\documentclass[12pt,a4paper,reqno,dvipsnames]{amsart}

\usepackage[utf8]{inputenc}
\usepackage[english]{babel}
\usepackage[normalem]{ulem} 

\usepackage{enumerate}
\usepackage{xypic}
\usepackage{amsmath}
\usepackage{amsthm}
\usepackage{amsfonts}
\usepackage{amssymb}
\usepackage{graphicx,color}
\usepackage{geometry}
\usepackage{wrapfig}
\usepackage{setspace}
\usepackage{multicol}
\usepackage{xcolor}
\usepackage{soul} 

\pagestyle{plain}
\geometry{paperwidth=210mm,paperheight=297mm,
	top=20mm,bottom=20mm,
	left=20mm,right=20mm}

\newtheorem{theorem}{Theorem}
\newtheorem{lemma}[theorem]{Lemma}
\newtheorem{proposition}[theorem]{Proposition}
\newtheorem{definition}[theorem]{Definition}
\newtheorem{remark}[theorem]{Remark}
\newtheorem{corollary}[theorem]{Corollary}

\newcommand{\HOM}{\operatorname{Hom}}
\newcommand{\ap}{\bullet}

\makeatletter

\makeatother

\title[Morita equivalence and globalization]{Morita equivalence and globalization for partial Hopf actions on nonunital algebras}
\date{\today}

\author[M. \ M. \ Alves]{Marcelo Muniz Alves}
\address{Departamento de Matem\'atica, Centro Polit\'ecnico, Universidade Federal do Paran\'a, Curitiba - PR, Brasil}
\email{marcelomsa@ufpr.br}

\author[T. L. Ferazza]{Tiago Luiz Ferrazza}
\address{Universidade Estadual do Paran\'a  - Campus de Paranagu\'a, Rua Comendador Correa Junior, 117, Paranaguá - PR, Brasil}
\email{tiagolferrazza@gmail.com}

\thanks{This study was financed in part by the Coordena\c{c}\~ao de Aperfeiçoamento de Pessoal de N\'ivel Superior – Brasil (CAPES) – Finance Code 001. The first author was partially supported by Conselho Nacional de Desenvolvimento Cient\'ifico e Tecnol\'ogico - CNPq (project 309469/2019-8). }
\keywords{Partial action; partial representation; nonunital algebra; globalization; Morita equivalence.}
\subjclass[2020]{Primary 16T05, Secondary 16S40.}

\begin{document}

	\maketitle

	\begin{abstract}
		In this work we investigate partial actions of a Hopf algebra $H$ on nonunital algebras and the associated partial smash products. We show that our partial actions correspond to nonunital algebras in the category of partial representations of $H$. The central problem of existence of a globalization for a partial action is studied in detail, and we provide sufficient conditions for the existence (and uniqueness) of a minimal globalization  for associative algebras in general. Extending previous results by Abadie, Dokuchaev, Exel and Simon, we define Morita equivalence for partial Hopf actions, and we show that if two symmetrical partial actions are Morita equivalent then their standard globalizations are also Morita equivalent. Particularizing to the case of a partial action on an algebra with local units, we obtain several strong results on equivalences of categories of modules of partial smash products of algebras and partial smash products of $\Bbbk$-categories. 
		
	\end{abstract}

	\section{Introduction}
	
	A partial action of a Hopf algebra $H$ on a unital algebra $A$ is a weakened version of the well-known concept of $H$-module algebra.
	A main motivation for introducing this subject was the previous development of a theory of partial group actions, which had then recently culminated in the Galois theory for partial group actions obtained by Dokuchaev, Ferrero and Paques in  \cite{ferrero}.  
	
	Partial group actions on algebras were originally introduced by Exel in the area of $C^*$-algebras \cite{Circle.actions}, and later were being investigated from a more purely algebraic point of view. 
	Some important features are the construction of a partial skew group algebra $A \ast G$ associated to a partial action of $G$ on $A$, which is a $G$-graded algebra, which is well-suited for the computation of cohomology invariants \cite{Circle.actions,AAR17};
	the existence, with mild hypotheses, of a globalization or enveloping action of $G$ on an algebra $B$ which contains $A$ as an ideal, with partial action induced by the $G$-action on $B$ \cite{dok}; a Morita context between the skew group algebra $A* G$ and the skew group algebra $B * G$ of its globalization \cite{dok}; 
	a Galois theory for partial actions \cite{ferrero}. 
	
	Partial actions of Hopf algebras on unital algebras
	first appear in \cite{caenepeel}. In this paper, Caenepeel and Jansen show that to a (unital) partial $H$-module algebra $A$ there corresponds a nonunital algebra structure on the tensor product $A \otimes H$, and the idempotent $1_A \# 1_H$ generates a unital ideal called the partial smash product $\underline{A \# H}$. It was shown in \cite{alves1} that there is always an $H$-module algebra $B$ which globalizes $A$, and that there is a Morita context between $\underline{A \# H}$ and $B \# H$.
	There is a Hopf-Galois theory as well for partial actions and coactions  \cite{caenepeel}, and partial $H$-module algebras may be characterized as algebras in an appropriate category, the category of partial representations \cite{alves4}; (see the survey \cite{Dokuchaev.survey}).

	From this point on the theory was extended in the direction of generalizing the objects that act, the ones that are acted upon, or both. In the first case, there are partial actions of weak Hopf algebras on unital algebras \cite{partial.weak.hopf}; in the second case, there are the partial actions of Hopf algebras on   $\Bbbk$-categories \cite{alves2}; and there are partial actions of multiplier Hopf algebras on nondegenerate nonunital algebras, 
	which fall in the third case \cite{ABFFM21,globalization-fonseca-fontes-martini}.

	In the present work we investigate partial actions of Hopf algebras on nonunital algebras in the broadest sense possible, including the degenerate ones, but we also study in depth the case of partial actions on algebras with local units. 
	This approach  was motivated by partial Hopf actions on $\Bbbk$-categories, which were in turn inspired by group actions and Hopf actions on categories \cite{cibils-solotar-galois,anca1}.
	
	We relate partial $H$-module nonunital algebras and nonunital algebras in the category of partial representations of $H$ (Theorem \ref{teo1}). Constructing the partial smash product $\underline{A \# H}$, we relate its left modules with the $(A,H)$-modules, extending a result from \cite{rafael}. We introduce globalizations and we establish 
	sufficient conditions for the existence and uniqueness of minimal globalizations (Thm. \ref{theorem.minimal.globalization} and Thm. \ref{thm.lifts}). It is also proved that 
	there is a strict Morita context between $\underline{A\#H}$ and $B\#H$, where $B$ is a globalization of the partial action of $H$ on $A$.
	
	In \cite{alves2} it was shown that every partial action on a $\Bbbk$-category $\mathcal{C}$ induces a partial action on the ``matrix algebra'' $a(\mathcal{C})$, which is an algebra with local units. Conversely, given an algebra $A$ with a system of local units $S$, one may construct  a  $\Bbbk$-category $\mathcal{C}^S(A)$, and we study partial actions on $A$ that induce partial actions on this category. We prove that the category of the left unital modules of $A$ is equivalent to the category of the left modules of the category $\mathcal{C}^S(A)$. Also, if $\mathcal{C}$ is a $\Bbbk$-category, we proved that the category of the left $\mathcal{C}$-modules and the category of the left unital $a(\mathcal{C})$-modules are equivalent.

	We develop a theory of Morita equivalence of partial Hopf actions, extending the concept and results of Morita equivalence of partial group actions presented in \cite{abadie}. Following the first results of Abadie, Dokuchaev, Exel and Sim\'on, in \cite{abadie}, we define the concept of Morita equivalence of partial $H$-actions, and with this we prove that every symmetric partial action on an idempotent algebra is Morita equivalent to a partial action where the algebra has trivial right (or left) annihilator (Thm. \ref{thm.morita.equivalent.partial.action}).
	In \cite{abadie}, the authors constructed a canonical globalization for a regular partial group action and proved that whenever two regular partial $G$-actions are Morita equivalent, the global actions of its canonical globalizations are also Morita equivalent; we have obtained a similar result for partial $H$-actions (Thm. \ref{thm.morita.equivalence.globalization}).

	Finally, it also holds that if $H$ is a Hopf algebra and $A$ is a partial $H$-module algebra with a system of local units $S$, then the categories of modules over the algebras $\underline{A\# H}$ and $\underline{a(\mathcal{C}^S(A))\#H}$, and over the $\Bbbk$-category $\underline{\mathcal{C}^S(A)\#H}$, are all equivalent (Corollaries \ref{corollary.morita.smash.1} and \ref{corollary.morita.smash.2}).
	
	Throughout this work, all the linear structures will be considered over a field $\Bbbk$; for instance, algebra means $\Bbbk$-algebra. Unless otherwise stated, ``module'' stands for ``left module'' and  ``partial action'' stands for ``left partial action''. 
	
	\section{Partial Hopf Actions}
	
	The intent of the next three subsections is to relate the suggested definition of partial Hopf actions on associative algebras with previous results found in the literature.
	
	\subsection{Partial Hopf Actions and the Smash Product}
	
	In \cite{caenepeel}, Caenepeel and Jansen introduced the concept of a Hopf algebra $H$ acting partially on a unital algebra $A$ as a linear map from $H\otimes A$ to $A$ which satisfies some necessary and sufficient conditions for the respective smash product $A\# H$ to be a unital, associative algebra.
	
	\begin{definition}  [\cite{caenepeel}] 
		\label{def.unital.partial.action} Let $H$ be a Hopf algebra and $A$ an algebra with unit $1_A$. A linear map $\cdot:H\otimes A \longrightarrow A$, $h\otimes a \mapsto h \cdot a$ is called a partial action of $H$ on $A$ if, for all $a,b \in A$, $h,g \in H$,
		\begin{enumerate}[\normalfont(1)]
			\item $1_H\cdot a = a$;
			\item $h\cdot(ab)=\sum(h_{(1)}\cdot a)(h_{(2)}\cdot b)$;
			\item $h\cdot(g\cdot a)=\sum(h_{(1)}\cdot 1_A)(h_{(2)}g\cdot a)$,
		\end{enumerate} 
		where $\Delta(h)=\sum h_{(1)}\otimes h_{(2)}$. In this case we say that $A$ is a partial $H$-module algebra with unity. If the equality
		\begin{enumerate}[\normalfont(1)]\addtocounter{enumi}{3}
			\item $h\cdot(g\cdot a)=\sum(h_{(1)}g\cdot a)(h_{(2)}\cdot 1_A)$
		\end{enumerate}
		also holds, then we say that this partial action is \textit{symmetrical} \emph{\cite{alves2} }.
	\end{definition}
	
	Inspired by \cite{alves2} and, mainly, by \cite{caenepeel}, we introduce the following definition of a partial action on associative algebras in general.
	
	\begin{definition}\label{partial.action}
		Let $A$ be an associative algebra. A linear map $\cdot:H\otimes A \longrightarrow A$, $h\otimes a \longmapsto h \cdot a$ will be called a \textit{partial action} of $H$ on $A$ if, for all $a,b \in A$, $h,k \in H$,  
		\begin{enumerate}[\normalfont(1)]
			\item $1_H\cdot a=a$;
			\item $h\cdot (a(k\cdot b))=\sum(h_{(1)}\cdot a)(h_{(2)}k\cdot b)$.
		\end{enumerate}
		In this case, $A$ will be called a \textit{partial $H$-module algebra}. We will say that the partial action is \textit{symmetrical} if, additionally, 
		\begin{enumerate}[\normalfont(1)]\addtocounter{enumi}{2}
			\item $h\cdot ((k\cdot b)a)=\sum(h_{(1)}k\cdot b)(h_{(2)}\cdot a)$
		\end{enumerate}
		for every $a,b\in A$, $h,k\in H$.
	\end{definition}
	
	In \cite{ABFFM21} the authors considered partial actions of multiplier Hopf algebras on nondegenerate algebras; here we are considering partial actions of Hopf algebras on all kind of associative algebras, introducing additional nondegeneracy properties only when needed. At the end of this section we will show that when $H$ is a Hopf algebra and $A$ is a nondegenerate algebra, then both definitions of partial action coincide.
	
	Recall that the right annihilator of an algebra $A$ is the ideal 
	$$r(A)=\{a\in A\,|\, ba=0,\,\forall b\in A\}.$$ 
	
	Analogously, the left annihilator of $A$ is the ideal 
	$$l(A)=\{b\in A\,|\, ba=0,\,\forall a\in A\}.$$ 
	
	The following lemmas follow closely the equivalence presented in \cite{caenepeel} mentioned in the beginning of this section. For this, we will assume that $H$ is a Hopf algebra, $A$ is an associative algebra with $r(A)=0$, $\cdot: H\otimes A\to A$ is a linear map given by $\cdot(h\otimes a)=h\cdot a$ and $A\#H$ is its associated smash product, which is the vector space $A \otimes H$ endowed with the product 
	\[
	(a \# h)(b \# k) = a (h_{(1)} \cdot b) \# h_{(2)} k.
	\]
	
	As it is customary, to avoid confusion with the tensor algebra $A \otimes H$, we will use the notation  $a \# h$ for the vector $a \otimes h$ as an element of the algebra $A \# H$. 
	
	\begin{lemma} $A\#H$ is an associative algebra if and only if $$h\cdot (a(k\cdot b))=\sum(h_{(1)}\cdot a)(h_{(2)}k\cdot b)$$ for every $h, k\in H$, $a, b\in A$.
	\end{lemma}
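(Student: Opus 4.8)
The plan is to expand the two triple products $\bigl((a\# h)(b\# k)\bigr)(c\# l)$ and $(a\# h)\bigl((b\# k)(c\# l)\bigr)$ for simple tensors, which suffices by bilinearity of the multiplication. Using that $\Delta$ is an algebra homomorphism together with coassociativity to track the Sweedler indices, a direct computation should give
\[
\bigl((a\# h)(b\# k)\bigr)(c\# l) = a(h_{(1)}\cdot b)(h_{(2)}k_{(1)}\cdot c)\# h_{(3)}k_{(2)}l
\]
and
\[
(a\# h)\bigl((b\# k)(c\# l)\bigr) = a\bigl(h_{(1)}\cdot(b(k_{(1)}\cdot c))\bigr)\# h_{(2)}k_{(2)}l,
\]
so that associativity of $A\# H$ is equivalent to the identity $(\star)$ obtained by equating these two expressions for all $a,b,c\in A$ and $h,k,l\in H$.

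For the ``if'' direction I would assume the displayed identity and apply it to the inner term $h_{(1)}\cdot(b(k_{(1)}\cdot c))$ appearing on the right-hand side of $(\star)$ (with $h\rightsquigarrow h_{(1)}$, $k\rightsquigarrow k_{(1)}$): this rewrites that term as $\sum (h_{(1)(1)}\cdot b)(h_{(1)(2)}k_{(1)}\cdot c)$, and a single use of coassociativity, $\sum h_{(1)(1)}\otimes h_{(1)(2)}\otimes h_{(2)} = \sum h_{(1)}\otimes h_{(2)}\otimes h_{(3)}$, turns the right-hand side of $(\star)$ into the left-hand side. Hence $(\star)$ holds and $A\# H$ is associative.

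For the ``only if'' direction I would assume $A\# H$ associative, so that $(\star)$ holds, and then apply the linear map $\mathrm{id}_A\otimes\varepsilon\colon A\otimes H\to A$ to both sides. Using $\sum h_{(1)}\otimes h_{(2)}\varepsilon(h_{(3)}) = \sum h_{(1)}\otimes h_{(2)}$ and $\sum k_{(1)}\varepsilon(k_{(2)}) = k$, the left-hand side collapses to $\varepsilon(l)\sum a(h_{(1)}\cdot b)(h_{(2)}k\cdot c)$ and the right-hand side to $\varepsilon(l)\,a\bigl(h\cdot(b(k\cdot c))\bigr)$. Choosing $l = 1_H$, so that $\varepsilon(l)=1$, gives $a\bigl[h\cdot(b(k\cdot c)) - \sum(h_{(1)}\cdot b)(h_{(2)}k\cdot c)\bigr] = 0$ for every $a\in A$; therefore the bracketed element lies in $r(A)$, which is $0$ by hypothesis, and renaming $b,c$ as $a,b$ yields exactly the desired identity.

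The computational part is just the Sweedler bookkeeping in the two triple-product expansions, which is routine. The only genuine point, and the step I expect to be the crux, is that the standing hypothesis $r(A)=0$ is precisely what allows one to strip the leading factor $a$ in the ``only if'' direction: without some nondegeneracy assumption, associativity of $A\# H$ would force the partial-action identity only modulo $r(A)$.
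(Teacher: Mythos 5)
Your proposal is correct and follows essentially the same route as the paper: expand the two triple products, apply $\mathrm{id}_A\otimes\varepsilon$ with the last Hopf-algebra entry set to $1_H$, and use the leading factor from $A$ together with $r(A)=0$ to extract the partial-action identity (the paper writes the support element as $(c\# l)$ on the left, which is the same device). Your "if" direction is slightly more explicit than the paper's one-line remark, but the argument is the same.
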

	\begin{proof}
		In fact, $A\#H$ is an associative algebra if and only if for every $a,b,c \in A$, $h,k,l\in H$,
		\begin{eqnarray*}
			((c\# l)(a\#h))(b\#k)\!\!\!\!\!\!\!\!\!\!\!\!\!\!\!\!\!\!\!\!\!\!\!\!\!&=&\!\!\!\!\!\!\!\!\!\!\!\!\!\!\!\!\!\!\!\!\!\!\!\!\!(c\#l)((a\#h)(b\# k))\\
			&\Updownarrow &\\
			\sum c(l_{(1)}\cdot a)(l_{(2)}h_{(1)}\cdot b)\# l_{(3)}h_{(2)}k\!\!\!\!\!\!\!\!\!\!\!\!\!\!\!\!\!\!\!\!\!\!\!\!\!&=&\!\!\!\!\!\!\!\!\!\!\!\!\!\!\!\!\!\!\!\!\!\!\!\!\! \sum c(l_{(1)}\cdot(a(h_{(1)}\cdot b)))\# l_{(2)}h_{(2)}k\\
			&\,\,\,\,\,\,\,\,\,\,\,\,\,\,\,\,\,\,\,\,\Downarrow^{I\otimes\varepsilon,\,k=1_H}&\\
			\sum c(l_{(1)}\cdot a)(l_{(2)}h\cdot b)\!\!\!\!\!\!\!\!\!\!\!\!\!\!\!\!\!\!\!\!\!\!\!\!\!&=&\!\!\!\!\!\!\!\!\!\!\!\!\!\!\!\!\!\!\!\!\!\!\!\!\! c(l\cdot(a(h\cdot b)))\\
			&\Updownarrow&\\
			c[\sum (l_{(1)}\cdot a)(l_{(2)}h\cdot b)\!\!\!\!\!\!\!\!\!\!\!\!\!\!\!\!\!\!\!\!\!\!\!\!\!&-&\!\!\!\!\!\!\!\!\!\!\!\!\!\!\!\!\!\!\!\!\!\!\!\!\!(l\cdot(a(h\cdot b)))]= 0.
		\end{eqnarray*}
		Since $r(A)=0$, we have the required equality. Conversely, if $h\cdot (a(k\cdot b))=\sum(h_{(1)}\cdot a)(h_{(2)}k\cdot b)$, clearly $A\#H$ is associative.
	\end{proof}
	
	The next two lemmas are proved similarly, using a``support element'' $c$ as before.
	
	\begin{lemma} $A\# H$ is an $A$-bimodule with structure given by $$b(a\# h)b'=\sum ba(h_{(1)}\cdot b')\# h_{(2)}$$ if and only if $$h\cdot ab=\sum(h_{(1)}\cdot a)(h_{(2)}\cdot b),$$ for every $h\in H$, $a,b\in A$.
	\end{lemma}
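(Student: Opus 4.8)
The plan is to read off from the three-variable formula the two candidate one-sided actions, namely the left action $b\cdot(a\#h)=ba\#h$ and the right action $(a\#h)\cdot b'=\sum a(h_{(1)}\cdot b')\#h_{(2)}$, and then to isolate which of the bimodule axioms actually involves the map $\cdot\colon H\otimes A\to A$. First I would note that the left-module axiom $(bb')\cdot(a\#h)=b\cdot(b'\cdot(a\#h))$ and the middle-linearity axiom $(b\cdot(a\#h))\cdot b'=b\cdot((a\#h)\cdot b')$ hold for \emph{any} linear map $\cdot$ whatsoever: the former is just associativity of $A$ in the first tensor slot, and the latter is immediate because the left action only multiplies on the left of the $A$-component while the right action only touches the $A$- and $H$-components on the right. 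Hence the entire content of the lemma sits in the right-module axiom $((a\#h)\cdot b)\cdot b'=(a\#h)\cdot(bb')$.

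For that axiom I would expand both sides with coassociativity, writing $\Delta^{2}(h)=\sum h_{(1)}\otimes h_{(2)}\otimes h_{(3)}$. Iterating the right action gives the left-hand side $\sum a(h_{(1)}\cdot b)(h_{(2)}\cdot b')\#h_{(3)}$, while the right-hand side is $\sum a(h_{(1)}\cdot(bb'))\#h_{(2)}$. Introducing $\phi,\psi\colon H\to A$ by $\phi(x)=(x_{(1)}\cdot b)(x_{(2)}\cdot b')$ and $\psi(x)=x\cdot(bb')$, the axiom reads $\sum a\,\phi(h_{(1)})\#h_{(2)}=\sum a\,\psi(h_{(1)})\#h_{(2)}$ for all $a\in A$ and $h\in H$.

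Now comes the ``support element'' step alluded to after the preceding lemma. Applying $\mathrm{id}_{A}\otimes\varepsilon$ to both sides and using counitality collapses the $H$-factor and leaves $a\,\phi(h)=a\,\psi(h)$ for every $a\in A$; since $r(A)=0$ this forces $\phi(h)=\psi(h)$, that is, $h\cdot(bb')=\sum(h_{(1)}\cdot b)(h_{(2)}\cdot b')$, which is the asserted identity after renaming $b,b'$ to $a,b$. Conversely, if $h\cdot(ab)=\sum(h_{(1)}\cdot a)(h_{(2)}\cdot b)$ holds, then substituting $h_{(1)}\cdot(bb')=\sum(h_{(1)}\cdot b)(h_{(2)}\cdot b')$ (with a relabelling of Sweedler indices) into the right-hand side of the right-module axiom reproduces the left-hand side verbatim; together with the two axioms already verified for free, this shows $A\#H$ is an $A$-bimodule. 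I expect the only delicate point to be the bookkeeping of Sweedler indices when passing between $\Delta$ and $\Delta^{2}$, together with the careful order of the two reductions—first strip the $H$-component with $\varepsilon$, then cancel the support element via $r(A)=0$; everything else is routine.
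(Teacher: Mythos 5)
Your proof is correct and follows exactly the route the paper intends: the paper only remarks that this lemma ``is proved similarly, using a support element as before,'' i.e.\ by reducing the relevant axiom via $\mathrm{id}_A\otimes\varepsilon$ and then cancelling the support element using $r(A)=0$, which is precisely what you do. Your additional observation that the left-module and middle-compatibility axioms hold for any linear map, so that all the content sits in the right-module axiom, is a correct and useful clarification of what ``similarly'' means here.
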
	
	
	\begin{lemma} The linear map $\iota: A\to A\#H$, $a\mapsto a\# 1_H$, is a right $A$-linear morphism if and only if $1_H\cdot a=a$.
	\end{lemma}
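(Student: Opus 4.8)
The plan is to unwind the definition of what it means for $\iota$ to be right $A$-linear and check it against the $A$-bimodule structure from the previous lemma. Recall that $A\#H$ carries a right $A$-module structure given (by setting $b=1$ in the bimodule formula, interpreted appropriately, or simply reading off the right action) by $(a\#h)b' = \sum a(h_{(1)}\cdot b')\#h_{(2)}$. So $\iota$ being a morphism of right $A$-modules means precisely that $\iota(ab') = \iota(a)b'$ for all $a,b'\in A$, i.e.
\[
ab' \# 1_H = \sum a((1_H)_{(1)}\cdot b')\#(1_H)_{(2)}.
\]
Since $\Delta(1_H) = 1_H\otimes 1_H$, the right-hand side collapses to $a(1_H\cdot b')\#1_H$.

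First I would observe that the condition $ab'\#1_H = a(1_H\cdot b')\#1_H$ for all $a,b'\in A$ is what we must characterize. The implication $(\Leftarrow)$ is immediate: if $1_H\cdot b' = b'$ for every $b'\in A$, then $a(1_H\cdot b')\#1_H = ab'\#1_H$, so $\iota$ is right $A$-linear. For the converse $(\Rightarrow)$, assume the displayed identity holds for all $a, b'$; this says $a(1_H\cdot b' - b')\#1_H = 0$ in $A\otimes H$ for all $a\in A$. Since $\{1_H\}$ is linearly independent, tensoring down (or just projecting onto the $1_H$-component) gives $a(1_H\cdot b' - b') = 0$ for all $a\in A$, i.e. $1_H\cdot b' - b' \in r(A)$. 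By the standing hypothesis $r(A)=0$, we conclude $1_H\cdot b' = b'$ for all $b'\in A$, which is condition (1).

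The only subtlety — and it is minor — is being careful that "right $A$-linear morphism'' is being tested as a map into the algebra $A\#H$ equipped with the specific right $A$-action inherited from the bimodule structure of the preceding lemma, not some other action; once that is pinned down the argument is a one-line use of $\Delta(1_H)=1_H\otimes 1_H$ together with $r(A)=0$, exactly in the spirit of the ``support element'' technique used in the previous two lemmas (here the support element is trivial since $h=1_H$). I do not anticipate any real obstacle; the proof is essentially a direct computation.
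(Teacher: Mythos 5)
Your proof is correct and follows exactly the route the paper intends (the paper omits the details, saying only that the lemma is proved with a ``support element'' as in the preceding lemmas): you unwind the right $A$-action on $A\#H$, use $\Delta(1_H)=1_H\otimes 1_H$, and invoke the standing hypothesis $r(A)=0$ for the converse. No issues.
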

	
	In the previous three lemmas, even if $r(A)\neq 0$, the axioms of a partial $H$-module algebra guarantee that the respective smash product will be an associative algebra, an $A$-bimodule and that the inclusion $\iota :A\to A\# H$ is a right $A$-linear morphism. We only need $r(A)=0$ for the converse.

	\begin{proposition} Let $H$ be a cocommutative Hopf algebra. If $A$ and $B$ are both (symmetrical) partial $H$-module algebras, then $A\otimes B$ is a (symmetrical) partial $H$-module algebra via
		\begin{center}
			$h\cdot (a\otimes b)=\sum h_{(1)}\cdot a\otimes h_{(2)}\cdot b$.
		\end{center}
	\end{proposition}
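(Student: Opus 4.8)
The plan is to check the axioms of Definition~\ref{partial.action} for the candidate map $h\cdot(a\otimes b)=\sum h_{(1)}\cdot a\otimes h_{(2)}\cdot b$ on the associative algebra $A\otimes B$ (and, in the symmetrical case, also axiom~(3)). Axiom~(1) is immediate from $\Delta(1_H)=1_H\otimes 1_H$ together with axiom~(1) in $A$ and in $B$: $1_H\cdot(a\otimes b)=1_H\cdot a\otimes 1_H\cdot b=a\otimes b$. Note that no nondegeneracy assumption on $A$, $B$, or $A\otimes B$ is needed, since only the ``forward'' direction of the axioms is being verified.

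For axiom~(2), I would expand both sides in terms of the iterated coproduct of $h$. On the left, multiplying out in $A\otimes B$ gives $(a\otimes b)(k\cdot(a'\otimes b'))=\sum a(k_{(1)}\cdot a')\otimes b(k_{(2)}\cdot b')$; applying the action and then axiom~(2) in each tensor factor yields
\[
h\cdot\bigl((a\otimes b)(k\cdot(a'\otimes b'))\bigr)=\sum (h_{(1)}\cdot a)(h_{(2)}k_{(1)}\cdot a')\otimes(h_{(3)}\cdot b)(h_{(4)}k_{(2)}\cdot b'),
\]
where $\sum h_{(1)}\otimes h_{(2)}\otimes h_{(3)}\otimes h_{(4)}$ denotes the iterated coproduct $(\Delta\otimes\Delta)\Delta(h)$, identified by coassociativity. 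On the right, using $\Delta(h_{(2)}k)=\sum h_{(2)(1)}k_{(1)}\otimes h_{(2)(2)}k_{(2)}$, one gets
\[
\sum(h_{(1)}\cdot(a\otimes b))(h_{(2)}k\cdot(a'\otimes b'))=\sum(h_{(1)}\cdot a)(h_{(3)}k_{(1)}\cdot a')\otimes(h_{(2)}\cdot b)(h_{(4)}k_{(2)}\cdot b').
\]
These two expressions differ only by the transposition of the second and third legs of the iterated coproduct of $h$, and this transposition is legitimate precisely because $H$ is cocommutative. This establishes axiom~(2).

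The symmetrical case is handled by the mirror-image computation: expand $h\cdot\bigl((k\cdot(a'\otimes b'))(a\otimes b)\bigr)$, apply axiom~(3) in $A$ and in $B$, rewrite everything through the iterated coproduct of $h$, and again invoke cocommutativity to permute the two inner tensorands so that the result matches $\sum(h_{(1)}k\cdot(a'\otimes b'))(h_{(2)}\cdot(a\otimes b))$. Since $A\otimes B$ is associative whenever $A$ and $B$ are, this completes the proof. The only real obstacle is the Sweedler bookkeeping — keeping track of which leg of $(\Delta\otimes\Delta)\Delta(h)$ acts on which tensor factor — and isolating the single spot where cocommutativity is actually used.
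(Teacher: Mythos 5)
Your verification is correct: the one place cocommutativity is needed is exactly the transposition of the second and third legs of $(\Delta\otimes\Delta)\Delta(h)$ that reconciles $\sum (h_{(1)}\cdot a)(h_{(2)}k_{(1)}\cdot a')\otimes(h_{(3)}\cdot b)(h_{(4)}k_{(2)}\cdot b')$ with $\sum(h_{(1)}\cdot a)(h_{(3)}k_{(1)}\cdot a')\otimes(h_{(2)}\cdot b)(h_{(4)}k_{(2)}\cdot b')$, and the mirror computation for axiom (3) goes through the same way. The paper itself gives no proof of this proposition; it only remarks that in \cite{alves2} the analogous statement was obtained, for unital algebras, as a corollary of a tensor-product construction for partial Hopf actions on $\Bbbk$-categories. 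Your route is therefore different in spirit: a direct Sweedler computation against Definition \ref{partial.action}, which has the advantage of working verbatim for arbitrary (possibly nonunital, possibly degenerate) associative algebras, since — as you correctly note — only the ``forward'' direction of the axioms is being checked and no annihilator or idempotency hypotheses enter. The categorical route of \cite{alves2} buys a more structural explanation in the unital setting but would require extra scaffolding here; your elementary argument is the appropriate one for the generality of this paper.
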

	
	In \cite{alves2}, this result was obtained (for unital algebras) as a consequence of a similar result for partial Hopf actions on categories.

	\subsection{Partial $H$-module Algebras and Algebras in $_H\mathcal{M}^{par}$}
	
	In \cite{alves4}, Alves, Batista e Vercruysse proved that, when the antipode of $H$ is bijective, there is a bijective correspondence between partial $H$-module algebras with symmetrical partial actions and unital algebras in the category of the partial $H$-modules. We will show that an analogous correspondence still holds for nonunital algebras $A$ that satisfy at least one of the following properties:
	\begin{enumerate}[\normalfont(1)]
		\item $A^2=A$;
		\item $l(A)=0$;
		\item $r(A)=0$.
	\end{enumerate}
	
	In order to do so, we consider the category of $(A,H)$-modules associated to a partial $H$-module $A$ introduced in \cite{rafael}. We will show that, under the same conditions above, the category of $(A,H)$-modules is equivalent to the category of $\underline{A \# H}$-modules, and then we use the fact that the partial $H$-modules are the same as modules over an appropriate partial smash product. 
	
	We begin by recalling the definition of a partial representation of a Hopf algebra from \cite{partial.coreps}, which is a shorter version of the one originally introduced in \cite{alves4}.
	
	\begin{proposition} \cite[Lemma 2.11]{partial.coreps} \label{proposition.equivalent.definitions.partial.reps}
		Let $H$ be a Hopf $\Bbbk$-algebra, $B$ be a unital $\Bbbk$-algebra and $\pi : H \to B$ be a linear map. The following are equivalent: 
		\begin{enumerate}[\normalfont(1)]
			\item $\pi: H \to B$ satisfies
			\begin{enumerate}
				\item[\normalfont(PR1)] $\pi (1_H)  =  1_B$; 
				\item[\normalfont(PR2)] $\pi (h) \pi (k_{(1)}) \pi (S(k_{(2)}))  =   \pi (hk_{(1)}) \pi (S(k_{(2)})) $, for every $h,k\in H$;
				\item[\normalfont(PR3)]$\pi (h_{(1)}) \pi (S(h_{(2)})) \pi (k)  =   \pi (h_{(1)}) \pi (S(h_{(2)})k)$, for every $h,k\in H$.
			\end{enumerate}
			\item $\pi: H \to B$ satisfies
			\begin{enumerate}
				\item[\normalfont(PR1)] $\pi (1_H)  =  1_B$; 
				\item[\normalfont(PR4)] $\pi (h) \pi (S(k_{(1)})) \pi (k_{(2)}) = \pi (hS(k_{(1)})) \pi (k_{(2)})$, for every $h,k\in H$;
				\item[\normalfont(PR5)] $\pi (S(h_{(1)}))\pi (h_{(2)}) \pi (k) = \pi (S(h_{(1)}))\pi (h_{(2)} k)$, for every $h,k\in H$.
			\end{enumerate}
		\end{enumerate}
	\end{proposition}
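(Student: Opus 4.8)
The plan is to prove the biconditional, but I would first shrink the work to a single implication by a duality trick. For any Hopf algebra $(H,m,\Delta,S)$ the triple $H^{\mathrm{op,cop}}=(H,m^{\mathrm{op}},\Delta^{\mathrm{cop}},S)$ is again a Hopf algebra with the \emph{same} antipode $S$; in contrast with $H^{\mathrm{op}}$ or $H^{\mathrm{cop}}$ taken separately, no invertibility of $S$ is needed here. A routine unwinding then shows that, for a linear map $\pi\colon H\to B$, the condition ``$\pi\colon H^{\mathrm{op,cop}}\to B^{\mathrm{op}}$ satisfies (PR1)--(PR3)'' is (after a harmless relabelling of $h,k$) exactly condition (2) for $\pi\colon H\to B$: axiom (PR2) for the pair $H^{\mathrm{op,cop}},B^{\mathrm{op}}$ becomes $\sum\pi(S(h_{(1)}))\pi(h_{(2)})\pi(k)=\sum\pi(S(h_{(1)}))\pi(h_{(2)}k)$, i.e.\ (PR5); axiom (PR3) becomes $\sum\pi(h)\pi(S(k_{(1)}))\pi(k_{(2)})=\sum\pi(hS(k_{(1)}))\pi(k_{(2)})$, i.e.\ (PR4); and (PR1) is self-dual. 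Since $(-)^{\mathrm{op,cop}}$ is an involution, it therefore suffices to prove the single implication ``(1) $\Rightarrow$ (2)'' for an arbitrary Hopf algebra; then ``(2) $\Rightarrow$ (1)'' follows by applying it to $H^{\mathrm{op,cop}}$ and $B^{\mathrm{op}}$.

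So assume $\pi$ satisfies (PR1)--(PR3). I would next extract the standard toolbox of identities. Applying (PR3) with the third leg of $\Delta^{(2)}(h)$ in place of $k$, then collapsing $\sum S(h_{(2)})h_{(3)}$ with the antipode axiom and using (PR1), gives the partiality identity $\sum\pi(h_{(1)})\pi(S(h_{(2)}))\pi(h_{(3)})=\pi(h)$. Setting $\varepsilon_h=\sum\pi(h_{(1)})\pi(S(h_{(2)}))$, similar manipulations that now combine (PR2) and (PR3) yield $\pi(h)\pi(k)=\sum\varepsilon_{h_{(1)}}\pi(h_{(2)}k)=\sum\pi(hk_{(1)})\pi(S(k_{(2)}))\pi(k_{(3)})$, the idempotence of each $\varepsilon_h$ on the relevant elements, and the mutual commutation of the $\varepsilon_h$'s. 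These are precisely the facts that let one move a factor $\pi(h)$ past a balanced pair $\pi(x_{(1)})\pi(S(x_{(2)}))$.

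With this toolbox I would establish (PR4) by starting from $\sum\pi(h)\pi(S(k_{(1)}))\pi(k_{(2)})$, expanding the factors around $\pi(S(k_{(1)}))$ via the partiality identity until a balanced pair sits next to it, absorbing $\pi(h)$ into that pair with (PR2), and finally collapsing the antipode--counit combination produced, eliminating $\pi(1_H)$ with (PR1); (PR5) is obtained symmetrically, now using (PR3) and the relation $\sum h_{(1)}S(h_{(2)})=\varepsilon(h)1_H$ (or, once (PR4) has been proved for all Hopf algebras, (PR5) for $H$ is simply (PR4) read in $H^{\mathrm{op,cop}},B^{\mathrm{op}}$). The genuinely delicate point --- and the main obstacle --- is exactly this last step: the axioms (PR2)--(PR3) and every derived identity only control a balanced pair written in the order $\pi(x_{(1)})\pi(S(x_{(2)}))$, whereas (PR4)--(PR5) involve the \emph{reversed} pair $\pi(S(x_{(1)}))\pi(x_{(2)})$, and since $S$ need not be invertible one cannot just substitute $k\mapsto S^{-1}(k)$; the reversed pair has to be manufactured by threading antipode--counit relations through several Sweedler legs while applying (PR2) and (PR3) in tandem. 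Keeping the indices consistent under $\Delta S=(S\otimes S)\tau\Delta$, and checking that each manipulation actually advances the computation rather than undoing a previous one, is where essentially all of the content sits; the rest is bookkeeping.
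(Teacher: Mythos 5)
The paper offers no proof of this proposition --- it is quoted verbatim from the cited reference \cite[Lemma 2.11]{partial.coreps} --- so there is no in-paper argument to compare you against; your attempt has to stand on its own. Your reduction to a single implication is correct and worth keeping: $H^{\mathrm{op,cop}}$ is indeed a Hopf algebra with the same antipode, the dictionary you state ((PR2) for $(H^{\mathrm{op,cop}},B^{\mathrm{op}})$ is (PR5) for $(H,B)$, (PR3) is (PR4), (PR1) is self-dual) checks out, and since $(-)^{\mathrm{op,cop}}$ is involutive, a universal proof of $(1)\Rightarrow(2)$ does yield $(2)\Rightarrow(1)$. The derived identity $\sum\pi(h_{(1)})\pi(S(h_{(2)}))\pi(h_{(3)})=\pi(h)$ from (PR1) and (PR3) is also correct.

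The genuine gap is the central computation, which you leave as a sketch while acknowledging it is ``where essentially all of the content sits''; moreover the route you describe for (PR4) stalls. Starting from $\sum\pi(h)\pi(S(k_{(1)}))\pi(k_{(2)})$ and expanding via the partiality identity, the only balanced pair $\pi(b_{(1)})\pi(S(b_{(2)}))$ you can manufacture without introducing $S^2$ is $\pi(k_{(2)})\pi(S(k_{(3)}))$, which sits to the right of $\pi(S(k_{(1)}))$; applying (PR2) then absorbs $\pi(S(k_{(1)}))$, not $\pi(h)$, producing $\pi(S(k_{(1)})k_{(2)})=\varepsilon(k_{(1)})\pi(1_H)$ and returning you exactly to the expression you started with. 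The computation does close if you start from the \emph{other} side of the identity: $\sum\pi(hS(k_{(1)}))\pi(k_{(2)})=\sum\pi(hS(k_{(1)}))\pi(k_{(2)})\pi(S(k_{(3)}))\pi(k_{(4)})$ by the partiality identity, $=\sum\pi(hS(k_{(1)})k_{(2)})\pi(S(k_{(3)}))\pi(k_{(4)})$ by (PR2) extended linearly, $=\sum\pi(h)\pi(S(k_{(1)}))\pi(k_{(2)})$ by $\sum S(k_{(1)})k_{(2)}=\varepsilon(k)1_H$ and (PR1); this is (PR4). For (PR5), first derive $\sum\pi(S(h_{(1)}))\pi(h_{(2)})\pi(S(h_{(3)}))=\pi(S(h))$ from (PR2), then write $\pi(h_{(2)})\pi(k)=\sum\pi(h_{(2)})\pi(S(h_{(3)}))\pi(h_{(4)}k)$ using (PR3) and collapse. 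Finally, your parenthetical shortcut is false: (PR4) read in $(H^{\mathrm{op,cop}},B^{\mathrm{op}})$ is (PR3) for $(H,B)$, not (PR5) --- the duality swaps the two axiom systems wholesale, so it can convert $(1)\Rightarrow(2)$ into $(2)\Rightarrow(1)$ but cannot trade the derivation of (PR4) for that of (PR5) within the same implication; both must be done by hand.
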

	
	\begin{definition} 
		Let $H$ be a Hopf $\Bbbk$-algebra, and let $B$ be a unital $\Bbbk$-algebra. A \emph{partial representation} of $H$ in $B$ is a linear map $\pi: H \rightarrow B$ which satisfies the equivalent conditions of Proposition \ref{proposition.equivalent.definitions.partial.reps}.
	\end{definition}

	\begin{remark}[\cite{alves4}] If $H$ is cocommutative, then the items in the definition of a partial representation coalesce into {\normalfont(PR1)}, {\normalfont(PR2)} and {\normalfont(PR5)}.
	\end{remark}
	
	We will show in the following that, under mild conditions, a partial $H$-module algebra $A$ carries a partial representation $\pi : H \to End(A)$ (this is well-known when $A$ is unital).  
	
	First note that if $A$ is any partial $H$-module algebra, then for all $x,y\in A$, $h\in H$, we have that
	\begin{equation}\label{eq1}
		\sum h_{(1)}\cdot S(h_{(2)})\cdot xy= \sum  (h_{(1)}\cdot S(h_{(2)})\cdot x)y,
	\end{equation}
	because
	\begin{eqnarray*}
		\sum h_{(1)}\cdot S(h_{(2)})\cdot xy&=& \sum h_{(1)}\cdot [(S(h_{(3)})\cdot x)(S(h_{(2)})\cdot y)]\\
		&=&\sum  (h_{(1)}\cdot S(h_{(4)})\cdot x)(h_{(2)}S(h_{(3)})\cdot y)\\
		&=&\sum  (h_{(1)}\cdot S(h_{(2)})\cdot x)y.
	\end{eqnarray*}
	
	Equality (\ref{eq1}) means that the linear map $a\mapsto \sum h_{(1)}\cdot S(h_{(2)})\cdot a$ is a right $A$-module map from $A$ to $A$ for every $h\in H$. Analogously, if the partial action is symmetrical, the equality
	\begin{equation}\label{eq2} 
		\sum S(h_{(1)})\cdot h_{(2)}\cdot xy=x(\sum S(h_{(1)})\cdot h_{(2)}\cdot y)
	\end{equation}
	means that the linear map $a\mapsto \sum S(h_{(1)})\cdot h_{(2)}\cdot a$ is a left $A$-module map from $A$ to $A$ for every $h\in H$.
	
	\begin{lemma} Let $A$ be a partial $H$-module algebra with symmetrical partial action and $H$ a Hopf algebra. If $A$ is either idempotent, $l(A)=0$ or $r(A)=0$ and the antipode of $H$ is bijective, then the linear map $\pi: H\to End(A)$, defined by $\pi(h)(a)=h\cdot a$, is a partial representation.
	\end{lemma}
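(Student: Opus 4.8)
The plan is to verify that $\pi$ satisfies the defining axioms of a partial representation; since the target $\operatorname{End}(A)$ is a unital algebra this makes sense, and by Proposition \ref{proposition.equivalent.definitions.partial.reps} it is enough to check one of the two equivalent triples, say (PR1), (PR2), (PR3). Axiom (PR1) is nothing but the first axiom of a partial action: $\pi(1_H)(a)=1_H\cdot a=a$, so $\pi(1_H)=\operatorname{id}_A$.

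The engine for the rest is the pair of identities (\ref{eq1}) and (\ref{eq2}): they say that $e_h:=\pi(h_{(1)})\pi(S(h_{(2)}))$ is a right $A$-module map $A\to A$ and that $f_h:=\pi(S(h_{(1)}))\pi(h_{(2)})$ is a left $A$-module map (here, and here only, the symmetry of the partial action is used). I would first prove the \emph{local} statement that the element
\[
D_{h,k}(a):=\sum h_{(1)}\cdot\bigl(S(h_{(2)})\cdot(k\cdot a)\bigr)-\sum h_{(1)}\cdot\bigl((S(h_{(2)})k)\cdot a\bigr),
\]
which is precisely (PR3) evaluated at $a$, lies in $l(A)$, and symmetrically that the (PR5)-difference $E_{h,k}(a):=f_h(k\cdot a)-\sum S(h_{(1)})\cdot((h_{(2)}k)\cdot a)$ lies in $r(A)$. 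For $D_{h,k}(a)$ this goes as follows: by (\ref{eq1}), $e_h(k\cdot a)\cdot b=e_h\bigl((k\cdot a)b\bigr)$ for every $b\in A$, and unwinding $e_h\bigl((k\cdot a)b\bigr)$ by means of the symmetric axiom (3), axiom (2), the anti-comultiplicativity of $S$ and the antipode relation $\sum h_{(1)}S(h_{(2)})=\varepsilon(h)1_H$ one obtains exactly $\bigl(\sum h_{(1)}\cdot((S(h_{(2)})k)\cdot a)\bigr)b$; hence $D_{h,k}(a)b=0$, i.e. $D_{h,k}(a)\in l(A)$. The computation for $E_{h,k}(a)$ is the mirror image, starting from the left $A$-linearity of $f_h$ and using $\sum S(h_{(1)})h_{(2)}=\varepsilon(h)1_H$. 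An entirely analogous argument, which this time needs the $S^{-1}$-twisted operators $a\mapsto\sum S^{-1}(h_{(2)})\cdot(h_{(1)}\cdot a)$ and $a\mapsto\sum h_{(2)}\cdot(S^{-1}(h_{(1)})\cdot a)$ (right- and left-$A$-linear respectively, which is where bijectivity of $S$ enters), places the (PR2)-difference in $l(A)$ and the (PR4)-difference in $r(A)$.

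With these localizations in hand the three cases are immediate. If $l(A)=0$ (resp. $r(A)=0$) the (PR2)- and (PR3)-differences (resp. the (PR4)- and (PR5)-differences) vanish outright, and by Proposition \ref{proposition.equivalent.definitions.partial.reps} this gives a complete set of axioms. If $A^2=A$, write $a=bc$; expanding $k\cdot(bc)=\sum(k_{(1)}\cdot b)(k_{(2)}\cdot c)$ and applying (\ref{eq1}) yields the recursion $D_{h,k}(bc)=\sum_{(k)}D_{h,k_{(1)}}(b)\,(k_{(2)}\cdot c)$, whose right-hand side is $0$ since each $D_{h,k_{(1)}}(b)$ already lies in $l(A)$; thus $D_{h,k}$ vanishes on $A^2=A$, and the same for the other three differences.

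The step I expect to be the real obstacle is the core unwinding: moving the antipode past the iterated coproducts in the triple composites $\pi(h_{(1)})\pi(S(h_{(2)}))\pi(k)$ and $\pi(h)\pi(k_{(1)})\pi(S(k_{(2)}))$ — reversing the comultiplication each time — and then collapsing with the antipode axioms so that $e_h((k\cdot a)b)$ lands exactly on $\bigl(\sum h_{(1)}\cdot((S(h_{(2)})k)\cdot a)\bigr)b$. This is where the absence of a unit in $A$ bites: in the unital case one simply inserts $1_A$, whereas here (\ref{eq1}), (\ref{eq2}) and their $S^{-1}$-twists must be used to reintroduce the missing ``support'', and the hypotheses $A^2=A$, $l(A)=0$, $r(A)=0$ are precisely the three devices that let one pass from the resulting identities-after-multiplication back to the desired operator identities.
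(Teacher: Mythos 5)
Your proposal is correct and, for the two annihilator cases, follows essentially the same computational path as the paper: the paper's proof of the $l(A)=0$ case is precisely your statement that the (PR2)- and (PR3)-differences lie in $l(A)$ (it multiplies by a support element $x$ on the right and shows the products agree), and its proof of the $r(A)=0$ case is your statement that the (PR4)- and (PR5)-differences lie in $r(A)$, with $S^{-1}$ entering exactly as you predict for (PR4). Where you genuinely diverge is the idempotent case: the paper redoes a direct computation of (PR2) and (PR3) on products $xy$ from scratch, whereas you reduce that case to the annihilator-membership already established, via the recursion $D_{h,k}(bc)=\sum D_{h,k_{(1)}}(b)(k_{(2)}\cdot c)$. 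I checked that this recursion is valid (it follows from \eqref{eq1}, the multiplicativity $k\cdot(bc)=\sum(k_{(1)}\cdot b)(k_{(2)}\cdot c)$, and axiom (2) applied to the second term), so your route buys a shorter, more unified proof in which the idempotent branch is a corollary of the $l(A)$-localization rather than an independent calculation. One inaccuracy worth flagging: you assert that placing the (PR2)-difference in $l(A)$ requires the $S^{-1}$-twisted operators and hence bijectivity of the antipode. The paper's computation of $\pi(h)\pi(k_{(1)})\pi(S(k_{(2)}))(a)x$ uses only $S$ (via $(h\cdot z)x=\sum h_{(1)}\cdot\bigl(z(S(h_{(2)})\cdot x)\bigr)$ and axioms (2)--(3)), so bijectivity is needed only for the (PR4) step in the $r(A)=0$ branch; under your scheme the idempotent and $l(A)=0$ branches would inherit an unnecessary bijectivity hypothesis, which also matters for the subsequent corollary on $\Bbbk G$-actions.
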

	\begin{proof}
		(PR1) is straightforward: given $a \in A$,  $\pi(1_H)(a)=1_H\cdot a=a$, therefore $\pi(1_H)=id_A$.Let us proceed by proving (PR2) and (PR3) first for an idempotent algebra $A$. 
		
		Since for any element $a\in A$ there exist $b_1,\ldots,b_n,c_1,\ldots,c_n \in A$ such that $a=\sum_{i=1}^nb_ic_i$ and $\pi$ is a linear map, we only need to check the axioms of partial representation for elements of the form $xy\in A$. Let $x,y \in A$ and $h,k\in H$,  and
		\begin{eqnarray*}
			\sum \pi(h)\pi(k_{(1)})\pi(S(k_{(2)}))(xy)&=& \sum h\cdot k_{(1)}\cdot S(k_{(2)})\cdot (xy)\\
			&\stackrel{ \eqref{eq1} }{=}& \sum h\cdot ((k_{(1)}\cdot S(k_{(2)})\cdot x)y)\\
			&=& \sum (h_{(1)}k_{(1)}\cdot S(k_{(2)})\cdot x)(h_{(2)}\cdot y)\\
			&=& \sum (h_{(1)}k_{(1)}\cdot S(k_{(4)})\cdot x)(h_{(2)}k_{(2)}S(k_{(3)})\cdot y)\\
			&=&\sum hk_{(1)}\cdot [(S(k_{(3)})\cdot x)(S(k_{(2)})\cdot y)]\\
			&=& \sum hk_{(1)}\cdot S(k_{(2)})\cdot (xy)\\
			&=& \sum \pi(hk_{(1)})\pi(S(k_{(2)}))(xy).
		\end{eqnarray*}	
		This proves that item (PR2) holds.  For item (PR3), we have that
		\begin{eqnarray*}
			\sum \pi(k_{(1)})\pi(S(k_{(2)}))\pi(h)(xy)&=& \sum k_{(1)} \cdot S(k_{(2)})\cdot h\cdot (xy)\\
			&=& \sum k_{(1)} \cdot S(k_{(2)})\cdot (h_{(1)}\cdot x)(h_{(2)}\cdot y)\\
			&=& \sum k_{(1)} \cdot [(S(k_{(3)})h_{(1)}\cdot x)(S(k_{(2)})\cdot h_{(2)}\cdot y)]\\
			&=& \sum (k_{(1)}\cdot S(k_{(2)})h_{(1)}\cdot x)(h_{(2)}\cdot y)\\
			&=& \sum k_{(1)} \cdot [(S(k_{(3)})h_{(1)}\cdot x)(S(k_{(2)})h_{(2)}\cdot y)]\\
			&=&\sum k_{(1)}\cdot S(k_{(2)})h\cdot (xy)\\
			&=& \sum \pi(k_{(1)})\pi(S(k_{(2)}h))(xy).
		\end{eqnarray*}

		Now we will assume that $A$ is not necessarily idempotent, but that instead we have $l(A)=0$. Then, for every $a,x\in A$, $h,k\in H$, we have also that $\pi(1_H)=id_A$ and
		\begin{eqnarray*}
			\sum \pi(h)\pi(k_{(1)})\pi(S(k_{(2)}))(a)x&=&[h\cdot k_{(1)}\cdot S(k_{(2)})\cdot a]x\\
			&=&\sum h_{(1)}\cdot [(k_{(1)}\cdot S(k_{(2)})\cdot a)(S(h_{(2)})\cdot x)]\\
			&=& \sum (h_{(1)}k_{(1)}\cdot S(k_{(3)})\cdot a)(h_{(2)}\cdot S(h_{(3)})\cdot x)\\
			&=& \sum h_{(1)}k_{(1)}\cdot [(S(k_{(3)})\cdot a)(S(k_{(2)})\cdot S(h_{(2)})\cdot x)]\\
			&=& \sum h_{(1)}k_{(1)}\cdot S(k_{(2)})\cdot (a(S(h_{(2)})\cdot x))\\
			&=&  \sum h_{(1)}k_{(1)}\cdot [(S(k_{(3)})\cdot a)(S(k_{(2)})S(h_{(2)})\cdot x)]\\
			&=& \sum (hk_{(1)}\cdot S(k_{(2)})\cdot  a)x\\
			&=&\sum \pi(hk_{(1)})\pi(S(k_{(2)}))(a)x.
		\end{eqnarray*}
		
		Since $l(A)=0$ and this holds for all $x\in A$, we have that $$\sum \pi(h)\pi(k_{(1)})\pi(S(k_{(2)}))=\sum \pi(hk_{(1)})\pi(S(k_{(2)})).$$ 
		
		For item (PR3), we have that
		\begin{eqnarray*}
			\sum \pi(h_{(1)})\pi(S(h_{(2)}))\pi(k)(a)x&=&\sum [h_{(1)}\cdot S(h_{(2)})\cdot k\cdot a]x\\
			&\stackrel{ \eqref{eq1} }{=}&\sum h_{(1)}\cdot S(h_{(2)})\cdot [(k\cdot a)x]\\
			&=&\sum h_{(1)}\cdot (S(h_{(3)})k\cdot a)(S(h_{(2)})\cdot x)\\
			&=& \sum (h_{(1)}\cdot S(h_{(4)})k\cdot a)(h_{(2)}S(h_{(3)})\cdot x)\\
			&=& \sum (h_{(1)}\cdot S(h_{(2)})k\cdot a)x\\
			&=& \sum \pi(h_{(1)})\pi(S(h_{(2)})k)(a)x.
		\end{eqnarray*}
		Since $l(A)=0$ and this holds for all $x\in A$, we have that 
		$$\pi (h_{(1)}) \pi (S(h_{(2)})) \pi (k)  =   \pi (h_{(1)}) \pi (S(h_{(2)})k).$$	
		
		Finally, assume that $r(A) =0$ and that the antipode is bijective. By recurring to Proposition \ref{proposition.equivalent.definitions.partial.reps},
		it is enough to prove that properties (PR4) and (PR5) hold. 
		For item (PR4),
		\begin{eqnarray*}
			\sum x\pi(h)\pi(S(k_{(1)}))\pi(k_{(2)})(a)&=& \sum x(h\cdot S(k_{(1)})\cdot k_{(2)}\cdot a)\\
			&=& \sum h_{(2)}\cdot [(S^{-1}(h_{(1)})\cdot x)(S(k_{(1)})\cdot k_{(2)}\cdot a)]\\
			&=& \sum (h_{(2)}\cdot S^{-1}(h_{(1)})\cdot x)(h_{(3)}S(k_{(1)})\cdot k_{(2)}\cdot a)\\
			&=& \sum h_{(2)}S(k_{(1)})\cdot [(k_{(2)}\cdot S^{-1}(h_{(1)})\cdot x)(k_{(3)}\cdot a)]\\
			&=&  \sum h_{(2)}S(k_{(1)})\cdot [(k_{(2)}S^{-1}(h_{(1)})\cdot x)(k_{(3)}\cdot a)]\\
			&=&\sum x(hS(k_{(1)})\cdot k_{(2)}\cdot a)\\
			&=&\sum x\pi(hS(k_{(1)}))\pi(k_{(2)})(a),
		\end{eqnarray*}
		since $r(A)=0$ and this holds for all $x\in A$, we have that $$\sum \pi(h)\pi(S(k_{(1)}))\pi(k_{(2)})=\sum \pi(hS(k_{(1)}))\pi(k_{(2)}).$$ The calculations needed to verify item (PR5) are similar, although in this case the bijectivity of the antipode of $H$ is not needed. 
	\end{proof}
	
	\begin{corollary}
		Let $H$ be a cocommutative Hopf algebra and $A$ a partial $H$-module algebra with symmetrical partial action. If $r(A)=0$ or $l(A)=0$, then the linear map $\pi: H\to End(A)$, defined by $\pi(h)(a)=h\cdot a$, is a partial representation.
	\end{corollary}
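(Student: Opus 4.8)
The plan is to obtain this as an immediate consequence of the previous Lemma, once we observe that cocommutativity makes the only missing hypothesis automatic. Recall that the Lemma already establishes that $\pi(h)(a)=h\cdot a$ is a partial representation whenever $A$ has a symmetrical partial action and one of the following holds: $A$ is idempotent, $l(A)=0$, or $r(A)=0$ together with bijectivity of the antipode of $H$.

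If $l(A)=0$, there is nothing left to prove: this is literally one of the three cases treated in the Lemma, and cocommutativity of $H$ plays no role. The only situation that needs a further word is $r(A)=0$. Here the Lemma requires in addition that the antipode be bijective, and the key point to invoke is the classical structural fact that the antipode of a cocommutative Hopf algebra is involutive, i.e. $S\circ S=\mathrm{id}_H$ (see Sweedler or Montgomery). In particular $S$ is bijective with $S^{-1}=S$, so all hypotheses of the Lemma are satisfied and the conclusion follows.

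I expect no real obstacle: the whole argument is the reduction to the Lemma plus a standard fact about cocommutative Hopf algebras. If one prefers a self-contained route that never mentions $S^{-1}$, one can instead appeal to the Remark following the definition of partial representation, which says that for cocommutative $H$ a partial representation is exactly a linear map satisfying (PR1), (PR2) and (PR5); since the verification of (PR5) inside the Lemma does not use bijectivity of the antipode, only (PR2) would need checking in the $r(A)=0$ case, and this can be done by the same ``support element'' computation used repeatedly above, with $S^{-1}$ replaced by $S$ via involutivity. Either way the proof is short, and I would write it as the one-line deduction from the Lemma.
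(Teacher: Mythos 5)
Your proposal is correct and is essentially the paper's (implicit) argument: the corollary is stated without proof precisely because it is the lemma specialized via the standard fact that a cocommutative Hopf algebra has involutive, hence bijective, antipode, so both the $l(A)=0$ and $r(A)=0$ cases of the lemma apply directly. Your alternative route through the remark that cocommutativity reduces the axioms to (PR1), (PR2), (PR5) is also sound, but the one-line deduction is all that is needed.
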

	
	This corollary shows that if $G$ is a group, $H=\Bbbk G$ and $A$ is a partial $H$-module algebra with symmetrical partial action, then the mapping $\pi(g)(a)=g\cdot a$ defines a partial representation and since
	\[
	\sum \pi(h_{(1)})\pi(S(h_{(2)}))\pi(h_{(3)})= \sum \pi(h_{(1)})\pi(S(h_{(2)}) h_{(3)}) = \pi(h) \pi(1_H) = \pi(h),
	\]
	for every partial representation $\pi:H\to End(A)$, we have that 
	\begin{eqnarray} \label{eq3}
		g\cdot g^{-1}\cdot g\cdot a=g\cdot a,
	\end{eqnarray}
	for every $a\in A$, $g\in G$. This equality will be useful in the next section.
	
	\begin{definition}[\cite{alves4}] Let $H$ be a Hopf algebra. A partial $H$-module is a vector space $M$ with a partial representation $\pi: H \to End(M)$.
	\end{definition}
	Motivated by the relation between $H$-modules and representations of $H$, Alves, Batista and Vercruysse presented in \cite{alves4} an equivalent definition of partial $H$-module. 
	
	\begin{definition} \label{def.partial.H.module}
		Let $H$ be a Hopf algebra. A partial $H$-module is a vector space $M$ with a linear map $\bullet: H\otimes M \to M$  satisfying the following properties:
		\begin{enumerate}
			\item[\normalfont(PM1)] $1_H\bullet m=m$;
			\item[\normalfont(PM2)] $\sum h\bullet (k_{(1)}\bullet (S(k_{(2)})\bullet m))=hk_{(1)}\bullet (S(k_{(2)})\bullet m)$;
			\item[\normalfont(PM3)] $\sum h_{(1)}\bullet (S(h_{(2)})\bullet (k\bullet m))=h_{(1)}\bullet (S(h_{(2)})k\bullet m)$.
		\end{enumerate}    
	\end{definition}
	Of course, as a consequence of Proposition \ref{proposition.equivalent.definitions.partial.reps}, the following equalities also hold for a partial $H$-module: 
	\begin{enumerate}
		\item[\normalfont(PM4)] $\sum h\bullet (S(k_{(1)})\bullet (k_{(2)}\bullet m))=hS(k_{(1)})\bullet (k_{(2)}\bullet m)$;
		\item[\normalfont(PM5)] $\sum S(h_{(1)})\bullet (h_{(2)}\bullet (k\bullet m))=S(h_{(1)})\bullet h_{(2)}\bullet (k\bullet m)$.
	\end{enumerate}
	
	In what follows, by a partial $H$-module we will always mean a vector space endowed with a linear map $\bullet : H \otimes A \to A$ as in Definition \ref{def.partial.H.module}. If $M$ and $N$ are partial $H$-modules, a morphism of partial $H$-modules is a $\Bbbk$-linear map $f: M \to N$ such that $f(h \bullet m) = h \bullet f(m)$ for every $h \in H$ and $m \in M$ (equivalently, $f \pi_M(h)  = \pi_N (h) f$, where $\pi_M : H \to End(M)$ and $\pi_N : H \to End(N)$ are the corresponding partial representations). The category of partial $H$-modules will be denoted by  $_H\mathcal{M}^{par}$. 
	
	The defining equations of a partial representation suggest the following construction.

	\begin{definition}[\cite{alves4}]
		Let $H$ be a Hopf algebra. The algebra $H_{par}$ is the quotient algebra $T(H)/I$, where $T(H)$ is the tensor algebra and $I$ is the ideal generated by the elements:
		\begin{enumerate}[\normalfont(1)]
			\item $1_H - 1_{T(H)}$;
			\item $\sum x\otimes y_{(1)}\otimes S(y_{(2)})- xy_{(1)}\otimes S(y_{(2)})$;
			\item $\sum x\otimes S(y_{(1)})\otimes y_{(2)}- xS(y_{(1)})\otimes y_{(2)}$;
		\end{enumerate}
	\end{definition}
	
	Recall from \cite{alves4} that, when the Hopf algebra $H$ has a bijective antipode, the category of the partial $H$-modules is isomorphic to the category of the $H_{par}$-modules.
	
	In fact, if we denote the class of  $x^1\otimes\cdots\otimes x^n$ in $T(H)/I$ by $[x^1]\cdots[x^n]$, it is clear that the map 
	\[
	t: H \to H_{par}, \ \ h \mapsto [h]
	\]
	is a partial representation, and every partial representation factors through it: if $M$ is a partial $H$-module, the corresponding structure of left $H_{par}$-module is defined by $[h]\cdot m = h \bullet m$. 
	
	Given $h \in H$, let $\epsilon_{h}=[h_{(1)}][S(h_{(2)})]$.
	From \cite{alves4} we know that $H_{par}$ is generated, as a vector space, by elements of the form 
	$$ \epsilon_{k^1}\cdots  \epsilon_{k^n}[h].$$
	If $A \subset H_{par}$ is the subalgebra 
	\begin{eqnarray*}
		A= span \{\epsilon_{h^1}\cdots\epsilon_{h^n};\,\,h^i\in H\},
	\end{eqnarray*}
	then there exists a canonical partial action of $H$ on $A$, given by 
	$$h\cdot a=[h_{(1)}] \ a \ [S(h_{(2)})],$$
	and in \cite{alves4} it is proved that the linear map 
	\[
	\underline{A\# H} \to  H_{par}, \ \ 	(a\# h)1_A \mapsto  a[h]
	\]
	is an isomorphism of algebras.
	
	This last isomorphism provides another equivalence of categories $_H\mathcal{M}^{par} \cong \,_{\underline{A\# H}}\mathcal{M}$ which is given explicitly by the following: if $M$ is a left $\underline{A\# H}$-module, then it is also a partial $H$-module by the rule
	\begin{eqnarray*}
		h\bullet m : =(1_A\# h)\triangleright m,
	\end{eqnarray*}
	and, as we can see in \cite{alves4}, we have that for $a=\epsilon_{h^1}\cdots\epsilon_{h^n}\in A$, $m\in M$, $k\in H$,
	\begin{eqnarray*}
		(a\# k)\triangleright m= \sum h^1_{(1)}\bullet (S(h^1_{(2)})\bullet (\cdots (h^n_{(1)}\bullet(S(h^n_{(2)})\bullet(k\bullet m))\cdots)).
	\end{eqnarray*}
	
	Also in \cite{alves4}, the authors highlight that every partial $H$-module $M$ can be viewed as an $A$-bimodule: there are algebra morphisms 
	\[
	s : A \to \underline{A\# H}, \ \ \ \ s(a) = a \# 1_H, 
	\]
	and 
	\[
	t: A^{op} \to \underline{A\# H}, \ \ \ \ t(\varepsilon_h) = (1_A \# h_{(2)})(1_A \# S^{-1}(h_{(1)})),  
	\]
	which satisfy $s(a) t(b) = t(b) s(a)$ for all $\, b \in A$, and so 
	\[
	am : = s(a)\triangleright m, \ \ \ \ ma : =t(a)\triangleright m
	\]
	define an $A$-bimodule structure on $M$.
	We note that $s$ and $t$ are part of the Hopf algebroid structure on $H_{par}$ (see [\cite{alves4}]). 
	
	\begin{definition}[\cite{rafael}] Let $A$ be a unital partial $H$-module algebra. A vector space $M$ is a (left) partial $(A,H)$-module if $M$ is a left $A$-module together with a linear map $H\otimes M\to M$, $h\otimes m\mapsto hm$ such that:
		\begin{enumerate}[\normalfont(1)]
			\item $1_Hm=m$;
			\item $h(a(km))=\sum (h_{(1)}\cdot a)((h_{(2)}k)m)$,
		\end{enumerate}
		for every $h,k\in H$, $a\in A$, $m\in M$.	
	\end{definition}
	
	Since for every $h\in H$, we have that
	\begin{eqnarray*}
		h\cdot 1_A=\sum [h_{(1)}][S(h_{(2)})]=\epsilon_{h}
	\end{eqnarray*} 
	and, as we can see in \cite{rafael}, every $\underline{A\# H}$-module is a partial $(A,H)$-module, then we have that
	\begin{eqnarray}\label{eq4}
		h\bullet x&=&h\bullet (1_Ax)=\sum \epsilon_{h_{(1)}}(h_{(2)}\bullet x), \end{eqnarray}
	and \begin{eqnarray}\label{eq5}
		h\bullet x&=&h\bullet (x1_A)=\sum (h_{(1)}\bullet x)\epsilon_{h_{(2)}}.
	\end{eqnarray}
	
	Now we will present a definition of partial $(A,H)$-modules for when $A$ does not have a unit.
	
	\begin{definition} \label{def.partial(A,H)-module} Let $A$ be an associative partial $H$-module algebra. A vector space $M$ is a (left) partial $(A,H)$-module if $M$ is an unital $A$-module together with a linear map $H\otimes M\to M$, $h\otimes m\mapsto hm$ such that:
		\begin{enumerate}[\normalfont(1)]
			\item $1_Hm=m$;
			\item $h(a(km))=\sum (h_{(1)}\cdot a)((h_{(2)}k)m)$,
		\end{enumerate}
		for every $h,k\in H$, $a\in A$, $m\in M$.	
	\end{definition}
	
	Recall that if $A$ is a partial $H$-module algebra with unit, then the smash product $A\#H$ is the algebra defined by: $A\#H=A\otimes H$ as vector space, and the product is given by 
	\begin{eqnarray*}
		(a\otimes h)(b\otimes k)=\sum a(h_{(1)}\cdot b)\otimes h_{(2)}k.
	\end{eqnarray*}
	
	This structure is defined in \cite{caenepeel}, where Caenepeel and Jansen noticed that $A\# H$ may not have unit, but the subalgebra $\underline{A\#H}=(A\# H)(1_A\# 1_H)$, that is called the partial smash product, has unit $1_A\# 1_H$. Also in \cite{caenepeel}, the authors proved that $A\#H$ is an $A$-bimodule, and here we highlight that $\underline{A\#H}$ is, in fact, the unital part of $A\#H$ as an $A$-bimodule. Note that $\underline{A\#H}$ is generated by the elements $a\#h=\sum a(h_{(1)}\cdot 1_A)\otimes h_{(2)}$ and $\sum a(h_{(1)}\cdot 1_A)\# h_{(2)}=a\#h$.
	
	For nonunital partial $H$-module algebras, we will construct the smash product in the same way: $A\#H=A\otimes H$ as vector space, and the product is given by 
	\begin{eqnarray*}
		(a\otimes h)(b\otimes k)=\sum a(h_{(1)}\cdot b)\otimes h_{(2)}k.
	\end{eqnarray*}
	
	Now, consider the vector space $\underline{A\# H}=(A\# H)(A\# 1_H)$, which corresponds to the partial smash product when $A$ has unit, and because of that, we will also call it partial smash product. In fact, $\underline{A\# H}$ is the unital sub $A$-bimodule of $A\#H$, and it is generated by the elements $\sum a(h_{(1)}\cdot b)\# h_{(2)}$.
	
	Also, note that whenever $A$ is at least an idempotent partial $H$-module algebra, then ${a\otimes 1_H\in \underline{A\# H}}$, for every $a\in A$. It is a consequence of the fact that $xy\otimes 1_H=(x\otimes  1_H)(y\otimes 1_H)$. Hence we have the following:
	\begin{proposition} \label{prop.smash.product.idempotent}
		Let $A$ be an idempotent partial $H$-module algebra. Then  $\underline{A\# H}$
		is an idempotent algebra.
	\end{proposition}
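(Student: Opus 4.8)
The plan is to establish the nontrivial inclusion $\underline{A\# H}\subseteq(\underline{A\# H})^2$; the reverse inclusion is automatic, since $\underline{A\# H}=(A\# H)(A\# 1_H)$ is a subalgebra of $A\# H$ (associativity of $A\# H$ is guaranteed by the first lemma of this section, as the axioms of a partial $H$-module algebra are in force here).

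First I would record two facts. As a vector space, $\underline{A\# H}=(A\# H)(A\# 1_H)$ is spanned by the elements $(a\# h)(b\# 1_H)=\sum a(h_{(1)}\cdot b)\# h_{(2)}$ with $a,b\in A$ and $h\in H$, so it is enough to show that each such generator lies in $(\underline{A\# H})^2$. The second ingredient is the observation made just before the statement: since $A$ is idempotent, $a\# 1_H\in\underline{A\# H}$ for every $a\in A$, because $xy\# 1_H=(x\# 1_H)(y\# 1_H)\in(A\# H)(A\# 1_H)$ and every $a$ is a finite sum of such products $xy$.

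The key step is then a single factorization. Given a generator $(a\# h)(b\# 1_H)$, use $A^2=A$ to write $b=\sum_j c_j d_j$ (finite sum); then, using $(c\# 1_H)(d\# 1_H)=cd\# 1_H$ (which needs only $1_H\cdot d=d$) and associativity of $A\# H$,
\[
(a\# h)(b\# 1_H)=\sum_j (a\# h)(c_j\# 1_H)(d_j\# 1_H)=\sum_j\bigl[(a\# h)(c_j\# 1_H)\bigr]\bigl[d_j\# 1_H\bigr].
\]
In each summand the first bracket is again a generator of the form $(a'\# h')(b'\# 1_H)$, hence lies in $\underline{A\# H}$, and the second bracket lies in $\underline{A\# H}$ by the previous paragraph. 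Thus every generator of $\underline{A\# H}$ belongs to $(\underline{A\# H})^2$, and summing over generators yields $\underline{A\# H}\subseteq(\underline{A\# H})^2$.

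I do not expect a genuine obstacle: the argument is the same bookkeeping one uses for idempotent rings. The only point meriting care is keeping track of where $A^2=A$ is invoked — it is used twice, once to put $A\# 1_H$ inside $\underline{A\# H}$ and once to split $b$ — and making sure the auxiliary identity $(c\# 1_H)(d\# 1_H)=cd\# 1_H$ and the associativity of $A\# H$ are cited correctly.
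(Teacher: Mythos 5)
Your argument is correct and follows the same route the paper takes (the paper leaves most of it implicit, recording only the key observation that $a\#1_H\in\underline{A\# H}$ when $A$ is idempotent, via $xy\#1_H=(x\#1_H)(y\#1_H)$). Your write-up simply makes explicit the factorization $(a\# h)(b\#1_H)=\sum_j\bigl[(a\# h)(c_j\#1_H)\bigr]\bigl[d_j\#1_H\bigr]$ that the paper's remark is meant to yield.
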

	In \cite{rafael} it was proved that, when $A$ is a unital algebra, $\underline{A\# H}$-modules are partial $(A,H)$-modules and vice-versa. But even for a nonunital partial $H$-module algebra $B$, essentially the same argument shows that whenever $M$ is a partial $(B,H)$-module, it is then a $\underline{B\# H}$-module with action
	\begin{eqnarray*}
		\sum a(h_{(1)}\cdot b)\# h_{(2)}\triangleright m=a(h\cdot (bm));
	\end{eqnarray*}
	moreover, if $M$ is a unital $B$-module (i.e., $B M = M$), then $M$ will also be a unital $\underline{B\# H}$-module. 
	For the converse we have two problems: one of them is that even if $M$ is a unital $\underline{B\# H}$-module, it might not be a unital $B$-module with respect to the induced action. The other problem is that, even when $M$ is a unital $B$-module, it is not clear how should one define a linear map $H\otimes M\to M$ such that, with the structure of $B$-module induced by the action of $\underline{B\# H}$, $M$ becomes a partial $(B,H)$-module. Nevertheless, this can be achieved  when $M$ is a unital $B$-module  and $B$ is a left $s$-unital algebra. 
	
	Recall that a ring $B$ is left $s$-unital if for each $b\in B$ there exists $x \in B$ such that $xb = b$ (see for instance (\cite{global-s-unital,garciasimon,tominaga})). It is clear that in this case $B$ is idempotent and also that $r(B) = 0$.  
	It can be shown that if $B$ is $s$-unital, $M$ is a unital $B$-module and $m_1, \ldots, m_n \in M$, then there exists $x \in B$ such that $xm_j = m_j$ for $j=1,2, \ldots, n$ \cite[Thm.1]{tominaga}. This property holds in particular for $M = B$. 
	
	Assume that $M$ is a unital $\underline{B\# H}$-module and that $B$ is a left $s$-unital algebra, then $M$ is also a unital $B$-module with the induced action. In fact, if $M$ is a unital $\underline{B\# H}$-module, then there exist $a_i, b_i \in B$ and $h^i\in H$ such that $$m=\sum_{i,h} a_i(h^i_{(1)}\cdot b_i)\# h^i_{(2)}\triangleright m_i,$$
	but since $B$ is a left $s$-unital algebra, there exists $x\in B$ such that $xa_i=a_i$ for every $a_i$, wich means that 
	$$m=x\#1_H\triangleright\sum_{i,h} a_i(h^i_{(1)}\cdot b_i)\# h^i_{(2)}\triangleright m_i.$$
	
	Now, given $h \in H$ and $m \in M$, we define $hm=\sum h_{(1)}\cdot x\# h_{(2)}\triangleright m$, where $x\in B$ is such that $xm=m$. Note that if also $m=ym$, there exists $z\in B$ such that $zy=y$ and $zx=x$,
	then $zm=m$ and
	\begin{eqnarray*}
		\sum h_{(1)}\cdot x\#h_{(2)}\triangleright m&=&\sum h_{(1)}\cdot zx\#h_{(2)}\triangleright m\\
		&=& \sum h_{(1)}\cdot z\#h_{(2)}\triangleright x\# 1_H \triangleright m\\
		&=& \sum h_{(1)}\cdot z\#h_{(2)}\triangleright xm\\
		&=& \sum h_{(1)}\cdot z\#h_{(2)}\triangleright ym\\
		&=& \sum h_{(1)}\cdot z\#h_{(2)}\triangleright y\# 1_H \triangleright m\\
		&=&\sum h_{(1)}\cdot zy\#h_{(2)}\triangleright m\\
		&=&\sum h_{(1)}\cdot y\#h_{(2)}\triangleright m,
	\end{eqnarray*}
	hence $h\otimes m\mapsto h\cdot m$ is well-defined.
	
	Moreover, since $M$ is also a unital $B$-module given by $a\otimes m\mapsto am=a\#1_H\triangleright m$, for every $a\in B, m\in M$, it follows that
	\[
	1_Hm= x\# 1_H\triangleright m = xm = m.
	\]
	Also, let $a\in B$, $m\in M$, $h,k\in H$ and consider $x,y\in B$ such that $xa=a$, $x(a(km))=a(km)$ and $ym=m$; then we have that
	\begin{eqnarray*}
		h(a(km))&=&\sum [h_{(1)}\cdot x\# h_{(2)}]\triangleright [a\# 1_H]\triangleright [k_{(1)}\cdot y\# k_{(2)}]\triangleright m\\
		&=&\sum h_{(1)}\cdot xa\# h_{(2)}\triangleright [k_{(1)}\cdot y\# k_{(2)}]\triangleright m\\
		&=&\sum [(h_{(1)}\cdot a)(h_{(2)}k_{(1)}\cdot y)\# h_{(3)}k_{(2)}]\triangleright m\\
		&=&\sum [h_{(1)}\cdot a\# 1_H]\triangleright [h_{(2)}k_{(1)}\cdot y\# h_{(3)}k_{(2)}]\triangleright m\\
		&=&\sum (h_{(1)}\cdot a)(h_{(2)}km).
	\end{eqnarray*} 
	Therefore, $M$ is a partial $(B,H)$-module.
	
	\begin{proposition} \label{prop.unital.module}
		Let $H$ be a Hopf algebra and $B$ be an left $s$-unital partial $H$-module algebra.
		\begin{enumerate}[\normalfont(1)]
			\item If $M$ is a unital $\underline{B\# H}$-module, then it is a partial $(B,H)$-module;
			\item If $M$ is a partial $(B,H)$-module, then it is a unital $\underline{B\# H}$-module.
		\end{enumerate}
	\end{proposition}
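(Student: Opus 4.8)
The plan is to collect the two implications from the computations already carried out in the discussion preceding the statement and organize them into a clean argument; almost no new content is needed, so the work is mostly a matter of packaging.

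For part (2), I would start from a partial $(B,H)$-module $M$. By Definition \ref{def.partial(A,H)-module}, $M$ is by hypothesis a unital left $B$-module, so the only thing to produce is an action of $\underline{B\#H}$. I would define it by $\left(\sum a(h_{(1)}\cdot b)\#h_{(2)}\right)\triangleright m := a\bigl(h\cdot(bm)\bigr)$, exactly as in the unital case of \cite{rafael}, and check associativity and well-definedness directly from axioms (1)--(2) of Definition \ref{def.partial(A,H)-module} and the product rule of $A\#H$; the key observation is that this verification never uses a unit of $B$, so it carries over verbatim. Unitality of $M$ as a $\underline{B\#H}$-module then follows by writing a general $m$ as $m=\sum_i b_i m_i$ and using the left $s$-unital hypothesis to pick a single $x\in B$ with $xb_i=b_i$ for all $i$, which exhibits $m$ in the image of the action.

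For part (1), I would begin with a unital $\underline{B\#H}$-module $M$ and first upgrade it to a unital $B$-module via $am:=(a\#1_H)\triangleright m$: writing $m=\sum_i a_i(h^i_{(1)}\cdot b_i)\#h^i_{(2)}\triangleright m_i$ and choosing, by left $s$-unitality, $x\in B$ with $xa_i=a_i$ for every $i$, one gets $m=(x\#1_H)\triangleright m=xm$. Next I would define the $H$-action by $hm:=\sum (h_{(1)}\cdot x)\#h_{(2)}\triangleright m$ for any $x\in B$ with $xm=m$, and prove this is independent of the choice of $x$ by passing to a common left unit $z$ with $zx=x$, $zy=y$ and $zm=m$ for two competing choices $x,y$. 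With the map in hand, $1_Hm=xm=m$ is immediate, and the compatibility identity $h(a(km))=\sum(h_{(1)}\cdot a)(h_{(2)}km)$ reduces to a direct expansion in $\underline{B\#H}$ after selecting $x,y\in B$ with $xa=a$, $x(a(km))=a(km)$ and $ym=m$. This shows $M$ is a partial $(B,H)$-module.

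The step I expect to be the real crux is the well-definedness of $h\otimes m\mapsto hm$ in (1): it is precisely here that the left $s$-unital hypothesis is indispensable, via the existence of common left units for finite subsets of $M$ (\cite[Thm.~1]{tominaga}). Once that is secured, the remaining identities are routine manipulations with the smash-product multiplication, and I would present them as such rather than grinding through every line.
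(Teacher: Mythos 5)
Your proposal is correct and follows essentially the same route as the paper: the paper's proof of this proposition is exactly the discussion preceding the statement, namely the action $\left(\sum a(h_{(1)}\cdot b)\#h_{(2)}\right)\triangleright m = a(h\cdot(bm))$ for part (2), and for part (1) the induced unital $B$-module structure, the definition $hm=\sum (h_{(1)}\cdot x)\#h_{(2)}\triangleright m$ with $xm=m$, well-definedness via a common left unit $z$, and the same verification of the two axioms. The only cosmetic difference is that the paper derives $zm=m$ from $zx=x$ and $xm=m$ rather than imposing it as part of the choice, which is immaterial.
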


	Recall that in \cite{garciasimon}, García and Simon defined the torsion of a left $A$-module $M$ as the $A$-submodule $t_A(M)=\{m\in M\ | \ am=0,\,\forall a\in A\}$. With this concept we can prove a similar result to Proposition \ref{prop.unital.module} for a more general class of nonunital algebras. 
	
	Let $B$ be an idempotent algebra and $H$ a Hopf algebra with bijective antipode. We will show that a unital left $\underline{B\# H}$-module $M$ with trivial torsion $B$-submodule ($t_B(M)=\{m\in M| am=\sum a\#1_H\triangleright m=0,\,\forall a\in B\}=0$) is a partial $(B,H)$-module. 
	
	In fact, for every $a,b,c\in B$, $m\in M$, we have that
	\begin{eqnarray*}
		\sum a(h_{(2)}\cdot S^{-1}(h_{(1)})\cdot b)\#h_{(3)}\triangleright cm&=&\sum a(h_{(2)}\cdot S^{-1}(h_{(1)})\cdot b)\#h_{(3)}\triangleright c\#1_H\triangleright m\\
		&=&\sum a(h_{(2)}\cdot ((S^{-1}(h_{(1)})\cdot b)c))\#h_{(3)}\triangleright m\\
		&=&\sum ab(h_{(1)}\cdot c)\#h_{(2)}\triangleright m\\
		&=&ab\#1_H\triangleright \sum (h_{(1)}\cdot c)\#h_{(2)}\triangleright m.
	\end{eqnarray*}
	Then, since every element of $B$ is a finite sum of products in $B$ and $t_B(M)=0$, whenever $m=\sum_i a_im_i=\sum_j b_jn_j\in M$, we have that $\sum_i (h_{(1)}\cdot a_i)\#h_{(2)}\triangleright m_i=\sum_j (h_{(1)}\cdot b_j)\#h_{(2)}\triangleright n_j$. Hence, the linear map $H\otimes M\to M$ given by $h(am)=\sum h_{(1)}\cdot a\#h_{(2)}\triangleright m$ is well-defined, and straightforward computations show that $M$ is a partial $(B,H)$-module. To assure that $M$ will be a unital $B$-module induced by the action of $\underline{B\#H}$, note that since $B$ is idempotent, for every $a,b\in B$, $h\in H$, there exist $a_i, c_i \in B$ such that $a=\sum_i a_ic_i$ and, therefore, $$\sum_h a(h_{(1)}\cdot b)\# h_{(2)}=\sum_{i,h} (a_i\#1_H)(c_i(h_{(1)}\cdot b)\#h_{(2)}).$$
	This proves the next result. 
	\begin{proposition}
		Let $H$ be a Hopf algebra with bijective antipode and $B$ be any partial $H$-module algebra.
		\begin{enumerate}[\normalfont(1)]
			\item If $M$ is a unital $\underline{B\# H}$-module such that $$t_B(M)=\{m\in M;\,\,am= a\#1_H\triangleright m=0,\,\forall a\in B \}=0,$$ then it is a partial $(B,H)$-module;
			\item If $M$ is a partial $(B,H)$-module, then it is a unital $\underline{B\# H}$-module.
		\end{enumerate}
	\end{proposition}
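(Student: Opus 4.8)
The plan is to deduce the two implications from material already developed in this section. Part (2) is the easy direction: by Definition \ref{def.partial(A,H)-module} a partial $(B,H)$-module $M$ is in particular a unital left $B$-module, and --- exactly as in the discussion preceding Proposition \ref{prop.unital.module} --- it carries a $\underline{B\# H}$-module structure via $\sum a(h_{(1)}\cdot b)\# h_{(2)}\triangleright m = a(h\cdot(bm))$; taking $h = 1_H$ shows $(ab\# 1_H)\triangleright m = (ab)m$, so $\underline{B\# H}\triangleright M \supseteq B^2M = B(BM) = BM = M$, and the $\underline{B\# H}$-module is unital. No hypothesis on the antipode is needed here.

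For part (1) I would argue in three steps. \textbf{Step 1: the induced $B$-action on $M$ is unital.} Since $B$ is idempotent, writing $a = \sum_i a_i c_i$ turns any generator of $\underline{B\# H}$ into $\sum a(h_{(1)}\cdot b)\# h_{(2)} = \sum_i (a_i\# 1_H)\bigl(c_i(h_{(1)}\cdot b)\# h_{(2)}\bigr)$, so $\underline{B\# H} = (B\# 1_H)\,\underline{B\# H}$; as $M$ is unital over $\underline{B\# H}$, this forces $M = BM$. \textbf{Step 2: the $H$-action is well defined.} One sets, for $a\in B$ and $m\in M$, $h(am) := \sum (h_{(1)}\cdot a)\# h_{(2)}\triangleright m$. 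The crux is the identity, valid for all $a,b,c\in B$ and $m\in M$,
\[
\sum a\,(h_{(2)}\cdot S^{-1}(h_{(1)})\cdot b)\# h_{(3)}\triangleright cm \;=\; ab\# 1_H\triangleright\Bigl(\sum (h_{(1)}\cdot c)\# h_{(2)}\triangleright m\Bigr),
\]
obtained by a direct computation using $cm = (c\# 1_H)\triangleright m$, axiom (2) of Definition \ref{partial.action} to absorb $S^{-1}(h_{(1)})\cdot b$, the bijectivity of $S$, and the product of $B\# H$. The left-hand side depends only on the product $cm$ (for fixed $a,b,h$); since $B = B^2$, every $d\in B$ is a finite sum of such elements $ab$, so $d\cdot\bigl(\sum (h_{(1)}\cdot c)\# h_{(2)}\triangleright m\bigr)$ depends only on $cm$. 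Consequently, if $\sum_i a_i m_i = \sum_j b_j n_j$ in $M$, the difference $\sum_i (h_{(1)}\cdot a_i)\# h_{(2)}\triangleright m_i - \sum_j (h_{(1)}\cdot b_j)\# h_{(2)}\triangleright n_j$ is annihilated by every $d\in B$, hence lies in $t_B(M) = 0$; since every $m\in M$ is a finite sum $\sum a_i m_i$ by Step 1, this yields a well-defined linear map $H\otimes M\to M$. \textbf{Step 3: the axioms.} One verifies $1_H m = m$ (immediate from $1_H\cdot a = a$ and $M = BM$) and the compatibility $h(a(km)) = \sum(h_{(1)}\cdot a)((h_{(2)}k)m)$ by a routine computation inside $\underline{B\# H}$, of the same flavour as the one establishing Proposition \ref{prop.unital.module}.

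I expect \textbf{Step 2} to be the genuine obstacle: it is the only point at which the hypotheses $t_B(M)=0$ and $S$ bijective are really used, and it rests on recognizing the displayed ``product identity'', which exhibits $\sum (h_{(1)}\cdot c)\# h_{(2)}\triangleright m$ as depending only on $cm$ once multiplied by an element of $B$. Steps 1 and 3, and part (2), are formal --- they follow from the idempotency of $B$, the definitions, and computations already carried out for Proposition \ref{prop.unital.module}.
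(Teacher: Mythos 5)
Your proposal is correct and follows essentially the same route as the paper: part (2) via the formula $\sum a(h_{(1)}\cdot b)\#h_{(2)}\triangleright m=a(h(bm))$, and part (1) via the factorization $\sum a(h_{(1)}\cdot b)\#h_{(2)}=\sum_i(a_i\#1_H)(c_i(h_{(1)}\cdot b)\#h_{(2)})$ to get $M=BM$, together with the identity $\sum a(h_{(2)}\cdot S^{-1}(h_{(1)})\cdot b)\#h_{(3)}\triangleright cm=ab\#1_H\triangleright\bigl(\sum(h_{(1)}\cdot c)\#h_{(2)}\triangleright m\bigr)$ and $t_B(M)=0$ to make $h(cm)$ well defined. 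Like the paper's own argument, yours uses idempotency of $B$ (the statement's ``any'' notwithstanding), so the two proofs agree even in that respect.
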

	
	\begin{lemma} Let $B$ be a (not necessarily unital) algebra in $_H\mathcal{M}^{par}$, where $H$ is a Hopf algebra with bijective antipode. Then $H$ acts partially in $B$ with symmetrical partial action.
	\end{lemma}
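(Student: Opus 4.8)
The plan is to promote the underlying partial $H$-module structure $\bullet\colon H\otimes B\to B$ of $B$ to a partial action in the sense of Definition \ref{partial.action} by setting $h\cdot b:=h\bullet b$. Axiom (1) is then immediate from (PM1). To handle the multiplicative axiom I would first unwind what "$B$ is an algebra in $_H\mathcal{M}^{par}$'' gives: since $H$ has bijective antipode, $_H\mathcal{M}^{par}\cong{}_{H_{par}}\mathcal{M}$, which is monoidal via $\otimes_A$ over the base algebra $A=\mathrm{span}\{\epsilon_{h^1}\cdots\epsilon_{h^n}\}$ of the Hopf algebroid $H_{par}$, with $A$ acting on a partial $H$-module $M$ on the left via $s$ and on the right via $t$ as recalled above. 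Thus the multiplication $\mu(b\otimes b')=bb'$ of $B$ is a morphism of partial $H$-modules, $A$-bilinear and $A$-balanced; explicitly, for $h\in H$, $b,b',b''\in B$, $\alpha\in A$: $h\bullet(b'b'')=\sum(h_{(1)}\bullet b')(h_{(2)}\bullet b'')$, $\alpha\cdot(b'b'')=(\alpha\cdot b')b''$, $(b'b'')\cdot\alpha=b'(b''\cdot\alpha)$, and $(b'\cdot\alpha)b''=b'(\alpha\cdot b'')$. I will combine these with two facts valid in any partial $H$-module: the translation identity $h\bullet(k\bullet m)=\sum\epsilon_{h_{(1)}}\cdot(h_{(2)}k\bullet m)$, obtained by translating the relation $\sum[h_{(1)}][S(h_{(2)})][h_{(3)}k]=[h][k]$ in $H_{par}$ (a consequence of (PR3)); and the counit identities of $H_{par}$, $\sum\epsilon_{h_{(1)}}\cdot(h_{(2)}\bullet m)=h\bullet m$ and $\sum(h_{(1)}\bullet m)\cdot\epsilon_{h_{(2)}}=h\bullet m$ (\cite{alves4}), in which bijectivity of the antipode already enters through the formula for $t$.

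With these in hand, axiom (2) comes out directly: by the morphism property of $\mu$, $h\bullet(a(k\bullet b))=\sum(h_{(1)}\bullet a)(h_{(2)}\bullet(k\bullet b))$; the translation identity rewrites $h_{(2)}\bullet(k\bullet b)$ as $\sum\epsilon_{h_{(2)}}\cdot(h_{(3)}k\bullet b)$; $A$-balancedness carries the $\epsilon_{h_{(2)}}$ onto the first tensor factor, giving $\sum\bigl((h_{(1)}\bullet a)\cdot\epsilon_{h_{(2)}}\bigr)(h_{(3)}k\bullet b)$; and the right-handed counit identity collapses $\sum(h_{(1)}\bullet a)\cdot\epsilon_{h_{(2)}}$, leaving $\sum(h_{(1)}\bullet a)(h_{(2)}k\bullet b)$, which is the claim.

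Axiom (3) is established by the mirror argument: $h\bullet((k\bullet b)a)=\sum(h_{(1)}\bullet(k\bullet b))(h_{(2)}\bullet a)$, and now the factor $h_{(1)}\bullet(k\bullet b)$ must be rewritten so that an idempotent $\epsilon$ ends up on its \emph{right}, so that $A$-balancedness can push it onto the factor $h_{(2)}\bullet a$, where a counit identity absorbs it, producing $\sum(h_{(1)}k\bullet b)(h_{(2)}\bullet a)$. Producing this right-handed form of the translation identity is the one genuinely technical point, and it is exactly where the bijectivity of $S$ is essential: it is obtained by the same manipulation as above but starting from (PR4)–(PR5) (equivalently (PM4)–(PM5)) and using $S^{-1}$, the same $S^{-1}$ that already occurs in the description of $t$. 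The rest is Sweedler-index bookkeeping; the only real care needed is ensuring that at each "counit-collapse'' step the relevant $\epsilon$ (or its $S^{-1}$-twisted analogue) sits on the correct side of the correct tensor factor, which is precisely what dictates using the $s$-versions of the identities for axiom (2) and the $t$-versions for axiom (3).
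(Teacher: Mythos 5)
Your proposal is correct and follows essentially the same route as the paper: the paper carries out exactly this argument, only phrased through the isomorphism $H_{par}\cong\underline{A\# H}$ (manipulating elements $1_A\# h$ and $t(\epsilon_h)=(1_A\# h_{(2)})(1_A\# S^{-1}(h_{(1)}))$ explicitly) rather than abstractly in $H_{par}$, and it likewise reduces both axioms to the morphism property of $\mu$, the two translation identities, $A$-balancedness, and the counit identities \eqref{eq4} and \eqref{eq5}, with $S^{-1}$ entering precisely in the right-handed (symmetry) case as you indicate.
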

	\begin{proof}
		In fact, for every $x,y \in B$, $h,k\in H$, since $H_{par}\simeq \underline{A\#H}$, we have that
		\begin{eqnarray*}
			1_H\bullet x=1_A\# 1_H\triangleright x=1_{\underline{A\# H}}\triangleright x=x,
		\end{eqnarray*}
		\begin{eqnarray*}
			h\bullet (x(k\bullet y))&=& 1_A\# h\triangleright (x(1_A\# k\triangleright y))\\
			&=& \sum (1_A\# h_{(1)}\triangleright x)((1_A\# h_{(2)})(1_A\# k)\triangleright y)\\
			&=&\sum (h_{(1)}\bullet x)(((h_{(2)}\cdot 1_A)\#h_{(3)}k)\triangleright y)\\
			&=&\sum (h_{(1)}\bullet x)((\epsilon_{h_{(2)}}\# 1_H)(1_A\#h_{(3)}k)\triangleright y)\\
			&=&\sum (h_{(1)}\bullet x)\epsilon_{h_{(2)}}((1_A\#h_{(3)}k)\triangleright y)\\
			&=&\sum (h_{(1)}\bullet x)\epsilon_{h_{(2)}}((h_{(3)}k)\bullet y)\\
			&\stackrel{ \eqref{eq5} }{=}&\sum (h_{(1)}\bullet x)((h_{(2)}k)\bullet y),
		\end{eqnarray*}
		and for the symmetrical case,
		\begin{eqnarray*}
			h\bullet((k\bullet x)y)&=&\sum ((1_A\# h_{(1)})(1_A\# k)\triangleright x)(1_A\# h_{(2)}\triangleright y)\\
			&=& \sum ((h_{(1)}\cdot 1_A\# h_{(2)}k)\triangleright x)(h_{(3)}\bullet y)\\
			&=&\sum ((h_{(2)}S^{-1}(h_{(1)})\cdot 1_A)(h_{(3)}\cdot 1_A)\# h_{(3)}k\triangleright x)(h_{(4)}\bullet y)\\
			&=& \sum (h_{(2)}\cdot S^{-1}(h_{(1)})\cdot 1_A\# h_{(3)}k\triangleright x)(h_{(4)}\bullet y)\\
			&=& \sum ((1_A \# h_{(2)})(S^{-1}(h_{(1)})\cdot 1_A\# k)\triangleright x)(h_{(3)}\bullet y)\\
			&=& \sum ((1_A \# h_{(4)})(S^{-1}(h_{(3)})\cdot 1_A\# S^{-1}(h_{(2)})h_{(1)}k)\triangleright x)(h_{(5)}\bullet y)\\
			&=&\sum ([(1_A\# h_{(3)})(1_A\# S^{-1}(h_{(2)}))](1_A\# h_{(1)}k)\triangleright x)(h_{(4)}\bullet y)\\
			&=& \sum ([(1_A\# h_{(3)})(1_A\# S^{-1}(h_{(2)}))]\triangleright (1_A\# h_{(1)}k)\triangleright x)(h_{(4)}\bullet y)\\
			&=& \sum (1_A\# h_{(1)}k)\triangleright x)\epsilon_{h_{(2)}}(h_{(3)}\bullet y)\\
			&\stackrel{ \eqref{eq4} }{=}& \sum (h_{(1)})k\bullet x)(h_{(2)}\bullet y).
		\end{eqnarray*}
	\end{proof}
	
	\begin{lemma}Let $H$ be a Hopf algebra with bijective antipode and $B$ a partial $H$-module algebra with symmetrical partial action. If either $B$ is idempotent, $r(B)=0$ or $l(B)=0$, then $B$ is an algebra in $_H\mathcal{M}^{par}$.
	\end{lemma}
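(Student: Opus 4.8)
The plan is to unwind the definition of a (nonunital) algebra in $_H\mathcal{M}^{par}$ and verify, for $B$ with its given multiplication $\mu\colon B\otimes B\to B$, the required ingredients: (a) $B$ is a partial $H$-module; (b) $\mu$ is bilinear and balanced over the base algebra $A\subseteq H_{par}$ generated by the elements $\epsilon_h$, so that $\mu$ descends to the monoidal product $B\otimes_A B$; (c) $\mu$ is a morphism of partial $H$-modules; (d) $\mu$ is associative, which is part of the hypothesis. Ingredient (a) is immediate from the earlier Lemma: since $B$ is idempotent or $r(B)=0$ or $l(B)=0$, the action is symmetrical and $H$ has bijective antipode, the map $\pi\colon H\to End(B)$, $\pi(h)(b)=h\cdot b$, is a partial representation, so $B$ is a partial $H$-module; through the isomorphism $\underline{A\# H}\cong H_{par}$ this makes $B$ an $\underline{A\# H}$-module with $(1_A\# h)\triangleright b=h\cdot b$.

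For (b) and (c) I would make the induced $A$-bimodule structure on $B$ explicit through the algebra maps $s$ and $t$, obtaining $\epsilon_g\cdot b=s(\epsilon_g)\triangleright b=\sum g_{(1)}\cdot S(g_{(2)})\cdot b$ and $b\cdot\epsilon_g=t(\epsilon_g)\triangleright b=\sum g_{(2)}\cdot S^{-1}(g_{(1)})\cdot b$. With this description, the $A$-bilinearity of $\mu$ amounts exactly to equality \eqref{eq1} (which holds for any partial $H$-module algebra) together with its ``$S^{-1}$-counterpart''
\[
\sum g_{(2)}\cdot S^{-1}(g_{(1)})\cdot(xy)=x\Bigl(\sum g_{(2)}\cdot S^{-1}(g_{(1)})\cdot y\Bigr),
\]
which, for a symmetrical action with bijective antipode, follows from a short Sweedler computation analogous to those behind \eqref{eq1} and \eqref{eq2}, now using axiom (3) of Definition \ref{partial.action} and the bijectivity of the antipode. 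For (c), the partial $H$-module structure on $B\otimes_A B$ is the diagonal one, so $\mu$ being a morphism unwinds to $h\cdot(xy)=\sum(h_{(1)}\cdot x)(h_{(2)}\cdot y)$, which is axiom (2) of a partial $H$-module algebra with $k=1_H$.

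The remaining part of (b), the $A$-balancedness of $\mu$, is the heart of the proof:
\[
\Bigl(\sum g_{(2)}\cdot S^{-1}(g_{(1)})\cdot x\Bigr)y=x\Bigl(\sum g_{(1)}\cdot S(g_{(2)})\cdot y\Bigr),\qquad x,y\in B,\ g\in H.
\]
If $B$ is unital this is immediate, as both sides equal $x(g\cdot 1_B)y$; in the nonunital setting I would mimic the case analysis in the proof of the earlier Lemma. If $B$ is idempotent one reduces $x$ and $y$ to products in $B$ and then rewrites both sides using \eqref{eq1}, its $S^{-1}$-counterpart, associativity, and the symmetry axiom until they coincide. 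If $r(B)=0$ (resp. $l(B)=0$) one uses a ``support element'' $c\in B$: after multiplying the desired identity on the left (resp. right) by $c$, both sides are brought to a common form by the same Sweedler manipulations and $c$ is cancelled using triviality of the annihilator.

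I expect this balancedness identity to be the only genuine obstacle, exactly as in the proof of the earlier Lemma: it does not collapse to a single formal rearrangement; each of the three cases needs its own chain of applications of the symmetric partial-action axioms; the bijectivity of the antipode is used wherever $S^{-1}$ occurs; and the symmetry of the action is precisely what lets Hopf-algebra elements be transported across products in $B$. Once (a)--(c) and this identity are in place, associativity of $B$ finishes the verification that $B$ is an algebra in $_H\mathcal{M}^{par}$.
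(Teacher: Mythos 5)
Your proposal is correct and follows essentially the same route as the paper: the trichotomy (idempotent, $r(B)=0$, $l(B)=0$) together with symmetry and bijectivity of $S$ gives the partial $H$-module structure via the earlier lemma on partial representations; the $A$-bimodule structure is the one coming from $s$ and $t$, with $A$-bilinearity of $\mu$ supplied by \eqref{eq1} and its $S^{-1}$-counterpart; and the morphism property reduces to $h\cdot(xy)=\sum(h_{(1)}\cdot x)(h_{(2)}\cdot y)$. The one place you diverge is the balancedness identity $(x\epsilon_h)y=x(\epsilon_h y)$: you predict it requires a separate case analysis with support elements, mirroring the partial-representation lemma, but in fact it collapses to a single uniform Sweedler computation valid for any symmetrical partial action with bijective antipode --- insert $\sum h_{(3)}S(h_{(4)})\bullet y$ in place of $y$, apply axiom (2) and then the symmetry axiom to move the action across the product, and collapse $\sum h_{(2)}S^{-1}(h_{(1)})$ to $\varepsilon(h_{(1)})1_H$. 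So the hypotheses on $B$ are consumed entirely in establishing that $\pi:H\to End(B)$ is a partial representation, and your case-by-case treatment of balancedness, while it would go through, is unnecessary work.
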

	\begin{proof}
		We proved that a partial $H$-module algebra with these properties is a partial $H$-module and, since $H$ acts partially in $B$, the multiplication of $B$ is clearly a morphism of partial $H$-modules. Then, as the base algebra of the monoidal structure of $_H\mathcal{M}^{par}$ is $A$, i.e., the tensor of this category is over $A$, we only need to show that $B$ is, in fact, an $A$-bimodule and the multiplication of $B$ is balanced. First, we consider $B$ as an $A$-bimodule with the structure presented in \cite{alves4} given by
		\begin{eqnarray*}
			\epsilon_{h}x&=& \sum h_{(1)}\bullet S(h_{(2)})\bullet x\\
			x\epsilon_h&=& \sum h_{(2)}\bullet S^{-1}(h_{(1)})\bullet x.
		\end{eqnarray*}
		We already saw that the linear map $x\mapsto \sum h_{(1)}\cdot S(h_{(2)})\cdot x$ is a morphism of right $B$-modules and that the linear map $x\mapsto \sum h_{(2)}\cdot S^{-1}(h_{(1)})\cdot x$ is a morphism of left $B$-modules. Hence, the multiplication of $B$ is a morphism of $A$-bimodules. To show that the multiplication of $B$ is also balanced, we have that
		\begin{eqnarray*}
			(x\epsilon_{h})y&=&(\sum h_{(2)}\bullet S^{-1}(h_{(1)})\bullet x)y\\
			&=&\sum (h_{(2)}\bullet S^{-1}(h_{(1)})\bullet x)(h_{(3)}S(h_{(4)})\bullet y)\\
			&=& \sum  (h_{(2)}S^{-1}(h_{(1)})\bullet x)(h_{(3)}\bullet S(h_{(4)})\bullet y)\\
			&=& \sum x (h_{(1)}\bullet S(h_{(2)})\bullet y)\\
			&=& x(\epsilon_{h}y).
		\end{eqnarray*}
	\end{proof}
	These results lead us to the following theorem.
	
	\begin{theorem}\label{teo1}
		Let $H$ be a Hopf algebra with bijective antipode. Then, when we consider only algebras that either are idempotent or have trivial right or left annihilator, there exist a bijective correspondence between algebras in $_H\mathcal{M}^{par}$ and partial $H$-module algebras with symmetrical partial action.
	\end{theorem}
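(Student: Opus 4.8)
The plan is to package the last two lemmas above into a pair of mutually inverse assignments. In one direction, given an algebra $B$ in $_H\mathcal{M}^{par}$ whose underlying algebra is idempotent, or has $r(B)=0$, or has $l(B)=0$, the lemma on algebras in $_H\mathcal{M}^{par}$ tells us that $h\cdot x:=h\bullet x$ is a symmetrical partial action of $H$ on the $\Bbbk$-algebra $B$; call the resulting partial $H$-module algebra $\Phi(B)$. In the other direction, given a partial $H$-module algebra $B$ with symmetrical partial action lying in the same class, the subsequent lemma (this is the only place the bijectivity of $S$ is needed) says that $h\bullet x:=h\cdot x$ makes $B$ an algebra in $_H\mathcal{M}^{par}$; call it $\Psi(B)$. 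Since neither assignment changes the underlying $\Bbbk$-algebra, both preserve the admissible class and are well defined.

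What is left is to check $\Phi\circ\Psi=\mathrm{id}$ and $\Psi\circ\Phi=\mathrm{id}$, and the content of this is that the two constructions carry exactly the same data written in two notations. Recall that an algebra in $_H\mathcal{M}^{par}$ is a partial $H$-module $B$ — whose $A$-bimodule structure is not extra data but the canonical one, $\epsilon_h x=\sum h_{(1)}\bullet S(h_{(2)})\bullet x$ and $x\epsilon_h=\sum h_{(2)}\bullet S^{-1}(h_{(1)})\bullet x$ — equipped with an associative, $A$-bilinear, $A$-balanced multiplication $B\otimes_A B\to B$ which is a morphism of partial $H$-modules; precomposing with the quotient map $B\otimes B\to B\otimes_A B$ recovers the underlying $\Bbbk$-algebra multiplication of $B$, which is precisely the one $\Phi$ retains. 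Now in both round trips the linear map $H\otimes B\to B$ is literally unchanged: $\Phi$ reads off $h\cdot x=h\bullet x$, while $\Psi$ sets $h\bullet x:=h\cdot x$ (in the lemma producing $\Psi(B)$ the partial $H$-module structure on $B$ is exactly $\pi(h)(x)=h\cdot x$). Hence $\Phi(\Psi(B))$ has the same symmetrical partial action and the same underlying algebra as $B$, so $\Phi\circ\Psi=\mathrm{id}$; and $\Psi(\Phi(B))$ has the same partial $H$-module structure $\bullet$ as $B$, hence the same derived $A$-bimodule structure, and the same $\Bbbk$-multiplication, hence the same balanced multiplication $B\otimes_A B\to B$, so $\Psi\circ\Phi=\mathrm{id}$.

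The real work is already done inside the two lemmas, so the only care needed here is to make sure the bijection is clean: that an object of $_H\mathcal{M}^{par}$ carries no $A$-bimodule information beyond its partial $H$-module structure, and that the ``morphism of partial $H$-modules'' and ``$A$-balanced'' conditions on the categorical side encode nothing more and nothing less than axioms (1)--(3) of a symmetrical partial action on the $\Bbbk$-algebra side — in particular, that no hidden constraint lurks in the monoidal structure of $_H\mathcal{M}^{par}$. I expect this bookkeeping to be the only (and mild) obstacle; with it in place, the theorem follows by concatenating the two preceding lemmas.
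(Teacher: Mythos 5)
Your proposal is correct and follows essentially the same route as the paper, which simply states that the theorem follows from the two preceding lemmas ("These results lead us to the following theorem") without spelling out the round-trip verification; your explicit check that both assignments leave the underlying linear map $H\otimes B\to B$ and the $\Bbbk$-algebra multiplication untouched, and that the $A$-bimodule structure is canonically derived rather than extra data, is exactly the bookkeeping the paper leaves implicit.
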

	
	\subsection{Partial Hopf Actions and Partial Group Actions}
	
	The concept of a partial action of a Hopf algebra has its origins in the theory of partial group actions, which predates the former by some 14 years (see for instance the survey \cite{Dokuchaev.survey}).
	We recall the definition of a partial action of a group $G$. 
	
	\begin{definition}[\cite{dok}] A partial action $\alpha$ of a group $G$ on an algebra $A$ consists of a family of two-sided ideals $D_g$ in $A$, $g\in G$, and algebra isomorphisms $\alpha_g: D_{g^{-1}} \to D_g$, such that:
		\begin{enumerate}[\normalfont(1)]
			\item $D_1 = A$ and $\alpha_1: A \to A$ is the identity;
			\item $\alpha_g(D_{g^{-1}}\cap D_h)\subseteq D_g\cap D_{gh}$;
			\item $\alpha_g(\alpha_h(x))=\alpha_{gh}(x)$, for any $x\in D_{h^{-1}}\cap D_{(gh)^{-1}}$.
		\end{enumerate} 
	\end{definition}
	
	Note that $A$ might  be a nonunital algebra. It is well-known that if $A$ is a unital algebra then there is a bijective correspondence between partial actions of the group algebra $\Bbbk G$ and partial actions of the group $G$ on $A$ such that the ideals $D_g$ are generated by central idempotents, i.e., $D_g=Ae_g$ for every $g\in G$, $e_g$ a central idempotent (see for instance \cite{alves1} and \cite{caenepeel}). We will prove that a similar correspondence still holds when we consider algebras without identity.
	
	The correspondence proved in \cite{caenepeel} implies that given a symmetrical partial $\Bbbk G$-action on a unital algebra $A$, the elements $g \cdot 1_A$, for $g \in G$, are central idempotents of $A$, and the ideals of the induced partial $G$-action are of the form $D_g=A(g\cdot 1_A)$. But for every $a\in A$, we have that $$a(g\cdot 1_A)=(gg^{-1} \cdot a)(g\cdot 1_A)=g\cdot g^{-1}\cdot a,$$ which means that if we define the algebra maps 
	\begin{eqnarray*}
		\psi_g: A&\to & A\\
		a&\mapsto &g\cdot g^{-1}\cdot a,
	\end{eqnarray*}
	then $D_g=\psi_g(A)$. This remark is the key for extending the correspondence between symmetrical partial $\Bbbk G$-algebras and a class of partial $G$-actions for nonunital algebras.
	
	\begin{lemma} Consider a symmetric partial $\Bbbk G$-action on an associative algebra $A$, let $g \in G$ and let $\psi_g : A \to A$ be as defined above. 
		\begin{enumerate}[\normalfont(1)]
			\item $\psi_g$ is an $A$-bimodule map.
			\item $\psi_g(A)$ is an ideal of $A$.
			\item If $A=A^2$, $r(A)=0$ or $l(A)=0$ then $\psi_g^2=\psi_g$. 
		\end{enumerate}
	\end{lemma}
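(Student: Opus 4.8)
The plan is to push everything down to the two defining identities of a symmetric partial action, specialized to the group-like elements of $\Bbbk G$. For $g,k\in G$ and $a,b\in A$, the axioms Definition \ref{partial.action}(2)--(3) read $g\cdot(a(k\cdot b))=(g\cdot a)(gk\cdot b)$ and $g\cdot((k\cdot b)a)=(gk\cdot b)(g\cdot a)$; taking $k=1_H$ in the first gives the workhorse identity $g\cdot(ab)=(g\cdot a)(g\cdot b)$, so each map $a\mapsto g\cdot a$ is a (possibly nonunital) algebra endomorphism of $A$, and hence so is the composite $\psi_g(a)=g\cdot(g^{-1}\cdot a)$. I would also record at the outset that part (2) is a formal consequence of part (1): once $\psi_g$ is known to be an $A$-bimodule map we get $x\psi_g(a)=\psi_g(xa)\in\psi_g(A)$ and $\psi_g(a)x=\psi_g(ax)\in\psi_g(A)$ for all $x,a\in A$, and $\psi_g(A)$ is a subspace because $\psi_g$ is linear; so the genuine work is in (1) and (3).

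For (1), I would compute $\psi_g(xa)=g\cdot\bigl(g^{-1}\cdot(xa)\bigr)=g\cdot\bigl((g^{-1}\cdot x)(g^{-1}\cdot a)\bigr)$ using the workhorse identity, and then apply the symmetric axiom Definition \ref{partial.action}(3), reading the first factor $g^{-1}\cdot x$ as ``$k\cdot b$'' with $k=g^{-1}$, $b=x$: this yields $(gg^{-1}\cdot x)\bigl(g\cdot(g^{-1}\cdot a)\bigr)=x\,\psi_g(a)$, so $\psi_g$ is left $A$-linear. Symmetrically, $\psi_g(ax)=g\cdot\bigl((g^{-1}\cdot a)(g^{-1}\cdot x)\bigr)$, and Definition \ref{partial.action}(2) with the \emph{second} factor read as ``$k\cdot b$'' gives $\psi_g(a)\,x$, so $\psi_g$ is right $A$-linear. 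Together these prove (1), and then (2) follows as above.

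For (3) I would split into the two regimes permitted by the hypothesis. If $r(A)=0$ or $l(A)=0$, then by the corollary above (applicable since $\Bbbk G$ is cocommutative) the map $\pi(g)(a)=g\cdot a$ is a partial representation, so equation \eqref{eq3}, $g\cdot g^{-1}\cdot g\cdot a=g\cdot a$, holds for every $a\in A$; replacing $a$ by $g^{-1}\cdot a$ gives $\psi_g^2(a)=g\cdot g^{-1}\cdot g\cdot(g^{-1}\cdot a)=g\cdot(g^{-1}\cdot a)=\psi_g(a)$. If instead $A=A^2$, equation \eqref{eq3} is not available, and this is the one point that needs a little care: I would verify $\psi_g^2=\psi_g$ on products, using for $x,y\in A$ the chain $\psi_g^2(xy)=\psi_g\bigl(\psi_g(x)\,y\bigr)=\psi_g(x)\,\psi_g(y)=\psi_g(xy)$ — the first equality by right $A$-linearity of $\psi_g$, the second by left $A$-linearity, the third by the multiplicativity of $\psi_g$ noted in the first paragraph — and then conclude by linearity of $\psi_g^2-\psi_g$ together with $A=A^2$. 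The main (mild) obstacle is exactly this split: one must resist invoking \eqref{eq3} in the $A=A^2$ case and instead route the argument through the bimodule and algebra-homomorphism properties of $\psi_g$ established in part (1).
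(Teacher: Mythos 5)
Your proof is correct, and parts (1) and (2), as well as the $A=A^2$ case of (3), follow essentially the same path as the paper (the paper just does the bimodule computation in one stroke via $\psi_g(axb)=a\psi_g(x)b$). The one place you diverge is the $r(A)=0$ and $l(A)=0$ cases of (3): the paper does not invoke the partial-representation corollary or equation \eqref{eq3} at all, but instead derives everything uniformly from the properties already established in (1), namely $a\psi_g^2(b)=\psi_g^2(ab)=\psi_g(ab)=a\psi_g(b)$ and its mirror, so that $\psi_g^2(b)-\psi_g(b)$ lies in $r(A)\cap l(A)$ and simultaneously vanishes on products. Your detour through \eqref{eq3} is legitimate (the corollary applies since $\Bbbk G$ is cocommutative and those results precede this lemma in the paper), but the paper's route is more self-contained and makes the three hypotheses fall out of a single two-line computation; yours buys a conceptual explanation of why idempotency holds in the annihilator cases (it is the relation $\pi(g)\pi(g^{-1})\pi(g)=\pi(g)$ of a partial representation) at the cost of importing that machinery.
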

	\begin{proof}
		In fact, for every $g\in G$, $a,b,x\in A$, we have that
		\[		\psi_g(axb)=g\cdot g^{-1}\cdot (axb)
		=(gg^{-1}\cdot a)(g\cdot g^{-1}\cdot (xb))
		=a(g\cdot g^{-1}\cdot x)(gg^{-1}\cdot b)
		=a\psi_g(x)b.
		\]
		It is then obvious that $\psi_g(A)$ is an ideal. 
		Since $\psi_g$ is an $A$-bimodule map and also an algebra map it also follows that, for every $a,b\in A$,  
		\[
		\psi_g^2(ab)=\psi_g(a\psi_g(b))
		=\psi_g(a)\psi_g(b)
		=\psi_g(ab)
		\]
		and
		\[
		a\psi_g^2(b)=\psi_g^2(ab)
		=\psi_g(ab)
		=a\psi_g(b),
		\]
		which means that $a(\psi_g^2(b)-\psi_g(b))=0$. Analogously, we can show that $(\psi_g^2(b)-\psi_g(b))a$, for every $a,b\in A$. Therefore $\psi_g^2=\psi_g$ whenever $A=A^2$, $r(A)=0$ or $l(A)=0$.
	\end{proof}

	The next lemma follows directly from the previous section.
	
	\begin{lemma}\label{l9}Let $A$ be a partial $\Bbbk G$-module algebra with symmetrical partial action and $\psi_g$ as before. If $A$ is idempotent, $r(A)=0$ or $l(A)=0$, then:
		\begin{enumerate}[\normalfont(1)]
			\item $g\cdot h\cdot h^{-1}\cdot a\in \psi_{gh}(A)$;
			\item $g\cdot h\cdot a=gh\cdot h^{-1}\cdot h\cdot a$;
			\item $\psi_g\psi_h=\psi_h\psi_g$;
			\item $g\cdot g^{-1} \cdot h\cdot a=h\cdot h^{-1}g\cdot g^{-1}h\cdot a$,
		\end{enumerate} 
		for every $a\in A$, $g,h\in G$.
	\end{lemma}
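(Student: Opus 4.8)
The plan is to reduce all four assertions to the partial representation $\pi\colon \Bbbk G\to End(A)$, $\pi(g)(a)=g\cdot a$. Since the partial action is symmetrical, $A$ is idempotent or has vanishing left or right annihilator, and the antipode $S(g)=g^{-1}$ of $\Bbbk G$ is bijective, the lemma that produces a partial representation $H\to End(A)$ from such data applies, so $\pi$ is a partial representation. Evaluating (PR2) and (PR3) on group elements, where $\Delta(g)=g\otimes g$, yields for all $x\in\Bbbk G$ and $g,h\in G$
\[
\pi(x)\pi(g)\pi(g^{-1})=\pi(xg)\pi(g^{-1}),\qquad \pi(g)\pi(g^{-1})\pi(x)=\pi(g)\pi(g^{-1}x),
\]
together with $\pi(g)\pi(g^{-1})\pi(g)=\pi(g)$, which is precisely \eqref{eq3}. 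Writing $\psi_g=\pi(g)\pi(g^{-1})$, the group specialization of \eqref{eq1} reads $\psi_g(xy)=\psi_g(x)\,y$, and that of \eqref{eq2}, after replacing $g$ by $g^{-1}$, reads $\psi_g(xy)=x\,\psi_g(y)$, for all $x,y\in A$; thus each $\psi_g$ is simultaneously a morphism of right and of left $A$-modules from $A$ to $A$.

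With these identities in hand, items (1), (2) and (4) are immediate. For (1), the first identity with $x=g$ gives $g\cdot h\cdot h^{-1}\cdot a=\pi(gh)\pi(h^{-1})(a)$; as $h^{-1}=(gh)^{-1}g$, the second identity (with $g$ replaced by $gh$ and $x=g$) rewrites $\pi(gh)\pi(h^{-1})=\pi(gh)\pi((gh)^{-1})\pi(g)=\psi_{gh}\,\pi(g)$, so $g\cdot h\cdot h^{-1}\cdot a=\psi_{gh}(g\cdot a)\in\psi_{gh}(A)$. For (2), the first identity with $x=g$ gives $gh\cdot h^{-1}\cdot h\cdot a=\pi(g)\pi(h)\pi(h^{-1})\pi(h)(a)$, which collapses via $\pi(h)\pi(h^{-1})\pi(h)=\pi(h)$ to $\pi(g)\pi(h)(a)=g\cdot h\cdot a$. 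For (4), the second identity gives $g\cdot g^{-1}\cdot h\cdot a=\pi(g)\pi(g^{-1}h)(a)$; on the other hand $g^{-1}h=(h^{-1}g)^{-1}$, so the first identity (with $x=h$ and $g$ replaced by $h^{-1}g$) gives $\pi(h)\pi(h^{-1}g)\pi(g^{-1}h)=\pi(g)\pi(g^{-1}h)$, hence $h\cdot h^{-1}g\cdot g^{-1}h\cdot a=\pi(g)\pi(g^{-1}h)(a)$ as well, and the two sides coincide.

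For (3) I would first check the identity on products: for $x,y\in A$,
\[
\psi_g\psi_h(xy)=\psi_g\bigl(x\,\psi_h(y)\bigr)=\psi_g(x)\,\psi_h(y)=\psi_h\bigl(\psi_g(x)\,y\bigr)=\psi_h\psi_g(xy),
\]
using in turn the left-module property of $\psi_h$, the right-module property of $\psi_g$, once more the left-module property of $\psi_h$, and once more the right-module property of $\psi_g$. If $A=A^2$ this already proves $\psi_g\psi_h=\psi_h\psi_g$. If $r(A)=0$, then for every $b\in A$ one has $b\,\psi_g\psi_h(a)=\psi_g\psi_h(ba)=\psi_h\psi_g(ba)=b\,\psi_h\psi_g(a)$, the outer equalities because $\psi_g$ and $\psi_h$ are left $A$-module maps and the middle one because $ba\in A^2$, whence $\psi_g\psi_h(a)=\psi_h\psi_g(a)$; the case $l(A)=0$ is handled symmetrically, multiplying by $b$ on the right and using that the $\psi_g$ are right $A$-module maps. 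Items (1), (2) and (4) are purely formal consequences of the partial-representation identities; the step requiring care is (3), where one must combine the two one-sided module-map properties of the $\psi_g$ (the second of which is the only place where symmetry of the action enters, through \eqref{eq2}) and dispatch the three hypotheses on $A$ separately.
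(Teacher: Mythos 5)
Your proof is correct and follows exactly the route the paper intends: the paper dismisses this lemma with ``follows directly from the previous section,'' meaning precisely the reduction to the partial-representation identities (PR2), (PR3), the consequence $\pi(g)\pi(g^{-1})\pi(g)=\pi(g)$, and the one-sided $A$-module properties of $\psi_g$ coming from \eqref{eq1} and \eqref{eq2}. You have simply supplied the details the paper omits, including the correct three-case treatment of the hypothesis on $A$ in item (3).
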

	
	\begin{proposition}\label{prop1} Let $A$ be an associative algebra such that $A^2=A$, $r(A)=0$ or $l(A)=0$. There is a bijective correspondence between symmetrical partial $\Bbbk G$-actions on $A$ and partial $G$-actions $\alpha=\{\{\alpha_g\}_{g\in G},\{D_g\}_{g\in G} \}$ on $A$ endowed with algebra epimorphisms $p_g: A\to D_g$, such that:
		\begin{enumerate}[\normalfont(1)]
			\item $p_1: A\to A$ is the identity of $A$
			\item $p_g^2=p_g$;
			\item $p_gp_h=p_hp_g$;
			\item $p_g\alpha_h=\alpha_hp_{h^{-1}g}|_{D_{h^{-1}}}$. 
		\end{enumerate}
	\end{proposition}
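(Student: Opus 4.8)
The plan is to give explicit constructions in both directions and then check they are mutually inverse, leaning on the structural facts established just above: that $\psi_g$ is an idempotent $A$-bimodule map and algebra map with $\psi_g(A)$ an ideal of $A$, the identities of Lemma \ref{l9}, and equation \eqref{eq3}.

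From a symmetric partial $\Bbbk G$-action $\cdot$ on $A$ I would build $(\{\alpha_g\},\{D_g\},\{p_g\})$ by setting $D_g:=\psi_g(A)$, letting $\alpha_g\colon D_{g^{-1}}\to D_g$ be the restriction of $a\mapsto g\cdot a$, and letting $p_g:=\psi_g$ regarded as a surjection $A\to D_g$. The lemma preceding Lemma \ref{l9} (and its proof) gives that $D_g$ is an ideal and that $\psi_g$ is an idempotent algebra map, so $p_g$ is an algebra epimorphism; since $\psi_g$ is then the identity on its image, equation \eqref{eq3} together with axiom (2) of Definition \ref{partial.action} shows that $\alpha_g$ is well defined and is an algebra isomorphism with inverse $\alpha_{g^{-1}}$. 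The partial $G$-action axioms follow: $D_1=A$ and $\alpha_1=\mathrm{id}$ because $\psi_1=\mathrm{id}$, while $\alpha_g(D_{g^{-1}}\cap D_h)\subseteq D_g\cap D_{gh}$ and $\alpha_g\alpha_h=\alpha_{gh}$ on $D_{h^{-1}}\cap D_{(gh)^{-1}}$ are items (1) and (2) of Lemma \ref{l9}. For the four conditions on the $p_g$: condition (1) is $\psi_1=\mathrm{id}$, condition (2) is $\psi_g^2=\psi_g$, condition (3) is Lemma \ref{l9}(3), and condition (4) unwinds, for $a\in D_{h^{-1}}$, to $g\cdot g^{-1}\cdot h\cdot a=h\cdot h^{-1}g\cdot g^{-1}h\cdot a$, which is Lemma \ref{l9}(4).

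Conversely, given a partial $G$-action $\{(\alpha_g,D_g)\}$ with epimorphisms $p_g$ as in the statement, I would define $g\cdot a:=\alpha_g(p_{g^{-1}}(a))$; this is legitimate because $p_{g^{-1}}(a)\in D_{g^{-1}}$, the domain of $\alpha_g$. The device that makes the verification run is that each $p_g$ is an idempotent algebra endomorphism of $A$ whose image $D_g$ is an ideal; hence $\ker p_g$ is an ideal as well, $A=D_g\oplus\ker p_g$ as algebras, and therefore
\[
yx=y\,p_g(x)\quad\text{and}\quad xy=p_g(x)\,y\qquad\text{whenever }y\in D_g .
\]
One also records that $p_g$ is the identity on $D_g$ and, because the $p_g$ commute, that $p_g(D_h)\subseteq D_g\cap D_h$. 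Axiom (1) of Definition \ref{partial.action} is immediate from condition (1) and $\alpha_1=\mathrm{id}$. For axiom (2): writing $h\cdot b=\alpha_h(p_{h^{-1}}(b))\in D_h$ one has $a(h\cdot b)\in D_h$; since $p_{g^{-1}}$ is an algebra map and, by condition (4), $p_{g^{-1}}(h\cdot b)=\alpha_h\big(p_{(gh)^{-1}}p_{h^{-1}}(b)\big)$, and since $p_{g^{-1}}(a)$ and this term both lie in $D_{g^{-1}}$, applying $\alpha_g$ as an algebra map and then $\alpha_g\alpha_h=\alpha_{gh}$ (valid since $p_{(gh)^{-1}}p_{h^{-1}}(b)\in D_{h^{-1}}\cap D_{(gh)^{-1}}$) yields
\[
g\cdot(a(h\cdot b))=(g\cdot a)\,\alpha_{gh}\big(p_{(gh)^{-1}}p_{h^{-1}}(b)\big).
\]
A second use of condition (4), now with $h$ replaced by $gh$, identifies $\alpha_{gh}\big(p_{(gh)^{-1}}p_{h^{-1}}(b)\big)$ with $p_g(gh\cdot b)$, and the absorption identity applied with $y=g\cdot a\in D_g$ turns the right-hand side into $(g\cdot a)(gh\cdot b)$. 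Axiom (3) is proved symmetrically, using $xy=p_g(x)\,y$ for $y\in D_g$; hence $\cdot$ is a symmetric partial $\Bbbk G$-action.

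These two assignments are mutually inverse. Starting from $\cdot$ and going back one gets $\alpha_g(p_{g^{-1}}(a))=g\cdot g^{-1}\cdot g\cdot a=g\cdot a$ by \eqref{eq3}. Starting from $(\{\alpha_g\},\{D_g\},\{p_g\})$ and going back one computes $\psi_g(a)=g\cdot g^{-1}\cdot a=\alpha_g\big(p_{g^{-1}}\alpha_{g^{-1}}(p_g(a))\big)$ and, using condition (4) with $h=g^{-1}$ (which gives $p_{g^{-1}}\alpha_{g^{-1}}|_{D_g}=\alpha_{g^{-1}}|_{D_g}$) and $\alpha_g\alpha_{g^{-1}}|_{D_g}=\mathrm{id}$, reduces this to $p_g(a)$; so $D_g$, $\alpha_g$ and $p_g$ come back unchanged. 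The routine portion is the bimodule/ideal bookkeeping of the first construction; the genuinely delicate step — and the one I would handle most carefully — is axiom (2) of the converse, since $p_{(gh)^{-1}}p_{h^{-1}}(b)$ and $p_{(gh)^{-1}}(b)$ differ in general and $\alpha_{gh}$ cannot be cancelled directly: the gap is closed exactly by the decomposition $A=D_g\oplus\ker p_g$ together with the reuse of condition (4).
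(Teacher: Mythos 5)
Your proposal is correct and follows essentially the same route as the paper: the same constructions $D_g=\psi_g(A)$, $\alpha_g=g\cdot(-)$, $p_g=\psi_g$ in one direction and $g\cdot a=\alpha_g(p_{g^{-1}}(a))$ in the other, with the same appeals to Lemma \ref{l9} and equation \eqref{eq3}. The only presentational difference is in the key absorption step of the converse, which the paper handles by applying the algebra map $p_g$ to the whole product and you handle via the equivalent decomposition $A=D_g\oplus\ker p_g$; you also spell out the mutual-inverse check that the paper leaves implicit.
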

	\begin{proof}
		Let $A$ be a partial $\Bbbk G$-module algebra with symmetrical partial action. For each $g\in G$ consider $D_g=\psi_g(A)$, $p_g=\psi_g$ and define $\alpha_g: D_{g^{-1}}\to D_g$ by $\alpha_g(\psi_{g^{-1}}(a))=g\cdot \psi_{g^{-1}}(a)$. We will prove that $\alpha$ is a partial group action.
		First, note that $\psi_1(A)=A$ and $\alpha_1(\psi_1(a))=a$. Now, 
		\[		\psi_g(a)=\psi_g^2(a)
		=g\cdot g^{-1}\cdot g\cdot g^{-1}\cdot a
		= \alpha_g(\psi_{g^{-1}}(g^{-1}\cdot a)), 
		\]
		i.e., $\alpha_g$ is surjective. For the injectivity of $\alpha_g$, assume that $\alpha_g(\psi_{g^{-1}}(a))=\alpha_g(\psi_{g^{-1}}(b))$, then
		\begin{eqnarray*}
			g^{-1}\cdot g\cdot a&\stackrel{ \eqref{eq3} }{=}&g^{-1}\cdot g\cdot g^{-1}\cdot g\cdot a\\
			&=&g^{-1}\cdot\alpha_g(\psi_{g^{-1}}(a))\\
			&=& g^{-1}\cdot\alpha_g(\psi_{g^{-1}}(b))\\
			&=& g^{-1}\cdot g\cdot g^{-1}\cdot g\cdot b\\
			&=& g^{-1}\cdot g\cdot b,
		\end{eqnarray*}
		i.e., $\psi_{g^{-1}}(a)=\psi_{g^{-1}}(b)$. Since all $\alpha_g$ are clearly algebra morphisms, we have that they are algebra isomorphisms.
		
		Now, let us prove that $\alpha_g(D_{g^{-1}}\cap D_h)=D_g\cap D_{gh}$. In fact, for any $x\in D_{g^{-1}}\cap D_h$, there exist $a,b\in A$ such that $x=\psi_{g^{-1}}(a)=\psi_h(b)$, then on one  hand
		\[
		\alpha_g(x)  = \alpha_g(\psi_{g^{-1}}(a))  = g\cdot g^{-1}\cdot g\cdot a 
		= \psi_g(g\cdot a)\in D_g,
		\]
		and on the other hand,
		\[ 
		\alpha_g(x)=\alpha_g(\psi_h(b))
		=g\cdot h\cdot h^{-1}\cdot b \in \psi_{gh}(A)=D_{gh}.
		\] 
		Hence $\alpha_g(D_{g^{-1}}\cap D_h)\subseteq D_g\cap D_{gh}$. 	
		
		For the last axiom of the definition of partial group action, let $x\in D_{h^{-1}}\cap D_{(gh)^{-1}}$, then there exists $a\in A$ such that $x=\psi_{h^{-1}}(a)$ and
		\[
		\alpha_g(\alpha_h(x)) = \alpha_g(\alpha_h(\psi_{h^{-1}}(a)))\\
		= g\cdot h\cdot h^{-1}\cdot h\cdot a\\
		=  gh\cdot h^{-1}\cdot h\cdot a\\
		= \alpha_{gh}(x).
		\]
		Item (4) is a direct consequence of item (4) of Lemma \ref{l9}.
		
		For the converse of the proposition, suppose that $\alpha$ is a partial group action of $G$ on $A$ and that there exist algebra morphisms of $A$ onto the ideals $p_g: A\to D_g$ that satisfy items (1), (2), (3) and (4) of the proposition. Define a map $\Bbbk G \otimes A \to A$ by  $g\cdot a=\alpha_g(p_{g^{-1}}(a))$, then $$1_G\cdot a=\alpha_1(p_1(a))=a$$ and
		\begin{eqnarray*}
			g\cdot(a(h\cdot b))&=&\alpha_g(p_{g^{-1}}(a\alpha_h(p_{h^{-1}}(b))))\\
			&=&\alpha_g(p_{g^{-1}}(a))\alpha_g(p_{g^{-1}}(\alpha_h(p_{h^{-1}}(b)))\\
			&=&\alpha_g(p_{g^{-1}}(a))\alpha_g(\alpha_h(p_{h^{-1}g^{-1}}p_{h^{-1}}(b)))\\
			&=&\alpha_g(p_{g^{-1}}(a))\alpha_{(gh)}(p_{h^{-1}g^{-1}}p_{h^{-1}}(b))\\
			&=&\alpha_g(p_{g^{-1}}(a))\alpha_{(gh)}(p_{h^{-1}}p_{h^{-1}g^{-1}}(b))\\
			&=&\alpha_g(p_{g^{-1}}(a))p_g(\alpha_{(gh)}(p_{h^{-1}g^{-1}}(b)))\\
			&=&p_g(\alpha_g(p_{g^{-1}}(a))\alpha_{(gh)}(p_{h^{-1}g^{-1}}(b)))\\
			&=&\alpha_g(p_{g^{-1}}(a))\alpha_{(gh)}(p_{h^{-1}g^{-1}}(b))=(g\cdot a)(gh\cdot b).
		\end{eqnarray*}
		Analogously, $g\cdot((h\cdot a)b)=(gh\cdot a)(g\cdot b)$. Hence $A$ is a partial $\Bbbk G$-module algebra with symmetrical partial action.
	\end{proof}
	
	\begin{definition}\label{def2} Let $\alpha$ be a partial action of the group $G$ on the algebra $A$. A family of algebra morphisms $\{p_g: A\to D_g\}_{g\in G}$ that satisfies all the properties mentioned in the previous proposition will be called
		a family of $\alpha$-projections. 
	\end{definition}
	Note that $\alpha$-projections correspond, in the case of nonunital algebras, to the central idempotents that characterize, in the case of unital algebras, the partial G-actions that come from partial $\Bbbk G$-actions. In fact, if $A$ is unital and $D_g=A1_g$, we only need to consider the linear maps $p_g: A\to D_g$ defined by $p_g(a)=a1_g$. Clearly each $p_g$ is an algebra map and any two such projections commute. For the last property, we have that
	\begin{eqnarray*}
		\alpha_kp_{k^{-1}g}p_{k^{-1}}(a1_{k^{-1}})&=&\alpha_k(a1_{k^{-1}}1_{k^{-1}g})\\
		&=&\alpha_k(a1_{k^{-1}})\alpha_k(1_{k^{-1}}1_{k^{-1}g})\\
		&=&\alpha_k(a1_{k^{-1}})1_k1_g\\
		&=& \alpha_k(a1_{k^{-1}})1_g\\
		&=&p_g\alpha_k(a1_{k^{-1}}).
	\end{eqnarray*}
	
	Actually, the partial $G$-action induced by a partial $\Bbbk G$-action is a \textit{regular} partial action, as we will prove next.

	\begin{definition}[\cite{abadie}] A regular partial $G$-action is a partial action such that for every $g_1,\cdots ,g_n$ $\in G$, we have that
		\begin{eqnarray*}
			D_{g_1}\cap\cdots\cap D_{g_n}=D_{g_1}\cdot D_{g_2} \cdots D_{g_n}.
		\end{eqnarray*}
	\end{definition}
	If we consider only idempotent algebras, then the bijection of the previous proposition is between symmetrical partial $\Bbbk G$-actions and regular partial $G$-actions. In fact, for every $g_1,\cdots g_k\in G$, since every $p_{g_i}$ is a projection, we have that if $x\in D_{g_1}\cap\cdots\cap D_{g_k}$ then
	\begin{eqnarray*}
		x=p_{g_1}(x)=\cdots=p_{g_k}(x)
	\end{eqnarray*}
	and since $x=\sum_{i=1}^{n}a_{1i}\cdots a_{ki}$, for some $a_{ji}\in A$, $1\leq j\leq k$, $1\leq i\leq n$,
	\begin{eqnarray*}
		x&=&p_{g_1}\circ\cdots\circ p_{g_k}(x)\\
		&=&\sum _{i=1}^n p_{g_1}\circ\cdots\circ p_{g_k}(a_{1i}\cdots a_{ki})\\
		&=&\sum _{i=1}^n (p_{g_1}\circ\cdots\circ p_{g_k}(a_{1i}))\cdots (p_{g_1}\circ\cdots\circ p_{g_k}(a_{ki})) \in D_{g_1}\cdot D_{g_2} \cdots D_{g_k},
	\end{eqnarray*}
	because the projections commute. Hence $D_{g_1}\cap\cdots\cap D_{g_n}\subseteq D_{g_1}\cdot D_{g_2} \cdots D_{g_n}$, and the other inclusion is a consequence of every $D_g$ being an ideal.

	\subsection{Connections with partial actions of Multiplier Hopf Algebras}
	
	Fonseca et al. introduced partial actions of a multiplier Hopf algebra $H$ on a nondegenerate algebra A, i.e., $r(A)=l(A)=0$. In this section we will show that when $H$ has $1_H$ (i.e, $H$ is a Hopf algebra), both definitions of partial actions coincide. Let us recall the definition of partial action considered in \cite{ABFFM21} (see also \cite{globalization-fonseca-fontes-martini}).
	
	\begin{definition}\label{partial.multiplier}\cite{ABFFM21} Let $H$ be a regular multiplier Hopf algebra, $A$ be a nondegenerate algebra and $M(A)$ be its multiplier algebra. A triple $(A,\cdot,\epsilon)$ is a partial $H$-module algebra if $\cdot$ is a linear map,
		\begin{eqnarray*}
			\cdot : H\otimes A&\to& A\\
			h\otimes a&\mapsto& h\cdot a
		\end{eqnarray*}
		and $\epsilon$ is a linear map $\epsilon: H\to M(A)$ satisfying the following conditions, for all $h,k\in H$ and $a,b\in A$:
		\begin{enumerate}[\normalfont(1)]
			\item $h\cdot(a(k\cdot b))=\sum (h_{(1)}\cdot a)(h_{(2)}k\cdot b)$;
			\item $\epsilon(h)(k\cdot a)=\sum h_{(1)}\cdot (S(h_{(2)})k\cdot a)$ and $\epsilon(H)A\subseteq H\cdot A$;
			\item given $h_1,...,h_n \in H$ and $a_1,...,a_m\in A$, there exists $k\in H$ such that $h_ik=kh_i=h_i$ and $h_i\cdot a_j=h_i\cdot k\cdot a_j$, for all $1\leq i\leq n$ and $1\leq j\leq m$;
			\item $H\cdot a=0$ if and only if $a=0$, that is, $\cdot$ is a nondegenerate action.
		\end{enumerate}
		Under these conditions, the map $\cdot$ is called a partial action of H on A, and we say that it is symmetric if the following additional conditions also hold:
		\begin{enumerate}[\normalfont(1)]\addtocounter{enumi}{4}
			\item $h\cdot((k\cdot a)b)=\sum (h_{(1)}k\cdot a)(h_{(2)}\cdot b)$;
			\item $(k\cdot a)\epsilon(h)=\sum h_{(2)}\cdot (S^{-1}(h_{(1)})k\cdot a)$;
			\item $A\epsilon(H)\subseteq H\cdot A$,
		\end{enumerate}
		for all $a,b\in A$ and $h,k\in H$.
	\end{definition}
	It is proved in \cite{ABFFM21} that when $H$ is a (unital) Hopf algebra then items (3) and (4) are equivalent to the condition that $1_H\cdot a=a$ for all $a\in A$; and that if $A$ is a unital algebra, then $\epsilon(h)=h\cdot 1_A$. This shows that when there exists $1_H$ and $A$ is a nondegenerate algebra, we have a partial action as in Definition \ref{partial.action}. Conversely, if $H$ is a Hopf algebra with bijective antipode and $A$ is a nondegenerate symmetric partial $H$-module algebra, we can define $\epsilon: H\to M(A)$ by $\epsilon(h)=(e_l(h),e_r(h))$ where
	$$e_l(h)(a)=\sum h_{(1)}\cdot (S(h_{(2)})\cdot a)$$
	and
	$$e_r(h)(a)=\sum h_{(2)}\cdot(S^{-1}(h_{(1)})\cdot a),$$
	then we have a symmetric partial action as in Definition \ref{partial.multiplier}. In fact, since for all $h, k\in H$ and $a,b\in A$
	\begin{eqnarray*}
		(\sum h_{(2)}\cdot S^{-1}(h_{(2)})\cdot a)b&=&(\sum h_{(2)}\cdot S^{-1}(h_{(2)})\cdot a)(h_{(2)}S(h_{(4)})\cdot b)\\
		&=&(\sum h_{(2)}\cdot [(S^{-1}(h_{(2)})\cdot a)(S(h_{(4)})\cdot b)]\\
		&=&(\sum (h_{(2)}S^{-1}(h_{(2)})\cdot a)(h_{(3)}\cdot S(h_{(4)})\cdot b)\\
		&=&a(\sum h_{(1)}\cdot S(h_{(2)})\cdot b),
	\end{eqnarray*}
	which shows that $[\epsilon_r(h)(a)]b=a[\epsilon_l(h)(b)]$, i.e., $\epsilon$ is well-defined, $\epsilon(H)A\subseteq H\cdot A$ and $A\epsilon(H)\subseteq H\cdot A$. Moreover,
	\begin{eqnarray*}
		[e(h)(k\cdot a)]b&=&[\sum h_{(1)}\cdot S(h_{(2)})\cdot k\cdot a]b\\
		&=&\sum h_{(1)}\cdot S(h_{(2)})\cdot [(k\cdot a)b]\\
		&=&\sum h_{(1)}\cdot [(S(h_{(3)})k\cdot a)(S(h_{(2)})\cdot b)]\\
		&=&\sum (h_{(1)}\cdot S(h_{(4)})k\cdot a)(h_{(2)}S(h_{(3)})\cdot b)\\
		&=&[\sum h_{(1)}\cdot S(h_{(2)})k\cdot a]b,
	\end{eqnarray*}
	proving that item (2) of Definition \ref{partial.multiplier} is satisfied. Analogously, we have that $(k\cdot a)\epsilon(h)=\sum h_{(2)}\cdot S^{-1}(h_{(2)})k\cdot a$.

	\section{Globalization of a Partial Hopf Action \label{section.globalization}}
	
	If $B$ is an $H$-module algebra with action $\triangleright : H \otimes B \to B$ and 
	$e$ is a central idempotent of $B$, then $A = eB$ is a unital ideal of $B$ and the formula
	\begin{equation} \label{equation.formula.restriction.unital.case}
		h \cdot a = e (h \triangleright a)    
	\end{equation}
	defines a partial $H$-action on $A$  which is called an induced, or restricted, partial action in \cite{alves1}.
	Conversely, in \cite{alves1} and \cite{alves3} it is shown that every partial Hopf action on a unital algebra admits a globalization, which in particular means that every partial action is the restriction of a global action; moreover, there exists a minimal globalization that is unique up to isomorphism. The original partial action is equivalent to a restricted partial action in this globalization. 
	
	In this section we will show that every symmetrical partial Hopf action is a restriction of a global action as in the case of partial actions on algebras with unit.
	
	Going back to the situation where $B$ is an $H$-module algebra, $e$ is a central idempotent of $B$ and $A = eB$, the partial action on $A$ defined by formula \eqref{equation.formula.restriction.unital.case} comes from the canonical projection of $B$ onto $A$ given by multiplication by $e$.  In the following we will show that if $A$ has trivial right or left annihilator then there exists a projection
	(Proposition \ref{proposition.quasi-g.is.g.if.r(A).or.l(A).trivial}) of the globalization onto $A$ which plays the role of multiplication by a central idempotent in the unital case. In Section \ref{section.local.units} we show that such a projection may be obtained for an ideal which has local units (see Proposition \ref{proposition.induced.action.local.units}).
	
	Let us recall the definition of globalization for partial actions on unital algebras. 
	\begin{definition}[\cite{alves1},\cite{alves3}] \label{definition.globalization} Let $\cdot: H \otimes A \longrightarrow A$ be a symmetrical partial action, where $A$ is a unital algebra. A pair $(B,\theta)$ is called an enveloping action, or globalization, for the partial action $\cdot$ if:
		\begin{enumerate}[\normalfont(1)]
			\item $B$ is an $H$-module algebra (possibly nonunital), with action $\triangleright$;
			\item $\theta: A \longrightarrow B$ is a monomorphism of algebras;
			\item $\theta(h\cdot a)=\theta(1_A)(h\triangleright \theta(a))=(h\triangleright \theta(a))\theta(1_A)$, $\forall a\in A, \,h\in H$;
			\item $B=H\triangleright\theta(A)$.
		\end{enumerate} 
	\end{definition}
	Items (3) and (4) imply that $\theta (1_A)$ is a central idempotent of $B$ and that $\theta(A)$ is the ideal generated by $\theta(1_A)$. Hence
	\[
	h \rightharpoonup x : = \theta (1_A) (h \triangleright x), \ h \in H, x \in \theta(A), 
	\]
	determines a partial action on $\theta(A)$. It follows from items (2) and (3)  that $\theta$ is a monomorphism of partial actions.
	
	Now, let $A$ be any associative algebra that is a partial $H$-module algebra with symmetrical partial action $\cdot: H\otimes A\to A$. Clearly, item (3) of the previous definition of globalization poses a problem in  the context of nonunital algebras,  and so we introduce a version of globalization for partial actions on associative algebras in general.
	
	\begin{definition}\label{def3} A \textit{globalization} for the partial action $\cdot$ is a pair $(B,\theta)$ such that 
		\begin{enumerate}[\normalfont(1)]
			\item $B$ is an $H$-module algebra (possibly nonunital), with action $\triangleright$;
			\item $\theta: A \longrightarrow B$ is a monomorphism of algebras;
			\item For every $a,b\in A$ and $h\in H$, we have $\theta((h\cdot a)b)=(h\triangleright \theta(a))\theta(b)$ and $\theta(b(h\cdot a))=\theta(b)(h\triangleright \theta(a))$ 
			\item $B=H\triangleright\theta(A)$.
			\item $h\bullet\theta(a)=\theta(h\cdot a)$ defines a partial action of $H$ on $\theta(A)$.
		\end{enumerate}
	\end{definition}
	Note that if $(B,\theta)$ is a globalization for $\cdot$, from items (3) and (4), we still have that $\theta(A)$ is an ideal of $B$, as can be seen in \cite{alves1}, just as for the globalization of a partial $H$-action on a unital algebra. 
	
	We remark that if $A$ is a partial $H$-module algebra with unit and $(B,\theta)$ is a globalization, then $(B,\theta)$ is actually a globalization in the sense of Definition \ref{definition.globalization} and vice-versa. 
	
	Note that if $h\bullet \theta(a):=\theta(h\cdot a)$ defines a partial action of $H$ on $\theta(A)$, we have that $\theta$ turns out to be a morphism of partial $H$-module algebras. Moreover,  $\theta(b)(h\bullet \theta(a))=\theta(b)\theta(h\cdot a)=\theta(b)(h\triangleright \theta(a))$ and this suggests that $h\bullet \theta(a)$ and $h\triangleright \theta(a)$ could be associated in some way. 
	When $A$ is a unital algebra, we have that $\theta(h\cdot a)=\theta(1_A(h\cdot a))=\theta(1_A)(h\triangleright\theta(a))=:h\rightharpoonup \theta(a)$, so item (5) of the definition of globalization was not necessary in this case; moreover, the existence of $\theta(1_A)$ provides a projection of $B=H\triangleright\theta(A)$ onto $\theta(A)$ in such a manner that we could say that the partial action $\cdot:H\otimes A\to A$ was induced by the action $\triangleright: H\otimes B\to B$. These considerations motivate the next result.

	\begin{proposition} \label{proposition.projection}
		Let $B$ be an $H$-module algebra with action $\triangleright$ and $A$ an ideal of $B$. If there exists a projection $\pi: H\triangleright A\to H\triangleright A$, with image $A$, such that
		\[\pi(h\triangleright (a\pi(k\triangleright b)))=\sum \pi(h_{(1)}\triangleright a))\pi(h_{(2)}k\triangleright b),\]
		for every $h,k\in H$ and $a,b\in A$, then the linear map $h\ap a=\pi(h\triangleright a)$ defines a partial action of $H$ on $A$.
	\end{proposition}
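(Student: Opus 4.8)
The plan is to check directly the two defining conditions of a partial action from Definition \ref{partial.action} for the map $h\ap a=\pi(h\triangleright a)$. The hypothesis on $\pi$ is essentially axiom (2) of that definition with $\triangleright$ and $\pi$ in place of the would‑be partial action, so the real content of the proof is bookkeeping: one must check that $\ap$ is well defined, that it is the identity for $1_H$, and that all the expressions appearing stay inside $H\triangleright A$, the domain of $\pi$.

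First I would record the elementary facts that keep everything in place. Since $B$ is an $H$-module algebra, $1_H$ acts as the identity of $B$, so $a=1_H\triangleright a\in H\triangleright A$ for every $a\in A$; hence $A\subseteq H\triangleright A$, and because $\pi$ has image $A$, the formula $h\ap a=\pi(h\triangleright a)$ does define a linear map $H\otimes A\to A$. Moreover $H\triangleright A$ is an $H$-submodule of $B$ (as $h\triangleright(k\triangleright a)=(hk)\triangleright a$), and $A$, being an ideal of $B$, is closed under multiplication; therefore for $a,b\in A$ and $k\in H$ we have $a(k\ap b)=a\,\pi(k\triangleright b)\in A\subseteq H\triangleright A$, and then $h\triangleright\bigl(a\,\pi(k\triangleright b)\bigr)\in H\triangleright A$, so $\pi$ may legitimately be applied to it. With this in hand, axiom (1) is immediate: $1_H\ap a=\pi(1_H\triangleright a)=\pi(a)=a$, the last equality because $\pi$ is an idempotent with image $A$ and therefore restricts to the identity on $A$.

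For axiom (2), given $a,b\in A$ and $h,k\in H$ I would simply compute
\[
h\ap\bigl(a(k\ap b)\bigr)=\pi\bigl(h\triangleright(a\,\pi(k\triangleright b))\bigr)=\sum \pi(h_{(1)}\triangleright a)\,\pi(h_{(2)}k\triangleright b)=\sum (h_{(1)}\ap a)(h_{(2)}k\ap b),
\]
where the middle step is exactly the stated property of $\pi$ and the outer steps unwind the definition of $\ap$. This is precisely condition (2) of Definition \ref{partial.action}, so $\ap$ is a partial action of $H$ on $A$. I do not expect any genuine obstacle here; the only point requiring attention is the one handled in the second paragraph, namely confirming that $\pi$ acts as the identity on $A$ and that $A$ being an ideal forces $a(k\ap b)\in A$, so that the hypothesis on $\pi$ can be invoked verbatim. (If one additionally wanted the resulting partial action to be symmetrical, a matching right‑handed identity for $\pi$ would be needed, but that is not asserted in the statement.)
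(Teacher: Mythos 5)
Your proof is correct and follows essentially the same route as the paper: verify $1_H\ap a=\pi(a)=a$ using that the projection $\pi$ restricts to the identity on its image $A$, and then obtain axiom (2) by applying the hypothesis on $\pi$ and unwinding the definition of $\ap$. The extra bookkeeping you include (that $A\subseteq H\triangleright A$ and that $a\,\pi(k\triangleright b)\in A$, so $\pi$ may be applied) is implicit in the paper's two-line argument and is a welcome clarification, not a different method.
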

	
	\begin{proof}
		In fact, we have that $1_H\bullet a=\pi(1_H\triangleright a)=\pi(a)=a$ and 
		\[
		h \ap(a(k \ap b))= \pi(h\triangleright (a\pi(k\triangleright b)))= \sum \pi(h_{(1)}\triangleright a))\pi(h_{(2)}k\triangleright b)
		=\sum (h_{(1)} \ap a)(h_{(2)}k \ap b),
		\]
		for every $h,k\in H$ and $a,b\in A$.
	\end{proof}
	
	Now we will show that, whenever $r(A)=0$ or $l(A)=0$, then the partial action is induced by the action of the globalization.

	\begin{proposition} \label{proposition.quasi-g.is.g.if.r(A).or.l(A).trivial}
		Let $A$ be a partial $H$-module algebra with symmetrical partial action given by $\cdot:H\otimes A\to A$ and $(B,\theta)$ a globalization where $B$ is an $H$-module algebra with action $\triangleright$. If $r(A)=0$ or $l(A)=0$, then $\pi(h\triangleright\theta(a))=\theta(h\cdot a)$ defines a projection as in Proposition \ref{proposition.projection}.
	\end{proposition}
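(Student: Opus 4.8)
The plan is simply to check that the map $\pi$, defined on $H\triangleright\theta(A)$ by $\pi(h\triangleright\theta(a))=\theta(h\cdot a)$, satisfies the three hypotheses of Proposition \ref{proposition.projection}, with the ambient ideal taken to be $\theta(A)$ (which is an ideal of $B$ by the remark following Definition \ref{def3}, and satisfies $B=H\triangleright\theta(A)$ by item (4)). Concretely one must establish: (i) $\pi$ is a well-defined linear endomorphism of $H\triangleright\theta(A)$; (ii) $\pi$ is idempotent with image $\theta(A)$; (iii) $\pi\bigl(h\triangleright(\theta(a)\,\pi(k\triangleright\theta(b)))\bigr)=\sum\pi(h_{(1)}\triangleright\theta(a))\,\pi(h_{(2)}k\triangleright\theta(b))$ for all $a,b\in A$ and $h,k\in H$. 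Then Proposition \ref{proposition.projection} yields that $h\bullet\theta(a):=\pi(h\triangleright\theta(a))=\theta(h\cdot a)$ is a partial action, namely the one appearing in item (5) of Definition \ref{def3}.

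The crux is (i), and this is exactly where the hypothesis is used. Suppose $\sum_i h_i\triangleright\theta(a_i)=0$ in $B$; we must show $\sum_i\theta(h_i\cdot a_i)=0$, which by injectivity of $\theta$ amounts to $\sum_i h_i\cdot a_i=0$. For any $b\in A$, item (3) of Definition \ref{def3} gives
$$\theta\Bigl(\bigl(\textstyle\sum_i h_i\cdot a_i\bigr)b\Bigr)=\sum_i\bigl(h_i\triangleright\theta(a_i)\bigr)\theta(b)=\Bigl(\sum_i h_i\triangleright\theta(a_i)\Bigr)\theta(b)=0,$$
so $\bigl(\sum_i h_i\cdot a_i\bigr)b=0$ for all $b\in A$, whence $\sum_i h_i\cdot a_i\in l(A)$; symmetrically, the other half of item (3) forces $\sum_i h_i\cdot a_i\in r(A)$. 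Under either hypothesis $l(A)=0$ or $r(A)=0$ we conclude $\sum_i h_i\cdot a_i=0$, so $\pi$ is well defined; linearity is then clear.

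For (ii), since $1_H\triangleright x=x$ for all $x\in B$, every $\theta(a)=1_H\triangleright\theta(a)$ lies in $H\triangleright\theta(A)$ and $\pi(\theta(a))=\theta(1_H\cdot a)=\theta(a)$; together with the evident inclusion $\operatorname{Im}\pi\subseteq\theta(A)$ this shows $\pi^2=\pi$ with image $\theta(A)$. For (iii), write $x=\theta(a)$, $y=\theta(b)$; since $\pi(k\triangleright\theta(b))=\theta(k\cdot b)\in\theta(A)$ and $\theta$ is an algebra morphism, the left-hand side equals $\pi\bigl(h\triangleright\theta(a(k\cdot b))\bigr)=\theta\bigl(h\cdot(a(k\cdot b))\bigr)$, while the right-hand side equals $\theta\bigl(\sum(h_{(1)}\cdot a)(h_{(2)}k\cdot b)\bigr)$, and these coincide by axiom (2) of Definition \ref{partial.action}.

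I expect step (i) to be the only genuine obstacle; the rest is bookkeeping with the globalization axioms, the identity $1_H\triangleright x=x$, and the defining identity of a partial action. A minor point worth noting along the way is that $h\triangleright(\theta(a)\,\pi(k\triangleright\theta(b)))$ really does lie in the domain $H\triangleright\theta(A)$ of $\pi$, which holds because $\pi(k\triangleright\theta(b))\in\theta(A)$ and $\theta(A)$ is an ideal of $B$.
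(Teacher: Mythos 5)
Your proposal is correct and follows essentially the same route as the paper: the only real point is well-definedness of $\pi$, which you settle exactly as the paper does by multiplying $\sum_i h_i\triangleright\theta(a_i)=0$ by $\theta(b)$, invoking item (3) of Definition \ref{def3}, injectivity of $\theta$, and the hypothesis $r(A)=0$ or $l(A)=0$. The remaining verification of the compatibility identity, which you carry out explicitly via axiom (2) of Definition \ref{partial.action} and multiplicativity of $\theta$, is what the paper dispatches in one line by citing item (5) of Definition \ref{def3}, so the two arguments coincide in substance.
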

	\begin{proof}
		We will assume that $r(A)=0$, since the case where $l(A)=0$ follows from an analogous argument. Assume that $\sum_ih_i\triangleright\theta(a_i)=0$, then for every $x\in A$ we have that
		\begin{eqnarray*}
			\theta(x)(\sum_ih_i\triangleright\theta(a_i))&=&\sum_i\theta(x(h_i\cdot a_i))\\
			&=&\theta(x)(\sum_i\theta(h_i\cdot a_i))\\
			&=&0.
		\end{eqnarray*}
		Since $\theta$ is a monomorphism and $r(A)=0$, we have that $\sum_i\theta(h_i\cdot a_i)=0$, i.e., $\pi(h\triangleright\theta(a))=\theta(h\cdot a)$ is well-defined. The fact that $\pi$ satisfies the property of Proposition \ref{proposition.projection} follows immediately from the hypothesis that $h\bullet\theta(a)=\theta(h\cdot a)$ defines a partial action of $H$ on $\theta(A)$.
	\end{proof}

	Proposition \ref{proposition.quasi-g.is.g.if.r(A).or.l(A).trivial} shows that the existence of a globalization implies that the partial action $\cdot$ is induced by the global action $\triangleright$.
	
	\begin{remark}
		In \cite{globalization-fonseca-fontes-martini}, a projection that satisfies the conditions of Proposition \ref{proposition.projection} is called an $H$-projection.
	\end{remark}

	\begin{definition} Let $\cdot: H \otimes A \longrightarrow A$ be a symmetrical partial action. A globalization $(B,\theta)$ will be called \textit{minimal} if $\sum_{i}kh_i\cdot a_i=0$, for all $k\in H$, implies $\sum_{i}h_i\triangleright \theta(a_i)=0$.
	\end{definition}
	
	In \cite{alves1} the authors defined minimal globalizations in the following way:
	
	\begin{definition}
		Let $H$ be a Hopf algebra and $A$ a unital partial $H$-module algebra. A globalization $(B,\theta)$ is minimal if $\theta(1_A)M=0$ implies that $M=0$, for every $H$-submodule $M$ of $B$.
	\end{definition}
	
	Since $\theta(A)$ is an ideal of $B$, we can consider the projection $\Pi_0: B\to A$ given by $\Pi_0(x)=\theta(1_A)x$; then the previous definition is equivalent to saying that the kernel of $\Pi_0$ does not contain any nonzero $H$-submodule of $B$. 
	
	When $r(A) =0$ there is an analogous projection.

	\begin{definition}
		Let $A$ be a partial $H$-module algebra such that $r(A)=0$ and let $(B,\theta)$ be a globalization; then we define $\Pi: B\to \theta(A)$ as the linear projection $\Pi(h\triangleright\theta(a))=\theta(h\cdot a)$.
	\end{definition}
	The linear map $\Pi: B\to \theta(A)$, given by $\Pi(h\triangleright\theta(a))=\theta(h\cdot a)$ (that is equivalent to $\Pi_0$ when $A$ is unital), is well-defined because if $\sum_i h_i\triangleright\theta(a_i)=\sum_j k_j\triangleright\theta(b_j)$, then for every $c\in A$ we have that
	\[
	\theta(c\sum_ih_i\cdot a_i) = \theta(c)\sum_i h_i\triangleright\theta(a_i) = \theta(c)\sum_j k_j\triangleright\theta(b_j)
	=\theta(c\sum_j k_j\cdot b_j),
	\]
	and since $r(A)=0$ and $\theta$ is a monomorphism, it follows that $\sum_i h_i\cdot a_i=\sum_j k_j\cdot b_j$. Note also that $\Pi$ is a projection by its definition. 
	
	\begin{proposition}
		Let $H$ be a Hopf algebra and $A$ a (not necessarily unital) partial $H$-module algebra. If $r(A)=0$, then a globalization $(B,\theta)$ is minimal if and only if the kernel of $\Pi$ does not contain any nonzero $H$-submodule of $B$.
	\end{proposition}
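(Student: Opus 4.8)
The plan is to unwind both directions through the projection $\Pi : B \to \theta(A)$ and the definition of minimality, translating the condition ``$\sum_i k h_i \cdot a_i = 0$ for all $k \in H$ implies $\sum_i h_i \triangleright \theta(a_i) = 0$'' into a statement about $\ker \Pi$. First I would observe that $\ker \Pi$ is always a left $H$-submodule \emph{in the weak sense} needed here: if $\sum_i h_i \triangleright \theta(a_i) \in \ker \Pi$, one must check that $k \triangleright \left(\sum_i h_i \triangleright \theta(a_i)\right) = \sum_i (kh_i) \triangleright \theta(a_i)$ also lies in $\ker \Pi$; this is immediate from the $H$-module algebra axiom on $B$ and the fact that $\Pi\big(\sum_i (kh_i)\triangleright\theta(a_i)\big) = \sum_i \theta(kh_i \cdot a_i)$ by definition of $\Pi$. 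So, \emph{any} $H$-submodule of $B$ is of the form (spanned by) elements $\sum_i h_i \triangleright \theta(a_i)$, and it is contained in $\ker\Pi$ exactly when $\sum_i \theta(kh_i \cdot a_i) = 0$ for all $k$, i.e. (since $\theta$ is mono) $\sum_i k h_i \cdot a_i = 0$ for all $k \in H$.

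For the forward direction, suppose the globalization is minimal and let $M \subseteq \ker \Pi$ be an $H$-submodule. Take $x = \sum_i h_i \triangleright \theta(a_i) \in M$ (using $B = H \triangleright \theta(A)$, every element of $M$ has this shape). Since $M$ is an $H$-submodule contained in $\ker\Pi$, for every $k\in H$ we have $k\triangleright x = \sum_i (kh_i)\triangleright\theta(a_i) \in M \subseteq \ker\Pi$, hence $0 = \Pi(k\triangleright x) = \sum_i \theta(kh_i\cdot a_i)$, so $\sum_i kh_i \cdot a_i = 0$ for all $k$. Minimality then forces $\sum_i h_i \triangleright \theta(a_i) = x = 0$, so $M = 0$.

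Conversely, assume $\ker\Pi$ contains no nonzero $H$-submodule, and suppose $\sum_i k h_i \cdot a_i = 0$ for all $k \in H$. Set $x = \sum_i h_i \triangleright \theta(a_i)$ and let $M = H \triangleright x = \mathrm{span}\{\, k \triangleright x : k \in H \,\}$, which is by construction an $H$-submodule of $B$ (taking $k = 1_H$ shows $x \in M$). Each generator satisfies $\Pi(k \triangleright x) = \Pi\big(\sum_i (kh_i)\triangleright\theta(a_i)\big) = \sum_i \theta(kh_i \cdot a_i) = 0$, so $M \subseteq \ker\Pi$; by hypothesis $M = 0$, whence $x = 0$, which is exactly minimality.

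The main obstacle is purely bookkeeping: making sure that $M = H\triangleright x$ really is closed under the $H$-action (it is, since $l \triangleright (k \triangleright x) = (lk)\triangleright x$ by the module axiom) and that the identity $\Pi(k \triangleright x) = \sum_i \theta(kh_i \cdot a_i)$ is legitimate --- this requires that the well-definedness argument for $\Pi$ (which used $r(A) = 0$ and that $\theta$ is mono) applies to the representation $k\triangleright x = \sum_i (kh_i)\triangleright\theta(a_i)$, which it does. No bijectivity of the antipode or symmetry of the partial action is needed beyond what is already built into the standing hypotheses for $\Pi$ to exist.
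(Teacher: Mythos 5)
Your proof is correct and takes essentially the same route as the paper: in the forward direction you apply the submodule hypothesis to deduce $\sum_i kh_i\cdot a_i=0$ for all $k$ and invoke minimality, and in the converse you place the $H$-submodule generated by $\sum_i h_i\triangleright\theta(a_i)$ inside $\ker\Pi$. The only caveat is your preliminary remark that $\ker\Pi$ is closed under the $H$-action: that is not true in general (membership in $\ker\Pi$ gives $\sum_i\theta(h_i\cdot a_i)=0$, not $\sum_i\theta(kh_i\cdot a_i)=0$ for all $k$), but neither direction of your argument actually uses it, so the proof stands.
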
  
	\begin{proof}
		Suppose that the globalization $(B,\theta)$ is minimal and let $M$ be an $H$-submodule of $B$ such that $\Pi(M)=0$. Then for every $\sum_i h_i\triangleright\theta(a_i)\in M$, we have that $\sum_i kh_i\triangleright\theta(a_i)\in M$ for every $k\in H$, which means that $\theta(\sum_i kh_i\cdot a_i)=0$. Since $\theta$ is injective and the globalization is minimal, we conclude that $\sum_i h_i\triangleright\theta(a_i)=0$, hence $M=0$. Let us now assume that the kernel of $\Pi$ does not contain any nonzero $H$-submodule of $B$ and suppose that $\sum_i kh_i\cdot a_i=0$ for every $k\in H$. Then the $H$-submodule $M$ of $B$ generated by $\sum_i h_i\triangleright\theta(a_i)$ is contained in the kernel of $\Pi$, but since $\ker\Pi$ does not contain any nonzero $H$-submodule of $B$, we conclude that $M=0$, hence $\sum_i h_i\triangleright\theta(a_i)=0$.
	\end{proof}
	
	\begin{lemma}\label{r(B).r(A).minimal} Let $\cdot: H\otimes A \to A$ be a partial action and $(B,\theta)$ a globalization.
		\begin{enumerate}[\normalfont(1)]
			\item If $r(B)=0$, then $r(A)=0$;
			\item If $r(B)=0$, then $(B,\theta)$ is a minimal globalization;
			\item If $r(A)=0$, $H$ has bijective antipode and $(B,\theta)$ is a minimal globalization, then $r(B)=0$.
		\end{enumerate}
	\end{lemma}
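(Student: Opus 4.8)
My plan is to treat the three items essentially independently; the common mechanism is to convert an annihilation statement in $B$ into one in $A$ (or conversely) by means of axiom (3) of Definition~\ref{def3} together with $B=H\triangleright\theta(A)$, and then to \emph{decouple} the Sweedler expressions that appear by inverting a suitable canonical endomorphism of $H\otimes H$. For item (1) I would take $a\in r(A)$ and, since $\theta$ is injective and $r(B)=0$, reduce to showing $B\theta(a)=0$; because $B=H\triangleright\theta(A)$ it suffices to see that $(h\triangleright\theta(b))\theta(a)=\theta((h\cdot b)a)=0$ for all $h\in H$, $b\in A$, which is immediate from axiom (3) and $a\in r(A)$.

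For item (2) I would assume $\sum_i kh_i\cdot a_i=0$ for all $k\in H$ and set $x=\sum_i h_i\triangleright\theta(a_i)$. Using $k\triangleright(h_i\triangleright\theta(a_i))=(kh_i)\triangleright\theta(a_i)$ together with axiom (3) one gets $\theta(c)(k\triangleright x)=\theta\big(c\sum_i kh_i\cdot a_i\big)=0$ for every $c\in A$ and $k\in H$; applying $g\triangleright(-)$ and the $H$-module algebra axiom then yields $\sum(g_{(1)}\triangleright\theta(c))\big((g_{(2)}k)\triangleright x\big)=0$ for all $g,k\in H$. Since the map $g\otimes k\mapsto\sum g_{(1)}\otimes g_{(2)}k$ is a linear bijection of $H\otimes H$ (with inverse $g\otimes k\mapsto\sum g_{(1)}\otimes S(g_{(2)})k$), I can decouple this identity to obtain $(g\triangleright\theta(c))(k\triangleright x)=0$ for all $g,k\in H$, $c\in A$; taking $k=1_H$ gives $Bx=0$, and $r(B)=0$ forces $x=0$, which is exactly minimality.

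For item (3), given $x\in r(B)$ I would write $x=\sum_i h_i\triangleright\theta(a_i)$, note that $(w\triangleright\theta(c))x=0$ for all $w\in H$, $c\in A$ (because $w\triangleright\theta(c)\in B$), and apply $k\triangleright(-)$ to get $\sum\big((k_{(1)}w)\triangleright\theta(c)\big)(k_{(2)}\triangleright x)=0$. The decoupling map here is $k\otimes w\mapsto\sum k_{(1)}w\otimes k_{(2)}$ — the analogue of the map used in (2), but with the free variable on the left factor, equivalently the canonical map of the co-opposite coalgebra $H^{\mathrm{cop}}$ — so its invertibility needs the antipode $S^{-1}$ of $H^{\mathrm{cop}}$, which is precisely the hypothesis that $S$ is bijective. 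After decoupling and setting $u=1_H$ I obtain $\theta(c)(v\triangleright x)=0$ for all $v\in H$, $c\in A$; rewriting $v\triangleright x=\sum_i(vh_i)\triangleright\theta(a_i)$ and using axiom (3) turns this into $\theta\big(c\sum_i vh_i\cdot a_i\big)=0$, whence, by injectivity of $\theta$ and $r(A)=0$, $\sum_i vh_i\cdot a_i=0$ for all $v\in H$. Minimality of $(B,\theta)$ then gives $x=\sum_i h_i\triangleright\theta(a_i)=0$, so $r(B)=0$.

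I expect the main obstacle to be exactly the decoupling step shared by (2) and (3): one must recognise that a relation of the form ``$\sum F(g_{(1)})G(g_{(2)}k)=0$ for all $g,k$'' (or its left-sided variant ``$\sum F(k_{(1)}w)G(k_{(2)})=0$ for all $k,w$'') is equivalent to ``$F(g)G(k)=0$ for all $g,k$'' because the associated canonical map on $H\otimes H$ is invertible, and to observe that for the left-sided variant this inverse involves $S^{-1}$, which is why bijectivity of the antipode is needed only in item (3). Once this is in place, everything else reduces to routine use of Definition~\ref{def3}(3) and of the identity $k\triangleright(h\triangleright y)=(kh)\triangleright y$ in the $H$-module $B$.
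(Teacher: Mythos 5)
Your argument is correct and is essentially the paper's proof: item (1) is identical, and in items (2) and (3) your ``decoupling via an invertible endomorphism of $H\otimes H$'' is exactly the paper's inline Sweedler substitutions $k\mapsto \sum k_{(1)}\otimes k_{(2)}S(k_{(3)})$ and $k\mapsto \sum k_{(2)}S^{-1}(k_{(1)})\otimes k_{(3)}$, combined with axiom (3) of Definition~\ref{def3} and $B=H\triangleright\theta(A)$. You also correctly locate the only place where bijectivity of the antipode is used, namely the left-handed version of the trick in item (3).
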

	\begin{proof}
		First, note that if $a\in r(A)$, then $(h\triangleright\theta(b))\theta(a)=\theta((h\cdot b)a)=0$, for every $h\in H$, $b\in B$, i.e., $\theta(a)\in r(B)=0$, and since $\theta$ is a monomorphism, we have that $a=0$.
		
		Now, note that if $\sum_i kh_i\cdot a_i=0$ for every $k\in H$, then
		\begin{eqnarray*}
			(k\triangleright\theta(b))(\sum_i h_i\triangleright\theta(a_i))&=&\sum(k_{(1)}\triangleright\theta(b))(\sum_i k_{(2)}S(k_{(3)})h_i\triangleright\theta(a_i))\\
			&=&\sum k_{(1)}\triangleright[\theta(b)(\sum_i S(k_{(3)})h_i\triangleright\theta(a_i))]\\
			&=&\sum k_{(1)}\triangleright\theta(b(\sum_iS(k_{(3)})h_i\cdot a_i))\\
			&=&0
		\end{eqnarray*}
		Therefore $\sum_i h_i\triangleright\theta(a_i)\in r(B)=0$, hence $(B,\theta)$ is a minimal globalization.
		
		Finally, suppose that $r(A)=0$ and that $(B,\theta)$ is a minimal globalization. If $\sum_i h_i\triangleright\theta(a_i)\in r(B)$, we have that
		\begin{eqnarray*}
			\theta(b(\sum_i kh_i\cdot a_i))&=&\theta(b)(\sum_i kh_i\triangleright\theta(a_i))\\
			&=&\sum (k_{(2)}S^{-1}(k_{(1)})\triangleright\theta(b))(\sum_i k_{(3)}h_i\triangleright\theta(a_i))\\
			&=&\sum k_{(2)}\triangleright[(S^{-1}(k_{(1)})\triangleright\theta(b))(\sum_ih_i\triangleright\theta(a_i))]\\
			&=&0
		\end{eqnarray*}
		for every $k\in H$ and $b\in A$. Since $\theta$ is a monomorphism, we have that $\sum_i kh_i\cdot a_i\in r(A)=0$, and since $(B,\theta)$ is minimal, we have that $\sum_i h_i\triangleright\theta(a_i)=0$.
	\end{proof}
	The previous lemma shows that there exist a (maybe) easier way to verify if a globalization is minimal. Now we will show that there always exists a minimal globalization for any symmetrical partial action over any associative algebra.
	
	We recall that the convolution algebra $\HOM(H,A)$ is a left $H$-module algebra with action $\triangleright$ given by $(k\triangleright f)(h)=f(hk)$. Any partial action $\cdot: H \otimes A \longrightarrow A$ induces a morphism of algebras $\varphi: A \longrightarrow \HOM(H,A)$ given by $\varphi(a)(h)=h\cdot a$, and 
	$$H\triangleright \varphi(A) = \left\{ \sum_i h_i \triangleright \varphi(a_i) ; h_i \in H, a_i \in A \right\}$$ is the smallest $H$-submodule algebra of $\HOM(H,A)$ that contains $\varphi(A)$.
	
	\begin{lemma} Let $\cdot: H \otimes A \longrightarrow A$ be a partial action. Then 
		\begin{enumerate}[\normalfont(1)]
			\item $B=H\triangleright \varphi(A)$ is an $H$-module subalgebra of $\HOM (H,A)$;
			\item If the partial action is symmetrical then $\varphi(A)$ is an ideal of $B$;
			\item If $r(A)=0$ and $\varphi(A)$ is an ideal of $B$, then the partial action is symmetrical.
		\end{enumerate}
	\end{lemma}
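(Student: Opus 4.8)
The plan is to dispose of the three items in turn, first isolating two computations in the convolution algebra $\HOM(H,A)$ that do the real work. Throughout I use that $\varphi$ is an algebra homomorphism: taking $k=1_H$ in axiom (2) of Definition~\ref{partial.action} and using $1_H\cdot b=b$ yields $h\cdot(ab)=\sum(h_{(1)}\cdot a)(h_{(2)}\cdot b)$, i.e.\ $\varphi(a)\star\varphi(b)=\varphi(ab)$, so that $\varphi(A)$ is a subalgebra of $\HOM(H,A)$; and $\varphi(a)=1_H\triangleright\varphi(a)$ shows $\varphi(A)\subseteq B$. The first computation is the identity
\[
\varphi(a)\star(h\triangleright\varphi(b))=\varphi\bigl(a(h\cdot b)\bigr),\qquad a,b\in A,\ h\in H,
\]
which is axiom (2) read through the definitions of $\star$ and $\triangleright$. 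The second is: for every $h\in H$, $a\in A$ and \emph{any} $g\in\HOM(H,A)$,
\[
(h\triangleright\varphi(a))\star g=\sum h_{(1)}\triangleright\bigl(\varphi(a)\star(S(h_{(2)})\triangleright g)\bigr),
\]
which follows from the fact (recalled just before the lemma) that $\HOM(H,A)$ is an $H$-module algebra, from $l\triangleright(k\triangleright g)=(lk)\triangleright g$, and from $\sum h_{(1)}\otimes h_{(2)}S(h_{(3)})=h\otimes1_H$.

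For item (1): $B$ is an $H$-submodule of $\HOM(H,A)$ since $k\triangleright(h\triangleright\varphi(a))=(kh)\triangleright\varphi(a)\in B$. The first identity together with linearity gives $\varphi(A)\star B\subseteq\varphi(A)$. Then for any $g\in B$ we have $S(h_{(2)})\triangleright g\in B$, hence $\varphi(a)\star(S(h_{(2)})\triangleright g)\in\varphi(A)$, so the right-hand side of the second identity lies in $H\triangleright\varphi(A)=B$; by linearity $B\star B\subseteq B$. Being both a submodule and a subalgebra of the $H$-module algebra $\HOM(H,A)$, $B$ is an $H$-module subalgebra. I expect this to be the main obstacle: trying to rewrite a product of generators $(h\triangleright\varphi(a))\star(k\triangleright\varphi(b))$ directly as a single generator does not close up, and the purpose of the second identity is precisely to reduce the product to the already-established inclusion $\varphi(A)\star B\subseteq\varphi(A)$.

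For item (2) it remains only to show $B\star\varphi(A)\subseteq\varphi(A)$, for which it suffices to treat $(h\triangleright\varphi(a))\star\varphi(b)$. Evaluating at $l\in H$ gives $\sum(l_{(1)}h\cdot a)(l_{(2)}\cdot b)$, and the symmetry axiom (3) of Definition~\ref{partial.action} (after renaming variables) identifies this with $l\cdot\bigl((h\cdot a)b\bigr)$; hence $(h\triangleright\varphi(a))\star\varphi(b)=\varphi\bigl((h\cdot a)b\bigr)\in\varphi(A)$ and $\varphi(A)$ is a two-sided ideal of $B$.

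For item (3), note first that $\varphi$ is injective, since $\varphi(a)=0$ forces $a=1_H\cdot a=0$. Fix $h,k\in H$ and $a,b\in A$. As $\varphi(A)$ is an ideal of $B$, the element $(k\triangleright\varphi(b))\star\varphi(a)$ lies in $\varphi(A)$, say it equals $\varphi(c)$ with $c\in A$ uniquely determined. Multiplying on the left by $\varphi(x)$ and using associativity together with the first auxiliary identity gives $\varphi(xc)=\varphi\bigl(x((k\cdot b)a)\bigr)$ for all $x\in A$, whence $x\bigl(c-(k\cdot b)a\bigr)=0$ for all $x$ and $r(A)=0$ forces $c=(k\cdot b)a$ (one may alternatively just evaluate $\varphi(c)$ at $1_H$). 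Evaluating the resulting equality $(k\triangleright\varphi(b))\star\varphi(a)=\varphi\bigl((k\cdot b)a\bigr)$ at an arbitrary $h\in H$ then produces exactly
\[
h\cdot\bigl((k\cdot b)a\bigr)=\sum(h_{(1)}k\cdot b)(h_{(2)}\cdot a),
\]
which is axiom (3) of Definition~\ref{partial.action}; so the partial action is symmetrical. The only delicate point is pinning down $c$, which is where the injectivity of $\varphi$ (and, on the route via left multiplication, the hypothesis $r(A)=0$) enters.
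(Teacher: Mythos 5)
Your proof is correct and follows essentially the same route as the paper: the identity $\varphi(a)\star(h\triangleright\varphi(b))=\varphi(a(h\cdot b))$ handles the right-ideal property and, combined with rewriting $(h\triangleright\varphi(a))\star g$ as $\sum h_{(1)}\triangleright(\varphi(a)\star(S(h_{(2)})\triangleright g))$, closes $B$ under convolution exactly as in the paper's computation, while the symmetry axiom gives the left-ideal property. For item (3) your primary argument (left-multiply by $\varphi(x)$, use injectivity of $\varphi$ and $r(A)=0$) is the paper's argument in a slightly different dressing; your parenthetical alternative of evaluating at $1_H$ is a genuine, and nicer, improvement, since it pins down $c=(k\cdot b)a$ directly and shows the hypothesis $r(A)=0$ is actually not needed for this implication.
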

	\begin{proof}
		Let $k,h \in H$, $a,b \in A$. Then the equality 
		\begin{equation} \label{equation.varphi.induced}
			\varphi(a)(h\triangleright\varphi(b)) = \varphi(a(h\cdot b)).
		\end{equation}
		follows by the same reasoning of \cite{alves1} for unital algebras.	Similarly, if the partial action is symmetrical then 
		\begin{equation} \label{equation.varphi.induced.symmetrical}
			(h\triangleright\varphi(a))\varphi(b) = \varphi((h\cdot a)b).
		\end{equation}
		Hence 
		\[
		(k \triangleright \varphi(a))(h\triangleright\varphi(b)) = k_1  \triangleright ( \varphi(a))(S(k_2) h\triangleright\varphi(b))
		= k_1  \triangleright ( \varphi(a (S(k_2) h \cdot b)),
		\]
		which shows that $B$ is a subalgebra of $\HOM(H,A)$; clearly it is an $H$-submodule as well. 
		
		Equation \eqref{equation.varphi.induced} also shows that $\varphi(A)$ is a right ideal of $B$ and, if the partial action is symmetrical, then \eqref{equation.varphi.induced.symmetrical} implies that  $\varphi(A)$ is a left ideal of $B$.
		
		Now assume that $\varphi(A)$ is a left ideal of $B$. Then, for $k\in H$, $a\in A$, for every $b\in A$, we have that $(k\triangleright\varphi(a))\varphi(b)\in\varphi(A)$. Hence, for every $c\in A$,
		\begin{eqnarray*}
			\varphi(c)\varphi((k\cdot a)b)(h)&=&\sum(h_{(1)}\cdot c)(h_{(2)}\cdot(k\cdot a))(h_{(3)}\cdot b)\\
			&=&\sum(h_{(1)}\cdot(c(k\cdot a)))(h_{(2)}\cdot b)\\
			&=&\sum(h_{(1)}\cdot c)(h_{(2)}k\cdot a)(h_{(3)}\cdot b)\\
			&=&\varphi(c)(k\triangleright \varphi(a))\varphi(b)(h).
		\end{eqnarray*}
		Now, since $r(A)=0$ and $\varphi$ is a monomorphism of algebras, we have that $r(\varphi(A))=0$. Then, as the previous calculation holds for every $c\in A$, we have that $$\varphi((k\cdot a)b)(h)=(k\triangleright \varphi(a))\varphi(b)(h),$$ i.e., the partial action is symmetrical.
	\end{proof}
	\begin{theorem} \label{theorem.minimal.globalization} Let $\cdot: H \otimes A \longrightarrow A$ be a symmetrical partial action. Then, the pair $(B,\varphi)$, where $B=H\triangleright \varphi(A)$, is a minimal globalization for $\cdot$.
	\end{theorem}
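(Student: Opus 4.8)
The plan is to check directly that $(B,\varphi)$ meets the five requirements of Definition \ref{def3} and then to establish minimality from the explicit formula for the $H$-action on $\HOM(H,A)$. Three of the five conditions are essentially immediate: condition (1) is part (1) of the preceding lemma, since $\HOM(H,A)$ is an $H$-module algebra and $B=H\triangleright\varphi(A)$ is an $H$-module subalgebra of it; condition (4) holds by the very definition of $B$; and for condition (2), $\varphi$ is an algebra morphism by construction and is injective because $\varphi(a)(1_H)=1_H\cdot a=a$, hence a monomorphism of algebras.

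For condition (3) I would simply quote equations \eqref{equation.varphi.induced} and \eqref{equation.varphi.induced.symmetrical}, which say precisely $\varphi(b(h\cdot a))=\varphi(b)(h\triangleright\varphi(a))$ and $\varphi((h\cdot a)b)=(h\triangleright\varphi(a))\varphi(b)$. For condition (5), note that $\varphi$ restricts to an isomorphism of algebras $A\to\varphi(A)$ (here I use that $\varphi(A)$ is a subalgebra of $\HOM(H,A)$, and that it is even a two-sided ideal of $B$ --- which is exactly where the symmetry hypothesis enters, via part (2) of the preceding lemma); thus $h\bullet\varphi(a):=\varphi(h\cdot a)$ is well defined by injectivity of $\varphi$ and transports the symmetrical partial $H$-module algebra structure of $A$ onto $\varphi(A)$. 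Concretely $1_H\bullet\varphi(a)=\varphi(a)$, and using \eqref{equation.varphi.induced} one gets
\begin{align*}
h\bullet\big(\varphi(a)(k\bullet\varphi(b))\big)
&= h\bullet\varphi\big(a(k\cdot b)\big)=\varphi\big(h\cdot(a(k\cdot b))\big)\\
&= \sum\varphi(h_{(1)}\cdot a)\,\varphi(h_{(2)}k\cdot b)=\sum (h_{(1)}\bullet\varphi(a))(h_{(2)}k\bullet\varphi(b)),
\end{align*}
and the third (symmetrical) axiom follows the same way from \eqref{equation.varphi.induced.symmetrical}. This shows $(B,\varphi)$ is a globalization.

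Finally, to prove minimality, suppose $\sum_i k h_i\cdot a_i=0$ for every $k\in H$. For an arbitrary $l\in H$,
\[
\Big(\sum_i h_i\triangleright\varphi(a_i)\Big)(l)=\sum_i\varphi(a_i)(lh_i)=\sum_i (lh_i)\cdot a_i,
\]
which vanishes by hypothesis with $k=l$; hence $\sum_i h_i\triangleright\varphi(a_i)=0$ in $\HOM(H,A)$, as required. The argument is mostly bookkeeping once the preceding lemma is available; the only genuinely delicate point is that condition (5) of Definition \ref{def3} really holds, and this reduces to the injectivity of $\varphi$ together with the identification of the ideal-compatibility identities with \eqref{equation.varphi.induced}--\eqref{equation.varphi.induced.symmetrical}. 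One could instead try to deduce minimality from Lemma \ref{r(B).r(A).minimal}(2) after checking $r(B)=0$, but that needs an extra argument about annihilators in $\HOM(H,A)$, so the direct computation above is preferable.
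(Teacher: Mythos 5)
Your proof is correct, and its overall shape matches the paper's: both work inside $\HOM(H,A)$, lean on the preceding lemma for the algebra/ideal structure of $B=H\triangleright\varphi(A)$ and $\varphi(A)$, and use evaluation at elements of $H$ as the key device. The one genuine divergence is in how item (5) of Definition \ref{def3} is handled. The paper does not argue by transport of structure; instead it shows that $\pi(h\triangleright\varphi(a))=\varphi(h\cdot a)$ is a well-defined projection of $B$ onto $\varphi(A)$ (well-definedness by evaluating at $1_H$) and then invokes Proposition \ref{proposition.projection} to conclude that $h\bullet\varphi(a)=\varphi(h\cdot a)$ is a partial action. Your route --- transporting the partial action of $A$ along the algebra isomorphism $\varphi:A\to\varphi(A)$, using \eqref{equation.varphi.induced} and \eqref{equation.varphi.induced.symmetrical} for item (3) --- is shorter and fully suffices for the definition as stated, but it does not record the extra information the paper's argument provides, namely that even when $r(A)$ and $l(A)$ are nontrivial the partial action on $\varphi(A)$ is induced from the global action via an $H$-projection (the point emphasized around Propositions \ref{proposition.projection} and \ref{proposition.quasi-g.is.g.if.r(A).or.l(A).trivial}). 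Conversely, you make explicit something the paper's proof leaves implicit: the minimality check, which is exactly the evaluation computation $(\sum_i h_i\triangleright\varphi(a_i))(l)=\sum_i lh_i\cdot a_i$ that the paper only spells out later, inside the proof of Theorem \ref{funtorial}. So your write-up is a valid, slightly more self-contained variant; if you want it to serve the later results as the paper's version does, add the observation that $\varphi(h\cdot a)=\pi(h\triangleright\varphi(a))$ for the projection $\pi$ above.
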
 
	\begin{proof}
		The fact that $(B,\varphi)$ satisfies item (1)-(4) of Definition \ref{def3} follows from similar calculations for the unital case. Here we only need to prove that there exists a projection as in Proposition \ref{proposition.projection}. In this case, the projection $\pi: H\triangleright \varphi(A)\to H\triangleright\varphi(A)$ given by $\pi(h\triangleright\varphi(a))=\varphi(h\cdot a)$, for all $h\in H$ and $a\in A$, is well-defined even if $r(A)\neq 0$ and $l(A)\neq 0$. To prove this, note that if $\sum h_i\triangleright \varphi(a_i)=\sum k_j\triangleright\varphi(b_j)$, then applying these morphisms in $1_H$ we obtain $\sum h_i\cdot a_i=\sum k_j\cdot b_j$, hence $\varphi(\sum h_i\cdot a_i)=\varphi(\sum k_j\cdot b_j)$.
	\end{proof}
	\begin{definition} The pair $(B,\varphi)$ as before will be called the \textit{standard} globalization of the symmetrical partial action $\cdot: H \otimes A \longrightarrow A$.
	\end{definition}
	
	We already know that when we consider only unital algebras, there exists only one minimal globalization, up to isomorphism \cite{alves1}. The next theorem shows that this also holds for a broader class of associative algebras.

	\begin{theorem} \label{thm.lifts}Let $H$ be a Hopf algebra, let $A$ and $A'$ be partial $H$-module algebras with symmetrical partial actions, and let $(B,\theta)$, $(B',\theta')$ be globalizations for $A$ and $A'$ respectively. Assume that $r(A')=0$ or $l(A')=0$. If  $(B,\theta)$ is a minimal globalization and $\alpha : A' \to A$ is a morphism of partial actions, then there exists a unique morphism of $H$-module algebras $\Phi : B' \to B$ such that 
		$\Phi (\theta'(a)) = \theta(\alpha(a))$ for each $a \in A'$.
	\end{theorem}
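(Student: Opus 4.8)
The plan is to build $\Phi$ directly from the presentation of $B'$ as $H\triangleright\theta'(A')$. Writing $\triangleright$ for the module actions on both $B$ and $B'$ and $\cdot$ for the partial actions on both $A$ and $A'$, every element of $B'$ is a finite sum $\sum_i h_i\triangleright\theta'(a_i)$ with $h_i\in H$, $a_i\in A'$, and the only conceivable $H$-linear map with the prescribed restriction to $\theta'(A')$ is
\[
\Phi\Big(\sum_i h_i\triangleright\theta'(a_i)\Big)=\sum_i h_i\triangleright\theta(\alpha(a_i)).
\]
This already settles uniqueness: any $H$-module algebra morphism $\Phi$ with $\Phi(\theta'(a))=\theta(\alpha(a))$ must satisfy $\Phi(h\triangleright\theta'(a))=h\triangleright\theta(\alpha(a))$, and these elements span $B'$. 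So everything comes down to showing that this formula yields a well-defined linear map and that it is multiplicative; it is $H$-linear by construction, and it restricts to $a\mapsto\theta(\alpha(a))$ on $\theta'(A')$ upon setting $h=1_H$.

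\emph{Well-definedness} is the crux, and it is here that the hypotheses on $A'$ and the minimality of $(B,\theta)$ enter. Assume $r(A')=0$; the case $l(A')=0$ is symmetric. Suppose $\sum_i h_i\triangleright\theta'(a_i)=0$ in $B'$, and fix $k\in H$. Acting by $k$ gives $\sum_i(kh_i)\triangleright\theta'(a_i)=0$. Since $\theta'(A')$ is an ideal of $B'$ (items (3)--(4) of Definition~\ref{def3}), for each $c\in A'$ we may left-multiply by $\theta'(c)$ and apply $\theta'(c)(g\triangleright\theta'(a))=\theta'(c(g\cdot a))$ to obtain $\theta'\big(c\sum_i kh_i\cdot a_i\big)=0$; as $\theta'$ is injective and $r(A')=0$, this forces $\sum_i kh_i\cdot a_i=0$ in $A'$. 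This holds for every $k\in H$. Since $\alpha$ is a morphism of partial $H$-module algebras, $\sum_i kh_i\cdot\alpha(a_i)=\alpha\big(\sum_i kh_i\cdot a_i\big)=0$ for every $k\in H$, and minimality of $(B,\theta)$ now yields $\sum_i h_i\triangleright\theta(\alpha(a_i))=0$. Hence $\Phi$ is a well-defined linear map.

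It remains to check \emph{multiplicativity}. Using the $H$-module algebra axiom for $B'$ in the form $(k\triangleright u)v=\sum k_{(1)}\triangleright\big(u\,(S(k_{(2)})\triangleright v)\big)$ together with $\theta'(a)(g\triangleright\theta'(b))=\theta'(a(g\cdot b))$, a product of generators reduces to a sum of generators:
\[
(k\triangleright\theta'(a))\,(h\triangleright\theta'(b))=\sum k_{(1)}\triangleright\theta'\big(a\,(S(k_{(2)})h\cdot b)\big).
\]
Applying $\Phi$ and using that $\alpha$ is an algebra map intertwining the partial actions, the right-hand side becomes $\sum k_{(1)}\triangleright\theta\big(\alpha(a)\,(S(k_{(2)})h\cdot\alpha(b))\big)$; running the identical manipulation inside $B$ shows this equals $(k\triangleright\theta(\alpha(a)))\,(h\triangleright\theta(\alpha(b)))=\Phi(k\triangleright\theta'(a))\,\Phi(h\triangleright\theta'(b))$. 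By bilinearity this extends to all of $B'$, so $\Phi$ is the desired morphism of $H$-module algebras. The only delicate step is the well-definedness of the previous paragraph, which genuinely needs both $r(A')=0$ (or $l(A')=0$), to recover relations among the $a_i$ inside $A'$, and the minimality of $(B,\theta)$, to transport those relations into $B$; everything else is routine coproduct bookkeeping.
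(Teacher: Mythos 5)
Your proposal is correct and follows essentially the same route as the paper: uniqueness from $B'=H\triangleright\theta'(A')$, well-definedness by multiplying $k\triangleright x$ by $\theta'(c)$, using injectivity of $\theta'$ and $r(A')=0$ to get $\sum_i kh_i\cdot a_i=0$, transporting this through $\alpha$ and invoking minimality of $(B,\theta)$, and multiplicativity via the identity $(k\triangleright\theta(a))(h\triangleright\theta(b))=\sum k_{(1)}\triangleright\theta\bigl(a(S(k_{(2)})h\cdot b)\bigr)$. The only difference is cosmetic: the paper cites the unital case for the multiplicativity computation, which you instead carry out explicitly.
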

	
	\begin{proof}  Let the global $H$-actions on $B'$ and $B$ be denoted by ``$\blacktriangleright$'' and ``$\triangleright$'' respectively. 
		If $\Phi : B' \to B$ satisfies the conditions above then clearly it is unique, since $B' = H \blacktriangleright \theta'(A')$ and hence
		\[
		\Phi \left(\sum_{i=1}^nh_i\blacktriangleright \theta'(a_i)\right) =
		\sum_{i=1}^nh_i\triangleright  \theta (\alpha(a_i));
		\]
		then we will define a linear map $\Phi : B' \to B$ by the previous expression. 
		
		First we will prove that $\Phi$ is well-defined. It is enough to show that if $x=\sum_{i=1}^nh_i\blacktriangleright \theta'(a_i)=0$ then  
		${\sum_{i=1}^nh_i\triangleright \theta(\alpha(a_i))=0}$.
		For every $c\in A'$ and $k\in H$,
		\[	
		0=\theta'(c)(k\blacktriangleright x) =\theta'(c)(\sum_{i=1}^nkh_i\blacktriangleright \theta'(a_i)) = \theta'(c\sum_{i=1}^nkh_i\cdot a_i).
		\]
		Since $\theta': A' \to B'$ is injective, we have that $\sum_{i=1}^nc(kh_i\cdot a_i)=0$ for every $c\in A'$, $k\in H$. 
		Assuming that $r(A')=0$ it follows that $\sum_{i=1}^nkh_i\cdot a_i=0$ for every $k\in H$, hence also 
		\[
		\sum_{i=1}^nkh_i\cdot \alpha(a_i)= \alpha \left( \sum_{i=1}^nkh_i\cdot a_i \right) = 0
		\] in $A$; since $(B,\theta)$ is a minimal globalization, we have that $\sum_{i=1}^nh_i\triangleright \theta(\alpha(a_i))=0$, thus showing that $\Phi: B' \to B$ is well-defined. The case where $l(A')=0$ follows by a similar calculation. 
		
		$\Phi$ is a morphism of $H$-modules by construction, and the proof that $\Phi$ is a morphism of (nonunital) algebras is the same as in \cite{alves1}, which is based essentially on the equality
		\begin{eqnarray*}
			(h\triangleright\theta(a))(k\triangleright\theta(b))=\sum h_{(1)}\triangleright\theta(a(S(h_{(2)})k\cdot b)),
		\end{eqnarray*}
		that is a consequence of axiom $3)$ of the definition of globalization. 
	\end{proof}
	
	\begin{corollary}\label{r(A).l(A).trivial.unique.glob.minimal}
		Let $H$ be a Hopf algebra and $A$ a partial $H$-module algebra with symmetrical partial action. If $r(A)=0$ or $l(A)=0$, there exist only one minimal globalization of the partial action on $A$ up to isomorphism.
	\end{corollary}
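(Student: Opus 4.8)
The plan is to obtain this as a formal consequence of the universal property established in Theorem \ref{thm.lifts}, exactly as uniqueness of the minimal globalization is obtained in the unital case. Existence is already provided by Theorem \ref{theorem.minimal.globalization}, since the standard globalization $(B,\varphi)$ with $B = H\triangleright\varphi(A)$ is minimal; so the only thing to prove is that any two minimal globalizations are isomorphic as globalizations, i.e.\ via an isomorphism of $H$-module algebras compatible with the embeddings of $A$.

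Let $(B,\theta)$ and $(B',\theta')$ be two minimal globalizations of the symmetrical partial action $\cdot : H\otimes A\to A$, with global actions $\triangleright$ on $B$ and $\blacktriangleright$ on $B'$. First I would apply Theorem \ref{thm.lifts} with $A' = A$ and $\alpha = \mathrm{id}_A$: since $(B,\theta)$ is minimal and, by hypothesis, $r(A)=0$ or $l(A)=0$, there is a unique morphism of $H$-module algebras $\Phi : B' \to B$ with $\Phi\circ\theta' = \theta$. Interchanging the roles of the two globalizations and using that $(B',\theta')$ is minimal (and again $r(A)=0$ or $l(A)=0$), Theorem \ref{thm.lifts} also yields a unique morphism of $H$-module algebras $\Psi : B \to B'$ with $\Psi\circ\theta = \theta'$.

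Next I would show $\Psi\circ\Phi = \mathrm{id}_{B'}$ and $\Phi\circ\Psi = \mathrm{id}_B$ using the uniqueness clause of Theorem \ref{thm.lifts}. The composite $\Psi\circ\Phi : B'\to B'$ is a morphism of $H$-module algebras and satisfies $(\Psi\circ\Phi)\circ\theta' = \Psi\circ\theta = \theta'$; the identity $\mathrm{id}_{B'}$ is another morphism of $H$-module algebras with the same property, so by the uniqueness part of Theorem \ref{thm.lifts} (with minimal target $(B',\theta')$, source the globalization $(B',\theta')$ of $A' = A$, and $\alpha = \mathrm{id}_A$) we conclude $\Psi\circ\Phi = \mathrm{id}_{B'}$. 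The symmetric argument gives $\Phi\circ\Psi = \mathrm{id}_B$. Hence $\Phi$ is an isomorphism of $H$-module algebras, and since $\Phi\circ\theta' = \theta$ and $\Phi^{-1}\circ\theta = \Psi\circ\theta = \theta'$, it is an isomorphism of globalizations, which proves the uniqueness up to isomorphism.

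There is essentially no hard step here, since all the substantive work is encapsulated in Theorem \ref{thm.lifts}; the only points requiring a moment's care are that a composite of morphisms of $H$-module algebras is again one, and that the identity map is covered by the uniqueness clause. The one conceptual remark worth making explicit in the write-up is that the hypothesis ``$r(A)=0$ or $l(A)=0$'' is used each time the lifting theorem is invoked, as it is precisely the nondegeneracy condition imposed there on the source algebra.
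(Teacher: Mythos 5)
Your proposal is correct and follows essentially the same argument as the paper: lift $\mathrm{id}_A$ in both directions via Theorem \ref{thm.lifts} (using minimality of each target and the hypothesis $r(A)=0$ or $l(A)=0$), then use the uniqueness clause to conclude that both composites are identities. The extra details you spell out (composites of $H$-module algebra morphisms, applicability of uniqueness to the identity) are fine but add nothing beyond the paper's proof.
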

	\begin{proof}
		Let $(B,\theta)$, $(B', \theta')$ be two minimal globalizations of the partial $H$-module algebra $A$. By the previous result the identity map of $A$ lifts to morphisms of $H$-module algebras $\Phi : B' \to B$ and $\Psi : B \to B'$ and, by uniqueness, it follows that $\Psi \circ \Phi = I_{B'}$ and $\Phi \circ \Psi = I_B$.
	\end{proof}
	
	\begin{theorem}\label{funtorial} Let $H$ be a Hopf algebra, let $A$ and $A'$ be partial $H$-module algebras with symmetrical partial actions, and let $(B,\varphi)$, $(B',\varphi')$ be the standard globalizations for $A$ and $A'$, respectively. If $\alpha : A' \to A$ is a morphism of partial actions, then there exists a unique morphism of $H$-module algebras $\Phi : B' \to B$ such that 
		$\Phi (\varphi'(a)) = \varphi(\alpha(a))$ for each $a \in A'$. 
	\end{theorem}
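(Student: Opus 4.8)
The plan is to exploit the concreteness of the standard globalization. Since $B = H\triangleright\varphi(A)$ and $B' = H\triangleright\varphi'(A')$ are by construction $H$-submodule algebras of the convolution algebras $\HOM(H,A)$ and $\HOM(H,A')$, and the assignment $A\mapsto\HOM(H,A)$ is functorial, the morphism $\alpha$ should induce $\Phi$ directly, with no need for the annihilator hypotheses that appeared in Theorem \ref{thm.lifts}. Recall that a morphism of partial actions $\alpha:A'\to A$ is an algebra map satisfying $\alpha(h\cdot a) = h\cdot\alpha(a)$ for all $h\in H$, $a\in A'$.

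First I would define $\widetilde\Phi:\HOM(H,A')\to\HOM(H,A)$ by $\widetilde\Phi(f) = \alpha\circ f$, and check that it is a morphism of $H$-module algebras. It is multiplicative because the convolution products of $\HOM(H,A')$ and $\HOM(H,A)$ involve only the comultiplication of $H$ and the multiplications of $A'$ and $A$, and $\alpha$ respects the latter: $\widetilde\Phi(f)\widetilde\Phi(g)(h) = \sum\alpha(f(h_{(1)}))\alpha(g(h_{(2)})) = \alpha\bigl(\sum f(h_{(1)})g(h_{(2)})\bigr) = \widetilde\Phi(fg)(h)$. It is $H$-equivariant because, for all $h,k\in H$, $\widetilde\Phi(k\triangleright f)(h) = \alpha(f(hk)) = \widetilde\Phi(f)(hk) = (k\triangleright\widetilde\Phi(f))(h)$.

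Next I would verify that $\widetilde\Phi$ sends $\varphi'(A')$ into $\varphi(A)$: for $a\in A'$ and $h\in H$, $\widetilde\Phi(\varphi'(a))(h) = \alpha(\varphi'(a)(h)) = \alpha(h\cdot a) = h\cdot\alpha(a) = \varphi(\alpha(a))(h)$, where the third equality uses that $\alpha$ is a morphism of partial actions. Combined with $H$-equivariance this gives $\widetilde\Phi(h\triangleright\varphi'(a)) = h\triangleright\varphi(\alpha(a))\in H\triangleright\varphi(A) = B$, so $\widetilde\Phi$ restricts to a morphism of $H$-module algebras $\Phi := \widetilde\Phi|_{B'}:B'\to B$, which by the computation above satisfies $\Phi(\varphi'(a)) = \varphi(\alpha(a))$ for every $a\in A'$. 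For uniqueness, any morphism of $H$-module algebras $\Psi:B'\to B$ with $\Psi\circ\varphi' = \varphi\circ\alpha$ must satisfy $\Psi\bigl(\sum_i h_i\triangleright\varphi'(a_i)\bigr) = \sum_i h_i\triangleright\Psi(\varphi'(a_i)) = \sum_i h_i\triangleright\varphi(\alpha(a_i))$, and since $B' = H\triangleright\varphi'(A')$ this forces $\Psi = \Phi$.

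I do not expect a genuine obstacle; the only point that needs care is confirming that $\widetilde\Phi$ actually takes values in the sub-$H$-module algebra $B$ rather than merely in $\HOM(H,A)$, which is precisely what the equivariance computation together with $\widetilde\Phi(\varphi'(a)) = \varphi(\alpha(a))$ delivers. It is worth remarking that this argument provides a hypothesis-free proof of the lifting property in the special case of standard globalizations, complementing Theorem \ref{thm.lifts} (whose extra assumptions were needed only to guarantee that the analogous formula defines $\Phi$ when the globalizations are given abstractly).
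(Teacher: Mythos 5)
Your proposal is correct, and it reaches the conclusion by a genuinely different (and cleaner) route than the paper. The paper defines $\Phi$ on generators by the same formula as in Theorem \ref{thm.lifts}, $\Phi\left(\sum_i h_i\triangleright\varphi'(a_i)\right)=\sum_i h_i\triangleright\varphi(\alpha(a_i))$, and the whole content of its proof is the well-definedness check: if $\sum_i h_i\triangleright\varphi'(a_i)=0$, evaluating at $k\in H$ gives $\sum_i kh_i\cdot a_i=0$, hence $\sum_i kh_i\cdot\alpha(a_i)=0$ for all $k$, and minimality of the standard globalization $(B,\varphi)$ forces $\sum_i h_i\triangleright\varphi(\alpha(a_i))=0$; multiplicativity and $H$-linearity are then inherited from the argument of Theorem \ref{thm.lifts} (which in turn refers to \cite{alves1}). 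You instead define the post-composition map $\widetilde\Phi(f)=\alpha\circ f$ on all of $\HOM(H,A')$, where well-definedness is automatic, multiplicativity is immediate from the convolution product and the multiplicativity of $\alpha$, and $H$-equivariance is a one-line check; the only thing left to verify is that $\widetilde\Phi(\varphi'(a))=\varphi(\alpha(a))$, which is exactly the condition that $\alpha$ intertwines the partial actions, and this plus equivariance shows $\widetilde\Phi(B')\subseteq B$. The uniqueness argument is the same in both proofs. What your approach buys is self-containedness (no appeal to Theorem \ref{thm.lifts} or to the multiplicativity computation from \cite{alves1}) and it makes the functoriality of the standard globalization, noted in the remark after the theorem, completely transparent; what the paper's phrasing buys is uniformity with the abstract lifting result, since it exhibits the standard globalization as the case of Theorem \ref{thm.lifts} in which the annihilator hypotheses can be dropped. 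Note also that, as you observe, your argument never uses the symmetry hypothesis directly; it is only needed so that $(B,\varphi)$ and $(B',\varphi')$ are globalizations in the first place.
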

	\begin{proof} If there exists such a morphism, it should be as in Theorem \ref{thm.lifts}. But in this case, $\Phi$ is well-defined even if $r(A')$ and $l(A')$ are nontrivial. In fact, if $\sum_ih_i\triangleright\varphi'(a_i)=0$, then $(\sum_ih_i\triangleright\varphi'(a_i))(k)=0$ for every $k\in H$, i.e., $\sum_ikh_i\cdot a_i=0$. Since standard globalizations are minimal and $\alpha(\sum_ikh_i\cdot a_i)=\sum_ikh_i\cdot \alpha(a_i)=0$, we have that $\Phi(\sum_ih_i\triangleright\varphi'(a_i))=\sum_ih_i\triangleright\varphi(\alpha(a_i))=0$.
	\end{proof}
	
	\begin{remark}
		It follows from Theorem \ref{funtorial} that the standard globalization defines a functor from partial $H$-module algebras to the category of $H$-module algebras. Moreover, it follows from Lemma \ref{r(B).r(A).minimal} that if $H$ has bijective antipode, then the standard globalization defines a functor from partial $H$-module algebras with trivial right annihilator to the category of $H$-module algebras with trivial right annihilator.
	\end{remark}
	
	Note that if $r(A)\neq0$ and $l(A)\neq 0$, there always exists at least one minimal globalization, the standard globalization. By similar calculations as in Theorems \ref{thm.lifts} and \ref{funtorial}, we know that there always exists at least an \emph{epimorphism} from the standard globalization to any other minimal globalization.

	\section{Partial Actions on Algebras with local units \label{section.local.units}}
	\subsection{Categorizable partial actions}
	Now that we have established a good definition for partial actions on associative algebras in general, we will restrict our study to algebras with local units and associate the obtained results to the known results about partial actions on categories.
	We begin by presenting an equivalent definition of partial action in the context of algebras with local units.
	
	\begin{definition}
		A $\Bbbk$-category is a category $\mathcal{C}$ such that every set of morphisms ${}_y\mathcal{C}_x= \HOM_{\mathcal{C}}(x,y)$ is a vector space over $\Bbbk$ and the composition maps $\circ :{}_z\mathcal{C}_y \times {}_y\mathcal{C}_x \to {}_z\mathcal{C}_x $ are $\Bbbk$-bilinear for every $x,y,z \in \mathcal{C}_{0}$.
	\end{definition}
	
	\begin{definition}
		Let $A$ be a $\Bbbk$-algebra. 
		The set $S=\{e_\lambda\}_{\lambda\in\Lambda}$ is called a \textit{system of local units} of $A$ if $e_\lambda^2=e_\lambda$, for every $\lambda\in\Lambda$, and for every finite subset $F$ of $A$, there exists $e_\alpha\in S$ such that $e_\alpha a=ae_\alpha=a$ for every $a\in F$. In this case we say that $A$ is an \textit{algebra with local units}.
	\end{definition}

	\begin{remark}
		Let $A$ be an algebra and $S$ be a system of local units. 
		We recall that there is a partial order on $S$ defined by 
		\[
		e_\alpha \leq e_\beta \iff e_\alpha e_\beta =  e_\beta e_\alpha = e_\alpha.
		\]
		By the definition of system of local units, 
		given any finite set $F$ of local units, there exists a local unit $e_\alpha$ such that $e_\lambda e_\alpha = e_\lambda e_\beta = e_\lambda$ for every $e_\lambda \in F$, i.e., $ e_\lambda \leq e_\alpha$ for all $e_\beta \in F$.
	\end{remark}
	
	\begin{proposition}Let $H$ be a Hopf algebra, $A$ an algebra with local units with system of local units $S=\{e_{\lambda}\}_{\lambda \in \Lambda}$, and $\cdot : H\otimes A \longrightarrow A$ a linear map. Then this map defines a partial $H$-action if and only if, for all $a,b \in A$, $h,k \in H$, we have:
		\begin{enumerate}[\normalfont(1)]
			\item $1_H\cdot a=a$;
			\item $h\cdot(ab)=\sum(h_{(1)}\cdot a)(h_{(2)}\cdot b)$;
			\item $h\cdot (k\cdot a)=\sum(h_{(1)}\cdot e_{\alpha})(h_{(2)}k\cdot a)$, for every $e_{\alpha} \in S$ such that $e_{\alpha}(k\cdot a)=k\cdot a$.
		\end{enumerate}
		Additionally, this partial partial action is symmetrical if and only if $$h\cdot (k\cdot a)=\sum(h_{(1)}k\cdot a)(h_{(2)}\cdot e_{\beta})$$ for every $e_{\beta} \in S$ such that $(k\cdot a)e_{\beta}=k\cdot a$.
	\end{proposition}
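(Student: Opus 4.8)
The statement is a reformulation, for algebras with local units, of Definition~\ref{partial.action}: the plan is to show that axioms (1)--(3) are together equivalent to axioms (1)--(2) of Definition~\ref{partial.action}, and that the displayed symmetry condition is, in the presence of (1)--(3), equivalent to axiom (3) of Definition~\ref{partial.action}. So there are four implications to check, two of which are immediate and two of which carry the actual content.

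First I would treat the easy direction, assuming $\cdot$ is a partial $H$-action (respectively, symmetrical). Axiom (1) is literally the same. For axiom (2), I would put $k=1_H$ in axiom (2) of Definition~\ref{partial.action} and use $1_H\cdot b=b$, which gives $h\cdot(ab)=\sum(h_{(1)}\cdot a)(h_{(2)}\cdot b)$. For axiom (3), given $e_\alpha\in S$ with $e_\alpha(k\cdot a)=k\cdot a$, I would write $h\cdot(k\cdot a)=h\cdot\big(e_\alpha(k\cdot a)\big)$ and apply axiom (2) of Definition~\ref{partial.action} with $e_\alpha$ in the role of $a$ and $a$ in the role of $b$, obtaining $\sum(h_{(1)}\cdot e_\alpha)(h_{(2)}k\cdot a)$ directly. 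Likewise, if the action is symmetrical and $(k\cdot a)e_\beta=k\cdot a$, then $h\cdot(k\cdot a)=h\cdot\big((k\cdot a)e_\beta\big)=\sum(h_{(1)}k\cdot a)(h_{(2)}\cdot e_\beta)$ by axiom (3) of Definition~\ref{partial.action}.

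The substantive direction is the converse, and here the local units are essential. Fix $a,b\in A$ and $h,k\in H$, and choose (by the defining property of $S$) a single $e_\alpha\in S$ that is a local unit for the finite set $\{a,\,k\cdot b\}$, so that $ae_\alpha=a$ and $e_\alpha(k\cdot b)=k\cdot b$. Starting from axiom (2) of the proposition, $h\cdot\big(a(k\cdot b)\big)=\sum(h_{(1)}\cdot a)\big(h_{(2)}\cdot(k\cdot b)\big)$. Viewing axiom (3) as an identity of linear maps in its first variable, I would substitute $h_{(2)}\cdot(k\cdot b)=\sum(h_{(2)(1)}\cdot e_\alpha)(h_{(2)(2)}k\cdot b)$; coassociativity then turns the right-hand side into $\sum(h_{(1)}\cdot a)(h_{(2)}\cdot e_\alpha)(h_{(3)}k\cdot b)$. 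Reassociating the coproduct the other way and applying axiom (2) of the proposition together with $ae_\alpha=a$, one has $\sum(h_{(1)(1)}\cdot a)(h_{(1)(2)}\cdot e_\alpha)=h_{(1)}\cdot(ae_\alpha)=h_{(1)}\cdot a$, so the expression collapses to $\sum(h_{(1)}\cdot a)(h_{(2)}k\cdot b)$, which is exactly axiom (2) of Definition~\ref{partial.action}. For the symmetry statement I would run the mirror-image computation: expand $h\cdot\big((k\cdot b)a\big)=\sum\big(h_{(1)}\cdot(k\cdot b)\big)(h_{(2)}\cdot a)$ using axiom (2) of the proposition, pick a common local unit $e_\beta$ for $\{a,\,k\cdot b\}$, apply the displayed symmetry condition to the factor $h_{(1)}\cdot(k\cdot b)$, and then fold the last two legs of the coproduct back with axiom (2) of the proposition and $e_\beta a=a$ to recover $h\cdot\big((k\cdot b)a\big)=\sum(h_{(1)}k\cdot b)(h_{(2)}\cdot a)$, i.e. axiom (3) of Definition~\ref{partial.action}.

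The only point demanding care is the Sweedler-index bookkeeping when a two-leg identity is plugged into one leg of a coproduct and the result is reassociated; this is routine provided the coassociativity reindexings are applied in the right order. Conceptually, the whole proof rests on a single observation: on an algebra with local units one can always insert an idempotent that acts as a unit on the finitely many relevant elements, and this insertion is precisely what converts between the compact ``multiplicativity plus twisted associativity over a local unit'' form of the axioms and the global form of Definition~\ref{partial.action}.
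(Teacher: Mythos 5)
Your proposal is correct and follows essentially the same route as the paper: the easy direction by specializing/inserting a local unit, and the converse by choosing a common local unit for the relevant finite set, expanding with multiplicativity, substituting the local-unit form of the composition axiom, and folding the coproduct back via $ae_\alpha=a$ (the paper only displays this computation for the non-symmetric axiom and leaves the symmetric case, which you spell out, as analogous). No gaps.
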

	It is clear that if the linear map $\cdot:H\otimes A\to A$ is a partial action then (1), (2) and (3) hold. Conversely, assuming these properties,
	item (2) of Definition 2 follows from the fact that if $A$ is a partial $H$-module algebra with local units, for every $a,b\in A$, $h,k\in H$, there exists $e_\alpha \in S$ such that $be_\alpha=b$ and $e_\alpha(k\cdot a)=k\cdot a$, then:
	\begin{eqnarray*}
		h\cdot (b(k\cdot a))&=&\sum (h_{(1)}\cdot b)(h_{(2)}\cdot(k\cdot a))\\
		&=&\sum (h_{(1)}\cdot b)(h_{(2)}\cdot e_\alpha)(h_{(3)}k\cdot a)\\
		&=&\sum (h_{(1)}\cdot be_\alpha)(h_{(2)}k\cdot a)\\
		&=&\sum (h_{(1)}\cdot b)(h_{(2)}k\cdot a).
	\end{eqnarray*}
	This calculation illustrates that if $\cdot : H\otimes A \longrightarrow A$ is a partial action for some s.l.u. (system of local units) $S$ of $A$, then it will be a partial action for any other s.l.u. of $A$.
	
	Now we will see a particular class of partial actions on algebras with local units, the \textit{categorizable} ones. This terminology is motivated by the fact that given a categorizable partial action, then we can induce a partial action on an associated category.
	
	We begin by recalling the construction of the algebra with local units associated to a $\Bbbk$-category and the definition of partial $H$-module category.

	\begin{definition} Let $\mathcal{C}$ be a $\Bbbk$-category. We will denote by $a(\mathcal{C})$ the algebra consisting of elements of the form $(\,_yf_x)_{x,y\in\mathcal{C}_0}$, with finite $_yf_x\neq 0$, where the multiplication is given by matrix multiplication and the composition of $\mathcal{C}$.
	\end{definition}

	Note that $(\,_yf_x)_{x,y\in\mathcal{C}_0}$ is the matrix notation of the elements of the algebra $a(\mathcal{C})$ which, as a vector space, is the direct sum $\bigoplus_{x,y\in\mathcal{C}_0}\,_yC_x$.
	It is well-known that $a(C)$ is an algebra with local units: in fact, one has the canonical system $S_0$ of local units whose elements are finite sums of distinct elements $e_{xx}$, which are given by $_x1_x$ at the position $(x,x)$ and zero at every other position. In this manner, for every finite subset $P \subset \mathcal{C}_0$, $P \neq \varnothing$, we may associate the local unit $e_P = \sum_{x \in P} e_{x,x}$.
	
	\begin{definition}\cite{cibils-solotar-galois}. An action of a Hopf algebra $H$ on a $\Bbbk$-category $\mathcal{C}$ is a family of linear maps 
		$\triangleright=\{\triangleright_{(x,y)} : H \otimes \,_y\mathcal{C}_x \longrightarrow \,_y\mathcal{C}_x\}_{x,y \in \mathcal{C}_0}$, such that, for every $x,y,z \in \mathcal{C}_0$, $\,_yf_x\in\,_y\mathcal{C}_x$ and $\,_zg_y\in\,_z\mathcal{C}_y$, we have that:
		\begin{enumerate}[\normalfont(1)]
			\item $1_H\triangleright_{(x,y)} \,_yf_x=\,_yf_x$;
			\item $h\triangleright_{(x,z)}(\,_zg_y\circ\,_yf_x)=\sum(h_{(1)}\triangleright_{(y,z)} \,_zg_y)\circ(h_{(2)}\triangleright_{(x,y)} \,_yf_x)$;
			\item $h\triangleright_{(x,y)}(k\triangleright_{(x,y)} \,_yf_x)=
			(h k)\triangleright_{(x,y)} \,_yf_x
			$;
			\item $ h \triangleright_{(x,x)} \,_x1_x = \varepsilon(h)\, _x1_x$.
		\end{enumerate}
		We also say that $\mathcal{C}$ is an $H$-module category.
	\end{definition}    
	
	Combining the concept of $H$-module category from \cite{cibils-solotar-galois} with the definition of partial $H$-module algebra from \cite{caenepeel}, we obtain the following definition from \cite{alves2}. 
	\begin{definition}[\cite{alves2}] A partial action of $H$ on a $\Bbbk$-category $\mathcal{C}$ is a family of linear maps $\triangleright=\{\triangleright_{(x,y)} : H \otimes \,_y\mathcal{C}_x \longrightarrow \,_y\mathcal{C}_x\}_{x,y \in \mathcal{C}_0}$, such that, for every $x,y,z \in \mathcal{C}_0$, $\,_yf_x\in\,_y\mathcal{C}_x$ and $\,_zg_y\in\,_z\mathcal{C}_y$, we have that:
		\begin{enumerate}[\normalfont(1)]
			\item $1_H\triangleright_{(x,y)} \,_yf_x=\,_yf_x$;
			\item $h\triangleright_{(x,z)}(\,_zg_y\circ\,_yf_x)=\sum(h_{(1)}\triangleright_{(y,z)} \,_zg_y)\circ(h_{(2)}\triangleright_{(x,y)} \,_yf_x)$;
			\item $h\triangleright_{(x,y)}(k\triangleright_{(x,y)} \,_yf_x)=\sum(h_{(1)}\triangleright_{(y,y)} \,_y1_y)\circ(h_{(2)}k\triangleright_{(x,y)} \,_yf_x)$;
			\item If additionally $h\triangleright_{(x,y)}(k\triangleright_{(x,y)} \,_yf_x)=\sum(h_{(1)}k\triangleright_{(x,y)} \,_yf_x)\circ(h_{(2)}\triangleright_{(x,x)} \,_x1_x)$, $\triangleright$ is called a symmetrical partial action.
		\end{enumerate}
		In this case, we say that $\mathcal{C}$ is a (symmetrical) partial $H$-module category.
	\end{definition}
	Here, different from \cite{alves2}, we present the symmetrical condition as an additional property.
	
	\begin{definition} If a linear map $\cdot : H\otimes A \longrightarrow A$ is a (symmetrical) partial action and there exists some s.l.u. $S=\{e_{\lambda}\}_{\lambda \in \Lambda}$ such that $H\cdot e_\alpha \subseteq e_\alpha A e_\alpha$, for every $e_\alpha \in S$, we will say that this partial action is an \textit{$S$-categorizable (symmetrical) partial action}.
	\end{definition}

	\begin{definition}
		Let $A$ be an algebra and $S$ be a system of local units for $A$. The category $\mathcal{C}^{S}(A)$ has $\Lambda$ as its set of objects. For any two objects $\alpha,\beta \in \Lambda$ we associate the vector space of morphisms  $\mathcal{C}^{S}(A)(\alpha,\beta)=\,_\beta\mathcal{C}^{S}(A)_\alpha=e_\beta Ae_\alpha$, and composition is just the restriction of multiplication of $A$.  
	\end{definition}
	
	Note that $\mathcal{C}^{S}(A)$ is a $\Bbbk$-category. The next result explains the name ``$S$-categorizable partial action''. 
	
	\begin{proposition}
		Let $\mathcal{C}$ be a $\Bbbk$-category and let $A$ be an algebra with system of local units $S$. 
		\begin{enumerate}[\normalfont(1)]
			\item Every (symmetrical) partial $H$-module category $\mathcal{C}$ induces canonically a (symmetrical) categorizable partial action of $H$ on the algebra $a(\mathcal{C})$ for the canonical system of local units of $a(\mathcal{C})$.
			\item Every (symmetrical) $S_0$-categorizable partial action of $H$ on $a(\mathcal{C})$ induces a (symmetrical) partial $H$-action on the category $\mathcal{C}^{S_0}(A)$.
		\end{enumerate}
	\end{proposition}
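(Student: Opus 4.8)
For item (1), the plan is to transport the partial $H$-module category structure entrywise. Since $a(\mathcal{C}) = \bigoplus_{x,y \in \mathcal{C}_0} {}_y\mathcal{C}_x$, I would define $\cdot : H \otimes a(\mathcal{C}) \to a(\mathcal{C})$ on the element $f$ having a single nonzero entry ${}_yf_x$ at the position $(x,y)$ by declaring $h \cdot f$ to be the element with single entry $h \triangleright_{(x,y)} {}_yf_x$ at $(x,y)$, and extend linearly; this is well defined by the universal property of the direct sum. To prove this is a partial action I would invoke the equivalent characterization for algebras with local units from the preceding proposition, using the canonical system $S_0 = \{ e_P : P \subseteq \mathcal{C}_0 \text{ finite, nonempty} \}$ with $e_P = \sum_{x\in P} e_{xx}$. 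Condition $1_H \cdot a = a$ is axiom (1) of a partial $H$-module category; the condition $h \cdot (ab) = \sum (h_{(1)} \cdot a)(h_{(2)} \cdot b)$ reduces by bilinearity to one entry of the matrix product $ab$, that is to a composition ${}_zg_y \circ {}_yf_x$, where it is exactly axiom (2); and for the third condition, given an elementary element with entry ${}_yf_x$ and any $e_P \in S_0$ with $y \in P$ (so that $e_P(k\cdot f) = k\cdot f$), the product $(h_{(1)}\cdot e_P)(h_{(2)}k\cdot f)$ receives a contribution only from the diagonal entry $h_{(1)}\triangleright_{(y,y)}{}_y1_y$ of $h_{(1)}\cdot e_P$ at the object $y$, and one recovers precisely the right-hand side of axiom (3).

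Categorizability for $S_0$ is immediate: $h \cdot e_{xx}$ has its only possible nonzero entry at $(x,x)$, hence $h \cdot e_P \in e_P\, a(\mathcal{C})\, e_P$ for every $e_P \in S_0$, i.e. $H \cdot e_P \subseteq e_P\, a(\mathcal{C})\, e_P$. In the symmetrical case one verifies in the same way the extra identity $h\cdot(k\cdot a) = \sum(h_{(1)}k\cdot a)(h_{(2)}\cdot e_\beta)$ of the preceding proposition on an elementary element with entry ${}_yf_x$, now choosing $e_\beta = e_P$ with $x \in P$ and using the symmetrical axiom (4) of a partial $H$-module category.

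For item (2), let $\cdot : H \otimes a(\mathcal{C}) \to a(\mathcal{C})$ be an $S_0$-categorizable partial action and recall that the morphism spaces of $\mathcal{C}^{S_0}(a(\mathcal{C}))$ are ${}_Q\mathcal{C}^{S_0}(a(\mathcal{C}))_P = e_Q\, a(\mathcal{C})\, e_P$, with composition the multiplication of $a(\mathcal{C})$ and identity morphism $e_P$ at the object $P$. The crucial point is that $\cdot$ \emph{restricts} to each morphism space: if $f = e_Q f e_P$, then using $h\cdot(ab) = \sum(h_{(1)}\cdot a)(h_{(2)}\cdot b)$ together with $h_{(1)}\cdot e_Q \in e_Q\, a(\mathcal{C})\, e_Q$ and $h_{(2)}\cdot e_P \in e_P\, a(\mathcal{C})\, e_P$ (categorizability) one gets $h\cdot f = h\cdot(e_Q f e_P) \in e_Q\, a(\mathcal{C}) \cap a(\mathcal{C})\, e_P = e_Q\, a(\mathcal{C})\, e_P$. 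Thus $\triangleright_{(P,Q)}(h \otimes f) := h \cdot f$ is a well-defined family of linear maps, and the axioms of a partial $H$-module category translate directly into the characterization of the preceding proposition: axiom (1) is $1_H \cdot f = f$; axiom (2) is $h\cdot(gf) = \sum(h_{(1)}\cdot g)(h_{(2)}\cdot f)$; axiom (3) is $h\cdot(k\cdot f) = \sum(h_{(1)}\cdot e_Q)(h_{(2)}k\cdot f)$, valid for the local unit $e_Q$ because $k\cdot f \in e_Q\, a(\mathcal{C})\, e_P$ forces $e_Q(k\cdot f) = k\cdot f$; and in the symmetrical case axiom (4) is $h\cdot(k\cdot f) = \sum(h_{(1)}k\cdot f)(h_{(2)}\cdot e_P)$, valid for $e_P$ since $(k\cdot f)e_P = k\cdot f$.

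The only genuinely nontrivial ingredient is the closure statement in item (2) — that the partial action of $a(\mathcal{C})$ stays inside the corner $e_Q\, a(\mathcal{C})\, e_P$ — and this is exactly where the categorizability hypothesis enters; everything else is the translation between the matrix algebra $a(\mathcal{C})$ and the category $\mathcal{C}$ (respectively $\mathcal{C}^{S_0}(a(\mathcal{C}))$), the main bookkeeping being the choice of a local unit $e_P$ large enough to contain the source and/or target objects of the morphism at hand. Having the equivalent description of partial actions on algebras with local units from the preceding proposition reduces all verifications to the defining axioms of a (symmetrical) partial $H$-module category, so no computation of substance remains.
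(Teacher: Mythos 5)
Your argument is correct, and its overall shape (entrywise action on $a(\mathcal{C})$, verification through the local-units characterization, and restriction to the corners $e_Q\,a(\mathcal{C})\,e_P$ for the converse) matches the paper's. The genuine difference is in how item (1) deals with the quantifier ``for \emph{every} admissible local unit'' in the third axiom: the paper verifies the axiom only for one specific local unit (the one supported on the targets of the nonzero entries) and then proves a separate independence lemma — using that $S_0$ is closed under differences of comparable idempotents ($e_V-e_U=e_{V\setminus U}$) together with $H\cdot e_\lambda\subseteq e_\lambda A e_\lambda$ — to conclude that the expression $\sum(h_{(1)}\cdot e)(h_{(2)}k\cdot a)$ does not depend on the admissible $e$. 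You instead check the axiom directly for all admissible $e_P$ by observing that $h_{(1)}\cdot e_P$ is a diagonal matrix, so only its $(y,y)$ entry $h_{(1)}\triangleright_{(y,y)}\,{}_y1_y$ can hit an elementary element with target $y$; this is shorter and perfectly valid for $a(\mathcal{C})$, whereas the paper's lemma is more general and is explicitly flagged there as a reusable tool (it lets one verify the third axiom for a single local unit for any categorizable-type action). One small point you gloss over: for the third axiom the reduction to elementary elements is not purely linear, since which local units are admissible depends on the whole element $a$, not on its components. It does go through, because the action preserves matrix positions: if $e_P(k\cdot a)=k\cdot a$ then entrywise either the target of a component lies in $P$ or that component of $k\cdot a$ vanishes, and in the latter case both sides of the axiom vanish for that component; it would be worth a sentence. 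Item (2) is essentially the paper's computation $e_\alpha(h\cdot e_\alpha a e_\beta)e_\beta=h\cdot e_\alpha a e_\beta$ phrased as a two-sided containment, and your use of $e_Q$ (resp. $e_P$) as the admissible unit in translating axioms (3) and (4) is exactly what makes the restricted maps a (symmetrical) partial action on $\mathcal{C}^{S_0}(A)$.
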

	
	\begin{proof}

		Consider a (symmetrical) partial $H$-action on $\mathcal{C}$ given by the family of linear maps $\triangleright=\{\triangleright_{(x,y)}:H\otimes \,_y\mathcal{C}_y\to \,_y\mathcal{C}_y\}$. The corresponding partial action on $a(\mathcal{C})$ is the linear map $\cdot: H\otimes a(\mathcal{C})\to a(\mathcal{C})$ given by $h\cdot (\,_yf_x)_{x,y}= (h\triangleright_{(x,y)}\,_yf_x)_{x,y}$.
		
		In fact, clearly $1\cdot (\,_yf_x)_{x,y}=(\,_yf_x)_{x,y}$ and
		\begin{eqnarray*}
			h\cdot(_yf_x)_{x,y}(_yg_x)_{x,y}=\sum (h_{(1)}\cdot(_yf_x)_{x,y})(h_{(2)}\cdot(_yg_x)_{x,y}).
		\end{eqnarray*}
		For the third axiom of partial action, note that
		\begin{eqnarray*}
			h\cdot k\cdot(_yf_x)_{x,y}&=&(h\triangleright_{(x,y)} k\triangleright_{(x,y)}\,_yf_x)_{x,y}\\
			&=&(h\triangleright_{(y,y)} (\,_y1_y(k\triangleright_{(x,y)}\,_yf_x)))_{x,y}\\
			&=&\sum((h_{(1)}\triangleright_{(y,y)} \,_y1_y)(h_{(2)}k\triangleright_{(x,y)}\,_yf_x))_{x,y}\\
			&=&\sum(h_{(1)}\cdot \sum_{y;\,_yf_x\neq 0} E_{yy})(h_{(2)}k\cdot (_yf_x)_{x,y}),
		\end{eqnarray*}
		i.e., there exists at least one subset $P\in\mathcal{C}_0$ such that $e_P(k\cdot  (_yf_x)_{x,y})=k\cdot (_yf_x)_{x,y}$ and $h\cdot k\cdot(_yf_x)_{x,y}=(h_{(1)}\cdot e_P)(h_{(2)}k\cdot (_yf_x)_{x,y})$. Analogously, if the original partial action is symmetrical, there exists at least one subset $Q\in\mathcal{C}_0$ such that $(k\cdot  (_yf_x)_{x,y})e_Q=k\cdot (_yf_x)_{x,y}$ and $h\cdot k\cdot(_yf_x)_{x,y}=\sum(h_{(1)}k\cdot (_yf_x)_{x,y})(h_{(2)}\cdot e_Q)$. 
		
		Now, note that whenever $e_U,e_V\in S_0$ are such that $e_U \leq e_V$
		, i.e. $U\subseteq V$, we have that $e_V-e_U = e_{V \setminus U}\in S$. Moreover, given a Hopf algebra $H$ and an algebra $A$ with s.l.u. $S=\{e_{\lambda}\}_{\lambda \in \Lambda}$, for every linear map $\cdot:H\otimes A\to A$, where $H\cdot e_\lambda\subseteq e_\lambda Ae_\lambda$, and $e_\beta - e_\alpha \in S$ whenever $e_\alpha\leq e_\beta$, we have that $\sum(h_{(1)}\cdot e_{\lambda})(h_{(2)}k\cdot a)=\sum(h_{(1)}\cdot e_{\gamma})(h_{(2)}k\cdot a)$ for every $e_{\lambda},e_\gamma \in S$ such that $e_{\lambda}a=a=e_\gamma a$. In fact, let $e_\alpha, e_\beta$ be local units such that $e_\alpha a=a=e_\beta a$, then there exists a local unit $e_\gamma$ such that $e_\alpha, e_\beta\leq e_\gamma$. Hence
		\begin{eqnarray*}
			\sum(h_{(1)}\cdot e_\gamma)(h_{(2)}k\cdot a)&=&\sum(h_{(1)}\cdot (e_\gamma-e_\alpha+e_\alpha))(h_{(2)}k\cdot a)\\
			&=&\sum(h_{(1)}\cdot (e_\gamma-e_\alpha))(h_{(2)}k\cdot a)+(h_{(1)}\cdot e_\alpha)(h_{(2)}k\cdot a)\\
			&=&\sum(h_{(1)}\cdot (e_\gamma\!-\!e_\alpha))(e_\gamma\!-\!e_\alpha)(e_\alpha)(h_{(2)}k\cdot a)\!+\!(h_{(1)}\cdot e_\alpha)(h_{(2)}k\cdot a)\\
			&=&\sum(h_{(1)}\cdot e_\alpha)(h_{(2)}k\cdot a).
		\end{eqnarray*}
		Analogously, $\sum(h_{(1)}\cdot e_\gamma)(h_{(2)}k\cdot a)=\sum(h_{(1)}\cdot e_\beta)(h_{(2)}k\cdot a)$.
		
		This is useful when one wants to verify whether some linear map $\cdot : H\otimes A \longrightarrow A$ is a (symmetrical) partial action or not, because we will only need to calculate the third axiom of partial actions for one local unit that satisfies the required property.
		
		Consequently, one can conclude that the linear map $\cdot: H\otimes a(\mathcal{C})\to a(\mathcal{C})$ mentioned before is, in fact, a (symmetrical) partial action.
		
		Conversely, given a (symmetrical) partial action $\cdot : H\otimes A \longrightarrow A$, if there exist some s.l.u. $S=\{e_{\lambda}\}_{\lambda \in \Lambda}$ such that $h\cdot e_\alpha \in e_\alpha A e_\alpha$, for all $\alpha\in \Lambda$, $h\in H$, we have that
		\begin{eqnarray*} 
			e_\alpha(h\cdot e_\alpha ae_\beta)e_\beta&=&\sum e_\alpha(h_{(1)}\cdot e_\alpha)(h_{(2)}\cdot e_\alpha ae_\beta)(h_{(3)}\cdot e_\beta)e_\beta\\
			&=&\sum (h_{(1)}\cdot e_\alpha)(h_{(2)}\cdot e_\alpha ae_\beta)(h_{(3)}\cdot e_\beta)\\
			&=&h\cdot e_\alpha ae_\beta,
		\end{eqnarray*}
		for all $e_\alpha, e_\beta\in S$, i.e., $H\cdot e_\alpha Ae_\beta \subseteq e_\alpha Ae_\beta$. 
		Hence, we can induce a (symmetrical) partial action of $H$ on the category $\mathcal{C}^{S_0}(A)$ by 
		\[
		\triangleright_{(\beta,\alpha)} : H \otimes \mathcal{C}^{S_0}(A)(\alpha,\beta) \to \mathcal{C}^{S_0}(A)(\alpha,\beta), \ \ 
		h \triangleright_{(\beta,\alpha)} e_\beta a e_\alpha = h \cdot  e_\beta a e_\alpha.
		\]
	\end{proof}

	\subsection{Globalization of partial actions on algebras with local units}
	
	We already know that every symmetrical partial action on any algebra has a globalization, so it is also true for algebras with local units.  Here we will present an equivalent definition of globalization, regarding only algebras with local units, which will be useful for relating globalizations of categorizable partial actions and globalizations of the induced partial action on the category associated to the considered system of local units.
	
	But first, we will follow the idea of Proposition \ref{proposition.projection} to provide a sufficient condition to determine if we can induce a partial action on an ideal with local units of an $H$-module algebra.
	
	\begin{definition}
		Let $A$ be an algebra with local units and $S$ be an s.l.u. of $A$. A subfamily $T\subseteq S$ which is also an s.l.u. of $A$, will be called a subsystem of local units of $S$.
	\end{definition}
	
	The goal of the next two results is to present an equivalent definition of globalization (see Definition \ref{def3}) in the particular case where the algebra has local units.

	\begin{proposition} \label{proposition.induced.action.local.units}
		Let $B$ be an $H$-module algebra with action $\triangleright$ and $A$ be an ideal of $B$. If $A$ has an s.l.u. $S=\{e_\lambda\}_{\lambda\in \Lambda}$ and for every $h\in H$, $a\in A$ there exist subsystems of local units $L(h,a)$ and $R(h,a)$ of $S$ such that $e(h\triangleright a)=(h\triangleright a)f$, for all $e\in L(h,a)$, $f\in R(h,a)$, then the linear map $h\rightharpoonup a=e(h\triangleright a)$, where $e\in L(h,a)$, determines a partial action of $H$ on $A$.	
	\end{proposition}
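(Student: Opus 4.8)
The plan is to first check that $h\rightharpoonup a = e(h\triangleright a)$, with $e\in L(h,a)$, is an unambiguous $A$-valued formula, then to isolate a ``transfer lemma'' that lets one replace $\rightharpoonup$ by $\triangleright$ whenever an element of $A$ is multiplied against it, and finally to verify the two axioms of Definition \ref{partial.action} by pushing every $\rightharpoonup$ onto a $\triangleright$ and using that $B$ is an $H$-module algebra. For well-definedness: since $L(h,a)\subseteq S\subseteq A$ and $A$ is an ideal of $B$, the element $e(h\triangleright a)$ lies in $A$; and by hypothesis $e(h\triangleright a)=(h\triangleright a)f$ for \emph{every} $e\in L(h,a)$ and \emph{every} $f\in R(h,a)$, so fixing one $f$ shows this value is independent of $e$. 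Denote it $h\rightharpoonup a$; a routine local-unit argument (algebras with local units have zero left and right annihilators) then gives bilinearity of $(h,a)\mapsto h\rightharpoonup a$.

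The crux is the transfer lemma: for all $x\in A$, $h\in H$, $b\in A$ one has $x(h\rightharpoonup b)=x(h\triangleright b)$ and $(h\rightharpoonup b)x=(h\triangleright b)x$. To prove the first identity I would first treat $x=e\in S$: since $R(h,b)$ is a system of local units of $A$ and $e(h\triangleright b)\in A$, pick $f\in R(h,b)$ with $\bigl(e(h\triangleright b)\bigr)f=e(h\triangleright b)$; then $e(h\rightharpoonup b)=e\bigl((h\triangleright b)f\bigr)=e(h\triangleright b)$. For general $x\in A$ write $x=xe'$ with $e'\in S$ a right local unit of $x$ and reduce to the previous case; the second identity is symmetric.

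With this in hand the axioms are short. Axiom (1): choose $e\in L(1_H,a)$ with $ea=a$, so $1_H\rightharpoonup a=e(1_H\triangleright a)=ea=a$. Axiom (2): the transfer lemma (applied with $x=a$) gives $a(k\rightharpoonup b)=a(k\triangleright b)=:c\in A$; expand the \emph{finite} sum $\Delta(h)=\sum_i h'_i\otimes h''_i$ and choose $e\in L(h,c)$ which is simultaneously a local unit for each of the finitely many elements $h'_i\rightharpoonup a\in A$. Then, using that $B$ is an $H$-module algebra,
\begin{align*}
h\rightharpoonup c
&= e(h\triangleright c)
= e\sum_i (h'_i\triangleright a)\,(h''_i k\triangleright b)
= \sum_i \bigl(e(h'_i\triangleright a)\bigr)(h''_i k\triangleright b)\\
&= \sum_i \bigl(e(h'_i\rightharpoonup a)\bigr)(h''_i k\triangleright b)
= \sum_i (h'_i\rightharpoonup a)(h''_i k\triangleright b)
= \sum_i (h'_i\rightharpoonup a)(h''_i k\rightharpoonup b),
\end{align*}
using the transfer lemma for $x=e$, then that $e$ is a left local unit of $h'_i\rightharpoonup a$, then the transfer lemma for $x=h'_i\rightharpoonup a\in A$. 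This is exactly $\sum(h_{(1)}\rightharpoonup a)(h_{(2)}k\rightharpoonup b)$, so $\rightharpoonup$ is a partial action; the symmetric axiom, if desired, follows identically starting from $(k\rightharpoonup b)a=(k\triangleright b)a$.

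The one genuinely delicate point is the uniform choice of $e$ in axiom (2): a priori the local unit appearing in $h\rightharpoonup(-)$ depends on its argument, so one must exploit that $\Delta(h)$ has finite support and that $L(h,c)$ — being a full system of local units of $A$ — is large enough to fix all the Sweedler components $h'_i\rightharpoonup a$ at once. Once that single $e$ is pinned down, the whole computation collapses, via the transfer lemma, onto the associativity and $H$-linearity already available in $B$.
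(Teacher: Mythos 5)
Your proposal is correct, and the computations all go through: the well-definedness argument (independence of $e$ via a fixed $f\in R(h,a)$), the transfer lemma $x(h\rightharpoonup b)=x(h\triangleright b)$ and $(h\rightharpoonup b)x=(h\triangleright b)x$ for $x\in A$ (using that $A$ is an ideal and that $R(h,b)$, resp.\ $L(h,b)$, is itself an s.l.u.\ of $A$), and the verification of both axioms of Definition \ref{partial.action} with a single $e\in L(h,c)$ chosen to also be a left local unit of the finitely many Sweedler components $h_{(1)}\rightharpoonup a$. The route differs from the paper's in its organization rather than its substance. The paper first shows that each $L(h,a)$ may be enlarged upward and that finite intersections of these subsystems are again subsystems, so that uniform choices of local units can be made across several pairs $(h_i,a_i)$; it then defines the projection $\pi:H\triangleright A\to A$, $\pi(h\triangleright a)=e(h\triangleright a)$, checks the compatibility $\pi(h\triangleright(a\pi(k\triangleright b)))=\sum\pi(h_{(1)}\triangleright a)\pi(h_{(2)}k\triangleright b)$ by inserting suitable local units $e_2,e_3$, and concludes by invoking Proposition \ref{proposition.projection}. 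Your transfer lemma absorbs exactly those local-unit insertions into a single reusable statement, which lets you bypass both the intersection-of-subsystems discussion and the appeal to Proposition \ref{proposition.projection}, and it also makes bilinearity and well-definedness cleaner (via $r(A)=l(A)=0$ for algebras with local units) — a point the paper treats rather briskly. What the paper's formulation buys in exchange is the explicit projection $\pi$ on $H\triangleright A$, which is the object reused in the subsequent discussion of globalizations for algebras with local units; your argument produces the same partial action but keeps $\pi$ implicit. Either way, the one delicate point you correctly identified — fixing one local unit that works simultaneously for all Sweedler components of $\Delta(h)$ — is handled legitimately, since $L(h,c)$ is by hypothesis a full system of local units of $A$.
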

	\begin{proof}
		First note that if $e,e' \in L(h,a)$ and $ee'=e'e$, then $e'e, e e' \in L(h,a)$, because given $f \in R(h,a)$ we have $$ee' (h \triangleright a) = e (h \triangleright a) f = (h \triangleright a) f^2 = (h \triangleright a)f.$$ 
		Note also that if $e\in L(h,a)$ then we may enlarge $L(h,a)$ by adding each local unit $x \in S$ such that $x\geq e$. In fact, for every local unit $x\geq e$, since $x(h\triangleright a)\in A$, $A$ is an ideal of $B$ and $R(h,a)$ is a subsystem of local units, there exists $f\in R(h,a)$ such that $x(h\triangleright a)=x(h\triangleright a)f=xe(h\triangleright a)=e(h\triangleright a)$. Hence we may assume that if $e \in L(h,a)$ and $x \geq e$ then $x \in L(h,a)$. It follows that for every $h_1,\ldots,h_n\in H$ and $a_1,\ldots,a_n\in A$ we can construct a family $\{L_i=L(h_i,a_i)\}_{i=1}^{n}$ whose intersection $L=\cap_i L_i$ is also a subsystem of local units; in fact, we may consider $L_i=L$ for every $i=1,\ldots,n$. Correspondingly, we may take $R(h_i,a_i)=R$, for some subsystem of local units $R$, for every $i=1,\ldots,n$,  obtaining the pair $L,R$ of subsystems of local units such that $e (h_i \triangleright a_i) = (h_i \triangleright a_i) f$ for every $e \in L, f \in R$, and every $i=1,\ldots,n$. 
		
		Following Proposition \ref{proposition.projection}, consider the linear map $\pi: H\triangleright A\to A$ given by $\pi(h\triangleright a)=e(h\triangleright a)$, where $e\in L(h,a)$. This map is well-defined: if $e, e'\in L(h,a)$ then, taking an arbitrary $f \in R(h,a)$,
		\[
		e' (h \triangleright a) = (h \triangleright a) f = e (h \triangleright a). 
		\]
		
		By construction, $\pi$ is a projection of $H \triangleright A$ onto $A$. Moreover, given $a, b \in B$, $h,k \in H$, we may choose $e_1 \in L(h,(a\pi(k\triangleright b)))$ and $e_2\in L(k,b)$ such that $ae_2=a$, thus obtaining
		\[ 	  
		\pi(h\triangleright (a\pi(k\triangleright b))) 
		=e_1(h\triangleright (ae_2(k\triangleright b)))
		=\sum e_1(h_{(1)}\triangleright a)(h_{(2)}k\triangleright b).\]
		Now, it follows from the fact that $A$ is an ideal of $B$ and the previous discussions on the intersections of $L(h_i,b_i)$ that we may choose a local unit $e_3$ such that 
		\[ \sum e_1(h_{(1)}\triangleright a)(h_{(2)}k\triangleright b)  =   
		\sum e_1(h_{(1)}\triangleright a)e_3(h_{(2)}k\triangleright b) 
		=   \sum \pi(h_{(1)}\triangleright a)\pi(h_{(2)}k\triangleright b), 
		\]
		and it follows from Proposition \ref{proposition.projection} that $h \rightharpoonup a = \pi (h \triangleright a)$ defines a partial action on $A$. 
	\end{proof}
	
	\begin{proposition} \label{definition.globalization.local.units} Let $\cdot: H \otimes A \longrightarrow A$ be a symmetrical partial action where $A$ is an algebra with s.l.u. $S=\{e_\lambda\}_{\lambda\in\Lambda}$. Then the pair $(B,\theta)$ is a globalization for $\cdot$ if and only if:
		\begin{enumerate}[\normalfont(1)]
			\item $B$ is an $H$-module algebra (possibly nonunital), with action $\triangleright$;
			\item $\theta: A \longrightarrow B$ is an algebra monomorphism;
			\item $\theta(A)$ is an ideal of $B$;
			\item $\theta(e_\alpha)(h\triangleright \theta(a))=(h\triangleright \theta(a))\theta(e_\beta)$, for every $e_\alpha,e_\beta \in S$, $a\in A$, such that $e_\alpha(h\cdot a)=h\cdot a=(h\cdot a)e_\beta$, and $\theta(h\cdot a)=\theta(e_\alpha)(h\triangleright \theta(a))$, for every $e_\alpha \in S$ such that $e_\alpha(h\cdot a)=h\cdot a$;
			\item $B=H\triangleright\theta(A)$.
		\end{enumerate}
	\end{proposition}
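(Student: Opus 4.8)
The plan is to prove the two implications by matching the listed conditions (1)--(5) against items (1)--(5) of Definition~\ref{def3}, the only substantial point being an appeal to Proposition~\ref{proposition.induced.action.local.units} in the converse direction.

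Suppose first that $(B,\theta)$ is a globalization in the sense of Definition~\ref{def3}. Then conditions (1), (2) and (5) of the statement are literally items (1), (2) and (4) of Definition~\ref{def3}, and condition (3) --- that $\theta(A)$ is an ideal of $B$ --- follows from items (3) and (4) of Definition~\ref{def3} exactly as in the remark following it, since $\bigl(\sum_i h_i\triangleright\theta(a_i)\bigr)\theta(b)=\sum_i\theta\bigl((h_i\cdot a_i)b\bigr)\in\theta(A)$ and symmetrically on the left. For condition (4), fix $h\in H$, $a\in A$: if $e_\alpha\in S$ satisfies $e_\alpha(h\cdot a)=h\cdot a$, then item (3) of Definition~\ref{def3} with $b=e_\alpha$ gives $\theta(h\cdot a)=\theta\bigl(e_\alpha(h\cdot a)\bigr)=\theta(e_\alpha)(h\triangleright\theta(a))$; if instead $(h\cdot a)e_\beta=h\cdot a$, the other half of item (3) gives $\theta(h\cdot a)=(h\triangleright\theta(a))\theta(e_\beta)$. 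Comparing these two expressions for $\theta(h\cdot a)$ is precisely condition (4).

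Conversely, assume (1)--(5). Items (1), (2) and (4) of Definition~\ref{def3} are again (1), (2) and (5), so it remains to check items (3) and (5) of Definition~\ref{def3}. For item (3), given $h\in H$ and $a,b\in A$, choose $e_\alpha\in S$ with $e_\alpha(h\cdot a)=(h\cdot a)e_\alpha=h\cdot a$ and $e_\alpha b=b$; then, using that $\theta$ is an algebra morphism together with condition (4),
\[
\theta\bigl((h\cdot a)b\bigr)=\theta(h\cdot a)\,\theta(b)=(h\triangleright\theta(a))\,\theta(e_\alpha)\,\theta(b)=(h\triangleright\theta(a))\,\theta(b),
\]
and $\theta\bigl(b(h\cdot a)\bigr)=\theta(b)(h\triangleright\theta(a))$ follows in the same manner. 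To obtain item (5) of Definition~\ref{def3} --- that $h\bullet\theta(a)=\theta(h\cdot a)$ is a partial action on $\theta(A)$ --- I would apply Proposition~\ref{proposition.induced.action.local.units} to the $H$-module algebra $B$ and its ideal $\theta(A)$, which carries the system of local units $\theta(S)=\{\theta(e_\lambda)\}_{\lambda\in\Lambda}$ (as $\theta$ is an isomorphism onto its image). For $h\in H$, $a\in A$ put
\[
L\bigl(h,\theta(a)\bigr)=\{\theta(e_\alpha):e_\alpha(h\cdot a)=h\cdot a\},\qquad R\bigl(h,\theta(a)\bigr)=\{\theta(e_\beta):(h\cdot a)e_\beta=h\cdot a\};
\]
these are subsystems of local units of $\theta(A)$, and condition (4) says exactly that $\theta(e_\alpha)(h\triangleright\theta(a))=(h\triangleright\theta(a))\theta(e_\beta)$ for $\theta(e_\alpha)\in L(h,\theta(a))$ and $\theta(e_\beta)\in R(h,\theta(a))$. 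Hence Proposition~\ref{proposition.induced.action.local.units} produces a partial action $h\rightharpoonup\theta(a)=\theta(e_\alpha)(h\triangleright\theta(a))$, which by the second clause of condition (4) equals $\theta(h\cdot a)$; this is item (5) of Definition~\ref{def3}, so $(B,\theta)$ is a globalization.

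The forward implication is a routine unwinding of Definition~\ref{def3}. In the converse, the step needing care is verifying that $L(h,\theta(a))$ and $R(h,\theta(a))$ are genuine subsystems of local units of $\theta(A)$, so that Proposition~\ref{proposition.induced.action.local.units} can be invoked: this reduces, via the remark on systems of local units, to the fact that any finite subset of $A$ together with $h\cdot a$ admits a common local unit, whose image under $\theta$ then lies in both families. Once this is settled, the remaining verifications are bookkeeping.
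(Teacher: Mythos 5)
Your argument is correct and follows the route the paper itself intends: the forward direction is the routine specialization of item (3) of Definition \ref{def3} to local units, and the converse invokes Proposition \ref{proposition.induced.action.local.units} with exactly the subsystems $L(h,a)=\{e_\alpha\in S\,:\,e_\alpha(h\cdot a)=h\cdot a\}$ and $R(h,a)=\{e_\beta\in S\,:\,(h\cdot a)e_\beta=h\cdot a\}$ that the paper indicates in the remark following the statement. The only cosmetic point is that in verifying $\theta(b(h\cdot a))=\theta(b)(h\triangleright\theta(a))$ you should take $e_\alpha$ to be a two-sided local unit for $\{b,\,h\cdot a\}$ (which the definition of a system of local units provides), rather than only requiring $e_\alpha b=b$.
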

	
	Note that when $A$ has local units and $(B,\theta)$ is a globalization, we have that 
	\begin{eqnarray*}
		\theta(h\cdot a)&=&\theta(e)(h\triangleright \theta(a))\\
		&=&(h\triangleright \theta(a))\theta(f),
	\end{eqnarray*}
	for every $e,f \in A$ such that $e(h\cdot a)=h\cdot a=(h\cdot a)f$. In this case the subsystems considered are $L(h,a)=\{e_\alpha \in S; \,\, e_\alpha(h\cdot a)=h\cdot a \}$ and $R(h,a)=\{e_\alpha \in S;\,\, (h\cdot a)e_\alpha=h\cdot a \}$, and then item (4) means that $\theta$ is a morphism of partial actions.

	\subsection{Globalization of partial gradings}
	
	Let $G$ be a group. Partial $G$-gradings have been investigated in \cite{alves1, alves2, alves4, dilations}. In this section we will use the results and definitions presented until now to find the minimal globalization of a good partial $G$-grading on $FMat_\mathbb{N}(\Bbbk)$, the algebra of $\mathbb{N} \times \mathbb{N}$ matrices with finitely many nonzero entries. 
	
	Following \cite{ion}, a \textit{good grading} of the matrix algebra $M_n(\Bbbk)$ is a $G$-grading where each elementary matrix $E_{i,j}$ is homogeneous. In this same paper all good gradings are classified \cite[Proposition 2.1, Corollary 2.2]{ion}: every such grading is determined by the $(n-1)$-tuple of elements of $G$ $(deg(E_{1,2}, \ldots, deg(e_{n-1,n})))$. Alternatively, every such grading is associated to a sequence $(g_2, \ldots, g_n) \in G^{n-1}$ by defining $g_1 = 1$ and $deg(E_{i,j}) = g_i g_j^{-1}$. It is shown that if $G$ is torsionfree then every $G$-grading of $M_n(\Bbbk)$ is a good grading (Theorem 1.4, Corollary 1.5). Moreover, if $M_n(\Bbbk)$ has a $G$-grading where some matrix $E_{i,j}$ is homogeneous then there is an isomorphism of $G$-graded algebras with $M_n(\Bbbk)$ endowed with a good $G$-grading (Corollary 1.6).

	It is well-known that $G$-gradings of an algebra $A$ correspond to right $\Bbbk G$-coactions on $A$ and, when $G$ is a finite group, these in turn are the same as left $(\Bbbk G)^*$-actions on $A$, which is a particular case of the right $H^*$-coaction / left $H$-action correspondence for a finite-dimensional Hopf algebra $H$. The same still holds for partial $H^*$-actions and $H$-coactions and, based on this result, we will work with partial gradings by a finite group $G$, i.e., partial (right)  coactions of $\Bbbk G$, as partial (left)   $(\Bbbk G)^*$-actions.

	Good partial $G$-gradings were introduced for the matrix algebra $M_n(\Bbbk)$ in \cite{alves4}, and here we extend this idea naturally to the algebra of finite matrices $FMat_\mathbb{N}(\Bbbk)$.

	\begin{definition}
		Let $G$ be a finite group. A good partial $G$-grading of $FMat_{\mathbb{N}}(\Bbbk)$ is a partial action of $(\Bbbk G)^*$ on $FMat_\mathbb{N}(\Bbbk)$ such that each elementary matrix $E_{ij}$ is a simultaneous eigenvector for all the elements of the canonical base $\{p_g ; g \in G\}$ of $(\Bbbk G)^*$. 
	\end{definition}

	In what follows, when we consider the matrix algebra $FMat_{\mathbb{N}}(\Lambda)$, where $\Lambda$ is an associative algebra, we will write $(\alpha)E_{ij}$ to denote the matrix which has the element $\alpha \in \Lambda$ at the $(i,j)$-entry of the matrix, and has zero in all other entries; in particular, $E_{ij} = (1_{\Lambda})E_{ij}$.
	
	Let $G$ be a finite abelian group such that $char(\Bbbk)\nmid |G|$, $H=(\Bbbk G)^*$ and $A=FMat_{\mathbb{N}}(\Bbbk)$. Suppose that the map $\cdot: H\otimes A\to A$ is a good partial $G$-grading on $A$. Then, by \cite{alves2} and \cite{alves4}, there exists a subgroup $L$ of $G$ and a family $\{t_{ij}\}_{i,j\in \mathbb{N}}\subset G$ such that 
	\begin{equation} \label{equation.tij}
		t_{ik}t_{kj}L=t_{ij}L    
	\end{equation}
	and 
	\begin{equation}\label{equation.partial.grading}
		p_g\cdot E_{ij}=\delta_{gL,t_{ij}L}\frac{1}{|L|}E_{ij}. 
	\end{equation}
	Let us describe a globalization for this partial grading. 
	
	Equality \eqref{equation.tij} implies that the subspace $B=\bigoplus_{i,j\in \mathbb{N}}B_{ij}$ of $FMat_{\mathbb{N}}(\Bbbk G)$,  where $B_{ij}=\bigoplus_{g\in t_{ij}L} \Bbbk g$, is a subalgebra of $FMat_{\mathbb{N}}(\Bbbk G)$. Let $\theta : A \to B$ be the map
	\[
	E_{ij}\mapsto \left(\delta_{(x,y),(i,j)}\frac{1}{|L|}\sum_{g\in t_{ij}L}g\right)_{x,y\in \mathbb{N}}=(\frac{1}{|L|}\sum_{g\in t_{ij}L}g)E_{ij}.
	\]
	The natural action of $H$ in $\Bbbk G$ given by $p_g\rightharpoondown h=\delta_{g,h}h$ induces a structure of $H$-module algebra on $FMat_{\mathbb{N}}(\Bbbk G)$, and $B$ is an $H$-submodule algebra of $FMat_{\mathbb{N}}(\Bbbk G)$. The $H$-action on $B$ is given explicitly by 
	$$p_g\triangleright (\sum_{h\in t_{ij}L}\alpha_hh)E_{ij}=(\delta_{gL,t_{ij}L}\alpha_gg)E_{ij}.$$ 
	
	We claim that $(B,\theta)$ is a minimal globalization for the good partial $G$-grading on $A$. In fact, it is not hard to show that $B=H\triangleright \theta(A)$ and $\theta$ is a monomorphism of algebras. To prove that $\theta(A)$ is an ideal of $B$, note that for every $g\in G$, $i,j,k\in\mathbb{N}$, we have
	\begin{eqnarray*}
		\theta(E_{ik})(p_g\triangleright \theta(E_{kj}))&=&\delta_{gL,t_{kj}L}\frac{1}{|L|}\theta(E_{ij}),\\
		(p_g\triangleright \theta(E_{ik}))\theta(E_{kj})&=&\delta_{gL,t_{ik}L}\frac{1}{|L|}\theta(E_{ij}).
	\end{eqnarray*}
	Consequently,
	\[
	\theta(p_g\cdot E_{ij})=\delta_{gL,t_{ij}L}\frac{1}{|L|}\theta(E_{ij})
	=\theta(E_{ii})(p_g\triangleright \theta(E_{ij}))
	=(p_g\triangleright \theta(E_{ij}))\theta(E_{jj}).
	\]
	Therefore, given $M \in FMat_{\mathbb{N}}(\Bbbk)$, it follows that $\theta(p_g\cdot M)=\theta(E)(p_g\triangleright \theta(M))=(p_g\triangleright \theta(M))\theta(F)$, for every pair of finite sums $E$, $F$ of distinct matrices$E_{ii}'s$ such that $EM=M=MF$. 
	
	Finally, suppose that $\sum_ikh_i\cdot a_i=0$ for every $k\in H$, with $h_i=\sum_{g\in G}\alpha^i_gp_g$ and $a_i=\sum_{j,l\in\mathbb{N}}a^i_{jl}E_{jl}$, where almost every $a^i_{jl}=0$. Hence, for every $g\in G$, choosing $k=p_g$, we have that
	\[
	0=\sum_i \alpha^i_gp_g\cdot a_i=\sum_{i,j,l}\alpha^i_gp_g\cdot a^i_{jl}E_{jl}
	=\sum_{i,j,l}\alpha^i_g\delta_{gL,t_{jl}L}a^i_{jl}\frac{1}{|L|}E_{jl},
	\]
	then, for every $j,l\in\mathbb{N}$, it follows that
	\begin{eqnarray*}
		\sum_{i}\alpha^i_g\delta_{gL,t_{jl}L}a^i_{jl}=0.
	\end{eqnarray*}
	Hence,
	\begin{eqnarray*}
		\sum_i h_i\triangleright\theta(a_i)&=&\sum_{i,g}\alpha^i_gp_g\triangleright(\sum_{j,l}\sum_{h\in t_{jl}L}\frac{1}{|L|}a^i_{jl}h)E_{jl}\\
		&=&\sum_{i,j,l,g}(\alpha^i_g\frac{1}{|L|}a^i_{jl}\delta_{gL,t_{jl}L}g)E_{jl}=0,
	\end{eqnarray*}
	thus showing that $(B,\theta)$ is the minimal globalization of the partial action $\cdot:H\otimes A\to A$.

	\subsection{Globalization of $S$-categorizable partial actions}
	In this section we will consider globalizations of $S$-categorizable partial actions and their connections to globalizations of partial actions on categories.
	Before we proceed, we recall some definitions from \cite{alves2}.

	\begin{definition}\cite{alves2} (semicategories and semifunctors)
		\begin{enumerate} [\normalfont(1)]
			\item A $\Bbbk$-semicategory $\mathcal{C}$ consists of a class $\mathcal{C}_0$ (its objects) and, for each $x, y \in \mathcal{C}_0$, a $\Bbbk$-linear space ${}_y\mathcal{C}_x$, the space of morphisms from $x$ to $y$, such that $\mathcal{C}$ satisfies all the axioms of a $\Bbbk$-category except for the existence of an identity for every object.
			\item A $\Bbbk$-semifunctor from the $\Bbbk$-semicategory $\mathcal{C}$ to the $\Bbbk$-semicategory $\mathcal{D}$ is a family of $\Bbbk$-linear maps  ${}_yF_x : {}_y \mathcal{C}_x \to {}_{F(y)} \mathcal{C}_{F(x)}$, where $(x,y) \in \mathcal{C}_0 \times \mathcal{C}_0$,  such that 
			${}_zF_x ({}_zg_y \circ {}_yf_x) = {}_zF_y ({}_zg_y) \circ {}_yF_x ( {}_yf_x)$ for every $x,y,z \in \mathcal{C}_0$, $ {}_yf_x \in {}_y\mathcal{C}_x$ and ${}_zg_y \in {}_z\mathcal{C}_y$. As usual, we will simply write $F$ instead of ${}_y F_x$.
			\item Let $\mathcal{C}$ be a $\Bbbk$-semicategory. 
			\begin{enumerate}[\normalfont(a)]
				\item A $\mathcal{C}_0$-semicategory  is a semicategory $\mathcal{D}$ whose class of objects is $\mathcal{C}_0$.
				\item A $\mathcal{C}_0$-subsemicategory $\mathcal{E}$ of a $\mathcal{C}_0$-semicategory $\mathcal{D}$ is a subsemicategory of $\mathcal{D}$ such that $\mathcal{E}_0 = \mathcal{D}_0 = \mathcal{C}_0$.
				\item A $\mathcal{C}_0$-semifunctor is a functor between $\mathcal{C}_0$-semicategories that is the identity on the objects.
			\end{enumerate}	
		\end{enumerate}
	\end{definition}
	
	\begin{definition}[\cite{alves2}]
		Let $\mathcal{C}$ be a $\Bbbk$-category. An ideal of $\mathcal{C}$ is a $\mathcal{C}_0$-subsemicategory $J$ of $\mathcal{C}$ such that $_zf_x\circ \,_xl_y\circ\,_yg_w\in \,_zJ_w$ for every $_xl_y\in \,_xJ_y$, $_zf_x\in\,_z\mathcal{C}_x$, $_yg_w\in\,_y\mathcal{C}_w$.
	\end{definition}
	
	The concept of central idempotent in a $\Bbbk$-category appears in \cite{alves2} but, in fact, it extends trivially to the context of $\Bbbk$-semicategories. 
	
	\begin{definition}[\cite{alves2}] \label{central.idempotent} A central idempotent in a $\Bbbk$-semicategory $\mathcal{C}$ is an idempotent natural transformation $e$ of the identity functor $Id_{\mathcal{C}}$ to itself, i.e., it is a collection $e=\{_xe_x\}_{x\in\mathcal{C}_0}$, where each $\,_xe_x\in\,_x\mathcal{C}_x$ is an idempotent endomorphism such that 
		\begin{eqnarray*}
			_ye_y\circ \,_yf_x=\,_yf_x\circ\,_xe_x
		\end{eqnarray*}
		for every $_yf_x\in\,_y\mathcal{C}_x$, $x,y \in \mathcal{C}_0$.
		Given a central idempotent $e$, the ideal $J$ of $\mathcal{C}$ generated by $e$ is given by 
		\begin{eqnarray*}
			_yJ_x=\,_ye_y\,_y\mathcal{C}_x\,_xe_x=\,_ye_y\,_y\mathcal{C}_x=\,_y\mathcal{C}_x\,_xe_x.
		\end{eqnarray*}
	\end{definition}
	
	The definition of a central idempotent of a category was motivated by the well known fact that if $B$ is an $H$-module algebra with action $\triangleright$ and $A$ is the ideal of $B$ generated by a central idempotent $e$, i.e., $A=Be$, then the mapping $g\cdot a=(h\triangleright a)e$ determines a partial action of $H$ on $A$. 
	Moreover, if $\mathcal{C}$ is a semicategory with finite number of objects, then $B=a(\mathcal{C})$ is a nonunital algebra, and every element of the form $\sum_{x\in\mathcal{C}_0} e_{xx}$ such that $e_{xx}\circ \,_xf_y=\,_xf_y\circ e_{yy}$ for every $x,y\in \mathcal{C}_0$, i.e., 
	$e=\{e_{xx}  \}_{x \in \mathcal{C}_0}$ determines a central idempotent as in the definition above, is a central idempotent of this algebra. Definition \ref{central.idempotent} is written in such a way that it works even for categories with infinite number of objects.
	
	\begin{definition}[\cite{alves2}] Let $\mathcal{C}$ be a partial $H$-module category. A globalization of the partial action is a pair $(\mathcal{B},F)$ where
		\begin{enumerate}[\normalfont(1)]
			\item $\mathcal{B}$ is an $H$-module semicategory over $\mathcal{C}_0$, with action $\triangleright$;
			\item $F:\mathcal{C}\longrightarrow\mathcal{B}$ is a faithful $\mathcal{C}_0$-semifunctor and $F(\mathcal{C})$ is the ideal of $\mathcal{B}$ generated by the central idempotent $e=\{F(\,_x1_x)\}_{x\in\mathcal{C}_0}$;
			\item $\mathcal{B}=H\triangleright F(\mathcal{C})$;
			\item $F$ intertwines the partial action on $\mathcal{C}$ and the induced partial action on $F(\mathcal{C})$, i.e., for every $_yf_x\in\,_y\mathcal{C}_x$ we have
			\begin{eqnarray*}
				F(h\cdot\,_yf_x)=F(\,_y1_y)(h\triangleright F(\,_yf_x))=(h\triangleright F(\,_yf_x))F(\,_x1_x).
			\end{eqnarray*}
		\end{enumerate}
	\end{definition}
	
	\begin{proposition}
		Let $\cdot: H \otimes A \longrightarrow A$ be an $S$-categorizable symmetrical partial action and $(B,\theta)$ be an enveloping action. Then the partial action of $H$ on $\mathcal{C}^S(A)$ induced by $\cdot$ has a globalization $(\mathcal{B},F)$ given by:
		\begin{itemize}
			\item $\mathcal{B}_0=\mathcal{C}^S(A)_0$\\
			$\mathcal{B}(\alpha,\beta)=\,_\beta\mathcal{B}_\alpha=H\triangleright \theta(\,_\beta\mathcal{C}^S(A)_\alpha)=H\triangleright \theta(e_\beta Ae_\alpha)$;
			\item $F:\mathcal{C}^S(A)\longrightarrow \mathcal{B}$ is given by $F_0(\alpha)=\alpha$ and $F_1(e_\beta ae_\alpha)=\theta(e_\beta ae_\alpha)$.
		\end{itemize}
	\end{proposition}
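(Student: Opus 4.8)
The plan is to verify, one by one, the four axioms in the definition of a globalization of a partial action on a $\Bbbk$-category for the pair $(\mathcal{B},F)$. Throughout I identify ${}_\beta\mathcal{C}^S(A)_\alpha$ with $e_\beta A e_\alpha$, recall that the induced partial action on $\mathcal{C}^S(A)$ is $h\triangleright_{(\beta,\alpha)}(e_\beta a e_\alpha)=h\cdot(e_\beta a e_\alpha)$, and use that $S$-categorizability gives $H\cdot e_\beta A e_\alpha\subseteq e_\beta A e_\alpha$ (as observed in the proof of the proposition relating partial $H$-module categories and $S$-categorizable partial actions). I will also use, as in the proof of Theorem \ref{thm.lifts}, the identity
\[
(h\triangleright\theta(a))(k\triangleright\theta(b))=\sum h_{(1)}\triangleright\theta\bigl(a(S(h_{(2)})k\cdot b)\bigr),
\]
which holds in any globalization by axiom (3) of Definition \ref{def3}.

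First I would check that $\mathcal{B}$ is a well-defined $\Bbbk$-semicategory sitting inside $B$: each morphism space ${}_\beta\mathcal{B}_\alpha=H\triangleright\theta(e_\beta A e_\alpha)$ is a $\Bbbk$-subspace of $B$, composition is the multiplication of $B$, so associativity and bilinearity are automatic, and the only point is closure. For $x=\sum_i h_i\triangleright\theta(e_\gamma a_i e_\beta)\in{}_\gamma\mathcal{B}_\beta$ and $y=\sum_j k_j\triangleright\theta(e_\beta b_j e_\alpha)\in{}_\beta\mathcal{B}_\alpha$, the displayed identity rewrites each summand of $xy$ as $(h_i)_{(1)}\triangleright\theta\bigl(e_\gamma a_i e_\beta\,(S((h_i)_{(2)})k_j\cdot e_\beta b_j e_\alpha)\bigr)$; since $S((h_i)_{(2)})k_j\cdot e_\beta b_j e_\alpha\in e_\beta A e_\alpha$, the argument of $\theta$ lies in $e_\gamma A e_\alpha$, so $xy\in{}_\gamma\mathcal{B}_\alpha$. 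Then I would promote $\mathcal{B}$ to an $H$-module semicategory over $\mathcal{C}^S(A)_0$ by restricting the $H$-action of $B$: ${}_\beta\mathcal{B}_\alpha$ is $H$-stable because $h\triangleright(k\triangleright\theta(c))=(hk)\triangleright\theta(c)$, and the module-semicategory axioms $1_H\triangleright u=u$ and $h\triangleright(uv)=\sum(h_{(1)}\triangleright u)(h_{(2)}\triangleright v)$ are inherited from the $H$-module algebra $B$ (the axiom on identities is vacuous here, as $\mathcal{B}$ is genuinely a semicategory). Finally, $F$ is a faithful $\mathcal{C}^S(A)_0$-semifunctor: $F_0=\mathrm{id}$, $F_1=\theta$ restricted to each $e_\beta A e_\alpha$ respects composition since $\theta$ is an algebra homomorphism and composition on both sides is the multiplication of $A$, resp.\ $B$, and $F$ is faithful because $\theta$ is injective.

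For axiom (2) I would show that $e=\{\theta(e_\alpha)\}_{\alpha}$ is a central idempotent of $\mathcal{B}$ generating $F(\mathcal{C}^S(A))$. Each $\theta(e_\alpha)$ is idempotent; for naturality take ${}_\beta f_\alpha=h\triangleright\theta(c)$ with $c\in e_\beta A e_\alpha$, and use axiom (3) of Definition \ref{def3} to obtain $\theta(e_\beta)\,{}_\beta f_\alpha=\theta\bigl(e_\beta(h\cdot c)\bigr)$ and ${}_\beta f_\alpha\,\theta(e_\alpha)=\theta\bigl((h\cdot c)e_\alpha\bigr)$; since $h\cdot c\in e_\beta A e_\alpha$, both equal $\theta(h\cdot c)$, proving $\theta(e_\beta)\,{}_\beta f_\alpha={}_\beta f_\alpha\,\theta(e_\alpha)$. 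The same computation identifies the generated ideal, ${}_\beta J_\alpha=\theta(e_\beta)\,{}_\beta\mathcal{B}_\alpha=\{\theta(h\cdot c):h\in H,\ c\in e_\beta A e_\alpha\}$, with $\theta(e_\beta A e_\alpha)={}_\beta F(\mathcal{C}^S(A))_\alpha$, where the inclusion $\supseteq$ uses $c=1_H\cdot c$ and $\subseteq$ uses $S$-categorizability. Axiom (3), $\mathcal{B}=H\triangleright F(\mathcal{C}^S(A))$, is immediate from the definition of the morphism spaces of $\mathcal{B}$. For axiom (4), given ${}_\beta f_\alpha=e_\beta a e_\alpha$ one has $h\cdot(e_\beta a e_\alpha)\in e_\beta A e_\alpha$, so by axiom (3) of Definition \ref{def3}, $F(h\cdot{}_\beta f_\alpha)=\theta\bigl(e_\beta(h\cdot(e_\beta a e_\alpha))\bigr)=\theta(e_\beta)\bigl(h\triangleright\theta(e_\beta a e_\alpha)\bigr)=F(e_\beta)\bigl(h\triangleright F({}_\beta f_\alpha)\bigr)$, and symmetrically $F(h\cdot{}_\beta f_\alpha)=\bigl(h\triangleright F({}_\beta f_\alpha)\bigr)F(e_\alpha)$, which is the intertwining condition. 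The step I expect to carry the real content is the closure of composition in $\mathcal{B}$ in the second paragraph: it is precisely there that the product formula of the globalization must combine with $S$-categorizability; everything else is bookkeeping that follows from $\theta$ being a monomorphism of algebras and from axiom (3) of Definition \ref{def3}.
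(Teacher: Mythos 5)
Your proposal is correct and follows essentially the same route as the paper: the paper's proof is a terse verification of the same four axioms (the $H$-module semicategory structure induced from $B$, faithfulness of $F$ with $e=\{\theta(e_\alpha)\}$ a central idempotent generating $F(\mathcal{C}^S(A))$, the equality $\mathcal{B}=H\triangleright F(\mathcal{C}^S(A))$, and the intertwining identity via axiom (3) of Definition \ref{def3} combined with $H\cdot e_\beta A e_\alpha\subseteq e_\beta A e_\alpha$), and you simply spell out the details the paper leaves implicit, such as closure of composition in $\mathcal{B}$ and the identification of the ideal generated by $e$.
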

	\begin{proof}
		In fact,
		\begin{enumerate}[\normalfont(1)]
			\item $\mathcal{B}$ is an $H$-module semicategory over $\mathcal{C}^S(A)_0$, with action $\blacktriangleright$ induced by $\triangleright$;
			\item $F$ is a faithful functor, because $\theta$ is monomorphism, $e=\{\theta(e_\alpha)\}_{e_\alpha \in S}$ is a central idempotent and $F(\mathcal{C}^S(A))$ is the ideal of $\mathcal{B}$ generated by $e$;
			\item $\mathcal{B}=H\triangleright F(\mathcal{C}^S(A))$;
			\item $F(h\cdot e_\beta ae_\alpha)=\theta(h\cdot e_\beta ae_\alpha)=\theta(e_\beta)(h\triangleright \theta(e_\beta ae_\alpha))=F(e_\beta)(h\triangleright F(e_\beta ae_\alpha))$ and\\
			$F(h\cdot e_\beta ae_\alpha)=\theta(h\cdot e_\beta ae_\alpha)=(h\triangleright \theta(e_\beta ae_\alpha))\theta(e_\alpha)=(h\triangleright F(e_\beta ae_\alpha))F(e_\alpha)$.
		\end{enumerate}
	\end{proof}
	
	Conversely, if the partial action $\triangleright: H \otimes \mathcal{C} \longrightarrow \mathcal{C}$ has a globalization $(\mathcal{B},F)$, then the partial action $\cdot: H \otimes a(\mathcal{C}) \longrightarrow a(\mathcal{C})$ induced by $\triangleright$ has a globalization given by the pair $(a(\mathcal{B}),\theta)$, where $\theta((\,_yf_x)_{x,y})=(F(\,_yf_x))_{x,y}$ and the action on $a(\mathcal{B})$ is induced by the action on $\mathcal{B}$.

	\section{A Morita context  between $\underline{A\# H}$ and $B\# H$}

	In \cite{alves3}, Alves and Batista proved that there exists a strict Morita context between $\underline{A\# H}$ and $B\# H$, whenever $(B,\theta)$ is a globalization for the symmetrical partial action on the unital algebra $A$ and $H$ has bijective antipode. We will show that an equivalent argument holds for nonunital algebras.
	
	We recall from \cite{garciasimon} that two idempotent rings are Morita equivalent, i.e., their categories of unital and torsionfree modules are equivalent, if and only if there exists a strict Morita context where the modules are unital.

	\begin{lemma} Let $A$ be a partial $H$-module algebra and $\theta: A\to B$ be a globalization. Then the map
		\[
		\Phi: \underline{A\# H} \to B\# H, \ \ \ \ \	\Phi(\sum a(h_{(1)}\cdot b)\# h_{(2)})=\sum \theta(a)(h_{(1)}\triangleright \theta(b))\# h_{(2)},
		\]
		is an algebra monomorphism.
	\end{lemma}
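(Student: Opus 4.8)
The plan is to obtain $\Phi$ as the restriction to $\underline{A\#H}$ of a single algebra map defined on all of $A\#H$, which sidesteps having to check directly that the stated formula is independent of the chosen expression $\sum a(h_{(1)}\cdot b)\#h_{(2)}$ of an element of $\underline{A\#H}$.

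First I would introduce the linear map $\bar{\Phi}=\theta\otimes\operatorname{id}_H\colon A\#H\to B\#H$, $a\#h\mapsto\theta(a)\#h$; this is well defined and linear for the trivial reason that $A\#H=A\otimes H$ and $B\#H=B\otimes H$ as vector spaces. The key point is that $\bar{\Phi}$ is multiplicative. Indeed, expanding the product of $A\#H$ and then using axiom (3) of Definition \ref{def3} in the form $\theta(a(g\cdot b))=\theta(a)(g\triangleright\theta(b))$ — which holds for every $g\in H$ and may therefore be applied term by term to the Sweedler expansion of $h$ — one gets
\[
\bar{\Phi}\big((a\#h)(b\#k)\big)=\sum\theta\big(a(h_{(1)}\cdot b)\big)\#h_{(2)}k=\sum\theta(a)\big(h_{(1)}\triangleright\theta(b)\big)\#h_{(2)}k=(\theta(a)\#h)(\theta(b)\#k),
\]
which is exactly $\bar{\Phi}(a\#h)\bar{\Phi}(b\#k)$. (Here one uses that $A\#H$ is associative, guaranteed by axiom (2) of a partial $H$-module algebra, and that $B\#H$ is an associative algebra since $B$ is an $H$-module algebra.)

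Next I would note that $\underline{A\#H}=(A\#H)(A\#1_H)$ is a subalgebra of $A\#H$ (because $(A\#1_H)(A\#H)\subseteq A\#H$), that $\sum a(h_{(1)}\cdot b)\#h_{(2)}=(a\#h)(b\#1_H)$, and hence the restriction $\Phi:=\bar{\Phi}|_{\underline{A\#H}}$ is an algebra homomorphism into $B\#H$ satisfying $\Phi(\sum a(h_{(1)}\cdot b)\#h_{(2)})=\bar{\Phi}(a\#h)\,\bar{\Phi}(b\#1_H)=\sum\theta(a)(h_{(1)}\triangleright\theta(b))\#h_{(2)}$, exactly the stated formula; in particular this confirms at the same time that the formula is well defined.

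Finally, for injectivity I would simply observe that, over a field, tensoring the injective linear map $\theta$ with $\operatorname{id}_H$ yields an injective map, so $\bar{\Phi}$ is injective and therefore so is its restriction $\Phi$. There is no serious obstacle here: the entire content is carried by globalization axiom (3), and the only points demanding a little care are checking that $\underline{A\#H}$ is genuinely a subalgebra of $A\#H$ (so that ``restriction'' is meaningful) and the Sweedler bookkeeping when inserting axiom (3) inside the coproduct of $h$.
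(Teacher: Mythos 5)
Your proposal is correct and takes essentially the same route as the paper: there too one defines the map on all of $A\# H$ by $a\# h\mapsto\theta(a)\# h$, observes it is an algebra morphism thanks to the globalization axioms (the paper cites the unital-case computation rather than writing it out), gets injectivity from that of $\theta$, and then restricts to the subalgebra $\underline{A\# H}$, where the stated formula on generators follows. The only difference is that you spell out the multiplicativity computation and the well-definedness remark explicitly, which the paper leaves to a reference.
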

	
	\begin{proof} Consider the linear map $\Phi :A\# H \longrightarrow B\# H$ defined by 
		\begin{eqnarray*}
			\Phi(a \# h)= \theta(a) \# h.
		\end{eqnarray*}
		By the same calculations as in \cite{alves1} $\Phi$ is a well-defined  algebra morphism (due to the axioms of a globalization), and $\Phi$ is injective because $\theta$ is injective.
		
		Its restriction to the subalgebra $\underline{A\# H}$,  which we will also denote by $\Phi$, is therefore an injective algebra morphism, whose expression on a generator $\sum a(h_{(1)}\cdot b)\# h_{(2)}$ is given by
		\begin{eqnarray*}
			\Phi(\sum a(h_{(1)}\cdot b)\# h_{(2)})=\sum \theta(a)(h_{(1)}\triangleright \theta(b))\# h_{(2)}.
		\end{eqnarray*}
	\end{proof}

	\begin{theorem}\label{teo2} Let $H$ be a Hopf algebra with bijective antipode,  $A$ be an idempotent algebra which is a  partial $H$-module algebra, and let  $(B,\theta)$ be a globalization for $A$. Then, there exists a strict Morita context between $\underline{A\# H}$ and $B\# H$.
	\end{theorem}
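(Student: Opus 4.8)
I would realize $\underline{A\#H}$ as a ``generalized corner'' of $B\#H$ through the monomorphism $\Phi$ of the preceding lemma, and build the Morita context from it. Write $C:=\Phi(\underline{A\#H})$; this is a subalgebra of $B\#H$ isomorphic to $\underline{A\#H}$, and it is idempotent by Proposition~\ref{prop.smash.product.idempotent}. Two structural remarks handle the bookkeeping. First, since $A$ is idempotent and $B=H\triangleright\theta(A)$ is an $H$-module algebra, one gets $B=B^2$, and hence $B\#H$ is an idempotent ring (every $z\#h$ is a sum of products $(z_i\#1_H)(z_i'\#h)$). Second, $\Phi$ extends to an algebra map $A\#H\to B\#H$, so $C=\Phi\bigl((A\#H)(A\#1_H)\bigr)=(\theta(A)\#H)(\theta(A)\#1_H)$; here $\theta(A)\#H$ is a right ideal of $B\#H$ (because $\theta(A)$ is an ideal of $B$) and $(\theta(A)\#H)(\theta(A)\#H)=\theta(A)\#H$, from which $C(B\#H)\subseteq\theta(A)\#H$ and $(\theta(A)\#H)\,C=C$; combining these gives $C(B\#H)C=C$, the reverse inclusion $C\subseteq C(B\#H)C$ being clear from $C=C^3$.

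Next I would put $M:=C(B\#H)$, viewed as a left $\underline{A\#H}$-module via $\Phi$ and a right $B\#H$-module by multiplication, and $N:=(B\#H)C$, a left $B\#H$-module and a right $\underline{A\#H}$-module via $\Phi$; idempotency of $C$ and of $B\#H$ makes each of $M$, $N$ a unital module on both sides. For the pairings I take $\tau\colon M\otimes_{B\#H}N\to\underline{A\#H}$, $m\otimes n\mapsto\Phi^{-1}(mn)$ --- legitimate since $MN=C(B\#H)C=C=\Phi(\underline{A\#H})$ --- and $\mu\colon N\otimes_{\underline{A\#H}}M\to B\#H$, $n\otimes m\mapsto nm$. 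That these are well defined on the balanced tensor products, are bimodule morphisms, and satisfy the associativity relations $\tau(m\otimes n)\,m'=m\,\mu(n\otimes m')$ and $n\,\tau(m\otimes n')=\mu(n\otimes m)\,n'$ of a Morita context follows at once from associativity in $B\#H$ and from $\Phi$ being an algebra isomorphism onto $C$. Surjectivity of $\tau$ is free: $C=C^3\subseteq C(B\#H)C=MN$, so the image of $\tau$ is all of $\underline{A\#H}$.

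The real content is the surjectivity of $\mu$, equivalently the identity $(B\#H)C(B\#H)=B\#H$, i.e.\ that $C$ generates $B\#H$ as a two-sided ideal. Because $B\#H$ is idempotent and $B=H\triangleright\theta(A)$, it suffices to prove that $(h\triangleright\theta(a))\#k$ lies in $(B\#H)C(B\#H)$ for all $h,k\in H$ and $a\in A$. I would do this exactly as in \cite{alves3}: one writes such a generator as a finite sum of products $(z\#l)\,c\,(z'\#m)$ with $c\in C$, by unfolding the coproducts of $h$ and $k$ and invoking the globalization identities $\theta((h\cdot a)b)=(h\triangleright\theta(a))\theta(b)$ and $\theta(b(h\cdot a))=\theta(b)(h\triangleright\theta(a))$ together with $(h\triangleright x)(k\triangleright y)=\sum h_{(1)}\triangleright\bigl(x(S(h_{(2)})k\triangleright y)\bigr)$ and its $S^{-1}$-mirror --- this is where bijectivity of the antipode enters. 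The only point that is not a transcription of \cite{alves3} is the absence of $1_A$: each use of $1_A$ in the unital argument must be replaced by writing the element of $A$ at hand as a finite sum of products and carrying the extra factors through the computation, and this accounting, rather than any new idea, is the main obstacle. Once $\mu$ is known to be surjective, the context $(\underline{A\#H},B\#H,M,N,\tau,\mu)$ is strict and $M,N$ are unital, so by the result of \cite{garciasimon} recalled above this gives the desired strict Morita context (and with it the Morita equivalence of $\underline{A\#H}$ and $B\#H$).
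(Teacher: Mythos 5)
Your proposal is correct and follows essentially the same route as the paper: both realize $\underline{A\#H}$ inside $B\#H$ via $\Phi$, take the obvious one-sided multiples as the bimodules (the paper's $M=\theta(A)\#H$ and $N=\operatorname{span}\{\sum h_{(1)}\triangleright\theta(a)\#h_{(2)}\}$ agree with your $C(B\#H)$ and $(B\#H)C$ once $A=A^2$ is used), use multiplication in $B\#H$ for the pairings, and reduce everything to the two surjectivity statements. The one step you defer to \cite{alves3} ``plus bookkeeping'' is handled in the paper by the single explicit identity $h\triangleright\theta(a)\#k=\sum\bigl(h_{(1)}\triangleright\theta(a_1)\#h_{(2)}\bigr)\bigl(\theta(a_2)\#S(h_{(3)})k\bigr)$ with $a=\sum a_1a_2$, which is exactly the $1_A$-free substitute you describe.
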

	\begin{proof} As in \cite{alves1} and \cite{dok},  consider the vector spaces
		\begin{eqnarray*}
			M&=& \{\sum_i \theta(a_i)\# h_i;\,\, a_i\in A,\,\, h_i\in H\}\\
			N&=&\{\sum_{i,(h_i)}(h_i)_{(1)}\triangleright \theta(a_i)\# (h_i)_{(2)};\,\, a_i \in A,\,\, h_i\in H\},
		\end{eqnarray*}
		i.e., $M=\Phi(A\# H)$ and $N$ is the subspace of $B\# H$ generated by the elements $\sum h_{(1)}\triangleright \theta(a)\# h_{(2)}$. Since $M$ and $N$ are subspaces of $B\# H$ and $\underline{A\# H}$ can be seen as a subspace of $B\# H$, by the previous lemma, it is not hard to show that $M$ is an $\underline{A\# H}-B\# H$-bimodule and $N$ is a $B\# H-\underline{A\# H}$-bimodule; these bimodule structures are the same ones considered in \cite{alves1}.
		
		For the rest of the Morita context, define the maps
		\begin{eqnarray*}
			\tau&:&M\otimes_{B\# H}N\longrightarrow \underline{A\# H}\cong \Phi(\underline{A\# H})\subseteq B\# H\\
			\sigma&:&N\otimes_{\underline{A\# H}}M\longrightarrow B\# H,
		\end{eqnarray*}
		both induced by the multiplication in $B\#H$. This is possible because $M$, $N$ and $\underline{A\# H}$ are seen as subspaces of $B\# H$, as mentioned before. Since the multiplication on $B\# H$ is associative, we have that both $\tau$ and $\sigma$ are bimodule morphisms. Hence, from \cite{garciasimon}, we only need to prove that they are surjective.
		
		First, we have that $MN\subseteq \Phi(\underline{A\# H})$ because for every $a,b \in A$ and $h,k\in H$ we have
		\begin{eqnarray*}
			(\theta(a)\# h)(\sum k_{(1)}\triangleright \theta(b)\# k_{(2)})&=& \sum \theta(a)(h_{(1)}k_{(1)}\triangleright \theta(b))\# h_{(2)}k_{(2)}\\
			&=& \sum \theta(a)((hk)_{(1)}\triangleright \theta(b))\# (hk)_{(2)}\,\in \,\Phi(\underline{A\# H}).
		\end{eqnarray*} 
		Since $\sum \theta(a)(h_{(1)}\triangleright \theta(b))\# h_{(2)}=(\theta(a)\# h)(\theta(b)\# 1_H)$ and $\theta(b)\# 1_H$ lies in $N$, we have that $MN=\Phi(\underline{A\# H})$. 
		Finally, as $NM\subseteq B\# H$, $h\triangleright\theta(a)\# k$ is a generator of $B\# H$ as vector space and
		\begin{eqnarray*}
			h\triangleright\theta(a)\# k=\sum(h_{(1)}\triangleright\theta(a_1)\# h_{(2)})(\theta(a_2)\# S(h_{(3)})k)\,\in\, NM,
		\end{eqnarray*} 
		where $a=\sum_i a_{1i}a_{2i}=\sum a_1a_2$ ($A$ is idempotent), we have that $NM=B\# H$.
	\end{proof}
	
	Since $A$ is idempotent, we have that both $\underline{A\# H}$ and $B\# H$ are also idempotent, and by \cite{garciasimon} we have the following result.
	
	\begin{theorem} Under the hypotheses of the previous theorem, we have that the category of the unital and torsionfree left $B\# H$-modules ($B\# H$-mod) and the category of the unital and torsionfree left $\underline{A\# H}$-modules ($\underline{A\# H}$-mod) are equivalent.
	\end{theorem}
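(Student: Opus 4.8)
The plan is to deduce the statement directly from Theorem~\ref{teo2} together with the characterization of Morita equivalence for idempotent rings recalled above from \cite{garciasimon}: two idempotent rings $R$ and $S$ have equivalent categories of unital torsionfree left modules exactly when there is a strict Morita context between them whose connecting bimodules are unital. So the proof reduces to three checks: that $\underline{A\# H}$ and $B\# H$ are idempotent, that the Morita context built in Theorem~\ref{teo2} is strict, and that its bimodules $M$ and $N$ are unital on both sides.

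The first check is immediate. Since $A$ is idempotent by hypothesis, $\underline{A\# H}$ is idempotent by Proposition~\ref{prop.smash.product.idempotent}. For $B\# H$, the proof of Theorem~\ref{teo2} shows $NM = B\# H$ with $N$ and $M$ contained in $B\# H$, so $(B\# H)^2 \supseteq NM = B\# H$, whence $B\# H$ is idempotent. The second check is precisely the content of Theorem~\ref{teo2}: the maps $\tau$ and $\sigma$ coming from the multiplication of $B\# H$ are surjective, and in fact $MN = \Phi(\underline{A\# H})$ and $NM = B\# H$.

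For the third check, write $R = \Phi(\underline{A\# H})$ and $S = B\# H$ as subrings of $B\# H$, so that all the context actions are given by multiplication in $B\# H$. Associativity gives $RM = (MN)M = M(NM) = MS$, and idempotency of $R$ gives $R(RM) = R^2 M = RM$; hence $M_0 := RM = MS$ is a unital $R$-$S$-bimodule contained in $M$, and likewise $N_0 := SN = NR$ is a unital $S$-$R$-bimodule contained in $N$. Using $R^3 = R$ and $S^3 = S$ one gets $M_0 N_0 = R$ and $N_0 M_0 = S$, so $(R,S,M_0,N_0,\tau,\sigma)$ is again a strict Morita context, now with unital bimodules. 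Applying the quoted theorem of Garc\'ia and Simon to the idempotent rings $\underline{A\# H}\cong R$ and $B\# H = S$ yields the equivalence between the categories of unital torsionfree left $\underline{A\# H}$-modules and unital torsionfree left $B\# H$-modules. The only point needing attention is this passage to the unital cores $M_0, N_0$; everything else is already established in Theorem~\ref{teo2} and Proposition~\ref{prop.smash.product.idempotent}, so there is no substantial obstacle.
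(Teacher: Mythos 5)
Your proposal is correct and follows essentially the same route as the paper: the paper also deduces the statement immediately from Theorem \ref{teo2}, the idempotency of $\underline{A\# H}$ and $B\# H$, and the Garc\'ia--Simon criterion for idempotent rings. The only difference is that you additionally verify unitality of the connecting bimodules (via the cores $M_0$, $N_0$, which in fact coincide with $M$ and $N$ since $A$ is idempotent), a hypothesis of the Garc\'ia--Simon theorem that the paper leaves implicit.
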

	
	\begin{remark}
		Under the hypothesis of Theorem \ref{teo2}, if $A$ is a unital algebra, even if $B\# H$ does not have unit, there still exist an equivalence between the categories of $\underline{A\# H}$-modules and the category of the unital and torsionfree left $B\# H$-modules.
	\end{remark}
	
	\section{Morita equivalence between $A$ and $a(\mathcal{C}^S(A))$}

	In this section we will show that every algebra $A$ with s.l.u. $S$ is Morita equivalent to the algebra $a(\mathcal{C}^S(A))$. This result will be fundamental further on for deriving an equivalence between the categories of unital $\underline{A\# H}$ modules and of $\underline{\mathcal{C}^S(A)\# H}$ modules
	(see Corollary \ref{corollary.morita.smash.2}).
	In order to do that, we will prove that the category of the unital $A$-modules is equivalent to the category of $\mathcal{C}^S(A)$-modules, and then we will show that for every $\Bbbk$-category $\mathcal{C}$, the category of the $\mathcal{C}$-modules is equivalent to the category of the unital $a(\mathcal{C})$-modules. We begin by recalling the definition of modules of a $\Bbbk$-category.
	
	\begin{definition}
		Let $\mathcal{C}$ be a $\Bbbk$-category. A $\mathcal{C}$-module $\mathcal{M}$ is a functor from $\mathcal{C}$ to the category of $\Bbbk$-vector spaces $\textbf{Vec}$. In other words, for every object $x\in \mathcal{C}_0$, a vector space $_x\mathcal{M}$ is determined, and for every linear map $f:x\to y$ in $\mathcal{C}_1$, a linear transformation $f:\,_x\mathcal{M} \to\,_y\mathcal{M}$ is determined, such that for all $x, y, z  \in \mathcal{C}_0$, ${}_xm \in _x\mathcal{M}$ and $_yg_z, _zf_x \in \mathcal{C}_1$,
		\begin{enumerate}[\normalfont(1)]
			\item $_x1_x\,_xm=\,_xm$;
			\item $_yg_z(\,_zf_x\,_xm)=(_yg_z\circ\,_zf_x)\,_xm$.
		\end{enumerate}
	\end{definition} 
	
	We know that given an algebra with local units $A$ and a fixed s.l.u. $S=\{e_\lambda\}_{\lambda\in\Lambda}$, every $A$-module $M$ can be seen as a $\mathcal{C}^S(A)$-module, namely $\mathcal{M}=\{\,_\lambda\mathcal{M}\}_{\lambda\in\Lambda}$, where $\,_\lambda\mathcal{M}=e_\lambda M$ and the actions on each $e_\lambda M$ are the restrictions of the original action of $A$ on $M$.
	
	Now, to construct an $A$-module arising from a $\mathcal{C}^S(A)$-module, note that given a $\mathcal{C}^S(A)$-module $\mathcal{M}=\{\,_\lambda\mathcal{M}\}_{\lambda\in\Lambda}$, for every $e_\lambda\leq e_\alpha$, we have that $e_\lambda\in e_\alpha Ae_\lambda\cap e_\lambda Ae_\alpha\cap e_\lambda Ae_\lambda$, hence $e_\lambda\cdot_{(\lambda,\lambda)}\,_\lambda m=e_\lambda\cdot_{(\alpha,\lambda)} (e_\lambda\cdot_{(\lambda,\alpha)}\,_\lambda m) $ for every $_\lambda\in\,_\lambda\mathcal{M}$. Since $e_\lambda\cdot_{(\lambda,\lambda)} \square =id_{_\lambda\mathcal{M}}$, we have that $I_{(\lambda,\alpha)}=e_\lambda\cdot_{(\lambda,\alpha)}\square$ is an injective linear map. This calculation is illustrated by the following diagram.
	
	$$\xymatrix{{}_\lambda M \ar@(ul,dl)[]_{e_\lambda \cdot_{(\lambda,\lambda)}} \ar@<1ex>[r]^{e_\lambda\cdot_{(\lambda,\alpha)}}&{}_\alpha M \ar@<1ex>[l]^{e_\lambda \cdot_{(\alpha,\lambda)}}} $$
	
	\begin{lemma}$\{\{\,_\lambda\mathcal{M}\}_{\lambda\in \Lambda},\{I_{(\lambda,\alpha)}:\,_\lambda\mathcal{M}\to\,_\alpha\mathcal{M}\}_{\lambda\leq\alpha\in\Lambda}\}$ forms a direct system of vector spaces.
	\end{lemma}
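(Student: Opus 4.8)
The plan is to verify the two defining axioms of a direct system indexed by the directed poset $(\Lambda,\leq)$: that each transition map with equal indices is the identity, and that the transition maps compose, i.e. $I_{(\alpha,\gamma)}\circ I_{(\lambda,\alpha)}=I_{(\lambda,\gamma)}$ whenever $\lambda\leq\alpha\leq\gamma$. First, though, I would record that $(\Lambda,\leq)$ is indeed directed: given $\lambda,\mu\in\Lambda$, applying the definition of a system of local units to the finite set $\{e_\lambda,e_\mu\}$ yields some $e_\gamma\in S$ with $e_\gamma e_\lambda=e_\lambda e_\gamma=e_\lambda$ and $e_\gamma e_\mu=e_\mu e_\gamma=e_\mu$, that is, $\lambda\leq\gamma$ and $\mu\leq\gamma$ (this is exactly the content of the remark on the partial order of $S$).

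Next I would check that the $I_{(\lambda,\alpha)}$ are well defined in the present setting, namely that for $\lambda\leq\alpha$ the local unit $e_\lambda$ really is a morphism $\lambda\to\alpha$ in $\mathcal{C}^S(A)$, i.e. $e_\lambda\in {}_\alpha\mathcal{C}^S(A)_\lambda=e_\alpha Ae_\lambda$; this is immediate from $e_\alpha e_\lambda=e_\lambda=e_\lambda e_\lambda$. Being the restriction to ${}_\lambda\mathcal{M}=e_\lambda M$ of the ($\Bbbk$-bilinear) action of $\mathcal{C}^S(A)$ on $\mathcal{M}$, each $I_{(\lambda,\alpha)}$ is linear. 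The axiom $I_{(\lambda,\lambda)}=id_{{}_\lambda\mathcal{M}}$ is then precisely axiom (1) of the definition of a $\mathcal{C}^S(A)$-module applied to the identity morphism ${}_\lambda 1_\lambda=e_\lambda$.

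For the compositional axiom, fix $\lambda\leq\alpha\leq\gamma$. Then $e_\lambda$ is simultaneously a morphism $\lambda\to\alpha$, a morphism $\alpha\to\gamma$ (since $e_\gamma e_\lambda e_\alpha=e_\lambda$), and a morphism $\lambda\to\gamma$, and in the category $\mathcal{C}^S(A)$ one has $e_\lambda\circ e_\lambda=e_\lambda^2=e_\lambda$. Applying axiom (2) of the definition of a $\mathcal{C}^S(A)$-module with ${}_zf_x={}_yg_z=e_\lambda$ gives, for every ${}_\lambda m\in{}_\lambda\mathcal{M}$,
\[
I_{(\alpha,\gamma)}\bigl(I_{(\lambda,\alpha)}({}_\lambda m)\bigr)
= e_\lambda\cdot_{(\alpha,\gamma)}\bigl(e_\lambda\cdot_{(\lambda,\alpha)}{}_\lambda m\bigr)
= (e_\lambda\circ e_\lambda)\cdot_{(\lambda,\gamma)}{}_\lambda m
= e_\lambda\cdot_{(\lambda,\gamma)}{}_\lambda m
= I_{(\lambda,\gamma)}({}_\lambda m),
\]
which is the required identity. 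This exhausts the content of the lemma, so there is no genuine obstacle; the only point needing care is the bookkeeping of the source and target objects, so that every occurrence of $e_\lambda$ is interpreted in the appropriate hom-space and axiom (2) for a module over a $\Bbbk$-category applies verbatim.
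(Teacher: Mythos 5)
Your verification takes the same direct route as the paper, which disposes of the lemma in one line by citing the relation $e_\gamma e_\alpha=e_\alpha$ whenever $e_\alpha\leq e_\gamma$; and most of your bookkeeping (directedness of $(\Lambda,\leq)$, $e_\lambda\in e_\alpha Ae_\lambda$, linearity, and $I_{(\lambda,\lambda)}=\mathrm{id}_{{}_\lambda\mathcal{M}}$) is fine. The step that is not right as written is the first equality of your composition computation. By the paper's definition the transition map out of $\alpha$ is $I_{(\alpha,\gamma)}=e_\alpha\cdot_{(\alpha,\gamma)}\square$, the action of the local unit attached to the \emph{source} object $\alpha$, not of $e_\lambda$. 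Hence $I_{(\alpha,\gamma)}\bigl(I_{(\lambda,\alpha)}({}_\lambda m)\bigr)$ is $e_\alpha\cdot_{(\alpha,\gamma)}\bigl(e_\lambda\cdot_{(\lambda,\alpha)}{}_\lambda m\bigr)$, and replacing it by $e_\lambda\cdot_{(\alpha,\gamma)}\bigl(e_\lambda\cdot_{(\lambda,\alpha)}{}_\lambda m\bigr)$ amounts to claiming that $e_\alpha$ and $e_\lambda$ act identically on the image of $I_{(\lambda,\alpha)}$ — which is true, but only because $e_\alpha e_\lambda=e_\lambda$; as it stands your argument uses only $e_\lambda\circ e_\lambda=e_\lambda$ and never invokes the relation that actually makes the maps defined in the paper compose correctly.

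The repair is one line and is exactly the fact the paper's proof cites: by axiom (2) of a $\mathcal{C}^S(A)$-module and $e_\alpha e_\lambda=e_\lambda$ (note $e_\alpha e_\lambda\in e_\gamma Ae_\lambda$ since $e_\gamma e_\alpha=e_\alpha$),
\[
I_{(\alpha,\gamma)}\bigl(I_{(\lambda,\alpha)}({}_\lambda m)\bigr)
= e_\alpha\cdot_{(\alpha,\gamma)}\bigl(e_\lambda\cdot_{(\lambda,\alpha)}{}_\lambda m\bigr)
= (e_\alpha e_\lambda)\cdot_{(\lambda,\gamma)}{}_\lambda m
= e_\lambda\cdot_{(\lambda,\gamma)}{}_\lambda m
= I_{(\lambda,\gamma)}({}_\lambda m).
\]
With this substitution your proof is complete and coincides with the paper's argument.
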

	\begin{proof}
		Follows directly from the fact that $e_\gamma e_\alpha=e_\alpha$ whenever $e_\alpha\leq e_\gamma$.
	\end{proof}
	
	It is well known that every direct system of vector spaces $\{\iota_{ij}:M_i\to M_j \}$ has a (unique) limit, i.e., there exists a vector space $M$ with inclusions $\iota_i: M_i\to M$ such that for every vector space $N$ with inclusions $\nu_i: M_i\to N$ there exists a unique linear transformation $\theta: M\to N$ such that $\theta\circ \iota_i=\nu_i$ for every $i$.
	
	To describe the limit of the direct system of the previous lemma, consider the subspace $T$ of $ \oplus_{\lambda} \,_\lambda\mathcal{M}$ generated by elements of the form $_\lambda m-I_{(\lambda,\alpha)}(\,_\lambda m)$ with $e_\lambda\leq e_\alpha$; then $\underrightarrow{lim}\,\,_\lambda\mathcal{M}=\oplus\,_\lambda\mathcal{M}/T$ by Proposition $2.6.8$ of \cite{weibel}. 
	Since $I_{(\lambda,\alpha)}$ is injective, we can identify $_\lambda M$ with $\overline{_\lambda M}\subset M$ by $_\lambda m\mapsto \overline{_\lambda m}$, the equivalence class of $_\lambda m$ in $M$.
	
	Now, consider $M=\underrightarrow{lim}\,\,_\lambda\mathcal{M}$ and define
	\begin{eqnarray*}
		\bullet: A\otimes M &\longrightarrow & M\\
		a\otimes \overline{\,_\lambda m} &\mapsto & \overline{e_\beta ae_\alpha\cdot_{(\alpha,\beta)}I_{(\lambda,\alpha)}\,_\lambda m},
	\end{eqnarray*}
	where $ae_\alpha=a=e_\beta a$ and $e_\lambda\leq e_\alpha$. In other words, $a\bullet \overline{\,_\lambda m}=\overline{a\cdot_{(\alpha,\beta)}e_\lambda\cdot_{(\lambda,\alpha)}\,_\lambda m}=\overline{ae_\lambda\cdot_{(\lambda,\beta)}\,_\lambda m}$
	
	\begin{proposition} $M$ is an $A$-module via $\bullet$.
	\end{proposition}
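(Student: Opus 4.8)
The plan is to define, for each $a\in A$, a $\Bbbk$-linear endomorphism $\rho_a$ of $M$ by assembling compatible maps out of the components ${}_\lambda\mathcal{M}$ and invoking the universal property of the direct limit, and then to verify that $a\mapsto\rho_a$ is $\Bbbk$-bilinear, multiplicative and unital. Throughout I will use the reformulation already recorded just before the statement: if $e_\beta a=a=ae_\alpha$ with $e_\lambda\le e_\alpha$, then $a\bullet\overline{{}_\lambda m}=\overline{(ae_\lambda)\cdot_{(\lambda,\beta)}{}_\lambda m}$, with $ae_\lambda\in e_\beta Ae_\lambda={}_\beta\mathcal{C}^S(A)_\lambda$; this follows from the composition axiom of a $\mathcal{C}^S(A)$-module together with $e_\alpha e_\lambda=e_\lambda$ and $e_\beta a=a$, and in particular the right-hand side does not depend on the choice of $e_\alpha$.

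Fix $a\in A$. For each $\lambda$ choose any $e_\beta\in S$ with $e_\beta a=a$ and set $\rho_a^\lambda:{}_\lambda\mathcal{M}\to M$, ${}_\lambda m\mapsto\overline{(ae_\lambda)\cdot_{(\lambda,\beta)}{}_\lambda m}$. The heart of the argument is that these maps are well-defined (independent of $e_\beta$) and compatible with the direct system. First, $\rho_a^\lambda$ is independent of $e_\beta$: if also $e_{\beta'}a=a$, pick $e_\gamma\ge e_\beta,e_{\beta'}$; the composition axiom gives $I_{(\beta,\gamma)}\bigl((ae_\lambda)\cdot_{(\lambda,\beta)}{}_\lambda m\bigr)=(ae_\lambda)\cdot_{(\lambda,\gamma)}{}_\lambda m=I_{(\beta',\gamma)}\bigl((ae_\lambda)\cdot_{(\lambda,\beta')}{}_\lambda m\bigr)$, and since $\overline{{}_\beta n}=\overline{I_{(\beta,\gamma)}({}_\beta n)}$ in $M=\bigoplus_\lambda{}_\lambda\mathcal{M}/T$, both choices yield the same class $\overline{(ae_\lambda)\cdot_{(\lambda,\gamma)}{}_\lambda m}$. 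Second, $\rho_a^\alpha\circ I_{(\lambda,\alpha)}=\rho_a^\lambda$ whenever $e_\lambda\le e_\alpha$: again by the composition axiom and $e_\alpha e_\lambda=e_\lambda$,
\[
\rho_a^\alpha\bigl(I_{(\lambda,\alpha)}({}_\lambda m)\bigr)=\overline{(ae_\alpha)\cdot_{(\alpha,\beta)}\bigl(e_\lambda\cdot_{(\lambda,\alpha)}{}_\lambda m\bigr)}=\overline{(ae_\lambda)\cdot_{(\lambda,\beta)}{}_\lambda m}=\rho_a^\lambda({}_\lambda m).
\]
Hence $\{\rho_a^\lambda\}$ is a cocone on the direct system, and the universal property of $M$ produces a unique linear map $\rho_a:M\to M$ with $\rho_a(\overline{{}_\lambda m})=\rho_a^\lambda({}_\lambda m)$; we set $a\bullet\overline{{}_\lambda m}:=\rho_a(\overline{{}_\lambda m})$, which is exactly the formula in the statement and is now well-defined.

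It remains to check the module axioms, which are routine given the reformulation. Linearity in ${}_\lambda m$ is inherited from linearity of $\cdot_{(\lambda,\beta)}$; for linearity in $a$ one computes $a\bullet-$ and $a'\bullet-$ using a single $e_\beta$ with $e_\beta a=a=e_\beta a'$. For multiplicativity, given $a,b\in A$ and $\overline{{}_\lambda m}\in M$ pick $e_\mu$ with $e_\mu b=b$ and $e_\sigma$ with $e_\sigma a=a$ (so $e_\sigma(ab)=ab$); then
\[
a\bullet\bigl(b\bullet\overline{{}_\lambda m}\bigr)=\overline{(ae_\mu)\cdot_{(\mu,\sigma)}\bigl((be_\lambda)\cdot_{(\lambda,\mu)}{}_\lambda m\bigr)}=\overline{\bigl((ae_\mu)(be_\lambda)\bigr)\cdot_{(\lambda,\sigma)}{}_\lambda m}=\overline{(ab\,e_\lambda)\cdot_{(\lambda,\sigma)}{}_\lambda m}=(ab)\bullet\overline{{}_\lambda m},
\]
using the composition axiom and $e_\mu b=b$. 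Finally $M$ is a \emph{unital} $A$-module: $e_\lambda\bullet\overline{{}_\lambda m}=\overline{e_\lambda\cdot_{(\lambda,\lambda)}{}_\lambda m}=\overline{{}_\lambda m}$ by the unit axiom of a $\mathcal{C}^S(A)$-module, so $AM=M$. The one genuine obstacle is the well-definedness in the middle step: the action must be made insensitive both to the local unit $e_\beta$ absorbing $a$ from the left and to translating $\overline{{}_\lambda m}$ upward along the $I_{(\lambda,\alpha)}$, and the device that turns both checks into one-liners is precisely the reduction of the action to the data $ae_\lambda$ and $e_\beta$.
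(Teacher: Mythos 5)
Your proof is correct and follows essentially the same route as the paper: the action is reduced to the formula $a\bullet\overline{{}_\lambda m}=\overline{(ae_\lambda)\cdot_{(\lambda,\beta)}{}_\lambda m}$ and well-definedness is settled by passing to a common upper local unit $e_\gamma$ and using the composition axiom, exactly as in the paper's argument. Your write-up is merely more explicit than the paper's, since you also verify compatibility with the maps $I_{(\lambda,\alpha)}$ (independence of the chosen representative in the direct limit) and spell out the associativity, linearity and unitality checks that the paper dismisses as straightforward.
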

	\begin{proof}
		We will prove only that $\bullet$ is well-defined, since the remaining verifications are then straightforward. Because of the description $a\bullet \overline{\,_\lambda m}=\overline{ae_\lambda\cdot_{(\lambda,\beta)}\,_\lambda m}$, that does not depend on the choice of $e_\alpha$, we will show that $\overline{ae_\lambda\cdot_{(\lambda,\beta)}\,_\lambda m}=\overline{ae_\lambda\cdot_{(\lambda,\gamma)}\,_\lambda m}$ whenever $e_\beta\leq e_\gamma$, in fact 
		\begin{eqnarray*}
			\overline{ae_\lambda\cdot_{(\lambda,\beta)}\,_\lambda m}&=& \overline{I_{(\beta,\gamma)}(ae_\lambda\cdot_{(\lambda,\beta)}\,_\lambda m)}\\
			&=&\overline{e_\gamma \cdot_{(\beta,\gamma)}ae_\lambda\cdot_{(\lambda,\beta)}\,_\lambda m}\\
			&=& \overline{e_\gamma ae_\lambda\cdot_{(\lambda,\gamma)}\,_\lambda m}\\
			&=&\overline{ae_\lambda\cdot_{(\lambda,\gamma)}\,_\lambda m}.
		\end{eqnarray*}
		Finally, if we chose $e_\alpha$ and $e_\beta$ such that $e_\beta a=e_\alpha a=a$, there always exist $e_\gamma \geq e_\alpha,e_\beta$, and we are done.   
	\end{proof}
	
	\begin{theorem}
		Let $A$ be an algebra with s.l.u. $S$. Then the category of the unital $A$-modules is equivalent to the category of $\mathcal{C}^S(A)$-modules.
	\end{theorem}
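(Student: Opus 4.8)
The plan is to exhibit a pair of functors between the category $\mathcal{C}^S(A)\text{-}\mathrm{Mod}$ of $\mathcal{C}^S(A)$-modules and the category $A\text{-}\mathrm{Mod}_u$ of unital $A$-modules, and to show they are mutually quasi-inverse. In one direction, I would send a unital $A$-module $M$ to the $\mathcal{C}^S(A)$-module $\mathcal{M}=\{{}_\lambda\mathcal{M}\}_{\lambda\in\Lambda}$ with ${}_\lambda\mathcal{M}=e_\lambda M$, where a morphism $e_\beta a e_\alpha\in{}_\beta\mathcal{C}^S(A)_\alpha$ acts by $x\mapsto e_\beta a e_\alpha x$ (well-defined because $e_\alpha$ fixes elements of $e_\alpha M$, and functoriality is immediate from associativity in $A$ and the unit axiom). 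On a morphism $f\colon M\to N$ of unital $A$-modules one restricts to get linear maps $e_\lambda M\to e_\lambda N$; this is clearly a morphism of $\mathcal{C}^S(A)$-modules. In the other direction I would send a $\mathcal{C}^S(A)$-module $\mathcal{M}$ to $M=\varinjlim{}_\lambda\mathcal{M}$ with the $A$-action $\bullet$ already constructed in the excerpt, which the preceding proposition shows is a well-defined $A$-module; a morphism of $\mathcal{C}^S(A)$-modules induces compatible maps on the pieces ${}_\lambda\mathcal{M}$, hence a map on the colimit, giving a functor.

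Next I would check that the composite $\mathcal{M}\mapsto M\mapsto \{e_\lambda M\}$ is naturally isomorphic to the identity on $\mathcal{C}^S(A)\text{-}\mathrm{Mod}$. The key point is the identification, already noted in the excerpt, of ${}_\lambda\mathcal{M}$ with its image $\overline{{}_\lambda\mathcal{M}}\subseteq M$ under the colimit structure maps (injective because each $I_{(\lambda,\alpha)}$ is injective and $e_\lambda\cdot_{(\lambda,\lambda)}=\mathrm{id}$). One must verify that $e_\lambda M = \overline{{}_\lambda\mathcal{M}}$: the inclusion $\supseteq$ is clear since $e_\lambda\bullet\overline{{}_\lambda m}=\overline{{}_\lambda m}$, and for $\subseteq$ one uses that $M$ is generated by the $\overline{{}_\mu\mathcal{M}}$ together with the compatibility $e_\lambda\bullet\overline{{}_\mu m}=\overline{e_\lambda e_\mu\cdot_{(\mu,\lambda)}{}_\mu m}$ when $e_\mu\le e_\lambda$, so every element of $e_\lambda M$ already lies in $\overline{{}_\lambda\mathcal{M}}$. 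Under this identification the $\mathcal{C}^S(A)$-action recovered on $e_\lambda M$ coincides with the original one on ${}_\lambda\mathcal{M}$, and naturality in $\mathcal{M}$ is routine.

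For the other composite, $M\mapsto\{e_\lambda M\}\mapsto \varinjlim e_\lambda M$, I would produce a natural isomorphism with the identity on $A\text{-}\mathrm{Mod}_u$. There is a canonical $A$-linear map $\varinjlim e_\lambda M\to M$, induced by the inclusions $e_\lambda M\hookrightarrow M$, which are compatible with the structure maps $I_{(\lambda,\alpha)}$ of the direct system (these structure maps are just multiplication by $e_\lambda$, which is the identity on $e_\lambda M$). This map is surjective because $M$ is a unital $A$-module: every $m\in M$ can be written $m=\sum a_i x_i$ with $a_i,x_i\in A$, and choosing a local unit $e_\alpha$ with $e_\alpha a_i=a_i$ gives $m\in e_\alpha M$; it is injective because if a representative $\overline{{}_\lambda m}$ maps to $0$ in $M$ then $m=0$ in $e_\lambda M$, and the colimit description $\varinjlim e_\lambda M=\bigoplus e_\lambda M/T$ forces $\overline{{}_\lambda m}=0$. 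Naturality is again immediate. The main obstacle is the bookkeeping in the surjectivity/injectivity of this last map and in matching the two descriptions of the colimit with the $A$-module structure $\bullet$; everything else is formal diagram-chasing with local units.
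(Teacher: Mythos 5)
Your proposal is correct and follows essentially the same route as the paper: the same pair of functors $M\mapsto\{e_\lambda M\}_\lambda$ and $\mathcal{M}\mapsto\varinjlim{}_\lambda\mathcal{M}$ with the action $\bullet$, with the composite on the $A$-module side identified with the identity via the canonical comparison with $M$ (which the paper states as an equality $\varinjlim e_\lambda M=M$), and the composite on the $\mathcal{C}^S(A)$-module side handled by the same isomorphisms ${}_\lambda\mathcal{M}\to e_\lambda\bullet G(\mathcal{M})$, $m\mapsto\overline{m}$, whose injectivity and surjectivity you justify exactly as the paper does.
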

	\begin{proof}
		First, note that if $\overline{_\lambda m}=\overline{_\alpha m}\in G(\mathcal{M})$, there exist $\gamma\in\Lambda$ such that $I^\mathcal{M}_{(\lambda,\gamma)}(_\lambda m)=I^\mathcal{M}_{(\alpha,\gamma)}(_\alpha m)$. Let $f:\mathcal{M}\to \mathcal{N}$ be a morphism in $\mathcal{C}^S(A)-Mod$, then 
		\begin{align*}
			I^\mathcal{N}_{(\lambda,\gamma)}(f_\lambda(_\lambda m)) & =e_\lambda\cdot_{(\lambda,\gamma)}f_\lambda(_\lambda m)
			=f_\lambda(e_\lambda\cdot_{(\lambda,\gamma)}\,_\lambda m)
			=f_\gamma(I^\mathcal{M}_{(\lambda,\gamma)}(_\lambda m))
			\\
			&=f_\gamma(I^\mathcal{M}_{(\alpha,\gamma)}(_\alpha m)) =I^\mathcal{N}_{(\alpha,\gamma)}(f_\alpha(_\alpha m)),
		\end{align*}
		
		i.e., $\overline{f_\lambda(_\lambda m)}=\overline{f_\alpha(_\alpha m)}$ and $G$ is well-defined.
		Now, let $A-Mod$ be the category of all unital $A$-modules. We have the following functors
		\begin{eqnarray*}
			F: A-Mod&\longrightarrow &\mathcal{C}^S(A)-Mod\\
			(M,\cdot)&\mapsto & \{e_\lambda M\}_{\lambda\in\Lambda}\\
			f:M\to N&\mapsto & \{F(f)_\lambda =f|_{e_\lambda M}\}\\
			\\
			G: \mathcal{C}^S(A)-Mod &\longrightarrow & A-Mod\\
			\mathcal{M} &\mapsto & (\underrightarrow{lim}\,\,\mathcal{M}_\lambda ,\bullet)\\
			f=\{f_\lambda:\,_\lambda\mathcal{M}\to \,_\lambda\mathcal{N}\}&\mapsto & G(f)(\overline{_\lambda m})=\overline{f_\lambda(_\lambda m)}
		\end{eqnarray*} 
		Since $\underrightarrow{lim}\,\,e_\lambda M=M$ whenever $M\in A-Mod$ and the $A$-module structure of $M$ coincides with that given by the functor $G$, we have that $GF=Id_{A-Mod}$ on objects, and is easy to see that $GF=Id_{A-Mod}$ in the morphisms too.  
		Now, consider the linear map
		\begin{eqnarray*}
			\varphi_\lambda: \,_\lambda\mathcal{M} &\longrightarrow & e_\lambda \bullet G(\mathcal{M})\\
			_\lambda m &\mapsto & \overline{_\lambda m}.
		\end{eqnarray*}
		Since $\overline{_\lambda m}=\overline{e_\lambda\cdot_{(\lambda,\lambda)}\,_\lambda m}=e_\lambda\bullet\overline{_\lambda m}$, we have that $\varphi$ is well-defined. Now, suppose that there exist $_\lambda m, \,_\lambda n\in \,_\lambda\mathcal{M}$ such that $\overline{_\lambda m}=\overline{_\lambda n}$. Then, there exist $e_\lambda\leq e_\beta$ such that $I_{(\lambda,\beta)}(_\lambda m)=I_{(\lambda,\beta)}(_\lambda n)$, since $I_{(\lambda,\beta)}$ is injective, we conclude that $_\lambda m=\,_\lambda n$. Finally, note that for every $\lambda,\beta\in\Lambda$ there exist $\gamma\in\Lambda$ such that $e_\lambda\leq e_\gamma$ and $e_\beta\leq e_\gamma$, then
		\begin{eqnarray*}
			e_\lambda\bullet\overline{_\beta m}&=&\overline{e_\lambda e_\gamma\cdot_{(\gamma,\lambda)}I_{(\beta,\gamma)}(_\beta m)}\\
			&=&\varphi_\lambda(e_\lambda e_\gamma\cdot_{(\gamma,\lambda)}I_{(\beta,\gamma)}(_\beta m)),
		\end{eqnarray*} 
		since $e_\lambda e_\gamma\cdot_{(\gamma,\lambda)}I_{(\beta,\gamma)}(_\beta m)\in\,_\lambda\mathcal{M}$, we have that $\varphi_\lambda$ is surjective and, consequently, each $\varphi_\lambda$ is an isomorphism of vector spaces. Moreover, as 
		\begin{eqnarray*}
			e_\beta ae_\lambda \bullet \varphi_\lambda(_\lambda m)&=&e_\beta ae_\lambda \bullet \overline{_\lambda m}\\
			&=&\overline{e_\beta ae_\lambda\cdot_{(\lambda,\beta)}\,_\lambda m}\\
			&=&\varphi_\beta(e_\beta ae_\lambda\cdot_{(\lambda,\beta)}\,_\lambda m),
		\end{eqnarray*}
		we have that the following diagrams commute:
		\begin{center}
			$\xymatrixcolsep{3pc}\xymatrix{
				_\lambda\mathcal{M} \ar[r]^{e_\beta ae_\lambda}\ar[d]_{\varphi_\lambda} & _\beta\mathcal{M}\ar[d]^{\varphi_\beta}\\
				e_\lambda\bullet G(\mathcal{M})\ar[r]_{e_\beta ae_\lambda} & e_\beta\bullet G(\mathcal{M})}$
		\end{center}
		i.e., since every $\varphi_\lambda$ is a linear isomorphism, $\varphi=\{\varphi_\lambda\}_{\lambda\in\Lambda}$ is a $\mathcal{C}^S(A)$-module isomorphism. Hence $FG\cong Id_{\mathcal{C}^S(A)-Mod}$ when restricted to the objects.  Finally, we have that $$FG(f)(\varphi_\lambda^\mathcal{M}(_\lambda m))=G(f)(\varphi_\lambda^\mathcal{M}(_\lambda m))=\varphi_\lambda^\mathcal{N} (f_\lambda(_\lambda m)).$$ Then $\varphi$ is actually a natural transformation, hence a natural isomorphism.
	\end{proof}
	
	\begin{theorem}\label{cac}
		Let $\mathcal{C}$ be a $\Bbbk$-category. Then the category of the $\mathcal{C}$-modules and the category of the unital $a(\mathcal{C})$-modules are equivalent.
	\end{theorem}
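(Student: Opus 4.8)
The plan is to exhibit an explicit pair of functors between the category of $\mathcal{C}$-modules and the category of unital $a(\mathcal{C})$-modules and to verify that they are mutually quasi-inverse, in the same spirit as the previous theorem but with the matrix construction $a(\mathcal{C})$ playing the role of $\mathcal{C}^{S}(A)$. First I would define $\Psi$ from $\mathcal{C}$-modules to unital $a(\mathcal{C})$-modules by $\Psi(\mathcal{M}) = \bigoplus_{x \in \mathcal{C}_0} {}_x\mathcal{M}$, where a matrix $(\,_yf_x)_{x,y} \in a(\mathcal{C})$ acts on a finitely supported family $(m_x)_x$ by matrix-times-vector multiplication: the $y$-th coordinate of the result is $\sum_x {}_yf_x(m_x)$, using the linear maps ${}_yf_x \colon {}_x\mathcal{M} \to {}_y\mathcal{M}$ supplied by the module structure. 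All sums that occur are finite because matrices in $a(\mathcal{C})$ and elements of the direct sum have finite support, and the two module axioms for $a(\mathcal{C})$ translate directly into axioms (1)--(2) of a $\mathcal{C}$-module. Moreover $\Psi(\mathcal{M})$ is unital: an element supported on a finite set $P \subseteq \mathcal{C}_0$ is fixed by the canonical local unit $e_P = \sum_{x\in P} e_{xx}$, since each ${}_x1_x$ acts as the identity on ${}_x\mathcal{M}$. On morphisms, $\Psi$ sends a natural transformation $\{f_x\}$ to the induced ``diagonal'' linear map, which is $a(\mathcal{C})$-linear.

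Conversely I would define $\Phi$ from unital $a(\mathcal{C})$-modules to $\mathcal{C}$-modules by ${}_x\Phi(M) = e_{xx}M$, the transition map along ${}_yf_x \in {}_y\mathcal{C}_x$ being $e_{xx}M \to e_{yy}M$, $m \mapsto (\,_yf_x)m$, where $(\,_yf_x)$ now also denotes the one-entry matrix in $a(\mathcal{C})$. This is well-defined and has image in $e_{yy}M$ because $e_{yy}(\,_yf_x) = (\,_yf_x) = (\,_yf_x)e_{xx}$, and the $\mathcal{C}$-module axioms reduce to the facts that ${}_x1_x = e_{xx}$ acts as the identity on $e_{xx}M$ and that the matrix product $(\,_yg_z)(\,_zf_x)$ equals $(\,_yg_z\circ{}_zf_x)$. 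An $a(\mathcal{C})$-linear map $g$ restricts to linear maps $e_{xx}M \to e_{xx}N$ and intertwines the transition maps, giving $\Phi$ on morphisms.

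The heart of the argument is then the two natural isomorphisms $\Phi\Psi \cong \mathrm{Id}$ and $\Psi\Phi \cong \mathrm{Id}$. For the first, $e_{xx}\Psi(\mathcal{M}) = e_{xx}\bigl(\bigoplus_y {}_y\mathcal{M}\bigr)$ is identified with ${}_x\mathcal{M}$ by the $x$-th coordinate projection, compatibly with the transition maps, and this is natural in $\mathcal{M}$. For the second --- which is the step where unitality is indispensable --- I would first prove that every unital $a(\mathcal{C})$-module $M$ is the internal direct sum $M = \bigoplus_{x\in\mathcal{C}_0} e_{xx}M$: directness follows from $e_{yy}e_{xx} = \delta_{xy}e_{xx}$, and the spanning property from the fact that every $m \in M$ is fixed by some finite local unit $e_P$, so that $m = \sum_{x\in P} e_{xx}m$. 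Then the summation map $\Psi(\Phi(M)) = \bigoplus_x e_{xx}M \to M$ is a bijection, and one checks it is $a(\mathcal{C})$-linear using $(\,_yf_x)m_x = (\,_yf_x)\sum_z m_z$ whenever $m_z \in e_{zz}M$; naturality in $M$ is immediate.

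I expect the only genuine difficulty to be bookkeeping rather than conceptual: keeping all the finite-support conditions straight so that the matrix--vector sums are legitimate (and, if one worries about the size of $\mathcal{C}_0$, treating the direct sums accordingly), and isolating precisely where unitality of $M$ enters, namely in the decomposition $M = \bigoplus_x e_{xx}M$, without which $\Psi\Phi$ would merely be a retraction. No bijectivity of an antipode or any Hopf-algebraic input is required here; the statement concerns only $\Bbbk$-categories and their associated matrix algebras, and together with the preceding theorem it yields the Morita equivalence of $A$ and $a(\mathcal{C}^{S}(A))$ announced at the start of the section.
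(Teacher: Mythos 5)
Your proposal is correct and follows essentially the same route as the paper: the paper's functors are exactly your $\Psi(\mathcal{M})=\bigoplus_{x}{}_x\mathcal{M}$ with matrix--vector action and $\Phi(M)=\{e_{xx}M\}_{x}$, and the paper likewise uses unitality (a local unit $e_P$ fixing each element) to obtain the decomposition $M=\bigoplus_x e_{xx}M$ that makes the composites (naturally isomorphic to) the identities. Your write-up is, if anything, slightly more careful than the paper's in phrasing the composites as natural isomorphisms rather than equalities.
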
 
	\begin{proof}
		In fact, consider the functors
		\begin{eqnarray*}
			F:\mathcal{C}-Mod &\to &a(\mathcal{C})-Mod\\
			M=\{_xM\}_{x\in\mathcal{C}_0}&\mapsto & \bigoplus_{x\in\mathcal{C}_0}\,_xM\\
			\theta: M\to N&\mapsto & F(\theta)((_xm)_{x\in\mathcal{C}_0})=(\theta_x(_xm))_{x\in\mathcal{C}_0}
		\end{eqnarray*}
		and
		\begin{eqnarray*}
			G: a(\mathcal{C})-Mod &\to & \mathcal{C}-Mod\\
			M&\mapsto & \{e_xM\}_{x\in\mathcal{C}_0}\\
			\alpha: M\to N &\mapsto & G(\alpha)_x=\alpha,
		\end{eqnarray*}
		where $e_x$ denotes the matrix with $_x1_x$ in the $(x,x)$ position and $0$ otherwise. Note that $S=\{\sum_{i=1}^ne_{x_i}\}$ is a system of local units for $a(\mathcal{C})$. Now, we have that 
		\begin{eqnarray*}
			FG(M)=\bigoplus_x\,_xG(M)=\bigoplus_xe_xM= M
		\end{eqnarray*}
		and
		\begin{eqnarray*}
			_yGF(\{_xM\})=e_y(\bigoplus_x\,_xM)=\,_yM,
		\end{eqnarray*}
		then $FG=Id_{a(\mathcal{C})-Mod}$ and $GF=Id_{\mathcal{C}-Mod}$ in the objects. For the morphisms, we have that 	\[
		(FG(\alpha))(m)=(FG(\alpha))(\sum_{i=1}^n e_{x_i}m) =\sum_{i=1}^n \alpha(e_{x_i}m)
		= \sum_{i=1}^n e_{x_i}\alpha(m)
		= \alpha(m),
		\]
		where $e=\sum_{i=1}^n e_{x_i}$ is such that $em=m$. And
		\begin{eqnarray*}
			(GF(\theta))_y(_ym)=F(\theta)(_ym)=\theta_y(_ym).
		\end{eqnarray*}
	\end{proof}
	
	Now we can enunciate the desired result.
	
	\begin{corollary} Let $A$ be an algebra with s.l.u. $S$. Then $A$ and $a(\mathcal{C}^S(A))$ are Morita equivalent.
	\end{corollary}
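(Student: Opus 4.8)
The plan is to obtain the Morita equivalence by composing two equivalences of module categories that have already been established, and then invoking the characterization of Morita equivalence for algebras with local units recalled from \cite{garciasimon}.

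First I would observe that both $A$ and $a(\mathcal{C}^S(A))$ are algebras with local units: this holds for $A$ by hypothesis, and $a(\mathcal{C})$ always carries the canonical system of local units described above. Consequently, every unital left module over either of these algebras is automatically torsionfree; indeed, if $M$ is unital and $m\in M$, writing $m=\sum_i a_i m_i$ and choosing a local unit $e$ with $e a_i = a_i$ for all $i$ gives $em=m$, so $t_A(M)=0$. Hence, by the characterization recalled from \cite{garciasimon}, proving that $A$ and $a(\mathcal{C}^S(A))$ are Morita equivalent amounts to exhibiting an equivalence between the category of unital left $A$-modules and the category of unital left $a(\mathcal{C}^S(A))$-modules.

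Then I would simply chain the two equivalences already proved. By the theorem identifying unital $A$-modules with $\mathcal{C}^S(A)$-modules, the category of unital left $A$-modules is equivalent to the category of $\mathcal{C}^S(A)$-modules. Applying Theorem \ref{cac} to the $\Bbbk$-category $\mathcal{C}=\mathcal{C}^S(A)$, the category of $\mathcal{C}^S(A)$-modules is equivalent to the category of unital left $a(\mathcal{C}^S(A))$-modules. Composing these two equivalences gives the desired equivalence between the categories of unital left $A$-modules and unital left $a(\mathcal{C}^S(A))$-modules, and therefore the Morita equivalence.

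There is no real obstacle here; the corollary is essentially a formal consequence of the two preceding theorems. The only point that deserves a line of justification is the one in the first step — checking that, for algebras with local units, the notion of Morita equivalence in use (equivalence of categories of unital and torsionfree modules, equivalently the existence of a strict unital Morita context) reduces to equivalence of the categories of unital modules, which is precisely the output of the two theorems being composed.
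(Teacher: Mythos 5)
Your proposal is correct and is essentially the paper's own argument: the corollary is obtained by composing the equivalence between unital $A$-modules and $\mathcal{C}^S(A)$-modules with Theorem \ref{cac} applied to $\mathcal{C}=\mathcal{C}^S(A)$, and then invoking the García--Simón characterization of Morita equivalence for idempotent rings. Your extra observation that unital modules over an algebra with local units are automatically torsionfree is a correct (and welcome) justification of the step the paper leaves implicit.
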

	By \cite{ahnmarki}, every algebra with local units is Morita equivalent to some algebra with enough idempotents, i.e., to some algebra $R$ such that $R=\bigoplus_e eR=\bigoplus_e Re$, where $\{e \}$ is a family of orthogonal idempotents in $R$. The previous corollary prove the same thing and give a better description of such algebra with enough idempotents. Also, in spite of the notation as subset of a system of local units, nothing require that the local units must be different. So, if we consider a unital algebra $A$ with an s.l.u. $S=\{e_1,\cdots,e_n\}$ where every $e_i=1_A$, this corollary provides the well known Morita equivalence between $A$ and $Mat_{n\times n}(A)$.
	
	We can push this result even further: in \cite{des} the authors prove that given two algebras with enough idempotents $A$ and $B$, there exists an infinite set of indexes $X$ such that $FMat_X(A)\cong FMat_X(B)$ as algebras. As a consequence, we obtain the following result.
	
	\begin{corollary} Let $A$ be an algebra with s.l.u. $S=\{e_\lambda\}_{\lambda\in\Lambda}$ and $B$ an algebra with s.l.u. $T=\{f_i\}_{i\in\Gamma}$. If $A$ and $B$ are Morita equivalent, then there exist an infinite set of indices $X$ whose cardinality is greater than or equal to those of $\Lambda$ and $\Gamma$ such that $FMat_X(a(\mathcal{C}^S(A)))\cong FMat_X(a(\mathcal{C}^T(B)))$.
	\end{corollary}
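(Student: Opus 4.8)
The plan is to deduce this corollary purely from things already in hand: the preceding corollary, which says that an algebra with a system of local units is Morita equivalent to its associated matrix algebra $a(\mathcal{C}^S(-))$; transitivity and symmetry of Morita equivalence for idempotent algebras (in the sense of \cite{garciasimon} used throughout); and the isomorphism of infinite matrix algebras from \cite{des}. First I would apply the previous corollary to $A$ (with s.l.u. $S$) and to $B$ (with s.l.u. $T$), obtaining Morita equivalences $a(\mathcal{C}^S(A)) \sim A$ and $B \sim a(\mathcal{C}^T(B))$. Chaining these with the hypothesis $A \sim B$ yields $a(\mathcal{C}^S(A)) \sim a(\mathcal{C}^T(B))$.

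Next I would record that $a(\mathcal{C}^S(A))$ and $a(\mathcal{C}^T(B))$ are algebras with enough idempotents, with distinguished orthogonal families of idempotents indexed by $\Lambda$ and by $\Gamma$ respectively. Writing $E_{\lambda\lambda}$ for the element of $a(\mathcal{C}^S(A))$ having $e_\lambda$ in the $(\lambda,\lambda)$-slot and $0$ elsewhere, the $E_{\lambda\lambda}$ are pairwise orthogonal idempotents (since $e_\lambda^2=e_\lambda$ while $E_{\lambda\lambda}E_{\mu\mu}=0$ for $\lambda\ne\mu$), and because every element of $a(\mathcal{C}^S(A))$ has only finitely many nonzero rows and columns one gets $a(\mathcal{C}^S(A)) = \bigoplus_{\lambda\in\Lambda} E_{\lambda\lambda}\,a(\mathcal{C}^S(A)) = \bigoplus_{\lambda\in\Lambda} a(\mathcal{C}^S(A))\,E_{\lambda\lambda}$; the same holds for $a(\mathcal{C}^T(B))$ with $\Gamma$. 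Then the result of \cite{des}, which applies precisely because these two algebras are Morita equivalent algebras with enough idempotents, yields an infinite index set $X_0$ with $FMat_{X_0}(a(\mathcal{C}^S(A))) \cong FMat_{X_0}(a(\mathcal{C}^T(B)))$ as algebras. To make the index set dominate $|\Lambda|$ and $|\Gamma|$, I would fix an infinite set $Y$ with $|Y| = \max(|\Lambda|,|\Gamma|,\aleph_0)$ and put $X = Y \times X_0$; using the canonical identification $FMat_Y(FMat_{X_0}(R)) \cong FMat_{Y\times X_0}(R)$ one obtains $FMat_X(a(\mathcal{C}^S(A))) \cong FMat_X(a(\mathcal{C}^T(B)))$ with $X$ infinite and $|X| \geq \max(|\Lambda|,|\Gamma|)$, which is the claim.

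The substance here is essentially a citation chase, so I expect the only delicate point to be the last step: extracting from \cite{des} that the index set may be taken infinite, and then checking that the ``matrices over matrices'' identification $FMat_Y(FMat_{X_0}(R)) \cong FMat_{Y\times X_0}(R)$ does let one enlarge it to a set whose cardinality dominates both $\Lambda$ and $\Gamma$ without destroying the isomorphism. Everything else — the transitivity chain and the verification that $a(\mathcal{C}^S(A))$ has enough idempotents indexed by $\Lambda$ — is routine.
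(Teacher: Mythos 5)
Your proposal is correct and follows essentially the same route as the paper, which states this corollary as an immediate consequence of the preceding corollary (Morita equivalence of $A$ with $a(\mathcal{C}^S(A))$), transitivity of Morita equivalence, and the result of \cite{des} for Morita equivalent algebras with enough idempotents. Your extra step enlarging the index set via $FMat_Y(FMat_{X_0}(R))\cong FMat_{Y\times X_0}(R)$ to guarantee the cardinality condition is a valid supplement to what the paper leaves implicit.
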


	\section{Morita Equivalence of Partial Actions}
	
	In this section we introduce Morita equivalence of partial Hopf actions, extending part of the theory developed in in \cite{abadie} for partial group actions. We will show that every partial action is Morita equivalent to a partial action of the Hopf algebra on an algebra with trivial right annihilator. Turning to the case of a partial $H$-module algebra with local units $A$ with $S$-globalizable partial action, we will prove that $\underline{A\# H}$ is Morita equivalent to $\underline{a(\mathcal{C}^S(A))\# H}$, where the partial action on $a(\mathcal{C}^S(A))$ is induced by the $S$-categorizable partial action on $A$.

	\begin{definition}\label{def4} Let $A$ and $B$ be associative partial $H$-module algebras with partial actions $\cdot_A$ and $\cdot_B$, respectively. We will say that $\cdot_A$ and $\cdot_B$ are \textit{Morita equivalent partial actions} if
		\begin{enumerate}[\normalfont(1)]
			\item There exists a strict Morita context $(A,B,\,_AM_B,\,_BN_A,\tau,\sigma)$, where $M$ and $N$ are unital bimodules;
			\item There exists a partial action $\triangleright: H\otimes C\to C$, where $C$ is the context algebra $C=\left(\begin{array}{cc}
				A & M\\
				N & B
			\end{array}\right)$, such that its restriction to $\left(\begin{array}{cc}
				A & 0\\
				0 & 0
			\end{array}\right)$ and $\left(\begin{array}{cc}
				0 & 0\\
				0 & B
			\end{array}\right)$ coincides with $\cdot_A$ and $\cdot_B$, respectively.
		\end{enumerate} 
	\end{definition}
	We recall that the multiplication of matrices in $C$ comes from the bimodule structures on $M$ and $N$ and from the maps $\tau: M \otimes_B N \to A$ and $\sigma: N \otimes_A M \to B$ of the Morita context. Throughout this subsection, we will denote $\tau(m,n)=(mn)$ and $\sigma(n,m)=(nm)$, for every $m\in M$, $n\in N$.
	
	Note that if $\cdot_A$ and $\cdot_B$ are Morita equivalent, then the partial action $\triangleright$ on $C$ provides a linear map $\varphi_M: H\otimes M\to M$, $\varphi_M(h\otimes m)=hm$. 
	In fact, since $M$ is a unital left $A$-module, for any $m\in M$, we have that $m=\sum_i a_im_i$, with $a_i\in A$, $m_i\in M$ and
	\begin{eqnarray*}
		h\triangleright \left(\begin{array}{cc}
			0 & m\\
			0 & 0
		\end{array}\right)&=&\sum_i h\triangleright \left(\begin{array}{cc}
			a_i & 0\\
			0 & 0
		\end{array}\right)\left(\begin{array}{cc}
			0 & m_i\\
			0 & 0
		\end{array}\right)\\
		&=&\sum_i \left(\begin{array}{cc}
			h_{(1)}\cdot_A a_i & 0\\
			0 & 0
		\end{array}\right)h_{(2)}\triangleright\left(\begin{array}{cc}
			0 & m_i\\
			0 & 0
		\end{array}\right)\in \left(\begin{array}{cc}
			A & M\\
			0 & 0
		\end{array}\right).
	\end{eqnarray*} 
	Analogously, since $M$ is a unital right $B$-module, we have that
	\begin{eqnarray*}
		h\triangleright\left(\begin{array}{cc}
			0 & m\\
			0 & 0
		\end{array}\right)\in \left(\begin{array}{cc}
			0 & M\\
			0 & B
		\end{array}\right),
	\end{eqnarray*}
	hence
	\begin{eqnarray*}
		h\triangleright\left(\begin{array}{cc}
			0 & m\\
			0 & 0
		\end{array}\right)\in \left(\begin{array}{cc}
			0 & M\\
			0 & B
		\end{array}\right)\cap \left(\begin{array}{cc}
			A & M\\
			0 & 0
		\end{array}\right)=\left(\begin{array}{cc}
			0 & M\\
			0 & 0
		\end{array}\right),
	\end{eqnarray*}
	and therefore, we define $hm$ by the equation
	\begin{eqnarray*}
		h\triangleright \left(\begin{array}{cc}
			0 & m\\
			0 & 0
		\end{array}\right)=\left(\begin{array}{cc}
			0 & hm\\
			0 & 0
		\end{array}\right)\in\left(\begin{array}{cc}
			0 & M\\
			0 & 0
		\end{array}\right).
	\end{eqnarray*}
	
	In the same way, the partial action on $C$ provides a linear map $\varphi_N:H\otimes N\to N$, $\varphi_N(h\otimes n)=hn$. 
	Thus we may write the partial action in the following form:
	\begin{eqnarray*}
		h\triangleright \left(\begin{array}{cc}
			a & m\\
			n & x
		\end{array}\right)=\left(\begin{array}{cc}
			h\cdot_Aa & hm\\
			hn & h\cdot_Bx
		\end{array}\right).
	\end{eqnarray*}
	
	\begin{lemma}
		The linear map $\varphi_M$ satisfies the following properties:
		\begin{enumerate}[\normalfont(1)]
			\item $1_Hm=m$;
			\item $h(a(km))=\sum (h_{(1)}\cdot_A a)((h_{(2)}k)m)$;
			\item $h(m(k\cdot_B b))=\sum (h_{(1)}m)(h_{(2)}k\cdot_B b)$,
		\end{enumerate}
		for every $h, k\in H$, $m\in M$.
	\end{lemma}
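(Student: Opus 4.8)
The plan is to derive each of the three identities directly from the fact that $\triangleright$ is a partial action on the context algebra $C$, by embedding the relevant elements of $A$, $M$, $N$ into $C$ as the matrices $\left(\begin{smallmatrix} a & 0 \\ 0 & 0 \end{smallmatrix}\right)$, $\left(\begin{smallmatrix} 0 & m \\ 0 & 0 \end{smallmatrix}\right)$, etc., and then reading off the $(1,2)$-entry of the resulting identity in $C$. The bookkeeping is entirely mechanical once one notes that the defining axioms of a partial $H$-module algebra (axioms (1) and (2) of Definition \ref{partial.action}) hold verbatim for $C$, and that multiplying these particular matrices together stays inside the ``upper-triangular'' corners computed in the discussion preceding the lemma.

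For item (1): apply $1_H \triangleright (-)$ to $\left(\begin{smallmatrix} 0 & m \\ 0 & 0 \end{smallmatrix}\right)$. By axiom (1) for the partial action on $C$ this equals $\left(\begin{smallmatrix} 0 & m \\ 0 & 0 \end{smallmatrix}\right)$, and comparing the $(1,2)$-entries with the definition $h \triangleright \left(\begin{smallmatrix} 0 & m \\ 0 & 0 \end{smallmatrix}\right) = \left(\begin{smallmatrix} 0 & hm \\ 0 & 0 \end{smallmatrix}\right)$ yields $1_H m = m$.

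For item (2): write $X = \left(\begin{smallmatrix} a & 0 \\ 0 & 0 \end{smallmatrix}\right)$ and $Y = \left(\begin{smallmatrix} 0 & m \\ 0 & 0 \end{smallmatrix}\right)$, so that $XY = \left(\begin{smallmatrix} 0 & am \\ 0 & 0 \end{smallmatrix}\right)$. Axiom (2) of Definition \ref{partial.action} applied in $C$ gives
\[
h \triangleright \big( X (k \triangleright Y) \big) = \sum (h_{(1)} \triangleright X)(h_{(2)} k \triangleright Y).
\]
The right-hand side is $\sum \left(\begin{smallmatrix} h_{(1)} \cdot_A a & 0 \\ 0 & 0 \end{smallmatrix}\right)\left(\begin{smallmatrix} 0 & (h_{(2)}k)m \\ 0 & 0 \end{smallmatrix}\right) = \left(\begin{smallmatrix} 0 & \sum (h_{(1)}\cdot_A a)((h_{(2)}k)m) \\ 0 & 0 \end{smallmatrix}\right)$, using that $h \triangleright (-)$ restricted to the $A$-corner is $\cdot_A$ and on the $M$-corner is $\varphi_M$; the left-hand side is $\left(\begin{smallmatrix} 0 & h(a(km)) \\ 0 & 0 \end{smallmatrix}\right)$, since $X(k\triangleright Y)$ has the form $\left(\begin{smallmatrix} 0 & a(km) \\ 0 & 0\end{smallmatrix}\right)$. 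Equating $(1,2)$-entries gives identity (2). Item (3) is proved the same way, now with $X = \left(\begin{smallmatrix} 0 & m \\ 0 & 0 \end{smallmatrix}\right)$ and $Y = \left(\begin{smallmatrix} 0 & 0 \\ 0 & b \end{smallmatrix}\right)$, so that $m(k\cdot_B b)$ appears as the $(1,2)$-entry of $X(k\triangleright Y)$, and axiom (2) produces $\sum (h_{(1)}m)(h_{(2)}k\cdot_B b)$ on the right. There is essentially no obstacle here; the only point requiring (minimal) care is to confirm that all the intermediate matrix products land in the corners identified in the discussion above the lemma, so that the ``off-diagonal'' entries one is comparing are genuinely the only nonzero ones — but this is exactly what was established there using that $M$ is a unital $A$-$B$-bimodule.
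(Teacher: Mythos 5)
Your proposal is correct and follows essentially the same route as the paper: the paper also reads off the $(1,2)$-entry of the identity obtained by applying axiom (2) of the partial action on the context algebra $C$ to the matrices $\left(\begin{smallmatrix} a & 0 \\ 0 & 0 \end{smallmatrix}\right)$ and $\left(\begin{smallmatrix} 0 & m \\ 0 & 0 \end{smallmatrix}\right)$ (and analogously for item (3)), using the identification of $h\triangleright$ with $\cdot_A$, $\varphi_M$, $\cdot_B$ on the respective corners established just before the lemma.
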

	
	\begin{proof}
		In fact, for item (2), we have that
		\begin{eqnarray*}
			\left(\begin{array}{cc}
				0 & h(a(km))\\
				0 & 0
			\end{array}\right)&=&h\triangleright \left(\begin{array}{cc}
				0 & a(km)\\
				0 & 0
			\end{array}\right)\\
			&=&h\triangleright \left(\left(\begin{array}{cc}
				a & 0\\
				0 & 0
			\end{array}\right)\left(k\triangleright\left(\begin{array}{cc}
				0 & m\\
				0 & 0
			\end{array}\right)\right)\right)\\
			&=&\sum \left(h_{(1)}\triangleright\left(\begin{array}{cc}
				a & 0\\
				0 & 0
			\end{array}\right)\right)\left(h_{(2)}k\triangleright\left(\begin{array}{cc}
				0 & m\\
				0 & 0
			\end{array}\right)\right)\\
			&=&\sum \left(\begin{array}{cc}
				h_{(1)}\cdot_Aa & 0\\
				0 & 0
			\end{array}\right)\left(\begin{array}{cc}
				0 & (h_{(2)}k)m\\
				0 & 0
			\end{array}\right)\\
			&=&\left(\begin{array}{cc}
				0 & \sum (h_{(1)}\cdot_Aa)((h_{(2)}k)m)\\
				0 & 0
			\end{array}\right),
		\end{eqnarray*}
		for every $a\in A$, $m\in M$, $h,k\in H$. Item 3) is proved using similar arguments.    
	\end{proof}

	In particular, items (1) and (2) show that $\varphi_M$ defines a structure of partial $(A,H)$-module on $M$ (see Definition \ref{def.partial(A,H)-module}). Analogously, $\varphi_N$ satisfies similar properties.
	
	\begin{lemma} For every $h, k\in H$, $m\in M$, $n\in N$, we have that
		\begin{enumerate}[\normalfont(1)]
			\item $h\cdot_A(m(kn))=\sum (h_{(1)}m)((h_{(2)}k)n)$;
			\item $h\cdot_B (n(km))=\sum (h_{(1)}n)((h_{(2)}k)m)$,
		\end{enumerate}
		
	\end{lemma}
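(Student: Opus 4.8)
The plan is to transport both identities into the context algebra $C$ and apply the second defining property of the partial action $\triangleright$ on $C$ (Definition \ref{partial.action}), in exactly the style of the computation used above to produce $\varphi_M$. The key observation is that the structure maps $\tau,\sigma$ of the Morita context are realized by matrix multiplication inside $C$: for $m\in M$ and $n\in N$ one has
\[
\left(\begin{array}{cc} 0 & m\\ 0 & 0\end{array}\right)\left(\begin{array}{cc} 0 & 0\\ n & 0\end{array}\right)=\left(\begin{array}{cc} (mn) & 0\\ 0 & 0\end{array}\right),\qquad
\left(\begin{array}{cc} 0 & 0\\ n & 0\end{array}\right)\left(\begin{array}{cc} 0 & m\\ 0 & 0\end{array}\right)=\left(\begin{array}{cc} 0 & 0\\ 0 & (nm)\end{array}\right).
\]
Running the displayed computation that produced $\varphi_M$ with the roles of the two off-diagonal corners interchanged (using that $N$ is a unital $B$--$A$-bimodule) yields $h\triangleright\left(\begin{array}{cc} 0 & 0\\ n & 0\end{array}\right)=\left(\begin{array}{cc} 0 & 0\\ hn & 0\end{array}\right)$; in particular $\left(\begin{array}{cc} 0 & 0\\ kn & 0\end{array}\right)=k\triangleright\left(\begin{array}{cc} 0 & 0\\ n & 0\end{array}\right)$, and by definition of $\varphi_M$ also $\left(\begin{array}{cc} 0 & km\\ 0 & 0\end{array}\right)=k\triangleright\left(\begin{array}{cc} 0 & m\\ 0 & 0\end{array}\right)$.

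For item (1), observe that $m(kn)$ lies in $A$, so the first matrix identity above gives $\left(\begin{array}{cc} m(kn) & 0\\ 0 & 0\end{array}\right)=\left(\begin{array}{cc} 0 & m\\ 0 & 0\end{array}\right)\bigl(k\triangleright\left(\begin{array}{cc} 0 & 0\\ n & 0\end{array}\right)\bigr)$. Applying $h\triangleright(-)$, then the second axiom of the partial action on $C$, and then the formula for $\triangleright$ on $C$ together with the first matrix identity, we obtain
\[
\left(\begin{array}{cc} h\cdot_A(m(kn)) & 0\\ 0 & 0\end{array}\right)=\sum\bigl(h_{(1)}\triangleright\left(\begin{array}{cc} 0 & m\\ 0 & 0\end{array}\right)\bigr)\bigl(h_{(2)}k\triangleright\left(\begin{array}{cc} 0 & 0\\ n & 0\end{array}\right)\bigr)=\sum\left(\begin{array}{cc} (h_{(1)}m)((h_{(2)}k)n) & 0\\ 0 & 0\end{array}\right),
\]
and reading off the $(1,1)$-entry gives the claim. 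For item (2) one argues identically after swapping corners: write $\left(\begin{array}{cc} 0 & 0\\ 0 & n(km)\end{array}\right)=\left(\begin{array}{cc} 0 & 0\\ n & 0\end{array}\right)\bigl(k\triangleright\left(\begin{array}{cc} 0 & m\\ 0 & 0\end{array}\right)\bigr)$, apply $h\triangleright(-)$, invoke the second axiom once more, and compare $(2,2)$-entries, now using the second matrix identity and that $\triangleright$ restricts to $\cdot_B$ on the lower-right corner.

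I do not expect a genuine obstacle here. The only points requiring a little care are checking that the matrix products above really land in the claimed corners — which is precisely where one uses that $M$ and $N$ are \emph{unital} bimodules and that $\tau,\sigma$ are morphisms of bimodules — and keeping track of which Sweedler component of $h$ multiplies $k$; both are handled exactly as in the preceding lemma concerning $\varphi_M$.
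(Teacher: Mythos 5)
Your proposal is correct and follows essentially the same route as the paper: embed $m(kn)$ (resp.\ $n(km)$) as a corner entry of a product in the context algebra $C$, apply axiom (2) of the partial action $\triangleright$ on $C$, and read off the $(1,1)$- (resp.\ $(2,2)$-) entry, using that $\triangleright$ sends each corner to itself as already established for $\varphi_M$ and $\varphi_N$.
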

	\begin{proof}
		In fact, for item (1), we have that
		\begin{eqnarray*}
			\left(\begin{array}{cc}
				h\cdot_A(m(kn)) & 0\\
				0 & 0
			\end{array}\right)&=&h\triangleright \left(\begin{array}{cc}
				(m(kn)) & 0\\
				0 & 0
			\end{array}\right)\\
			&=&h\triangleright \left(\left(\begin{array}{cc}
				0 & m\\
				0 & 0
			\end{array}\right)\left(k\triangleright\left(\begin{array}{cc}
				0 & 0\\
				n & 0
			\end{array}\right)\right)\right)\\
			&=&\sum \left(h_{(1)}\triangleright\left(\begin{array}{cc}
				0 & m\\
				0 & 0
			\end{array}\right)\right)\left(h_{(2)}k\triangleright\left(\begin{array}{cc}
				0 & 0\\
				n & 0
			\end{array}\right)\right)\\
			&=&\sum \left(\begin{array}{cc}
				0 & h_{(1)}m\\
				0 & 0
			\end{array}\right)\left(\begin{array}{cc}
				0 & 0\\
				(h_{(2)}k)n & 0
			\end{array}\right)\\
			&=&\left(\begin{array}{cc}
				\sum ((h_{(1)}m)((h_{(2)}k)n)) & 0\\
				0 & 0
			\end{array}\right),
		\end{eqnarray*}
		for every $m\in M$, $n\in N$, $h,k\in H$. Item (2) is proved using similar arguments.    
	\end{proof}
	
	In the definition of Morita equivalent partial actions, item (2) can be replaced by the existence of compatible partial $(A,H)-(B,H)$ and $(B,H)-(A,H)$-bimodule structures on $M$ and $N$, respectively.
	Just to be clear, the ``right module'' version of Definition \ref{def.partial(A,H)-module} is as follows. 
	
	\begin{definition} Let $B$ be an associative right partial $H$-module algebra. 
		A vector space $N$ is a right partial $(B,H)$-module if $N$ is a unital right $B$-module together with a linear map $N\otimes H\to N$, $n\otimes h\mapsto nh$ such that:
		\begin{enumerate}[\normalfont(1)]
			\item $n1_H=n$;
			\item $((nk)b)h =\sum (n(kh_{(1)}))(b \cdot h_{(2)})$,
		\end{enumerate}
		for every $h,k\in H$, $b\in B$, $n\in N$.
		If $N$ has also a structure of left $(A,H)$-module, where $A$ is an associative (left) partial $H$-module algebra, then we will say that $N$ is a partial $(A,H) - (B,H)$-bimodule if 
		\begin{enumerate}[\normalfont(1)]\addtocounter{enumi}{2}
			\item $a (n b) = (a n) b$ for all $a \in A$, $b \in B$;
			\item $h (n k) = (h n) k$ for all $h,k \in H$.
		\end{enumerate}
	\end{definition}
	
	\begin{proposition} \label{Proposition.Morita.equivalence.partial.bimodules}Let $A$ and $B$ be associative partial $H$-module algebras with partial actions $\cdot_A$ and $\cdot_B$, respectively, and assume that item {\normalfont(1)} of Definition \ref{def4} holds. The following are equivalent:
		\begin{enumerate}[\normalfont(1)]
			\item There exists a partial action $\triangleright: H\otimes C\to C$, where $C$ is the context algebra $C=\left(\begin{array}{cc}
				A & M\\
				N & B
			\end{array}\right)$, such that its restriction to $\left(\begin{array}{cc}
				A & 0\\
				0 & 0
			\end{array}\right)$ and $\left(\begin{array}{cc}
				0 & 0\\
				0 & B
			\end{array}\right)$ coincides with $\cdot_A$ and $\cdot_B$, respectively.
			\item $M$ has a partial $(A,H)-(B,H)$-bimodule structure and $N$ has a partial $(B,H)-(A,H)$-bimodule structure such that:
			\begin{itemize}
				\item $h\cdot_A (m(kn))=\sum (h_{(1)}m)((h_{(2)}k)n)$;
				\item $h\cdot_B (n(km))=\sum (h_{(1)}n)((h_{(2)}k)m)$,
			\end{itemize}
			for every $h\in H$, $m\in M$, $n\in N$.
		\end{enumerate} 
	\end{proposition}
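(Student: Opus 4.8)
The plan is to prove the two implications separately; in both directions the key is to pass between the partial action on $C$ and its four block components.

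For $(1)\Rightarrow(2)$, suppose $\triangleright\colon H\otimes C\to C$ is a partial action whose restrictions to the two diagonal corner subalgebras are $\cdot_A$ and $\cdot_B$. As already observed in the discussion preceding this proposition, the fact that $M$ is a unital left $A$-module and a unital right $B$-module forces $h\triangleright$ to carry the upper-right corner of $C$ into itself, and dually for the lower-left corner and $N$; reading off these corners produces linear maps $\varphi_M\colon H\otimes M\to M$, $h\otimes m\mapsto hm$, and $\varphi_N\colon H\otimes N\to N$. The two lemmas immediately above — together with their verbatim analogues for $\varphi_N$, obtained by interchanging the roles of the corners — show that $\varphi_M$ makes $M$ a left partial $(A,H)$-module whose $H$-action is compatible with the right $B$-action, that $\varphi_N$ does the symmetric thing for $N$, and that the two mixed identities $h\cdot_A(m(kn))=\sum(h_{(1)}m)((h_{(2)}k)n)$ and $h\cdot_B(n(km))=\sum(h_{(1)}n)((h_{(2)}k)m)$ hold. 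What is left is that the one-sided module structures and the $H$-action assemble into the stated partial-bimodule data; each of the remaining compatibilities (such as $a(mb)=(am)b$ and the compatibility of the $H$-action across the two sides) is the instance of axiom (2) for $\triangleright$ obtained by plugging in matrices supported in the appropriate corners, hence follows from associativity of the product of $C$ and the partial-action axioms for $\triangleright$.

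For $(2)\Rightarrow(1)$, assume $M$ and $N$ carry the partial-bimodule structures of (2) together with the two mixed conditions, and define $\triangleright\colon H\otimes C\to C$ block-wise by
\[
h\triangleright\begin{pmatrix}a & m\\ n & b\end{pmatrix}=\begin{pmatrix}h\cdot_A a & hm\\ hn & h\cdot_B b\end{pmatrix}.
\]
This is a well-defined linear map, its restrictions to the diagonal corners are $\cdot_A$ and $\cdot_B$, and $1_H\triangleright X=X$ because each of the four block actions is unital. It remains to verify
\[
h\triangleright\bigl(X\,(k\triangleright Y)\bigr)=\sum (h_{(1)}\triangleright X)\,(h_{(2)}k\triangleright Y),\qquad X,Y\in C.
\]
Both sides are bilinear in $X$ and $Y$, and every element of $C$ is a finite sum of matrices supported in a single block, so it is enough to treat the case in which $X$ and $Y$ are each supported in one block; furthermore $X(k\triangleright Y)$ vanishes unless the column index of $X$ matches the row index of $Y$, so only finitely many cases arise. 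In each of them the identity reduces to exactly one of the hypotheses of (2): the partial-action axiom for $\cdot_A$ or for $\cdot_B$; a partial $(A,H)$- or $(B,H)$-module axiom for $M$ or for $N$; the compatibility of the $H$-action with the relevant one-sided module structure; or — in the cases in which $X(k\triangleright Y)$ lands back in a diagonal corner through $\tau$ or through $\sigma$ — one of the mixed conditions $h\cdot_A(m(kn))=\sum(h_{(1)}m)((h_{(2)}k)n)$ and $h\cdot_B(n(km))=\sum(h_{(1)}n)((h_{(2)}k)m)$, where one also uses that $\tau$ and $\sigma$ are the structure maps of the Morita context, so that $C$ is associative and the products $(mn)$, $(nm)$ make sense. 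Assembling the cases yields the axiom, so $\triangleright$ is a partial action with the prescribed restrictions.

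The hard part is the finite case analysis in $(2)\Rightarrow(1)$: one must run systematically through the block positions of $X$ and $Y$ and identify, for each, the matching hypothesis of (2); the least routine cases are the ones in which the product passes through $\tau$ or $\sigma$ into a diagonal corner, where the interplay between the mixed conditions, the balancedness of the Morita maps and the associativity relations of the Morita context must be kept straight. Everything else — well-definedness, unitality, the restrictions, and the whole of $(1)\Rightarrow(2)$ — is bookkeeping once the block description is set up.
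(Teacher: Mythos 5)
Your proposal matches the paper's own argument: the paper likewise obtains $(1)\Rightarrow(2)$ from the calculations and lemmas preceding the proposition, and proves $(2)\Rightarrow(1)$ by defining the block-wise action $h\triangleright\left(\begin{smallmatrix}a & m\\ n & b\end{smallmatrix}\right)=\left(\begin{smallmatrix}h\cdot_A a & hm\\ hn & h\cdot_B b\end{smallmatrix}\right)$ and checking the partial-action axiom blockwise against the hypotheses of (2). Your write-up is in fact more explicit about the case analysis than the paper, which leaves that verification implicit.
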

	
	\begin{proof}
		The proof of (1)$\Rightarrow$(2) is in the previous calculations following Definition \ref{def4}. For (2)$\Rightarrow$(1), we define a partial action of $H$ on $C$ by 
		\begin{eqnarray*}
			h\triangleright \left(\begin{array}{cc}
				a & m\\
				n & x
			\end{array}\right)=\left(\begin{array}{cc}
				h\cdot_Aa & hm\\
				hn & h\cdot_Bx
			\end{array}\right).
		\end{eqnarray*}
	\end{proof}
	
	Note that if the hypothesis are satisfied, then we have that $h\cdot_A(mn)=\sum (h_{(1)}m)(h_{(2)}n)$ and
	$h\cdot_B(nm)=\sum (h_{(1)}n)(h_{(2)}m)$.
	
	\begin{proposition} Morita equivalence of partial actions is an equivalence relation.
	\end{proposition}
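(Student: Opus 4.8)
The plan is to verify the three defining properties—reflexivity, symmetry, and transitivity—directly from Definition \ref{def4}, using Proposition \ref{Proposition.Morita.equivalence.partial.bimodules} to reformulate condition (2) in terms of partial bimodule structures whenever that is more convenient than manipulating the context algebra $C$ itself.

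\textbf{Reflexivity.} Given a partial $H$-module algebra $A$ with partial action $\cdot_A$, one takes the standard (trivial) Morita context $(A,A,{}_AA_A,{}_AA_A,\tau,\sigma)$ with $\tau$ and $\sigma$ both equal to the multiplication map $A\otimes_A A\to A$; since $A$ need not be unital this context is strict precisely when $A^2=A$, so this step already forces the convention—implicit in the section—that all partial $H$-module algebras appearing here are idempotent (the hypothesis under which strict Morita contexts with unital bimodules behave well, as in \cite{garciasimon}). The context algebra is then the algebra $\mathrm{Mat}_2(A)$ of $2\times 2$ matrices over $A$, and one defines $h\triangleright(a_{ij})=(h_{(1)}\cdot_A a_{11},\,h_{(2)}\cdot_A a_{12};\,h_{(1)}\cdot_A a_{21},\,h_{(2)}\cdot_A a_{22})$—more precisely, one uses the partial $(A,H)$-bimodule structure on $M=N=A$ induced by $\cdot_A$ and invokes the implication (2)$\Rightarrow$(1) of Proposition \ref{Proposition.Morita.equivalence.partial.bimodules}, for which the two compatibility identities $h\cdot_A(m(kn))=\sum(h_{(1)}m)((h_{(2)}k)n)$ reduce to axiom (2) of Definition \ref{partial.action} for $\cdot_A$. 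Its restrictions to the two diagonal corners visibly recover $\cdot_A$, so $\cdot_A$ is Morita equivalent to itself.

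\textbf{Symmetry.} If $(A,B,{}_AM_B,{}_BN_A,\tau,\sigma)$ together with a partial action $\triangleright$ on $C=\left(\begin{smallmatrix}A&M\\N&B\end{smallmatrix}\right)$ witnesses that $\cdot_A\sim\cdot_B$, then $(B,A,{}_BN_A,{}_AM_B,\sigma,\tau)$ is again a strict Morita context with unital bimodules, and its context algebra $C'=\left(\begin{smallmatrix}B&N\\M&A\end{smallmatrix}\right)$ is isomorphic to $C$ via the "transpose-and-swap" isomorphism $\left(\begin{smallmatrix}b&n\\m&a\end{smallmatrix}\right)\mapsto\left(\begin{smallmatrix}a&m\\n&b\end{smallmatrix}\right)$, which is an algebra isomorphism exactly because $\tau,\sigma$ are the structure maps of both contexts (up to the swap). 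Transporting $\triangleright$ along this isomorphism gives a partial action on $C'$ whose diagonal restrictions are $\cdot_B$ and $\cdot_A$ in the required corners, so $\cdot_B\sim\cdot_A$.

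\textbf{Transitivity.} This is the substantive step and the one I expect to be the main obstacle. Suppose $\cdot_A\sim\cdot_B$ via $(A,B,M,N,\tau,\sigma)$ and $\cdot_B\sim\cdot_C$ via $(B,C,P,Q,\tau',\sigma')$. One forms the composite Morita context with bimodules ${}_A(M\otimes_B P)_C$ and ${}_C(Q\otimes_B N)_A$ and the induced pairings $M\otimes_B P\otimes_C Q\otimes_B N\to M\otimes_B N\to A$ and its mirror; strictness of the composite follows from strictness of the two given contexts together with idempotency (this is the standard composition of strict Morita contexts over idempotent rings, cf. \cite{garciasimon}), and the bimodules are unital because a tensor product of unital bimodules over an idempotent ring is unital. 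For condition (2), I would use Proposition \ref{Proposition.Morita.equivalence.partial.bimodules} in the form (1)$\Leftrightarrow$(2): from $\triangleright$ on $\left(\begin{smallmatrix}A&M\\N&B\end{smallmatrix}\right)$ I extract partial $(A,H)$–$(B,H)$- and $(B,H)$–$(A,H)$-bimodule structures on $M$, $N$, and similarly on $P$, $Q$; then I define a linear map $H\otimes(M\otimes_B P)\to M\otimes_B P$ by $h(m\otimes p)=\sum(h_{(1)}m)\otimes(h_{(2)}p)$ and check that it is well-defined over $\otimes_B$ (this uses the right $(B,H)$-module axiom on $M$ and the left one on $P$, together with the $B$-balancing, and is where the cocommutativity-free bookkeeping of comultiplication indices has to be done carefully), that it satisfies the partial $(A,H)$–$(C,H)$-bimodule axioms (PM-type identities (1)–(2) of Definition \ref{def.partial(A,H)-module} and the compatibility $h(n k)=(hn)k$), and finally that the two cross-compatibility identities $h\cdot_A\big((m\otimes p)(k(q\otimes n))\big)=\sum(h_{(1)}(m\otimes p))((h_{(2)}k)(q\otimes n))$ and its mirror hold—these follow by composing the corresponding identities for $(M,N)$ and $(P,Q)$ and collapsing $\otimes_B P\otimes_C Q\otimes_B\to\otimes_B$ via $\tau'$, which is $B$-bilinear and $H$-equivariant in the partial sense by construction. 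The implication (2)$\Rightarrow$(1) of Proposition \ref{Proposition.Morita.equivalence.partial.bimodules} then produces a partial action on the composite context algebra restricting to $\cdot_A$ and $\cdot_C$ on the diagonal corners, completing the proof.
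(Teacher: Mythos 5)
Your proposal is correct and follows essentially the same route as the paper: the substantive step, transitivity, is handled exactly as in the paper's proof by composing the two strict Morita contexts over the middle algebra (with pairings factoring through $\tau'$ and $\tau$) and putting the diagonal partial action on the composite context algebra, acting on the off-diagonal corners by $h(m\otimes p)=\sum (h_{(1)}m)\otimes (h_{(2)}p)$. The only difference is one of detail: you make explicit the reflexivity and symmetry arguments (including the observation that reflexivity via the trivial context implicitly uses idempotency of $A$), the well-definedness of the action over $\otimes_B$, and the use of Proposition \ref{Proposition.Morita.equivalence.partial.bimodules}, all of which the paper treats as clear or leaves implicit.
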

	\begin{proof}
		Let $A$, $A'$, $A''$ be associative partial $H$-module algebras with partial actions $\cdot$, $\cdot_1$, $\cdot_2$, respectively. Suppose that $\cdot$ is Morita equivalent to $\cdot_1$ and $\cdot_1$ is Morita equivalent to $\cdot_2$ with strict Morita contexts $(A,A',M,M')$ and $(A',A'',L',L'')$, respectively. Now, consider the linear maps
		\begin{eqnarray*}
			\tau: (M\otimes_{A'} L')\otimes_{A''} (L''\otimes_{A'}M')&\to & A\\
			m\otimes l'\otimes l''\otimes m'&\mapsto & (m((l'l'')m')),
		\end{eqnarray*}
		and
		\begin{eqnarray*}
			\sigma: (L''\otimes_{A'}M')\otimes_{A}(M\otimes_{A'} L') &\to & A''\\
			l''\otimes m' \otimes m\otimes l' &\mapsto & (l''((m'm)l')).
		\end{eqnarray*}
		Since these maps are induced by morphisms of strict Morita contexts, when we consider the natural $(A,A'')$-bimodule structure of $M\otimes_{A'} L'$ and the natural $(A'',A)$-bimodule structure of $L''\otimes_{A'}M'$, we have the strict Morita context $(A,A'',M\otimes_{A'} L',L''\otimes_{A'}M',\tau,\sigma)$. Finally, defining
		\begin{eqnarray*}
			h\triangleright \left( \begin{array}{cc}
				a & m\otimes l'\\
				l''\otimes m' & a''
			\end{array}\right)=\sum \left( \begin{array}{cc}
				h\cdot a & h_{(1)}m\otimes h_{(2)}l'\\
				h_{(1)}l''\otimes h_{(2)}m' & h\cdot_2 a''
			\end{array}\right),
		\end{eqnarray*}
		we have that the partial actions $\cdot$ and $\cdot_2$ are Morita equivalent. Since clearly Morita equivalence of partial actions is reflexive and symmetric, it is an equivalence relation.
	\end{proof}
	We recall from \cite{garciasimon} that two idempotent rings are Morita equivalent, i.e., its categories of unital and torsionfree modules are equivalent, if and only if there exists a strict Morita context where the modules are unital. We also recall that if $A$ is an idempotent partial $H$-module algebra then $\underline{A\# H}$ is also an idempotent algebra (Proposition \ref{prop.smash.product.idempotent}).
	\begin{theorem}
		Let $A$ and $B$ be idempotent partial $H$-module algebras with partial actions $\cdot_A$ and $\cdot_B$, respectively. If $\cdot_A$ is Morita equivalent to $\cdot_B$, then $\underline{A\# H}$ is Morita equivalent to $\underline{B\# H}$.
	\end{theorem}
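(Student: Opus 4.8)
The plan is to realize the required strict Morita context inside the partial smash product of the context algebra. Write the given strict Morita context as $(A,B,{}_AM_B,{}_BN_A,\tau,\sigma)$ with $M,N$ unital bimodules, let $C=\left(\begin{smallmatrix} A & M\\ N & B\end{smallmatrix}\right)$ be the context algebra, and recall from Proposition \ref{Proposition.Morita.equivalence.partial.bimodules} that $C$ carries the ``entry-wise'' partial $H$-action $h\triangleright\left(\begin{smallmatrix} a & m\\ n & x\end{smallmatrix}\right)=\left(\begin{smallmatrix} h\cdot_A a & hm\\ hn & h\cdot_B x\end{smallmatrix}\right)$. First I would note that $C$ is idempotent, since $A^2=A$, $B^2=B$ and $M=AM=MB$, $N=BN=NA$; hence by Proposition \ref{prop.smash.product.idempotent} the three algebras $\underline{A\#H}$, $\underline{B\#H}$ and $\underline{C\#H}$ are all idempotent, so that by the criterion of García and Simón it will suffice to exhibit a strict Morita context between $\underline{A\#H}$ and $\underline{B\#H}$ whose bimodules are unital.

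Second, since the $H$-action on $C$ preserves the block grading $C=A\oplus M\oplus N\oplus B$, so does the multiplication of $C\#H$, and since $\underline{C\#H}=(C\#H)\cdot C$ is spanned by block-homogeneous elements it inherits the structure of a $2\times 2$ generalized matrix ring $\underline{C\#H}=\left(\begin{smallmatrix} \mathcal{A} & \mathcal{M}\\ \mathcal{N} & \mathcal{B}\end{smallmatrix}\right)$, where $\mathcal{A},\mathcal{M},\mathcal{N},\mathcal{B}$ are the intersections of $\underline{C\#H}$ with $A\otimes H$, $M\otimes H$, $N\otimes H$, $B\otimes H$ respectively. The step I expect to be the crux is the identification $\mathcal{A}=\underline{A\#H}$ and $\mathcal{B}=\underline{B\#H}$ as subalgebras of $\underline{C\#H}$. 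Inspecting generators $(c\#h)\cdot c'$ of $\underline{C\#H}$ one finds that $\mathcal{A}$ is spanned by the usual generators $\sum a(h_{(1)}\cdot_A b)\#h_{(2)}$ of $\underline{A\#H}$ together with the ``exotic'' elements $\sum\tau(m,h_{(1)}n)\#h_{(2)}$; to absorb the latter I would write $n=\sum_p n_p a_p$ (using $N=NA$) and apply the partial $(A,H)$-module relation $h(na)=\sum(h_{(1)}n)(h_{(2)}\cdot_A a)$ together with the $A$-bilinearity of $\tau$ to get $\sum\tau(m,h_{(1)}n)\#h_{(2)}=\sum_p\big(\sum\tau(m,h_{(1)}n_p)\#h_{(2)}\big)\cdot a_p\in(A\#H)\cdot A=\underline{A\#H}$, where the last equality is exactly the description of $\underline{A\#H}$ recalled above. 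The symmetric computation with $M=MB$ gives $\mathcal{B}=\underline{B\#H}$.

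Third, I would verify that $(\underline{A\#H},\underline{B\#H},\mathcal{M},\mathcal{N},\tau',\sigma')$, with $\tau',\sigma'$ induced by the multiplication of $\underline{C\#H}$, is a strict Morita context with unital bimodules. The Morita identities and the $\mathcal{B}$-balancedness of $\tau'$ are immediate from associativity in $\underline{C\#H}$. Surjectivity $\mathcal{M}\mathcal{N}=\underline{A\#H}$ follows, exactly as in the proof of Theorem \ref{teo2}, from surjectivity of $\tau$ and idempotence: for a generator $\sum a(h_{(1)}\cdot_A b)\#h_{(2)}$ of $\underline{A\#H}$ with $a=\sum_i\tau(m_i,n_i)$ one has
$\sum a(h_{(1)}\cdot_A b)\#h_{(2)}=\sum_i\big(\left(\begin{smallmatrix}0 & m_i\\ 0 & 0\end{smallmatrix}\right)\#1_H\big)\big(\sum\left(\begin{smallmatrix}0 & 0\\ n_i(h_{(1)}\cdot_A b) & 0\end{smallmatrix}\right)\#h_{(2)}\big)\in\mathcal{M}\mathcal{N}$,
and symmetrically $\mathcal{N}\mathcal{M}=\underline{B\#H}$; then the block decomposition of the identity $\underline{C\#H}=(\underline{C\#H})^2$ gives $\mathcal{A}\mathcal{M}=\mathcal{M}\mathcal{N}\mathcal{M}=\mathcal{M}\mathcal{B}$ and hence $\mathcal{M}=\mathcal{A}\mathcal{M}+\mathcal{M}\mathcal{B}=\mathcal{M}\mathcal{B}=\mathcal{A}\mathcal{M}$, so $\mathcal{M}$ (and likewise $\mathcal{N}$) is a unital bimodule. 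Finally, $\underline{A\#H}$ and $\underline{B\#H}$ being idempotent rings linked by a strict Morita context with unital modules, they are Morita equivalent by \cite{garciasimon}, which is the assertion. The only genuinely delicate point in this program is the corner identification $\mathcal{A}=\underline{A\#H}$ of the second paragraph; everything else is bookkeeping with the block structure and routine computations already carried out for $\underline{A\#H}$ and for Theorem \ref{teo2}.
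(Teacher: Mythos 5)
Your proof is correct, but it follows a genuinely different route from the paper's. The paper never forms the smash product of the context algebra $C$: it constructs the two bimodules directly as $(M\otimes H)(B\#1_H)\subseteq M\otimes H$ and $(N\otimes H)(A\#1_H)\subseteq N\otimes H$, writes explicit formulas for the $\underline{A\# H}$- and $\underline{B\# H}$-actions and for the context maps in terms of the maps $H\otimes M\to M$ and $H\otimes N\to N$ furnished by the Morita equivalence of the partial actions, and then checks balancedness, unitality and surjectivity by hand before invoking Garc\'ia and Sim\'on \cite{garciasimon}. You instead realize the whole context at once as the Peirce decomposition of the idempotent algebra $\underline{C\# H}$, which reduces everything except the corner identification $\mathcal{A}=\underline{A\# H}$ to bookkeeping; that identification is indeed the one genuinely new step, and your absorption argument for it (using $N=NA$, the relation $h(na)=\sum (h_{(1)}n)(h_{(2)}\cdot_A a)$ and the right $A$-linearity of $\tau$) is sound. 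The same absorption shows that your $\mathcal{M}$ coincides with the paper's $(M\otimes H)(B\#1_H)$, so the two constructions produce literally the same strict unital context in the end. What the paper's route buys is explicit, reusable formulas for all the structure maps; what yours buys is conceptual economy: unitality of the bimodules and the Morita identities fall out of idempotency of $\underline{C\# H}$ (Proposition \ref{prop.smash.product.idempotent}) together with associativity, and the context algebra of Definition \ref{def4}, which the paper's own proof does not exploit, is put to direct use.
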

	\begin{proof}
		Suppose that the Morita equivalence of $A$ and $B$ is given by the strict Morita context $(A,B,\,_AM_B,\,_BN_A)$. Note that $M\otimes H$ is an $A\#H-B\#H$-bimodule with structure given by:
		\begin{eqnarray*}
			(m\otimes h)(x\otimes k)&=&\sum m(h_{(1)}\cdot_B x)\otimes h_{(2)}k\\
			(a\otimes h)(m\otimes k)&=&\sum a(h_{(1)}m)\otimes h_{(2)}k.
		\end{eqnarray*}
		Analogously $N\otimes H$ is a $B\#H-A\#H$-bimodule. Now, since $\underline{A\# H}$ and $\underline{B\# H}$ are subalgebras of $A\# H$ and $B\# H$, respectively, we have that $M\otimes H$ is also an $\underline{A\# H}-\underline{B\# H}$-bimodule and $N\otimes H$ is also a $\underline{B\# H}-\underline{A\# H}$-bimodule. Since 
		\begin{eqnarray*}
			(M\otimes H)(B\# 1_H)(\underline{B\# H})&=&(M\otimes H)(B\# 1_H)(B\# H)(B\# 1_H)\\
			&\subseteq & (M\otimes H)(B^2\# H)(B\# 1_H)\\
			&\subseteq & (M\otimes H)(B\# 1_H),
		\end{eqnarray*}
		because $(M\otimes H)(B\# H)\subseteq M\otimes H$, we have that $(M\otimes H)(B\# 1_H)$ is an $\underline{A\# H}-\underline{B\# H}$-sub-bimodule of $M\otimes H$, and this sub-bimodule structure is given by
		\begin{eqnarray*}
			\sum a(h_{(1)}\cdot_A b)\# h_{(2)} \blacktriangleright \sum m(k_{(1)}\cdot_B z)\otimes k_{(2)}&=& \sum a(h_{(1)}(bm))(h_{(2)}k_{(1)}\cdot_B z)\otimes h_{(3)}k_{(2)},\\
			\sum m(k_{(1)}\cdot_B z)\otimes k_{(2)} \blacktriangleleft \sum x(h_{(1)}\cdot_B y)\# h_{(2)}&=& \sum m(k_{(1)}\cdot_Bzx)(k_{(2)}h_{(1)}\cdot_B y)\otimes k_{(3)}h_{(2)},
		\end{eqnarray*}
		for every $h, k\in H$, $a, b\in A$, $x, y\in B$, $m\in M$.
		
		Analogously, $(N\otimes H)(A\# H)$ is a $\underline{B\# H}-\underline{A\# H}$-sub-bimodule of $N\otimes H$ with structure given by	
		\begin{eqnarray*}
			\sum x(h_{(1)}\cdot_B y)\# h_{(2)} \blacktriangleright \sum n(k_{(1)}\cdot_A c)\otimes k_{(2)}&=& \sum x(h_{(1)}(yn))(h_{(2)}k_{(1)}\cdot_Ac)\otimes h_{(3)}k_{(2)},\\
			\sum n(k_{(1)}\cdot_Ac)\otimes k_{(2)} \blacktriangleleft \sum a(h_{(1)}\cdot_A b)\# h_{(2)}&=& \sum n(k_{(1)}\cdot_Aca)(k_{(2)}h_{(1)}\cdot_A b)\otimes k_{(3)}h_{(2)},
		\end{eqnarray*}
		for every $h, k\in H$, $a, b, c\in A$, $x, y, z\in B$, $n\in N$.
		
		Since $M$ and $N$ are unital bimodules, then $(M\otimes H)(B\#1_H)$ and $(N\otimes H)(A\#1_H)$ are also unital bimodules. Finally, consider the linear maps
		\begin{eqnarray*}
			\tau: (M\otimes H)B\otimes_{\underline{B\# H}} (N\otimes H)A&\to & \underline{A\# H}\\
			(\sum m(h_{(1)}\cdot_B x)\otimes h_{(2)})\otimes (\sum n(k_{(1)}\cdot_Aa )\otimes k_{(2)})&\mapsto & \sum (m(h_{(1)}(xn)))(h_{(2)}k_{(1)}\cdot_A a)\# h_{(3)}k_{(2)}, 
		\end{eqnarray*}
		and
		\begin{eqnarray*}
			\sigma: (N\otimes H)\otimes_{\underline{A\# H}}(M\otimes H)&\to & \underline{B\# H}\\
			(\sum n(k_{(1)}\cdot_Aa )\otimes k_{(2)})\otimes (\sum m(h_{(1)}\cdot_B x)&\mapsto & \sum (n(h_{(1)}(am)))(h_{(2)}k_{(1)}\cdot_Bx)\# h_{(3)}k_{(2)},
		\end{eqnarray*}
		that are well-defined because $M$ and $N$ are unital, and are also surjective because the Morita context $(A,B,M,N)$ is strict. Clearly $\tau$ and $\sigma$ are morphisms of bimodules, and they are balanced by construction. Hence $\underline{A\# H}$ is Morita equivalent to $\underline{B\# H}$.
	\end{proof}
	
	Consider $A$ a partial $H$-module algebra with s.l.u. $S=\{e_\lambda\}_{\lambda\in\Lambda}$ with symmetrical $S$-categorizable partial action. To prove that $\underline{A\# H}$ and $\underline{a(\mathcal{C}^S(A))\# H}$ are Morita equivalent, first we will use the strict Morita context $(A,a(\mathcal{C}^S(A)),\oplus_{\lambda}Ae_\lambda,\oplus_{\lambda}e_\lambda A,\tau,\sigma)$, where the elements of $\oplus_{\lambda}Ae_\lambda$ are seen as row matrices, the elements of $\oplus_{\lambda}e_\lambda A$ are seen as column matrices, the $A$-module structures are the usual and the $a(\mathcal{C}^S(A))$-module structures and the morphisms $\tau$ and $\sigma$ are given by matrix multiplication. 
	\begin{corollary} \label{corollary.morita.smash.1}
		Let $A$ be a partial $H$-module algebra with s.l.u. $S=\{e_\lambda\}_{\lambda\in\Lambda}$ with symmetrical $S$-categorizable partial action and consider the induced symmetrical partial action on $a(\mathcal{C}^S(A))$. Then $\underline{A\# H}$ and $\underline{a(\mathcal{C}^S(A))\# H}$ are Morita equivalent.
	\end{corollary}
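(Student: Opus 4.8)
The plan is to invoke the preceding Theorem --- if $\cdot_A$ and $\cdot_B$ are Morita equivalent partial actions on idempotent algebras, then $\underline{A\#H}$ and $\underline{B\#H}$ are Morita equivalent --- with $B=a(\mathcal{C}^S(A))$ endowed with its induced partial action. First note that $A$ is idempotent (it has local units) and that $a(\mathcal{C}^S(A))=\bigoplus_{\mu,\nu\in\Lambda}e_\mu Ae_\nu$ is idempotent (it has the canonical system of local units); moreover, since the partial action $\cdot$ on $A$ is $S$-categorizable we have $H\cdot e_\mu Ae_\nu\subseteq e_\mu Ae_\nu$, so the induced partial action on $a(\mathcal{C}^S(A))$ is simply $\cdot$ applied entrywise. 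Thus the whole statement reduces to proving that $\cdot$ and this induced action are Morita equivalent partial actions in the sense of Definition \ref{def4}. I would start from the strict Morita context $(A,a(\mathcal{C}^S(A)),M,N,\tau,\sigma)$ with $M=\bigoplus_\lambda Ae_\lambda$ (rows) and $N=\bigoplus_\lambda e_\lambda A$ (columns) recalled before the statement, whose bimodules are unital; by Proposition \ref{Proposition.Morita.equivalence.partial.bimodules} it then suffices to equip $M$ with a partial $(A,H)$-$(B,H)$-bimodule structure and $N$ with a partial $(B,H)$-$(A,H)$-bimodule structure satisfying the two compatibility identities stated there.

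The crucial step is that $S$-categorizability makes the partial action of $A$ restrict to $M$ and $N$. For $m\in Ae_\lambda$, writing $m=m(1_H\cdot e_\lambda)$ and using axiom (2) of Definition \ref{partial.action},
\[
h\cdot m=\sum(h_{(1)}\cdot m)(h_{(2)}\cdot e_\lambda)\in A\,(e_\lambda Ae_\lambda)\subseteq Ae_\lambda ,
\]
because $h_{(2)}\cdot e_\lambda\in e_\lambda Ae_\lambda$. Symmetrically, for $n\in e_\lambda A$, writing $n=(1_H\cdot e_\lambda)n$ and using the symmetry axiom (3) of Definition \ref{partial.action},
\[
h\cdot n=\sum(h_{(1)}\cdot e_\lambda)(h_{(2)}\cdot n)\in(e_\lambda Ae_\lambda)\,A\subseteq e_\lambda A .
\]
Hence $\cdot$ restricts componentwise to linear maps $H\otimes M\to M$ and $H\otimes N\to N$, and these coincide with the restriction of the induced action on $a(\mathcal{C}^S(A))$ to its rows and columns.

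With these $H$-actions in hand, I would then verify the bimodule axioms. The left $A$- and right $a(\mathcal{C}^S(A))$-module structures on $M$, the module structures on $N$, and the pairings $\tau,\sigma$ are all instances of matrix multiplication built from the product of $A$, while the $H$-actions are the entrywise restriction of $\cdot$; so every identity required in the definition of a partial $(A,H)$-$(B,H)$-bimodule (the left-module part being Definition \ref{def.partial(A,H)-module}), together with the compatibility conditions $h\cdot_A(m(kn))=\sum(h_{(1)}m)((h_{(2)}k)n)$ and $h\cdot_B(n(km))=\sum(h_{(1)}n)((h_{(2)}k)m)$ of Proposition \ref{Proposition.Morita.equivalence.partial.bimodules}, unwinds entrywise to axiom (2) or axiom (3) of Definition \ref{partial.action} for $A$ (the symmetry axiom being available by hypothesis), together with the elementary fact that $e_\lambda$ is absorbed by the relevant one-sided ideals. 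Then Proposition \ref{Proposition.Morita.equivalence.partial.bimodules} produces a partial action on the context algebra that restricts to $\cdot$ and to the induced action, so the two partial actions are Morita equivalent, and the preceding Theorem yields the Morita equivalence of $\underline{A\#H}$ and $\underline{a(\mathcal{C}^S(A))\#H}$.

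The main obstacle is precisely the restriction step of the second paragraph: for a general partial action on an algebra with local units the summands $Ae_\lambda$ and $e_\lambda A$ need not be invariant, and then the Morita bimodules carry no natural $H$-action; it is exactly the $S$-categorizability condition $H\cdot e_\lambda\subseteq e_\lambda Ae_\lambda$ (combined with the symmetry axiom, used for $N$) that repairs this. Once the restriction is established, the rest is bookkeeping with matrices plus the partial-action axioms, along the lines of the proofs of Proposition \ref{Proposition.Morita.equivalence.partial.bimodules} and of the preceding Theorem.
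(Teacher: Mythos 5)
Your proposal is correct and follows essentially the same route as the paper: it uses the recalled strict Morita context $(A,a(\mathcal{C}^S(A)),\oplus_\lambda Ae_\lambda,\oplus_\lambda e_\lambda A,\tau,\sigma)$, puts the entrywise partial action on the context algebra (you phrase this via Proposition \ref{Proposition.Morita.equivalence.partial.bimodules}, the paper writes the matrix formula directly), and then applies the preceding theorem on Morita equivalent partial actions of idempotent algebras. Your explicit verification that $S$-categorizability plus symmetry forces $H\cdot Ae_\lambda\subseteq Ae_\lambda$ and $H\cdot e_\lambda A\subseteq e_\lambda A$ is exactly the well-definedness the paper asserts without detail, so there is no gap.
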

	\begin{proof}
		We only need to consider the linear map
		\begin{eqnarray*}
			h\triangleright \left(\begin{array}{cc}
				a & (a^\lambda)_\lambda\\
				(a_\lambda)_\lambda & (a(\lambda,\alpha) )_{\lambda,\alpha}
			\end{array} \right)=\left(\begin{array}{cc}
				h\cdot a & (h\cdot a^\lambda)_\lambda\\
				(h\cdot a_\lambda)_\lambda & (h\cdot a(\lambda,\alpha) )_{\lambda,\alpha}
			\end{array} \right),
		\end{eqnarray*}
		where $a^\lambda\in  Ae_\lambda$, $a_\lambda\in e_\lambda A$ and $a(\lambda,\alpha)\in e_\alpha Ae_\lambda$. Since the partial action on $A$ is symmetrical $S$-categorical, this map is well-defined and it is straightforward that it determines a partial action on $C=\left(\begin{array}{cc}
			A & \oplus_{\lambda}Ae_\lambda\\
			\oplus_{\lambda}e_\lambda A & a(\mathcal{C}^S(A))
		\end{array} \right)$.
	\end{proof}

	In \cite{alves2}, the authors proved that given a partial $H$-module $\Bbbk$-category $\mathcal{C}$, we have that $\underline{a(\mathcal{C})\# H}$ and $a(\underline{\mathcal{C}\# H})$ are isomorphic, where the partial action on $a(\mathcal{C})$ is induced by the partial action on $\mathcal{C}$. Considering this fact,  Theorem \ref{cac} and the previous results of this section, we have the following result.
	
	\begin{corollary} \label{corollary.morita.smash.2}
		Let $A$ be a partial $H$-module algebra with s.l.u. $S$ with symmetrical $S$-categorizable partial action. Then the category of the unital $\underline{A\# H}$ modules is equivalent to the category of the $\underline{\mathcal{C}^S(A)\# H}$ modules.
	\end{corollary}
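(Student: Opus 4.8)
The plan is to obtain the equivalence by composing three results already at our disposal. First, Corollary \ref{corollary.morita.smash.1} provides a strict Morita context between $\underline{A\# H}$ and $\underline{a(\mathcal{C}^S(A))\# H}$ with unital bimodules; since $A$ has a system of local units it is idempotent, so by Proposition \ref{prop.smash.product.idempotent} both $\underline{A\# H}$ and $a(\mathcal{C}^S(A))$ — and hence $\underline{a(\mathcal{C}^S(A))\# H}$ — are idempotent algebras, and García and Simón's theorem \cite{garciasimon} upgrades this context to an equivalence between the categories of unital (and torsionfree) left $\underline{A\# H}$-modules and of unital (and torsionfree) left $\underline{a(\mathcal{C}^S(A))\# H}$-modules. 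Second, I would invoke the algebra isomorphism $\underline{a(\mathcal{C})\# H}\cong a(\underline{\mathcal{C}\# H})$ from \cite{alves2}, applied to the $\Bbbk$-category $\mathcal{C}=\mathcal{C}^S(A)$, which identifies $\underline{a(\mathcal{C}^S(A))\# H}$ with $a(\underline{\mathcal{C}^S(A)\# H})$ as algebras, so that their module categories literally coincide. Third, I would apply Theorem \ref{cac} to the $\Bbbk$-category $\mathcal{D}=\underline{\mathcal{C}^S(A)\# H}$, obtaining an equivalence between the category of $\mathcal{D}$-modules and the category of unital $a(\mathcal{D})$-modules. Stringing these together yields an equivalence between the category of unital $\underline{A\# H}$-modules and the category of $\underline{\mathcal{C}^S(A)\# H}$-modules.

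The one bookkeeping point is to check that the adjectives attached to the word ``module'' match along the chain: Theorem \ref{cac} delivers merely \emph{unital} $a(\mathcal{D})$-modules, whereas \cite{garciasimon} is phrased in terms of \emph{unital and torsionfree} modules. I would resolve this by showing that all the algebras in sight have local units. Indeed, if $A$ has a system of local units $S=\{e_\lambda\}$ then $\{e_\lambda\# 1_H\}$ is a system of local units for $\underline{A\# H}$: each $e_\lambda\# 1_H$ lies in $\underline{A\# H}$ because $A$ is idempotent, and a short computation with the coproduct gives $(e_\lambda\# 1_H)\bigl(\sum a(h_{(1)}\cdot b)\# h_{(2)}\bigr)=\sum a(h_{(1)}\cdot b)\# h_{(2)}$ whenever $e_\lambda a=a$, and $\bigl(\sum a(h_{(1)}\cdot b)\# h_{(2)}\bigr)(e_\lambda\# 1_H)=\sum a(h_{(1)}\cdot b)\# h_{(2)}$ whenever $b e_\lambda=b$ (here one uses axiom (2) of a partial action with $k=1_H$). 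The same argument applied to $a(\mathcal{C}^S(A))$, which carries its canonical system of local units, shows that $\underline{a(\mathcal{C}^S(A))\# H}$, and hence $a(\underline{\mathcal{C}^S(A)\# H})$, has local units as well. Since over any algebra with local units every unital module is automatically torsionfree, the unital module categories occurring in the three steps are exactly the unital torsionfree ones, and the composition is legitimate.

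Concretely, I would trace a module $\mathcal{M}=\{{}_\lambda\mathcal{M}\}_{\lambda\in\Lambda}$ over the $\Bbbk$-category $\underline{\mathcal{C}^S(A)\# H}$ to the $a(\underline{\mathcal{C}^S(A)\# H})$-module $\bigoplus_\lambda{}_\lambda\mathcal{M}$ (Theorem \ref{cac}), transport it across the isomorphism of \cite{alves2} to a $\underline{a(\mathcal{C}^S(A))\# H}$-module, and then apply the Morita functor $N\otimes_{\underline{a(\mathcal{C}^S(A))\# H}}(-)$ arising from the context of Corollary \ref{corollary.morita.smash.1}, with the quasi-inverse running the three steps backwards; since each ingredient is already established, the only real content is the verification just described. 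The main (mild) obstacle is therefore precisely this reconciliation of hypotheses — confirming that $\underline{A\# H}$, $\underline{a(\mathcal{C}^S(A))\# H}$ and $a(\underline{\mathcal{C}^S(A)\# H})$ all have local units, so that ``unital module'' is unambiguous throughout — after which the statement is a formal composition of Corollary \ref{corollary.morita.smash.1}, the isomorphism of \cite{alves2}, and Theorem \ref{cac}.
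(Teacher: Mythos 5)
Your proposal is correct and follows essentially the same route as the paper, which likewise obtains the corollary by composing Corollary \ref{corollary.morita.smash.1}, the isomorphism $\underline{a(\mathcal{C})\# H}\cong a(\underline{\mathcal{C}\# H})$ from \cite{alves2}, and Theorem \ref{cac}. Your extra verification that $\underline{A\# H}$ and $\underline{a(\mathcal{C}^S(A))\# H}$ have local units (so that ``unital'' and ``unital torsionfree'' coincide) is a sound piece of bookkeeping that the paper leaves implicit.
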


	In \cite{abadie}, it was proved that every partial group action is Morita equivalent to a globalizable partial action. Since we know that every symmetrical partial Hopf action has a globalization, here we will show that every symmetrical partial action is Morita equivalent to a partial action the algebra has trivial right (or left) annihilator. 
	
	Let $A$ be an associative algebra and consider $B=A/r(A)$ and $C=A/l(A)$. In \cite{garciasimon}, Garc\'ia and S\'imon highlighted the fact that $A$, $B$ and $C$ are Morita equivalent algebras. Explicitly, denoting the equivalence class of $a\in A$ in $B$ and in $C$ by $[a]_r$ and $[a]_l$, respectively, we have that the mappings $a[b]_r=ab$ and $[a]_rb=[ab]_r$ provide a structure of right $B$-module on $A$ and a structure of right $A$-module on $B$. 
	
	Hence, considering the natural structures of left $A$-module of $A$ and of left $B$-module of $B$, we have that $(A,B,\,_AA_B,\,_BB_A)$ determines a strict Morita context where the bimodules morphisms of the  context are given by the same mappings of the module structures.
	
	Analogously, $(C,A,\,_AC_C,\,_CA_A)$ determines a strict Morita context between $C$ and $A$.
	
	For the partial actions we will assume that $A$ is a partial $H$-module algebra with symmetrical partial action. Note that if $H$ has bijective antipode, then for every $x\in r(A)$, $a\in A$ and $h\in H$, we have that
	\begin{eqnarray*}
		a(h\cdot x)&=&\sum (h_{(2)}S^{-1}(h_{(1)})\cdot a)(h_{(3)}\cdot x)\\
		&=&\sum h_{(2)}\cdot((S^{-1}(h_{(1)})\cdot a)x)\\
		&=&0,
	\end{eqnarray*} 
	i.e., $h\cdot x\in r(A)$. Hence we can define a partial $H$-module algebra structure in $B$ given by $h\cdot [a]_r=[h\cdot a]_r$.
	
	For the algebra $C$, even if $H$ does not have bijective antipode, for every $x\in l(A)$, $a\in A$ and $h\in H$, we have that
	\begin{eqnarray*}
		(h\cdot x)a&=&\sum (h_{(1)}\cdot x)(h_{(2)}S(h_{(3)})\cdot a)\\
		&=&\sum (h_{(1)}\cdot (x(S(h_{(2)})\cdot a))\\
		&=&0,
	\end{eqnarray*}
	i.e., $h\cdot x\in l(A)$. Hence we can define a partial $H$-module algebra structure in $C$ given by $h\cdot [a]_l=[h\cdot a]_l$.
	
	Clearly these partial actions are symmetrical and the bimodules $_AA_B$ and $_BB_A$ (analogously $_AC_C$ and $_CA_A$) satisfy the hypotheses of Proposition 
	\ref{Proposition.Morita.equivalence.partial.bimodules}, i.e., the symmetrical partial action of $H$ on $A$ is Morita equivalent to the induced symmetrical partial action of $H$ on $C$ and to the induced (if $H$ has bijective antipode) partial action on $B$.
	
	We recall that if $A$ is idempotent, then $r(A/r(A))=0=l(A/l(A))$, hence $l(C)=0$. Therefore, by the previous discussion, we obtain the following result. 
	
	\begin{theorem} \label{thm.morita.equivalent.partial.action}
		Every symmetrical partial Hopf action on an idempotent algebra is Morita equivalent to a partial action on an idempotent algebra with trivial right (or left) annihilator. 
	\end{theorem}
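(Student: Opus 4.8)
The plan is to deduce the statement directly from the constructions made in the paragraphs immediately preceding it, using the quotient by the left annihilator as the target algebra. Let $A$ be an idempotent algebra carrying a symmetrical partial $H$-action $\cdot: H\otimes A\to A$, and set $C=A/l(A)$. Since $A^2=A$ one has $C^2=C$, so $C$ is again idempotent, and by the results of Garc\'ia and Sim\'on recalled above $l(C)=l(A/l(A))=0$. We already checked that $h\cdot x\in l(A)$ whenever $x\in l(A)$ and $h\in H$, so $h\cdot[a]_l=[h\cdot a]_l$ defines a symmetrical partial $H$-action on $C$, and the canonical projection $p: A\to C$ is a morphism of partial $H$-module algebras.

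Next I would invoke the strict Morita context between $C$ and $A$ described above, whose underlying bimodules are $A$ and $C$ themselves, with the operations $a[b]_l$ and $[a]_l b$ induced by the multiplication of $A$. To apply Proposition~\ref{Proposition.Morita.equivalence.partial.bimodules} I equip these bimodules with the linear maps $H\otimes A\to A$ and $H\otimes C\to C$ given, respectively, by the partial action $\cdot$ and by its quotient $[\,\cdot\,]_l$. One then checks that, with these data, $A$ becomes a partial $(C,H)-(A,H)$-bimodule, $C$ becomes a partial $(A,H)-(C,H)$-bimodule, and the two cross-compatibility conditions
\[
h\cdot_C\bigl(m(kn)\bigr)=\sum (h_{(1)}m)\bigl((h_{(2)}k)n\bigr),\qquad
h\cdot_A\bigl(n(km)\bigr)=\sum (h_{(1)}n)\bigl((h_{(2)}k)m\bigr)
\]
of that proposition hold; all of these identities reduce, after applying $p$, to axiom~(2) of Definition~\ref{partial.action} for $A$, since every product occurring is ultimately a product in $A$ read modulo $l(A)$ and $p$ intertwines the actions. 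Proposition~\ref{Proposition.Morita.equivalence.partial.bimodules} then furnishes a partial $H$-action on the context algebra $\left(\begin{array}{cc} C & A\\ A & A\end{array}\right)$ restricting to $[\,\cdot\,]_l$ on the upper-left corner and to $\cdot$ on the lower-right corner. Together with item~(1) of Definition~\ref{def4} — the context is strict and its bimodules are unital because $A$, hence also $C$, is idempotent — this exhibits $\cdot$ and $[\,\cdot\,]_l$ as Morita equivalent partial actions. As $C$ is an idempotent algebra with $l(C)=0$, this proves the ``left annihilator'' form of the statement. The ``right annihilator'' version is obtained in exactly the same way with $B=A/r(A)$ in place of $C$, using that $B$ is idempotent with $r(B)=0$; the only difference is that making $h\cdot[a]_r=[h\cdot a]_r$ well defined requires $h\cdot r(A)\subseteq r(A)$, and the verification of this inclusion (given above) uses the bijectivity of the antipode of $H$, so that variant carries that extra hypothesis on $H$.

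I expect the main obstacle to be the bookkeeping concentrated in the middle paragraph: one must match the two partial-module structures on $A$ and on $C$ with the bimodule operations of the Garc\'ia--Sim\'on context and verify, clause by clause, the definition of a partial $(A,H)-(C,H)$-bimodule together with the two cross conditions. Although each identity collapses to axiom~(2) of Definition~\ref{partial.action} once $p$ has been applied, keeping straight in which copy of $A$ or $C$ each symbol lives, and on which side each module acts, is the part of the argument that is least automatic.
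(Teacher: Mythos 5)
Your proposal is essentially the paper's own proof: pass to the quotient $C=A/l(A)$ (resp.\ $B=A/r(A)$, where descending the action indeed uses the bijective antipode, exactly as in the paper's preceding discussion), observe that the quotient is idempotent with trivial left (resp.\ right) annihilator, and combine the Garc\'ia--Sim\'on strict Morita context having $A$ and the quotient as unital bimodules with Proposition~\ref{Proposition.Morita.equivalence.partial.bimodules} to obtain Morita equivalence of the partial actions. The only slip is cosmetic: in the displayed context algebra the lower-left entry should be the $A$-$C$-bimodule $C$ rather than $A$ (a right $C$-action on $A$ by multiplication is not well defined, since $A\,l(A)\neq 0$ in general), and some of the bimodule identities use the symmetry axiom (3) of Definition~\ref{partial.action}, not only axiom (2).
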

	
	Considering the results presented in \cite{abadie}, the authors constructed a canonical Morita globalization for a regular partial action and proved that whenever two regular partial $G$-actions are Morita equivalent, the global actions of its canonical Morita globalizations are also Morita equivalent. Here, we will prove that a similar result holds for partial $H$-actions.
	
	\begin{theorem}
		\label{thm.morita.equivalence.globalization}
		Let $A$ and $B$ be two symmetrical partial $H$-module algebras with Morita equivalent partial actions. Then the global actions of its standard globalizations are also Morita equivalent.
	\end{theorem}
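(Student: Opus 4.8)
The plan is to globalize the context algebra itself and to read off a Morita context between the two standard globalizations from its four corners. By Definition \ref{def4}, the Morita equivalence of $\cdot_A$ and $\cdot_B$ gives a strict Morita context $(A,B,{}_AM_B,{}_BN_A,\tau,\sigma)$ with $M,N$ unital bimodules, together with the partial action $\triangleright$ on $C=\left(\begin{smallmatrix}A&M\\N&B\end{smallmatrix}\right)$ furnished by Proposition \ref{Proposition.Morita.equivalence.partial.bimodules}, which restricts to $\cdot_A$ and $\cdot_B$ on the two diagonal corners. Since $C=A\oplus M\oplus N\oplus B$ as a vector space, the convolution algebra $\HOM(H,C)$ is canonically the matrix algebra $\left(\begin{smallmatrix}\HOM(H,A)&\HOM(H,M)\\\HOM(H,N)&\HOM(H,B)\end{smallmatrix}\right)$, where the entries carry the convolution products of $A,B$ and the convolution pairings coming from $AM\to M$, $MB\to M$, $NA\to N$, $BN\to N$, $\tau\colon M\otimes_BN\to A$ and $\sigma\colon N\otimes_AM\to B$, and the $H$-action $(k\triangleright f)(h)=f(hk)$ acts entrywise. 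Under this identification, the formula $h\triangleright\left(\begin{smallmatrix}a&m\\n&x\end{smallmatrix}\right)=\left(\begin{smallmatrix}h\cdot_A a&hm\\hn&h\cdot_B x\end{smallmatrix}\right)$ shows that $\varphi_C$ is diagonal, namely $\varphi_C\left(\begin{smallmatrix}a&m\\n&x\end{smallmatrix}\right)=\left(\begin{smallmatrix}\varphi_A(a)&\varphi_M(m)\\\varphi_N(n)&\varphi_B(x)\end{smallmatrix}\right)$ with $\varphi_M(m)(h)=hm$ and $\varphi_N(n)(h)=hn$, and with $\varphi_A,\varphi_B$ literally the maps of the standard globalizations of $A$ and $B$, since $\triangleright$ restricts to $\cdot_A,\cdot_B$ on the corners.

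Next I would verify that $\mathcal{G}(C):=H\triangleright\varphi_C(C)$ is an $H$-module subalgebra of $\HOM(H,C)$: this is exactly the computation of the lemma immediately preceding Theorem \ref{theorem.minimal.globalization}, which establishes that $H\triangleright\varphi(X)$ is a subalgebra using only the identity \eqref{equation.varphi.induced}, so no symmetry of $\triangleright$ is needed. Because $H$ acts entrywise and the four corners of $\varphi_C(C)$ can be chosen independently, $\mathcal{G}(C)=\left(\begin{smallmatrix}\mathcal{G}(A)&\mathcal{M}\\\mathcal{N}&\mathcal{G}(B)\end{smallmatrix}\right)$, where $\mathcal{G}(A)=H\triangleright\varphi_A(A)$ and $\mathcal{G}(B)=H\triangleright\varphi_B(B)$ are the underlying algebras of the standard globalizations, $\mathcal{M}:=H\triangleright\varphi_M(M)$, $\mathcal{N}:=H\triangleright\varphi_N(N)$, and the global $H$-action on $\mathcal{G}(C)$ restricts entrywise to the global actions on $\mathcal{G}(A)$ and on $\mathcal{G}(B)$. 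The associative multiplication of $\mathcal{G}(C)$ then makes $\mathcal{M}$ a $\mathcal{G}(A)$--$\mathcal{G}(B)$-bimodule, $\mathcal{N}$ a $\mathcal{G}(B)$--$\mathcal{G}(A)$-bimodule, and the corner multiplications $\mathcal{M}\mathcal{N}\subseteq\mathcal{G}(A)$, $\mathcal{N}\mathcal{M}\subseteq\mathcal{G}(B)$ produce bimodule maps $\tau',\sigma'$ for which the Morita context axioms hold automatically by associativity; moreover the entrywise $H$-action on $\mathcal{G}(C)$ is precisely a (global) partial action on the context algebra of $(\mathcal{G}(A),\mathcal{G}(B),\mathcal{M},\mathcal{N},\tau',\sigma')$ restricting to the two global actions. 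Hence, once this induced context is shown to be strict with $\mathcal{M},\mathcal{N}$ unital, Definition \ref{def4} is met and the global actions of the standard globalizations are Morita equivalent.

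The remaining, and I expect the main, work is exactly those last two checks. For unitality, writing $m=\sum_i a_i m_i'=\sum_j m_j'' b_j$ (as $M$ is a unital $A$--$B$-bimodule) and using the module compatibilities of $\varphi_M$ recorded in the lemmas after Definition \ref{def4} — which give $\varphi_M(am)=\varphi_A(a)\varphi_M(m)$ and $\varphi_M(mb)=\varphi_M(m)\varphi_B(b)$ inside $\HOM(H,C)$ — together with the $H$-module-algebra identity $k\triangleright(fg)=\sum(k_{(1)}\triangleright f)(k_{(2)}\triangleright g)$, one obtains $\mathcal{M}=\mathcal{G}(A)\mathcal{M}=\mathcal{M}\mathcal{G}(B)$, and symmetrically for $\mathcal{N}$. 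For strictness, the original context being strict gives $a=\sum_i\tau(m_i,n_i)$ for each $a\in A$; combined with the cross-relation $h\cdot_A\tau(m,n)=\sum\tau(h_{(1)}m,h_{(2)}n)$ from Proposition \ref{Proposition.Morita.equivalence.partial.bimodules}(2), this yields $\varphi_A(\tau(m,n))=\varphi_M(m)\varphi_N(n)$ in the $(1,1)$-corner, whence $k\triangleright\varphi_A(a)=\sum_i(k_{(1)}\triangleright\varphi_M(m_i))(k_{(2)}\triangleright\varphi_N(n_i))\in\mathcal{M}\mathcal{N}$; since such elements span $\mathcal{G}(A)$ we conclude $\mathcal{M}\mathcal{N}=\mathcal{G}(A)$, and symmetrically $\mathcal{N}\mathcal{M}=\mathcal{G}(B)$, so $\tau'$ and $\sigma'$ are surjective. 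The bookkeeping of the matrix identification of $\HOM(H,C)$ and of the diagonal form of $\varphi_C$ is routine but has to be set up carefully, as it is what certifies that the components $\varphi_M,\varphi_N$ of $\varphi_C$ are exactly the data entering the Morita context of the globalizations.
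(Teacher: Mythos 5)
Your proof is correct, and it assembles exactly the same data as the paper's proof --- your $\mathcal{M}=H\triangleright\varphi_M(M)$ and $\mathcal{N}=H\triangleright\varphi_N(N)$ are the paper's $M'$ and $N'$, and your corner products are precisely the convolution formulas the paper writes for the module actions and for $\tau'$, $\sigma'$ --- but you reach them by a different route: rather than defining each structure map by an explicit formula and checking well-definedness by hand, you globalize the context algebra $C$ itself, identify $\HOM(H,C)$ with the generalized matrix algebra built from $\HOM(H,A)$, $\HOM(H,M)$, $\HOM(H,N)$, $\HOM(H,B)$, and read the new context off the four corners of the $H$-module subalgebra $H\triangleright\varphi_C(C)$; your remark that the subalgebra lemma before Theorem \ref{theorem.minimal.globalization} uses only \eqref{equation.varphi.induced}, hence needs no symmetry of the action on $C$, correctly licenses this. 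This packaging makes well-definedness, the bimodule axioms and balancedness automatic from associativity of convolution, and it exhibits the global action on the new context algebra directly, so condition (2) of Definition \ref{def4} is immediate; you are also more explicit than the paper on the two points it compresses, proving unitality of $\mathcal{M},\mathcal{N}$ from $\varphi_M(am)=\varphi_A(a)\varphi_M(m)$, $\varphi_M(mb)=\varphi_M(m)\varphi_B(b)$, and strictness from $\varphi_A(\tau(m,n))=\varphi_M(m)\varphi_N(n)$ plus the cross-relations of Proposition \ref{Proposition.Morita.equivalence.partial.bimodules}. What the paper's direct construction buys is that it avoids the matrix-identification bookkeeping; what yours buys is that one associativity argument replaces several computational verifications.
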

	\begin{proof}
		Consider the strict Morita context $(A,B,\,_AM_B,\,_BN_A,\tau,\sigma)$ from the Morita equivalence of the partial $H$-actions on $A$ and on $B$. We will denote the two partial actions on $A$ and $B$ by a dot  ``$\cdot$'', the elements of $A$ by $a,b,\ldots$, the elements of $B$ by $x,y,\ldots$ and the mappings of the Morita context by $\tau(m,n)=(mn)$ and $\sigma(n,m)=(nm)$, for every $m\in M$, $n\in N$. We will also denote the standard globalizations by $A'=H\triangleright \varphi_A(A)$ and $B'=H\triangleright \varphi_B(B)$ (note that here we will use the notation $\triangleright$ to represent the actions of both globalizations, because it will not give rise to any confusion in the calculations).
		
		The Morita equivalence of the partial actions yields two linear maps ${H\otimes M\to M}$ and ${H\otimes N\to N}$; then the Adjoint Isomorphism provides the linear maps $\varphi_M:M\to \HOM(H,M)$ and $\varphi_N: N\to \HOM(H,N)$ defined by $\varphi_M(m)(h)=hm$ and $\varphi_N(n)(h)=hn$, respectively, for all $m\in M$, $n\in N$, $h\in H$.
		
		Also, as for partial $H$-module algebras, the vector spaces $\HOM(H,M)$ and $\HOM(H,N)$ are $H$-module algebras with $H$-module structures given by $(k\triangleright f)(h)=f(hk)$ (again, the same notation for the action), particularly, $(k\triangleright \varphi_M(m))(h)=(hk)m$. 
		
		Now, we will consider the vector spaces $M'=H\triangleright \varphi_M(M)$ and $N'=H\triangleright \varphi_N(N)$ and prove that $M'$ is an $(A',B')$-bimodule and $N'$ is a $(B',A')$-bimodule. It is enough to prove those claims for $M'$, and we define the $A'$-module structure on $M'$ by 
		$$(h\triangleright \varphi_A(a)\blacktriangleright k\triangleright \varphi_M(m))(l)=\sum (h\triangleright\varphi_A(a)(l_{(1)}))(k\triangleright\varphi_M(m)(l_{(2)}))=\sum (l_{(1)}h\cdot a)((l_{(2)}k)m).$$
		Note that if $\sum h^i\triangleright \varphi_A(a^i)=0$, then $\sum lh^i\cdot a=0$ for every $l\in H$, hence $\sum h^i\triangleright \varphi_A(a^i) \blacktriangleright k\triangleright \varphi_M(m)=0$. Also, we have that 
		\begin{eqnarray*}
			(h\triangleright \varphi_A(a)\blacktriangleright k\triangleright \varphi_M(m))(l)&=&\sum (l_{(1)}h\cdot a)((l_{(2)}k)m)\\
			&=&\sum lh_{(1)}(a((S(h_{(2)})k)m))\\
			&=& (\sum h_{(1)}\triangleright\varphi_M(a((S(h_{(2)})k)m)))(l),
		\end{eqnarray*}
		i.e., $h\triangleright \varphi_A(a)\blacktriangleright k\triangleright \varphi_M(m) \in M'$, and we proved that $\blacktriangleright$ is well-defined. Moreover, since $\blacktriangleright$ works like the "convolution product" together with the $A$-module structure of $M$, it follows that $\blacktriangleright$ is actually an action.
		
		The right $B'$-module structure is given, analogously, by 
		$$( k\triangleright \varphi_M(m)\blacktriangleleft (h\triangleright \varphi_B(x)))(l)=\sum ((l_{(1)}k)m )(l_{(2)}h\cdot x).$$

		Now, instead of using the notations $\varphi_A$, $\varphi_B$, $\varphi_M$ and $\varphi_N$ we will only write $\varphi$.  We can also use the previous calculation to show that the mappings
		\begin{eqnarray*}
			\tau'(h\triangleright\varphi(m),k\triangleright\varphi(n))(l)&=&\sum \tau((l_{(1)}h)m,(l_{(2)}k)n),\\
			\sigma'(k\triangleright\varphi(n),h\triangleright\varphi(m))(l)&=&\sum \sigma((l_{(2)}k)n,(l_{(1)}h)m),
		\end{eqnarray*}
		define linear maps
		\begin{eqnarray*}
			\tau': M'\otimes N'&\to& A'\\
			\sigma': N'\otimes M' &\to& B',
		\end{eqnarray*}
		that are well-defined balanced bimodule maps. Additionally, they are surjective because $\tau$ and $\sigma$ are surjective and $M$ and $N$ are unital bimodules. In other words, $(A',B',M',N',\tau',\sigma')$ is a strict Morita context. Finally, with the considered $H$-actions on $M'$ and $N'$, we can easily see that the $H$-actions on $A'$ and on $B'$ are Morita equivalent.  
	\end{proof}
	
	\subsection{Morita Equivalence of Partial $\Bbbk G$-Actions and Morita Equivalence of partial $G$-actions}
	
	In \cite{abadie}, Abadie et al. proved that the global actions of the globalizations of two Morita equivalent partial group actions are also Morita equivalent. In the previous section we proved that the actions of the minimal globalizations of two Morita equivalent partial actions are also Morita equivalent; we also know from \cite{alves1} that every globalization of a partial $\Bbbk G$-action on a unital algebra is minimal. We will show that even if we consider nonunital algebras, under some assumptions, this result also holds.
	
	\begin{lemma}
		Let $A$ be an associative partial $\Bbbk G$-module algebra, where $G$ is a group, and let $(B,\theta)$ be a globalization. Then:
		\begin{enumerate}[\normalfont(1)]
			\item if $A$ is unital, then $B$ has local units;
			\item if $A$ has local units, then $B$ has local units;
			\item if $A$ is $s$-unital, then $B$ is $s$-unital;
			\item if $A$ is idempotent, then $B$ is idempotent;
			\item if $r(A)=0$, then $r(B)=0$ if and only if the globalization is minimal.
		\end{enumerate}
	\end{lemma}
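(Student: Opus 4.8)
The plan is to work throughout inside a fixed globalization $(B,\theta)$ of the (necessarily symmetrical, cf.\ Theorem~\ref{theorem.minimal.globalization}) partial action, exploiting that every element of $G\subseteq\Bbbk G$ is group-like. By Definition~\ref{def3}, $B$ is a $\Bbbk G$-module algebra with action $\triangleright$, $\theta$ is an injective algebra map, $\theta(A)$ is a two-sided ideal of $B$, and $B=\Bbbk G\triangleright\theta(A)=\sum_{g\in G}g\triangleright\theta(A)$. Since $\Delta(g)=g\otimes g$ and $S(g)=g^{-1}$, each $g\triangleright(-)$ is an algebra automorphism of $B$, so every $g\triangleright\theta(A)$ is a subalgebra isomorphic to $A$; moreover Definition~\ref{def3}(3) gives $\theta(a)(g\triangleright\theta(b))=\theta(a(g\cdot b))$, $(g\triangleright\theta(a))\theta(b)=\theta((g\cdot a)b)$ and (as noted in the proof of Theorem~\ref{thm.lifts}) $(g\triangleright\theta(a))(k\triangleright\theta(b))=g\triangleright\theta(a((g^{-1}k)\cdot b))$ for $g,k\in G$; in particular $\theta(A)B\subseteq\theta(A)$ and $B\theta(A)\subseteq\theta(A)$.

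Parts (4) and (5) are then almost immediate. For (4): if $A=A^2$ and $a=\sum_i a_ib_i$, then $g\triangleright\theta(a)=\sum_i(g\triangleright\theta(a_i))(g\triangleright\theta(b_i))\in B^2$, and since such elements span $B$ we get $B=B^2$. For (5): the antipode of $\Bbbk G$ is bijective, so the assertion is exactly parts (2) and (3) of Lemma~\ref{r(B).r(A).minimal} applied to $H=\Bbbk G$.

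For (1)--(3) the approach is uniform: given $b_1,\dots,b_n\in B$, write each as $b_j=\sum_{g\in G_0}g\triangleright\theta(a_{j,g})$ for a common finite $G_0\subseteq G$ with $1\in G_0$, and look for a single idempotent (for (1),(2)), resp.\ $s$-unital (for (3)), element $e\in B$ with $eb_j=b_j=b_je$ for all $j$. Case (1) is clean: $e_0:=\theta(1_A)$ is a central idempotent of $B$ (for unital $A$, Definition~\ref{def3} coincides with Definition~\ref{definition.globalization}), and since $\Bbbk G$ acts by automorphisms each translate $e_g:=g\triangleright e_0$ is again central in $B$ — indeed $g^{-1}\triangleright(b\,e_g)=(g^{-1}\triangleright b)e_0=e_0(g^{-1}\triangleright b)=g^{-1}\triangleright(e_g\,b)$ and $g^{-1}\triangleright(-)$ is injective. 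Thus the $e_g$, $g\in G_0$, are pairwise commuting central idempotents, their Boolean join $e:=\bigvee_{g\in G_0}e_g$ lies in the subalgebra they generate, hence in $B$, and $e_g\leq e$; since $g\triangleright\theta(a)=e_g(g\triangleright\theta(a))=(g\triangleright\theta(a))e_g$, it follows that $eb_j=b_j=b_je$, so $B$ has local units.

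For (2) and (3) one runs the same argument with $1_A$ replaced by a local unit (resp.\ an $s$-unit) $e\in A$ chosen to be a two-sided unit for all the $a_{j,g}$. The obstruction — and the step requiring genuine care — is that $\theta(e)$ need not be central in $B$, so one cannot conclude directly that the idempotents $f_g:=g\triangleright\theta(e)$ commute; applying $g^{-1}\triangleright(-)$ as above reduces the commutativity of $f_g$ and $f_{g'}$ to the equality $e\big((g^{-1}g')\cdot e\big)=\big((g^{-1}g')\cdot e\big)e$, which holds once $e$ is also chosen to absorb the finitely many elements $(g^{-1}g')\cdot e$, $g,g'\in G_0$ — a self-referential requirement that has to be met using that these elements lie in the ideals $D_k=\psi_k(A)$ and that $A$ has local units (resp.\ $s$-units). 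Granting such an $e$, the Boolean join $\bigvee_{g\in G_0}f_g$ lies in $B$ and works as in case (1). I expect precisely this choice of $e$ to be the main difficulty: each $g\triangleright\theta(A)$ is only a one-sided ideal of $B$ and multiplication mixes the $\Bbbk G$-translates, so the ``Boolean join'' device only functions after commutativity of the $f_g$ has been secured, whereas in the unital case this is automatic from the centrality of $\theta(1_A)$.
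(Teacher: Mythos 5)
Your treatment of items (1), (4) and (5) is correct and consistent with the paper: (4) follows because each $g\in G$ is group-like, so $g\triangleright(-)$ is multiplicative; (5) is exactly Lemma \ref{r(B).r(A).minimal}(2),(3) together with the bijectivity of the antipode of $\Bbbk G$; and for (1) the translates $e_g=g\triangleright\theta(1_A)$ are indeed commuting central idempotents whose Boolean joins furnish local units for $B$. The genuine gap is in items (2) and (3), which your proposal does not prove. Your whole construction hinges on the commutativity of the idempotents $f_g=g\triangleright\theta(e)$, which you correctly reduce to $e\,((g^{-1}g')\cdot e)=((g^{-1}g')\cdot e)\,e$; but the required choice of $e$ --- a local unit of $A$ absorbing the finitely many translates $(g^{-1}g')\cdot e$, a condition that refers to $e$ itself --- is only announced as the expected main difficulty and is never carried out, so the key step is missing rather than established. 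For item (3) the framework breaks down even before that: an $s$-unit $x\in A$ need not be idempotent, hence $g\triangleright\theta(x)$ is not an idempotent and the Boolean join $\bigvee_{g\in G_0}f_g$ you invoke is simply not defined there, so commutativity would not save the argument as written.

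The obstruction you ran into is an artifact of insisting on commuting idempotents; the paper's proof takes a different and much shorter route that never meets it. Since $g\triangleright\theta(-)$ is an injective algebra map, if $x\in A$ satisfies $xa=ax=a$ then $g\triangleright\theta(x)$ is a two-sided unit (idempotent when $x$ is) for $g\triangleright\theta(a)$; this observation is essentially the paper's entire proof of (3), with (1) and (2) declared analogous and (4),(5) quoted from the general Hopf case. The passage from single translates to arbitrary elements and finite subsets of $B=\sum_g g\triangleright\theta(A)$, which the paper leaves implicit and which is where you got stuck, needs only the standard absorption argument for a finite sum of ideals each of which is an $s$-unital ring (resp.\ a ring with local units): each $g\triangleright\theta(A)$ is an ideal of $B$ isomorphic to $A$, so all cross terms such as $eb$, $be$, $ebe$ and $efe$ lie back in that ideal, and one may choose the new unit $f$ there to absorb these finitely many elements \emph{after} $e$ has been fixed; then for instance $u=f+e-fe-ef+fef$ is an idempotent with $ue=eu=e$ that also units the new elements, with no commutativity and no self-referential choice inside $A$. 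In short, (1), (4), (5) are fine, but (2) and (3) remain unproved in your proposal, and the specific device you chose (commuting translated idempotents plus Boolean joins) is what manufactures the difficulty that both the paper's per-translate argument and the absorption trick avoid.
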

	\begin{proof}
		We will only prove item (3), since items (1) and (2) are analogous and we already proved that items (4) and (5) hold for any Hopf algebra. 
		Assume that $A$ is $s$-unital. For every $h\triangleright \theta(a)\in B$, there exists $x\in A$ such that $xa=ax=a$, hence
		\[
		(h\triangleright \theta(x))(h\triangleright \theta(a))= h\triangleright \theta(a)
		=(h\triangleright \theta(a))(h\triangleright \theta(x))
		\]
		and therefore $B$ is $s$-unital. 
	\end{proof}
	
	\begin{corollary} Let $G$ be a group,
		$A$ be an associative partial $\Bbbk G$-module algebra and $(B,\theta)$ be a globalization. If $A$ is $s$-unital then the globalization is minimal.	
	\end{corollary}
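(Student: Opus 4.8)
The plan is to chain together the two facts we already have in hand: the $s$-unitality transfer (item (3) of the preceding Lemma) and the criterion relating triviality of the right annihilator of $B$ to minimality (item (2) of Lemma~\ref{r(B).r(A).minimal}, or equivalently item (5) of the preceding Lemma). Since $\Bbbk G$ is a Hopf algebra, every result stated earlier for a Hopf algebra $H$ applies verbatim with $H = \Bbbk G$.

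First I would recall that a left $s$-unital algebra is automatically idempotent and has trivial right annihilator: if $b \in r(A)$ and $x \in A$ is such that $xb = b$, then $b = xb = 0$ (here $xb$ is the product in $A$, and $b \in r(A)$ forces it to vanish). Thus the hypothesis ``$A$ is $s$-unital'' gives us in particular $r(A) = 0$.

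Next I would invoke item (3) of the Lemma immediately preceding this corollary: since $A$ is $s$-unital, its globalization $B$ is $s$-unital as well. Applying the same elementary observation to $B$, we get $r(B) = 0$. Finally, by item (2) of Lemma~\ref{r(B).r(A).minimal}, $r(B) = 0$ implies that $(B,\theta)$ is a minimal globalization, which is exactly the assertion. (Alternatively, one can cite item (5) of the preceding Lemma directly, using $r(A)=0$ and $r(B)=0$.)

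There is essentially no obstacle here: the corollary is a straightforward combination of already-established implications, and the only thing to be careful about is making explicit the small remark that $s$-unitality entails $r(-)=0$, which is the link that lets item (5)/Lemma~\ref{r(B).r(A).minimal}(2) apply. No bijectivity of the antipode is needed for the direction we use.
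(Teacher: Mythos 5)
Your proposal is correct and follows the route the paper intends: $s$-unitality of $A$ transfers to $B$ by item (3) of the preceding lemma, $s$-unitality gives trivial right annihilator (as the paper already remarked when introducing $s$-unital rings), and $r(B)=0$ forces minimality by Lemma~\ref{r(B).r(A).minimal}(2) (equivalently item (5) of the preceding lemma, whose relevant direction needs no antipode bijectivity). Nothing is missing.
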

	Now, we will relate
	the definitions of Morita equivalence of partial $\Bbbk G$-actions and of Morita equivalence of partial $G$-actions, as was done for partial actions.
	We begin by recalling the concept of a product partial action, where the intersections of domains are substituted by their products.  
	\begin{definition}[\cite{abadie}] A product partial action $\alpha$ of a group $G$ on an algebra $A$ consists of a family of two-sided ideals $D_g$ in $A$, $g\in G$, and algebra isomorphisms $\alpha_g: D_{g^{-1}} \to D_g$, such that:
		\begin{enumerate}[\normalfont(1)]
			\item $\alpha_1$ is the trivial isomorphism $A\to A$;
			\item $D_g^2 = D_g, D_g \cdot D_h = D_h \cdot D_g$;
			\item $\alpha_g(D_{g^{-1}} \cdot D_h)\subseteq D_g \cdot D_{gh}$;
			\item $\alpha_g(\alpha_h(x))=\alpha_{gh}(x)$, for any $x\in D_{h^{-1}}\cdot D_{(gh)^{-1}}$.
		\end{enumerate} 
	\end{definition}
	Every regular partial action is also a product partial action. 
	\begin{definition}[\cite{abadie}] Let 
		\begin{eqnarray*}
			\alpha=\{\alpha_g: D_{g^{-1}}\to D_g\}\,\,\,and\,\,\,\alpha'=\{\alpha'_g:D'_{g^{-1}}\to D'_g\}
		\end{eqnarray*}
		be regular partial actions of $G$ on algebras $A$ and $A'$, respectively. We say that $\alpha$ is Morita equivalent to $\alpha'$ if:
		\begin{enumerate}[\normalfont(1)]
			\item There exists a strict Morita context $(A,A',\,_AM_{A'},\,_{A'}M'_A,\tau,\sigma)$, where $M$ and $M'$ are unital bimodules such that $M'D_gM=D'_g$ for any $g\in G$;
			\item There exists a product partial action $\theta=\{\theta_g: E_{g^{-1}}\to E_g \}$ of $G$ on $C$, where $C$ is the context algebra $C=\left(\begin{array}{cc}
				A & M\\
				M' & A'
			\end{array}\right)$, such that $\theta$ restricted to $\left(\begin{array}{cc}
				A & 0\\
				0 & 0
			\end{array}\right)$ and $\left(\begin{array}{cc}
				0 & 0\\
				0 & A'
			\end{array}\right)$, is $\alpha$ and $\alpha'$, respectively.
		\end{enumerate}
	\end{definition}
	Recall from \cite{abadie} that "$\theta$ restrict to $\left(\begin{array}{cc}
		A & 0\\
		0 & 0
	\end{array}\right)$ is $\alpha$" means that, for every $g\in G$
	\begin{center}
		$E_g\cap$ $\left(\begin{array}{cc}
			A & 0\\
			0 & 0
		\end{array}\right)= \left(\begin{array}{cc}
			D_g & 0\\
			0 & 0
		\end{array}\right)$
	\end{center}
	and 
	\begin{center}
		$\theta_g \left(\begin{array}{cc}
			a & 0\\
			0 & 0
		\end{array}\right) = \left(\begin{array}{cc}
			\alpha_g(a) & 0\\
			0 & 0
		\end{array}\right),\,\,\, \forall a\in D_{g^{-1}}$.
	\end{center}
	
	A consequence of Proposition \ref{prop1} is that whenever $A$ is idempotent, $\alpha$ is a regular partial action of $G$ on $A$ and there exist $\alpha$-projections $p_g: A\to D_g$, then the linear map $\cdot: \Bbbk G\otimes A\to A$ defined by $g\cdot a=\alpha_g(p_{g^{-1}}(a))$ is a symmetrical partial action.
	
	\begin{lemma}\label{lem1}  Let 
		\begin{eqnarray*}
			\alpha=\{\alpha_g: D_{g^{-1}}\to D_g\}\,\,\,and\,\,\,\alpha'=\{\alpha'_g:D'_{g^{-1}}\to D'_g\}
		\end{eqnarray*}
		be Morita equivalent regular partial actions of $G$
		on idempotent algebras $A$ and $A'$,
		respectively, with product partial action on the context algebra $C$ given by $\theta=\{\theta_g:E_{g^{-1}}\to E_g\}$. Suppose that there exist algebra morphisms $p_g: A\to D_g$, $p'_g: A'\to D'_g$ and $P_g: C\to E_g$ that are $\alpha$-projections, $\alpha'$-projections and $\theta$-projections, respectively, and that every $P_g$ restricted to the copy of $A$ and $A'$ is $p_g$ and $p'_g$, respectively, i.e.
		\begin{eqnarray*}
			P_g\left(\begin{array}{cc}
				a & 0\\
				0 & 0
			\end{array}\right)&=&\left(\begin{array}{cc}
				p_g(a) & 0\\
				0 & 0
			\end{array}\right),\\
			P_g\left(\begin{array}{cc}
				0 & 0\\
				0 & a'
			\end{array}\right)&=&\left(\begin{array}{cc}
				0 & 0\\
				0 & p'_g(a')
			\end{array}\right).
		\end{eqnarray*}
		Then the induced partial actions of $\Bbbk G$ on $A$ and $A'$ are also Morita equivalent.
	\end{lemma}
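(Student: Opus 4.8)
The strategy is to exhibit, for the induced partial $\Bbbk G$-actions on $A$ and $A'$, precisely the data required by Definition \ref{def4}. By the consequence of Proposition \ref{prop1} these induced actions are $g\cdot_A a=\alpha_g(p_{g^{-1}}(a))$ and $g\cdot_{A'}a'=\alpha'_g(p'_{g^{-1}}(a'))$, and item (1) of Definition \ref{def4} --- a strict Morita context $({}_AM_{A'},{}_{A'}M'_A,\tau,\sigma)$ with $M,M'$ unital --- is already part of the hypothesis of Lemma \ref{lem1}. So the only thing left to build is a symmetrical partial $\Bbbk G$-action on the context algebra $C$ whose restrictions to the corners $\left(\begin{smallmatrix}A&0\\0&0\end{smallmatrix}\right)$ and $\left(\begin{smallmatrix}0&0\\0&A'\end{smallmatrix}\right)$ are $\cdot_A$ and $\cdot_{A'}$. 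The natural candidate is $g\triangleright c=\theta_g(P_{g^{-1}}(c))$, i.e.\ the same recipe from Proposition \ref{prop1} applied to the product partial action $\theta$ together with its family of $\theta$-projections $\{P_g\}$.

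For this candidate to be a genuine symmetrical partial $\Bbbk G$-action I would first record that $C$ is idempotent: since $A,A'$ are idempotent and $M,M'$ are unital bimodules in a strict Morita context, each of the four blocks of $C^2$ already exhausts the corresponding block of $C$, so Proposition \ref{prop1} is applicable to $C$. The one genuine obstacle is that $\theta$ is only assumed to be a \emph{product} partial action, whereas Proposition \ref{prop1} is stated for ordinary partial $G$-actions. I would resolve this by noting that the mere existence of the family $\{P_g\}$ of commuting idempotent algebra maps $P_g\colon C\to E_g$ forces $\theta$ to be \emph{regular}: given $x\in E_{g_1}\cap\cdots\cap E_{g_n}$ one has $x=(P_{g_1}\circ\cdots\circ P_{g_n})(x)$, and writing $x$ as a sum of $n$-fold products of elements of $C$ (possible as $C=C^2=\cdots=C^n$) and pushing the algebra map $P_{g_1}\circ\cdots\circ P_{g_n}$ through each factor lands $x$ in $E_{g_1}E_{g_2}\cdots E_{g_n}$; the reverse inclusion is automatic since each $E_g$ is an ideal. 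A regular product partial action is an ordinary partial $G$-action, so Proposition \ref{prop1} then yields that $g\triangleright c=\theta_g(P_{g^{-1}}(c))$ defines a symmetrical partial $\Bbbk G$-module algebra structure on $C$.

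It then remains to check the restriction conditions, which is pure bookkeeping. For $a\in A$ we have $p_{g^{-1}}(a)\in D_{g^{-1}}$, hence $\left(\begin{smallmatrix}p_{g^{-1}}(a)&0\\0&0\end{smallmatrix}\right)\in E_{g^{-1}}$, and using the two compatibility hypotheses of Lemma \ref{lem1} (that $P_{g^{-1}}$ restricted to the copy of $A$ equals $p_{g^{-1}}$, and that $\theta_g$ restricted to that copy equals $\alpha_g$) one gets
\[
g\triangleright\left(\begin{array}{cc}a&0\\0&0\end{array}\right)
=\theta_g\left(\begin{array}{cc}p_{g^{-1}}(a)&0\\0&0\end{array}\right)
=\left(\begin{array}{cc}\alpha_g(p_{g^{-1}}(a))&0\\0&0\end{array}\right)
=\left(\begin{array}{cc}g\cdot_A a&0\\0&0\end{array}\right),
\]
and symmetrically $g\triangleright\left(\begin{smallmatrix}0&0\\0&a'\end{smallmatrix}\right)=\left(\begin{smallmatrix}0&0\\0&g\cdot_{A'}a'\end{smallmatrix}\right)$. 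Thus $\triangleright$ restricts to $\cdot_A$ and to $\cdot_{A'}$ on the respective corners, so the given strict Morita context together with the partial action $\triangleright$ is exactly the data of Definition \ref{def4}; therefore the induced partial $\Bbbk G$-actions $\cdot_A$ and $\cdot_{A'}$ are Morita equivalent, as claimed. The heart of the argument is the middle step --- guaranteeing that transporting $\theta$ along the projections produces an honest symmetrical partial $\Bbbk G$-action on $C$ even though $\theta$ was only assumed to be a product partial action --- and the regularity observation above (idempotency of $C$ and the $E_g$'s plus commutativity of the projections) is what settles it.
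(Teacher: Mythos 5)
Your proof is correct and takes essentially the same route as the paper: define $g\triangleright c=\theta_g(P_{g^{-1}}(c))$ on the context algebra $C$ and verify that it restricts on the two corners to $\alpha_g\circ p_{g^{-1}}$ and $\alpha'_g\circ p'_{g^{-1}}$, which together with the given strict Morita context is exactly Definition \ref{def4}. The paper's proof consists only of that restriction computation, taking for granted (via the remark preceding the lemma and Proposition \ref{prop1}) that this recipe yields a partial $\Bbbk G$-action on $C$; your extra observations that $C$ is idempotent and that the commuting surjective idempotent projections $P_g$ force intersections of the $E_g$ to equal their products merely make explicit a step the paper leaves implicit.
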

	\begin{proof}	
		We only need to prove that the induced partial actions of $\Bbbk G$ on the context algebra restricted to $\left(\begin{array}{cc}
			A & 0\\
			0 & 0
		\end{array}\right)$ and $\left(\begin{array}{cc}
			0 & 0\\
			0 & A'
		\end{array}\right)$ are, respectively, the induced partial actions of $\Bbbk G$ on $A$ and $A'$. In fact, in the first case, we have that
		\begin{eqnarray*}
			g\cdot \left(\begin{array}{cc}
				a & 0\\
				0 & 0
			\end{array}\right)&=&\theta_gP_{g^{-1}}\left(\begin{array}{cc}
				a & 0\\
				0 & 0
			\end{array}\right)\\
			&=&\theta_g\left(\begin{array}{cc}
				p_{g^{-1}}(a) & 0\\
				0 & 0
			\end{array}\right)\\
			&=&\left(\begin{array}{cc}
				\alpha_g(p_{g^{-1}}(a)) & 0\\
				0 & 0
			\end{array}\right)\\
			&=&\left(\begin{array}{cc}
				g\cdot a & 0\\
				0 & 0
			\end{array}\right).
		\end{eqnarray*}
		Analogously, the induced partial action of $\Bbbk G$ on the context algebra restricted to $\left(\begin{array}{cc}
			0 & 0\\
			0 & A'
		\end{array}\right)$ is the induced partial action of $\Bbbk G$ on $A'$.
	\end{proof}
	
	\begin{theorem}\label{lem2} Let $A$ and $A'$ be idempotent partial $\Bbbk G$-module algebras with Morita equivalent symmetrical partial actions. Then the induced partial actions of $G$ on $A$ and $A'$ are Morita equivalent.
	\end{theorem}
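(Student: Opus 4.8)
The plan is to push the data of the $\Bbbk G$-Morita equivalence through the correspondence of Proposition~\ref{prop1} and then to verify, piece by piece, the two conditions in the definition of Morita equivalence of partial $G$-actions from \cite{abadie}. This is, in effect, the converse of the implication of Lemma~\ref{lem1}, and one may prefer to phrase it that way.

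First I would translate the three algebras involved. Since $A$ and $A'$ are idempotent, Proposition~\ref{prop1}, together with the remark that on idempotent algebras this bijection matches symmetrical partial $\Bbbk G$-actions with \emph{regular} partial $G$-actions, turns $\cdot_A$ and $\cdot_{A'}$ into regular partial $G$-actions $\alpha=\{\alpha_g:D_{g^{-1}}\to D_g\}$ on $A$ and $\alpha'=\{\alpha'_g:D'_{g^{-1}}\to D'_g\}$ on $A'$, with $\alpha$-projections $p_g=\psi^A_g$, where $\psi^A_g(a)=g\cdot_A(g^{-1}\cdot_A a)$ and $D_g=\psi^A_g(A)$ (and likewise for $A'$); these are exactly the induced partial $G$-actions whose Morita equivalence must be established. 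By Definition~\ref{def4} there is a strict Morita context $(A,A',{}_AM_{A'},{}_{A'}N_A,\tau,\sigma)$ with $M,N$ unital and a partial $\Bbbk G$-action $\triangleright$ on the context algebra $C$ whose restrictions to the two diagonal corners are $\cdot_A$ and $\cdot_{A'}$; by Proposition~\ref{Proposition.Morita.equivalence.partial.bimodules} this data is the same as partial-bimodule structures on $M$ and $N$ (over the pairs $(A,\Bbbk G)$--$(A',\Bbbk G)$ and $(A',\Bbbk G)$--$(A,\Bbbk G)$) together with the identities $g\cdot_A\tau(m,n)=\tau(gm,gn)$ and $g\cdot_{A'}\sigma(n,m)=\sigma(gn,gm)$; note also that $C$ is idempotent because $M$ and $N$ are unital bimodules.

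The heart of the argument is then to restrict everything along the maps $\psi^M_g(m)=g(g^{-1}m)$ and $\psi^N_g(n)=g(g^{-1}n)$ (the left $\Bbbk G$-actions coming from the bimodule structures). Short computations with the partial-bimodule axioms and with equality~\eqref{eq3} give $\psi^M_g(am)=\psi^A_g(a)m$, $\psi^M_g(mx)=\psi^M_g(m)x$, the mirror identities for $\psi^N_g$, and $\psi^A_g(\tau(m,n))=\tau(\psi^M_gm,\psi^N_gn)$, $\psi^{A'}_g(\sigma(n,m))=\sigma(\psi^N_gn,\psi^M_gm)$. Writing $M_g=\psi^M_g(M)$, $N_g=\psi^N_g(N)$, and using that $M,N$ are unital and $\tau,\sigma$ surjective, one deduces $M_g=D_gM=MD'_g$, $N_g=ND_g=D'_gN$, $\tau(M_g,N_g)=D_g$, $\sigma(N_g,M_g)=D'_g$, and, from the $A'$-linearity of $\sigma$, the required condition $ND_gM=\sigma(N,MD'_g)=\sigma(N,M)D'_g=D'_g$ (symmetrically $MD'_gN=D_g$); in particular $(D_g,D'_g,M_g,N_g,\tau|,\sigma|)$ is again a strict Morita context of unital bimodules. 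Next I would let $E_g$ be the $\Bbbk$-subspace of $C$ with diagonal corners $D_g,D'_g$ and off-diagonal corners $M_g,N_g$, and define $\theta_g:E_{g^{-1}}\to E_g$ by $\theta_g(c)=g\triangleright c$. Using the identities above, the regularity of $\alpha,\alpha'$ (so that $D_gD_h=D_hD_g$ and $\alpha_g(D_{g^{-1}}D_h)=D_gD_{gh}$) and Lemma~\ref{l9}, one checks that each $E_g$ is an idempotent ideal of $C$ with $E_1=C$, that $E_gE_h=E_hE_g$, and that the $\theta_g$ are mutually inverse algebra isomorphisms with $\theta_1=\mathrm{id}$, $\theta_g(E_{g^{-1}}E_h)\subseteq E_gE_{gh}$ and $\theta_g\theta_h=\theta_{gh}$ on $E_{h^{-1}}E_{(gh)^{-1}}$; thus $\theta=\{\theta_g\}$ is a product partial action of $G$ on $C$. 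Finally, since $\triangleright$ restricts to $\cdot_A$ on the $A$-corner, one has $E_g\cap(\text{$A$-corner})=D_g$ and there $\theta_g$ is $g\cdot_A(-)$, which is $\alpha_g$ on $D_{g^{-1}}$ (and symmetrically for $A'$), so $\theta$ restricts to $\alpha$ and $\alpha'$ on the diagonal corners. Combining the restricted context, the identity $ND_gM=D'_g$ and the product partial action $\theta$ verifies both conditions of the definition of Morita equivalence of partial $G$-actions, so $\alpha$ and $\alpha'$ are Morita equivalent.

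The main obstacle is the third paragraph: the careful bookkeeping that the Morita context restricts compatibly along $\psi^M_g,\psi^N_g$ — i.e., obtaining $ND_gM=D'_g$ and showing that the family $\{E_g,\theta_g\}$ satisfies every axiom of a product partial action — even though each individual identity needed is a one- or two-line manipulation of the partial-module axioms together with \eqref{eq3} (in particular the partial $\Bbbk G$-action $\triangleright$ on $C$ need not itself be symmetrical, so one cannot simply apply Proposition~\ref{prop1} to $C$; one must instead build $\theta$ by hand from the corner data).
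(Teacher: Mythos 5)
Your proposal is correct and shares the paper's skeleton — the corner projections $\psi_g$, the Morita context, ideals $E_g$ with corners $D_g$, $D'_g$, $D_gM$, $D'_gN$, and the final restriction check — but it takes a genuinely different route at one point. The paper obtains the product partial action on the context algebra $C$ in one stroke: since $C$ is idempotent, the partial $\Bbbk G$-action on $C$ is treated via the correspondence of Proposition \ref{prop1} as inducing a regular, hence (by \cite{abadie}) product, partial $G$-action on $C$, and only afterwards are the intersection and restriction conditions verified; this tacitly treats the action on $C$ as symmetrical, which Definition \ref{def4} does not literally require, and you correctly flag this. You instead refuse to apply Proposition \ref{prop1} to $C$ and construct $\theta=\{\theta_g:E_{g^{-1}}\to E_g\}$ by hand from the corner data, verifying the product-partial-action axioms directly; this buys independence from any symmetry hypothesis on the action on $C$ (only the symmetrical corner actions, the non-symmetric mixed identities of Proposition \ref{Proposition.Morita.equivalence.partial.bimodules}, and equation \eqref{eq3} on $A$ and $A'$ are used), at the cost of the extra verifications you only sketch. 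You also reach the compatibility $ND_gM=D'_g$ through $D_gM=MD'_g$ and the balancedness of $\sigma$, whereas the paper proves $MD'_gM'=D_g$ and $M'D_gM=D'_g$ by a direct two-inclusion element computation. One caution on labour: the inclusion $MD'_g\subseteq D_gM$ (and, relatedly, the ideal property of your $E_g$) is not a one- or two-line consequence of the partial-bimodule axioms and \eqref{eq3} alone; it genuinely needs the surjectivity of $\tau,\sigma$ and unitality exactly as you invoke them — for instance, writing $\psi^{A'}_g(\sigma(n,m'))=\sigma(\psi^N_gn,\psi^M_gm')$ gives $m\,\psi^{A'}_g(\sigma(n,m'))=\tau(m,\psi^N_gn)\,\psi^M_g(m')\in A\,(D_gM)\subseteq D_gM$ — so your estimate there is optimistic, but the argument does close and the theorem follows.
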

	\begin{proof}
		Since $A$ is idempotent, we know that the induced partial action of $G$ on $C$ is regular, hence by \cite{abadie} it is a product partial action.
		
		Now, consider the Morita context $(A,A',M,M')$ given by the Morita equivalence of the symmetrical partial actions of $\Bbbk G$ on $A$ and on $A'$. In order to show that the restriction properties holds, note that $D_g=g\cdot g^{-1}\cdot A$ and $D'_g=g\cdot g^{-1}\cdot A'$, hence 
		$$g\cdot g^{-1}\cdot M=g\cdot g^{-1}\cdot AM=(g\cdot g^{-1}\cdot A)M=M(g\cdot g^{-1}\cdot A'),$$ $$g\cdot g^{-1}\cdot M'=g\cdot g^{-1}\cdot A'M'=(g\cdot g^{-1}\cdot A')M'=M'(g\cdot g^{-1}\cdot A).$$
		Then, since $MD'_gM'$ is generated by elements of the form $m(g\cdot g^{-1}\cdot a)m'$, we have that
		\begin{eqnarray*}
			m(g\cdot g^{-1}\cdot a)m'&=&(g\cdot(g^{-1}m)(g^{-1}\cdot a))m'\\
			&=&g\cdot(g^{-1}m)(g^{-1}\cdot a)(g^{-1} m')\\
			&=& g\cdot g^{-1}\cdot (\tau(m,am')) \in D_g.
		\end{eqnarray*}
		Conversely, for every $g\cdot g^{-1}\cdot a\in D_g$, there exist $m_1, \ldots, m_r\in M$ and $n_1, \ldots, n_r \in M'$ such that $a=\sum_i \tau(m_i,n_i)$; since $M$ is a unital right $A'$-module we may also write each $m_i$ as a linear combination $m_i=\sum_j m_{ij}a'_{ij}$, where $m_{i,j} \in M$, $a'_{ij} \in A$´, and therefore
		\begin{eqnarray*}
			g\cdot g^{-1} \cdot a&=& \sum_{i,j} g\cdot g^{-1}\cdot \tau(m_{ij}b_{ij},n_i)\\
			&=&\sum_{i,j} \tau(g(g^{-1}m_{ij})(g\cdot g^{-1}\cdot b_{ij}),g(g^{-1}n_i)) \in MD'_gM',
		\end{eqnarray*}
		proving that $MD'_gM'=D_g$. Analogously, we have that $M'D_gM= D'_g$. Hence
		\begin{eqnarray*}
			E_g&=& C(g\cdot g^{-1}\cdot C)\\
			&=& \left(\begin{array}{cc}
				A & M\\
				M' & A'
			\end{array}\right)\left(\begin{array}{cc}
				g\cdot g^{-1}\cdot A & g\cdot g^{-1}\cdot M\\
				g\cdot g^{-1}\cdot M' & g\cdot g^{-1}\cdot A'
			\end{array}\right)\\
			&=&\left(\begin{array}{cc}
				A & M\\
				M' & A'
			\end{array}\right)\left(\begin{array}{cc}
				D_g & D_gM\\
				D'_g M' & D'_g
			\end{array}\right)\\
			&=&\left(\begin{array}{cc}
				D_g & D_gM\\
				D'_g M' & D'_g
			\end{array}\right),
		\end{eqnarray*} 
		then
		\begin{eqnarray*}
			E_g\cap \left(\begin{array}{cc}
				A & 0\\
				0 & 0
			\end{array}\right)&=&\left(\begin{array}{cc}
				D_g & 0\\
				0 & 0
			\end{array}\right),\\
			E_g\cap \left(\begin{array}{cc}
				0 & 0\\
				0 & A'
			\end{array}\right)&=&\left(\begin{array}{cc}
				0 & 0\\
				0 & D'_g
			\end{array}\right),
		\end{eqnarray*}
		and 
		\begin{eqnarray*}
			\theta_g\left(\begin{array}{cc}
				g^{-1}\cdot g\cdot a & 0\\
				0 & 0
			\end{array}\right)&=&g\cdot g^{-1}\cdot g\cdot\left(\begin{array}{cc}
				g^{-1}\cdot g\cdot a & 0\\
				0 & 0
			\end{array}\right)\\
			&=&\left(\begin{array}{cc}
				g\cdot g^{-1}\cdot g\cdot g^{-1}\cdot g\cdot a & 0\\
				0 & 0
			\end{array}\right)\\
			&=&\left(\begin{array}{cc}
				g\cdot g^{-1}\cdot a & 0\\
				0 & 0
			\end{array}\right)\\
			&=&\left(\begin{array}{cc}
				\alpha_g(g^{-1}\cdot g\cdot a) & 0\\
				0 & 0
			\end{array}\right).
		\end{eqnarray*}
		Hence the restriction of the induced partial action of $G$ on $C$ to $\left(\begin{array}{cc}
			A & 0\\
			0 & 0
		\end{array}\right)$ is the induced partial action of $G$ on $A$. Analogously, the induced partial action of $G$ on $C$ restrict to $\left(\begin{array}{cc}
			0 & 0\\
			0 & A'
		\end{array}\right)$ is the induced partial action of $G$ on $A'$.
	\end{proof}

	\bibliographystyle{plain}
	\bibliography{Morita_equivalence_and_globalization_for_partial_Hopf_actions_on_nonunital_algebras}

\end{document}